\documentclass{amsart}

\usepackage{amsmath,amssymb,amsthm,graphicx,slashed,subfig}%
\usepackage{amsmath}%
\setcounter{MaxMatrixCols}{30}%
\usepackage{amsfonts}%
\usepackage{amssymb}%

\usepackage{hyperref,enumerate}

\setcounter{tocdepth}{2}

\usepackage{colortbl}

\usepackage[all,cmtip]{xy}

\definecolor{Red}{rgb}{0.6,0,0}
\def\Red{\textcolor{black}}

\usepackage{graphicx}
\providecommand{\U}[1]{\protect\rule{.1in}{.1in}}
\newtheorem{thm}{Theorem}[section]
\newtheorem{corl}[thm]{Corollary}

\newtheorem{lma}[thm]{Lemma}
\newtheorem{lem}[thm]{Lemma}
\newtheorem{prop}[thm]{Proposition}
\newtheorem{defn}[thm]{Definition}
\newtheorem{ex}[thm]{Example}
\newtheorem{rem}[thm]{Remark}
\newtheorem{fact}[thm]{Fact}
\def\tilde{\widetilde}
\def\alg{\mathrm{alg}}
\def\A{\mathcal{A}}
\def\cS{\mathcal{S}}
\def\B{\mathcal{B}}
\def\bullet{\cdot}
\def\bar{\overline}
\newcommand{\bra}[1]{\langle #1 |}
\newcommand{\ket}[1]{| #1 \rangle}
\def\C{\mathbb{C}}
\def\cb{\mathrm{cb}}

\newcommand{\CZ}[1]{C^*(\Z)_{(#1)}}

\def\E{\mathcal{E}}
\def\env{\mathrm{env}}
\def\epsilon{\varepsilon}
\def\eps{\varepsilon}

\def\F{\mathcal{F}}

\def\H{\mathcal{H}}

\def\id{\mathbb{I}}
\def\K{\mathcal{K}}

\def\min{\mathrm{min}}
\def\N{\mathbb{N}}

\def\cP{\mathcal P}

\def\phi{\varphi}
\def\R{\mathbb{R}}
\def\cR{\mathcal{R}}

\def\cS{\mathcal{S}}
\def\T{\mathbb{T}}
\DeclareMathOperator{\tr}{Tr}
\newcommand{\Toep}[1]{C(S^1)^{(#1)}}
\def\U{\mathcal{U}}
\def\Z{\mathbb{Z}}
\def\opcit{{\it op.cit.\/}\ }
\def\sext{{$C^{\sharp }$}}
\def\ker{{\mbox{Ker}}}
\def\qqq{\,,\,~\forall}
\newcommand{\ie}{{\it i.e.\/}\ }

\newtheoremstyle{commentstyle}
  {0.2cm}{0.2cm}
  {\sf}
  {0cm}
  {\bfseries}{ }
  {0cm}
  {\thmname{#1}\thmnumber{ #2}:\thmnote{ #3}}

\theoremstyle{commentstyle}
\newtheorem{mycomment}{Comment}


\title[Spectral truncations in NCG and operator systems]{Spectral truncations in noncommutative geometry and operator systems}
\author{Alain Connes}
\address{College de France, 3 rue Ulm, F75005, Paris, France\\
I.H.E.S. F-91440 Bures-sur-Yvette, France\\
Department of Mathematics, Ohio State University, Columbus OH 43210 USA\\}
\email{alain@connes.org}
\author{Walter D. van Suijlekom}

\address{Institute for Mathematics, Astrophysics and Particle Physics, Radboud
University Nijmegen, Heyendaalseweg 135, 6525 AJ Nijmegen, The Netherlands.
}
\email{waltervs@math.ru.nl}
\date{\today}

\begin{document}

\begin{abstract}
In this paper we extend the traditional framework of noncommutative geometry in order to deal with spectral truncations of geometric spaces ({\em i.e.} imposing an ultraviolet cutoff in momentum space) and with tolerance relations which provide a coarse grain approximation of geometric spaces at a finite resolution. In our new approach  the traditional role played by $C^*$-algebras is taken over by operator systems. As part of the techniques we treat $C^*$-envelopes, dual operator systems and stable equivalence. We define a propagation number for operator systems, which we show to be an invariant under stable equivalence and use to compare approximations of the same space. 

We illustrate our methods for concrete examples obtained by spectral truncations of the circle. These are operator systems of finite-dimensional Toeplitz matrices and their dual operator systems which are given by functions in the group algebra on the integers with  support in a fixed interval. It turns out that the cones of positive elements and the pure state spaces for these operator systems possess a very rich structure which we analyze including for the algebraic geometry of the boundary of the positive cone and the metric aspect \ie the distance on the state space associated to the Dirac operator. The main property of  the spectral truncation is that it keeps the isometry group intact. In contrast, if one considers the other finite approximation provided by circulant matrices the isometry group becomes discrete, even though in this case the operator system is a $C^*$-algebra. We analyze this in the context of the finite Fourier transform on the cyclic group. 

The extension of noncommutative geometry to operator systems allows one to deal with metric spaces up to finite resolution by considering the relation $d(x,y)< \epsilon$ between two points, or more generally a tolerance relation  which naturally gives rise to an operator system.

\end{abstract}

\maketitle
\tableofcontents

\section{Introduction}
\label{sect:intro}

Noncommutative geometry \cite{C94} has shown that it is possible to give a fully spectral description of Riemannian spin manifolds. In fact, the mere knowledge of the spectrum of the Dirac operator $D$ relative to that of a function algebra $A$ allows one to reconstruct the full Riemannian spin manifold $M$ \cite{C08}. In physical terms this can be phrased by saying that we can probe the structure of curved spacetime around us by means of eigenfrequencies and eigenfunctions for a fermion that moves through that spacetime.

For the mathematical reconstruction of $M$ it is crucial to know the full spectrum of $D$ and $A$. In practice, however, it is clear that we will only have access to part of that spectrum. Indeed, we are limited by the power and resolution of our detectors and typically study physical phenomena up to a certain energy scale. Motivated by this we pose the following question:
\begin{quote}
{\em   can the framework of noncommutative geometry be extended to the case where only part of the spectrum of $D$ is available together with, say, a certain truncation of the algebra $A$?}
\end{quote}
This question has been present all along in the development of the relation between noncommutative geometry and physics with the long term goal of finding testable models of quantum gravity from truncated versions of the model given by quanta of geometry in \cite{CCM14,CCM15}. It has been clear from the start that spectral truncation, which means introducing a cutoff in momentum space truncating the Hilbert space of fermions, respects all continuous symmetries and is superior to an artificial discretization. The spectral truncation in relation with the metric aspect has been studied and formalized in the work \cite{ALM14} and the present paper is directly in line with this development. Our new input is  to  put forward the role of operator systems in the general theory and to analyze in great details, including the algebraic geometry of the boundary of the positive cone, the example of the truncated circle with its wealth of structure coming from the theory of Toeplitz matrices.

The way operator systems naturally arise in the process of spectral truncation is as follows. 
Since the self-adjoint Dirac operator $D$ acts in a Hilbert space $H$, a natural spectral truncation is given simply by a spectral projection $P$ onto eigenspaces of $D$; this is an operator that commutes with $D$. The natural truncation of the action of the $*$-algebra $A$ to the Hilbert space $PH$ is given by the space $PAP$. Since $P$ does not commute with $A$, this is not an algebra anymore. Moreover in many interesting examples if one takes the $C^*$-algebra generated by $PAP$ one gets a non-informative full matrix algebra. However, not all is lost: the space $PAP$ is a $*$-closed subspace in $\B(H)$, {\em i.e.} it is a so-called {\em operator system} \cite{CE77} ({\em cf.} \cite{ER00,Pau02,Pis03,Bla06}). Such spaces have an extremely rich structure: they are matrix ordered, they possess cones of positive elements, (pure) state spaces, {\em et cetera}. Moreover, for truncations it turns out that symmetries of the pair $(A,D)$ induce symmetries of $(PAP,PDP)$. This follows quite easily from the fact that $P$ is a spectral projection of $D$.

Another viewpoint is obtained when we change our perspective from momentum to position space. The energy cutoff translates to a consideration of metric spaces with a certain finite resolution $\epsilon$, where we say that two points $x,y$ are equivalent if $d(x,y) < \eps$. This is not an equivalence relation, but it is a so-called {\em tolerance relation}.  We may imitate the construction of the $C^*$-algebra of a foliation or more generally of a groupoid $C^*$-algebra  with the crucial difference that the convolution product cannot be defined due to the lack of transitivity. However, given the symmetry and reflexivity, a tolerance relation does  define an operator system. This creates a generalization of the basic construction of noncommutative geometry which started from analysing the geometric examples of intractable spaces of leaves of foliations using noncommutative algebras. Here the issue of the lack of transitivity of the relation is already present in the simplest case where the generated equivalence relation has a single class \ie corresponds to the  $C^*$-algebra of compact operators. Besides the  class of examples of operator systems associated to spectral truncations a whole new class thus appears from tolerance relations. Such relations appear naturally  in the  homotopy theory of simplicial complexes which do not fulfill the Kan-extension property as  was shown in \cite{CCgromov,CCfrob}. 

In this paper we develop the formalism needed for doing noncommutative geometry with operator systems. We will focus mainly on the `topological properties' described by the operator systems and corresponding state spaces. The metric aspect as provided by spectral triples extends in a straightforward manner to the new framework and in \S \ref{sect:trunc-dist} we analyse the distance function on the truncated circle. For some preliminary results on the metric aspect, we refer to \cite{ALM14,Ber19}. Computer simulations involving a spectral truncation adopting also the Heisenberg quantization relation of \cite{CCM14,CCM15} have been reported in \cite{GS19a,GS19b}.

The key concepts that we will discuss and introduce here are:
\begin{itemize}
\item operator systems: both concrete and abstract (in the sense of \cite{CE77});
\item duality between operator systems (in the sense of \cite{Arv69,CE77});
\item enveloping $C^*$-algebras of operator systems (in the sense of \cite{Ham79});
  \item stable equivalence of operator systems;
\item propagation number as a new invariant under stable equivalence;
\item extreme rays in the cone of positive elements of an operator system;
  \item pure state spaces of operator systems.
\end{itemize}

We will give many examples of the theory, based on spectral truncations of the circle and spaces at finite resolution. This allows to test the above concepts for some concrete operator systems. We will consider:
\begin{itemize}
\item the Toeplitz operator system $\Toep{n}$ arising from spectral truncations of the circle;
\item elements in the group algebra of $\Z$ of finite support: $\CZ{n}$;
\item the circulant matrices, or, equivalently the group algebra $C^*(C_m)$ of the cyclic group of order $m$;
  \item operator systems $E(\cR)$ associated to tolerance relations $\cR$; in particular describing metric spaces with finite resolution.
  \end{itemize}

The paper is organized as follows. In Section \ref{sect:op-syst} we review and develop some concepts and techniques for operator spaces and operator systems, including the appropriate maps between them. This also includes a discussion on the $C^*$-envelope, first introduced by Arveson \cite{Arv69} but realized by Hamana in \cite{Ham79}. We introduce a so-called {\em propagation number} which measures how far an operator system is from the $C^*$-envelope. We show that this is an invariant under stable equivalence of operator systems.

In Section \ref{sect:truncations} we come to our main motivation: spectral truncations. For the circle we present a fully detailed analysis of the structure of the state space for the smallest non-trivial truncation (of rank 3) but which already turns out to be extremely rich.

We continue our analysis in Section \ref{sect:toeplitz} where the underlying mathematical structure of Toeplitz operator systems is unveiled and analyzed in full detail. We identify the $C^*$-envelope and compute the propagation number. The dual operator system is realized in terms of functions in the group algebra of $\Z$ with  support in a fixed interval. Because of the close relation with old factorization results of positive functions on the circle by Fej\'er and Riesz we will call this system the Fej\'er--Riesz operator system. Using the duality we reach a full understanding of the pure state spaces and extreme rays, both for the Toeplitz operator system, as well as for the Fej\'er--Riesz operator system. Moreover, the duality allows for a new proof of another old result by Carath\'eodory on Vandermonde factorizations of positive Toeplitz matrices. One interesting feature which arises in the algebraic geometry of the boundary hypersurface  of the cone of positive Toeplitz matrices is the link between the rank of the matrix and the singularity of the hypersurface (see Theorem \ref{thmstratification}). In \S \ref{sect:trunc-dist} we analyse the distance function on the truncated circle and prove in Theorem \ref{thmtruncatedd} that it is larger than the Kantorovich distance of the corresponding probability measures on the circle. Other results in this direction have been reported in \cite{ALM14,GS19a,GS19b} while further Gromov--Hausdorff convergence results will be reported elsewhere by the second author.

The relation between Toeplitz and circulant matrices is analyzed in Section \ref{sect:circulant}. We realize the finite Fourier transform in terms of a duality between operator systems. 

In the final Section \ref{sect:outlook} we explain how the framework  proposed in this paper \ie using operator systems rather than $C^*$-algebras, allows one to apply the fundamental idea of noncommutative geometry of associating a noncommutative $C^*$-algebra to a quotient space which is intractable by standard topological methods, to situations where the equivalence relation defining the quotient is no longer assumed to be transitive. The detailed development of this idea will be done in a forthcoming paper.

\subsubsection*{Acknowledgements}

We are grateful to Gilles Pisier for his useful remarks on an early version of this paper. We thank Fabien Besnard for very useful comments. 
WvS would like to thank IH\'ES for their hospitality and support during a visit in February 2020. WvS thanks Bram Mesland for numerous fruitful discussions on operator spaces. We thank Hugo Woerdeman for fruitful interaction. 
\section{Preliminaries on operator systems}
\label{sect:op-syst}

In this section we review and develop some of the general concepts and techniques on operator systems that are needed in the later sections.

\subsection{Operator spaces and operator systems}
We start by briefly recalling the theory of operator spaces and operator systems, referring to \cite{ER00,Pau02,Pis03,Bla06} for more details.

\subsubsection{Operator spaces}
Operator space theory can be considered as a ``quantum'' or noncommutative version of Banach space theory in the sense that one extends the usual norms on a vector space $E$ to so-called matrix-norms, {\em i.e.} norms on $M_n(E)$ for every $n \in \N$. Let us make this more precise ({\em cf.} \cite[Section 2.1]{ER00} for more details).
\begin{defn}
  Let $E$ be a vector space. A {\em matrix norm} $\| \cdot \|$ on $E$ is an assignment of a norm $\| \cdot \|_n$ on the matrix space $M_n(E)$ for each $n \in \N$.

  An (abstract) operator space is a linear space $E$ together with a matrix norm $\| \cdot \|$ for which
  \begin{enumerate}
    \item $E$ is complete as a normed vector space.
  \item $\| x \oplus y \|_{m+n} = \max \{ \| x \|_m, \| y \|_n\}$
    \item $ \| \alpha x \beta \|_n \leq \| \alpha \| \| x \|_m \| \beta\|$
  \end{enumerate}
  for all $x \in M_m(E), y \in M_n(E)$ and $ \alpha \in M_{nm}(\C), \beta \in M_{mn}(\C)$. 
  \end{defn}
From the first condition it actually follows that all $M_n(E)$ are complete with respect to the norm $\|\cdot \|_n$ ({\em cf.} \cite[Section 2.1]{ER00}).

Given two operator spaces $E$ and $F$ and a linear mapping $\phi:E \to F$, for each $n \in \N$ there is a corresponding linear map $\phi_n: M_n(E) \to M_n(F)$ from matrices with coefficients in $E$ to matrices with coefficients in $F$, given by
$$
\phi_n(x) = \left( \phi(x_{ij}) \right); \qquad x = (x_{ij}) \in M_n(E).
$$
To each $\phi_n$ we may associate its operator norm and the {\em completely bounded norm} is defined to be
$$
\| \phi\|_\cb := \sup \{ \| \phi_n \|: n \in \N \}.
$$
There are the following notions of morphisms between operator spaces and operator systems. 
\begin{defn}
  Let $\phi:E \to F$ be a linear map between operator spaces.
  \begin{enumerate}
\item We say that $\phi$ is {\em completely bounded} (respectively, {\em completely contractive}) if $\| \phi \|_{\cb} < \infty$ (respectively, $\| \phi \|_{\cb} \leq 1$).
  \item We say that $\phi$ is {\em  completely isometric} if each $\phi_n$ is isometric.
  \end{enumerate}
 \end{defn}

The prototypical example of an operator space is given by a closed subspace of $\B(H)$ for some Hilbert space $H$. Indeed, there is a natural inclusion $M_n(E) \subseteq M_n(\B(H)) = \B(H^n)$ which determines a norm $\| \cdot \|_n$ on $M_n(E)$. We call such an $E$ a {\em concrete operator space}. It follows from Ruan's representation theorem \cite{Rua88} ({\em cf.} \cite[Section 2.3]{ER00} that any abstract operator space is completely isometrically isomorphic to a concrete operator space.


\subsubsection{Operator systems}
We now focus our attention on operator systems, with the crucial property that they possess cones of positive elements. Again there is a notion of abstract operator system and concrete operator system, in fact both originating from the seminal work by Choi and Effros \cite{CE77}. Let us briefly sketch these notions, referring to the original \cite{CE77} and e.g. \cite[Chapter 13]{Pau02} for more details.

Let $E$ be a vector space equipped with a conjugate linear involution $x \mapsto x^*$. We call such a space a $*$-vector space and we set $E_h = \{ x \in E: x^* = x\}$. For $(x_{ij}) \in M_n(E)$ we set $(x_{ij})^* = (x_{ji}^*)$ so that $M_n(E)$ is also a $*$-vector space. In order to talk about positive elements we need a notion of ordering.

\begin{defn}\label{defn:matrix-order}
We say that a $*$-vector space is {\em matrix ordered} if
\begin{enumerate}
\item for each $n$ we are given a cone of positive elements $M_n(E)_+$ in $M_n(E)_h$,
\item $M_n(E)_+ \cap (-M_n(E)_+) = \{ 0\}$ for all $n$,
  \item for every $m,n$ and $A  \in M_{mn}(\C)$ we have that $A M_n(E)_+ A^* \subseteq M_m(E)_+$.
\end{enumerate}
\end{defn}
We will write $x \geq 0$ and call $x$ positive whenever $x  \in M_n(E)_+$. A map from $M_n(E)$ to $M_n(F)$ is then called {\em positive} if it maps $M_n(E)_+$ to $M_n(F)_+$. 
\begin{defn}
  Let $\phi:E \to F$ be a linear map between matrix-ordered $*$-vector spaces. 
    \begin{enumerate}
\item We call $\phi$ {\em completely positive} if each $\phi_n$ is positive.
\item We call $\phi$ a {\em complete order isomorphism} if $\phi$ is invertible with both $\phi$ and $\phi^{-1}$ completely positive.
  \end{enumerate}
  \end{defn}
We also say that $\phi:E \to F$ is a {\em complete order injection} if it is a complete order isomorphism onto its image.

Finally, let us address the role that $1$ plays in an operator system. Let $E$ be an ordered $*$-vector space. We call $e \in E_h$ an {\em order unit} for $E$ if for each $x \in E_h$ there is a $t>0$ such that $-t e \leq x \leq t e$. It is called an {\em Archimedean order unit} if $-t e \leq x$ for all $t >0$ implies that $x \geq 0$.
\begin{defn}
  An {\em (abstract) operator system} is given by a matrix-ordered $*$-vector space $E$ with an order unit $e$ such that for all $n$
  $$I_n =\begin{pmatrix} e &0 &  \cdots & 0 \\ 0 & \ddots & \ddots & \vdots \\ \vdots &\ddots & \ddots & 0 \\ 0 & \cdots & 0 & e \end{pmatrix}
  $$ is an Archimedean order unit for $M_n(E)$.   
  \end{defn}

There is a relation between operator systems and operator spaces and, in fact, a matrix order induces a matrix norm ({\em cf.} \cite[Proposition 13.3]{Pau02}):
$$
\| x \|_n = \inf \left\{ t: \begin{pmatrix} t I_n & x \\ x^* & t I_n \end{pmatrix} \geq 0 \right\} 
$$
for any $x \in M_n(E)$. This relationship respects the morphisms between operator systems and operator spaces, as the following result shows.

\begin{prop}
\label{prop:rel-maps-opsyst}

  Let $\phi:E \to F$ be a linear map between operator systems.
  \begin{enumerate}
  \item If $\phi$ is completely positive, then it is completely bounded with
    $$
\| \phi\|_{\cb} = \| \phi\| = \|\phi(1) \|.
$$
  \item A unital map $\phi$ is completely positive if and only it is completely contractive.

  \item A unital map $\phi$ is a complete order injection if and only if it is completely isometric.
  \end{enumerate}
 
\end{prop}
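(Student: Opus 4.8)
The backbone of all three parts is the order description of the matrix norm, so I would record that first: for self-adjoint $h\in M_n(E)_h$ one has
\[
\|h\|_n=\inf\{t>0:\ -tI_n\le h\le tI_n\}.
\]
This follows from the defining formula for $\|\cdot\|_n$ once one checks that, for self-adjoint $h$, the matrix $\begin{pmatrix} tI_n & h \\ h & tI_n\end{pmatrix}$ is positive iff $-tI_n\le h\le tI_n$: conjugating by the unitary $\tfrac1{\sqrt2}\begin{pmatrix}1&1\\1&-1\end{pmatrix}$ (legitimate by axiom (3) of a matrix order) diagonalizes it to $\diag(tI_n+h,\,tI_n-h)$. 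In particular $\|I_n\|_n=1$.

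For (1), I would first note that a completely positive $\phi$ is self-adjoint: if $h=h^*$ then $\|h\|\,1\pm h\ge0$, so $\phi(\|h\|\,1\pm h)\ge0$ is self-adjoint, forcing $\phi(h)=\phi(h)^*$. Then for self-adjoint $h$ the bound $-\|h\|\,1\le h\le\|h\|\,1$ yields, by positivity, $-\|h\|\,\phi(1)\le\phi(h)\le\|h\|\,\phi(1)$, hence $\|\phi(h)\|\le\|\phi(1)\|\,\|h\|$. To drop self-adjointness I would move to $M_2(E)$ and apply the same estimate to the self-adjoint element $\begin{pmatrix}0&x\\x^*&0\end{pmatrix}$, whose norm equals $\|x\|$: since $\phi_2$ is positive with $\phi_2(I_2)=\diag(\phi(1),\phi(1))$ and $\|\diag(\phi(1),\phi(1))\|=\|\phi(1)\|$, this gives $\|\phi(x)\|\le\|\phi(1)\|\,\|x\|$, so $\|\phi\|=\|\phi(1)\|$ (the reverse inequality being trivial as $\|1\|=1$). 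Finally, running the identical argument for each positive map $\phi_n$ and using $\|\phi_n(I_n)\|=\|\phi(1)\|$ gives $\|\phi_n\|=\|\phi(1)\|$ for every $n$, whence $\|\phi\|_{\cb}=\|\phi(1)\|=\|\phi\|$.

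For (2), the forward direction is immediate from (1): a unital completely positive map has $\|\phi\|_{\cb}=\|\phi(1)\|=1$. For the converse it suffices, applying the statement to each $\phi_n$ (again unital and contractive), to show that a unital contraction is positive. Given $x\ge0$ with $\|x\|\le1$ (by scaling) we have $0\le x\le1$, i.e. $\|2x-1\|\le1$; unitality and contractivity give $\|2\phi(x)-1\|\le1$, and if $\phi(x)$ is self-adjoint this forces $-1\le2\phi(x)-1$, i.e. $\phi(x)\ge0$. The one genuinely nontrivial point, which I regard as the main obstacle, is precisely the self-adjointness of a unital contraction, since an abstract operator system carries no product with which to run the usual $C^*$-argument. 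Here I would invoke the Choi--Effros realization to embed $E\subseteq\B(K)$ and $F\subseteq\B(H)$ concretely, write $\phi(h)=a+ib$ with $a,b$ self-adjoint for $h=h^*$, and test $\|\phi(h+it\,1)\|\le\|h+it\,1\|=\sqrt{\|h\|^2+t^2}$ against unit vectors $\xi$: the numerical-range bound $\|\phi(h+it\,1)\|^2\ge\langle a\xi,\xi\rangle^2+(\langle b\xi,\xi\rangle+t)^2$ forces $\langle b\xi,\xi\rangle=0$ for all $\xi$ upon letting $t\to\pm\infty$, whence $b=0$ and $\phi(h)=\phi(h)^*$.

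For (3), recall that a complete order injection means $\phi$ is a unital complete order isomorphism onto $\phi(E)$, a unital self-adjoint subspace of $F$ and hence itself an operator system. If $\phi$ is a complete order injection, then both $\phi$ and $\phi^{-1}:\phi(E)\to E$ are unital and completely positive, hence completely contractive by (2); the two estimates $\|\phi_n(x)\|\le\|x\|$ and $\|x\|=\|\phi_n^{-1}\phi_n(x)\|\le\|\phi_n(x)\|$ then squeeze to equality, so $\phi$ is completely isometric. Conversely, a unital complete isometry is completely contractive, hence completely positive by (2) (in particular self-adjoint, so that $\phi(E)$ really is an operator system), and its inverse on $\phi(E)$ is again a unital complete isometry, hence completely positive by the same token; thus $\phi$ is a complete order injection.
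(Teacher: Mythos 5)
Your proof is correct, and it is worth recording how it sits relative to the paper: the paper offers no argument of its own, disposing of (1) and (2) by citing Lemmas 5.1.1 and 5.1.2 of Effros--Ruan and of (3) by remarking that it follows from (2). Your proposal reconstructs, in a self-contained way, essentially the standard arguments behind those citations: the order-theoretic formula $\|h\|_n=\inf\{t>0:\ -tI_n\le h\le tI_n\}$ for self-adjoint $h$, the off-diagonal $2\times 2$ trick to pass from self-adjoint to general elements in (1), the classical numerical-range argument for self-adjointness of a unital contraction in (2), and the squeeze between $\phi$ and $\phi^{-1}$ in (3) --- the last being precisely the ``follows from (2)'' route the paper indicates, including the observation that complete positivity of $\phi$ forces $\phi(E)$ to be $*$-closed and hence an operator system. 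Two remarks. First, the identity $\left\|\left(\begin{smallmatrix}0&x\\x^*&0\end{smallmatrix}\right)\right\|_2=\|x\|$, which you assert without justification, actually needs no concrete realization: by your opening reduction it equals $\inf\left\{t:\ tI_2\pm\left(\begin{smallmatrix}0&x\\x^*&0\end{smallmatrix}\right)\ge 0\right\}$, and conjugating the minus-sign condition by the scalar matrix $\mathrm{diag}(1,-1)$ shows that both conditions coincide with positivity of $\left(\begin{smallmatrix}t1&x\\x^*&t1\end{smallmatrix}\right)$, i.e.\ with the condition in the defining formula for $\|x\|$; so the argument stays entirely within the abstract framework. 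Second, in (2) the appeal to the Choi--Effros realization is legitimate (the paper records that theorem) and is genuinely needed for your step $\|h+it\,1\|=\sqrt{\|h\|^2+t^2}$, which uses spectral theory in $\B(H)$; this is the one non-elementary ingredient of your proof. What your route buys is a proof readable without consulting the references, at the cost of length, which is evidently why the paper chose to cite instead.
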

\proof
The first two statements can be found in \cite{ER00} as Lemma 5.1.1 and Lemma 5.1.2, respectively, the third follows then directly from the second ({\em cf.} \cite[\S 1.3.3]{BM04}).
\endproof
We may summarize this by saying that there is a functor from abstract operator systems to abstract operator spaces. The obtained abstract operator spaces have naturally a unit and involution, and as discussed below in the concrete case, this additional structure is the only nuance between the two notions. In fact the role of the unit in passing from operator spaces to operator systems was
fully clarified by the work of D. Blecher and M. Neal \cite{BN11} who found the norm identities that qualify an element of an operator space as a unit of an operator system with the given underlying operator space.

\begin{defn}  We say that a subspace $E \subseteq \B(H)$ is a (concrete) {\em operator system} if it is self-adjoint in the sense that $E^* = E$ where $E^* = \{ x: x^* \in E\}$ and contains the identity $1$ in $\B(H)$.
\end{defn}

The {\em cone of positive elements} in $E$ is defined to be
$$
E_+ := E \cap \B(H)_+,
$$
and, more generally, we write for any $n \in \N$:
$$
M_n(E)_+ := M_n(E) \cap \B(H^n)_+.
$$
This turns a concrete operator system in an abstract operator system. 
In the other direction, the celebrated Choi--Effros Theorem shows that any abstract operator system is completely order isomorphic to a concrete operator system \cite{CE77}.

We also note that a unital complete order isomorphism $\phi:A \to B$ between two unital $C^*$-algebras is a $*$-isomorphism (see \cite[Corollary 5.2.3]{ER00} for a proof).

We also have the following result.
\begin{prop}\label{prop;iso-unital}
Let $\phi: E \to F$ be a completely isometric, completely positive isomorphism between unital operator systems. Then $\phi$ is unital.
  \end{prop}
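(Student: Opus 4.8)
The plan is to set $p := \phi(1_E)$ and to prove directly that $p = 1_F$, by squeezing $p$ between a lower bound $p \geq 1_F$ and the norm constraint $\|p\| = 1$. Two facts are immediate from the hypotheses: since $\phi$ is completely positive it is in particular positive, so $p = \phi(1_E) \geq 0$; and since $\phi$ is completely isometric it is isometric, so $\|p\| = \|1_E\| = 1$.

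Before extracting the lower bound I would record two preliminary observations. First, a positive map between operator systems is automatically $*$-preserving: a positive map sends $E_+$ into $F_+ \subseteq F_h$, and since every self-adjoint $x$ is a difference of positives (because $\|x\|\,1_E \pm x \geq 0$), we get $\phi(E_h) \subseteq F_h$; writing $x = a + ib$ with $a,b \in E_h$ then gives $\phi(x^*) = \phi(x)^*$, so $\phi$ restricts to a linear bijection $E_h \to F_h$. Second, I would recall the order-unit description of the norm: for self-adjoint $x$ one has $\|x\| = \inf\{ t > 0 : -t\,1 \leq x \leq t\,1\}$, so in particular $-\|x\|\,1_E \leq x \leq \|x\|\,1_E$. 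Working in a concrete realization $E \subseteq \B(H)$, $F \subseteq \B(K)$ (legitimate by Choi--Effros), this is just the spectral statement that $\sigma(x) \subseteq [-t,t]$ iff $\|x\| \leq t$.

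The heart of the argument is to apply positivity of $\phi$ to these sandwich inequalities. For $x \in E_h$ the relation $\|x\|\,1_E - x \geq 0$ gives, after applying the positive map $\phi$, that $\|x\|\,p - \phi(x) \geq 0$, and symmetrically on the other side, so $-\|x\|\,p \leq \phi(x) \leq \|x\|\,p$. Since $\phi$ is isometric, $\|\phi(x)\| = \|x\|$, so writing $y = \phi(x)$ this reads $-\|y\|\,p \leq y \leq \|y\|\,p$ for every $y \in F_h$, surjectivity of $\phi$ onto $F_h$ being exactly what lets $y$ range over all of $F_h$. Specializing to $y = 1_F$, for which $\|1_F\| = 1$, yields $1_F \leq p$.

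Finally I would combine $p \geq 1_F \geq 0$ with $\|p\| = 1$. In the concrete picture $p$ is a positive operator on $K$ with $p \geq 1$ and operator norm $1$; equivalently $\|\xi\|^2 \leq \langle p\,\xi,\xi\rangle \leq \|p\|\,\|\xi\|^2 = \|\xi\|^2$ for all $\xi$, forcing $\langle p\,\xi,\xi\rangle = \|\xi\|^2$ and hence $p = 1_F$. The one point that deserves care — and which I regard as the main obstacle — is the production of the lower bound $p \geq 1_F$: positivity of $\phi$ by itself only delivers the upper estimates $\phi(x) \leq \|x\|\,p$, so it is essential to use that $\phi$ is an \emph{isomorphism} (surjectivity) in order to find the element $x_0 = \phi^{-1}(1_F)$ whose sandwich inequality converts into the desired lower bound. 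Without surjectivity a completely positive complete isometry need not be unital, so this is precisely where the full hypothesis is consumed.
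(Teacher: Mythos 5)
Your proof is correct, but it follows a genuinely different route from the paper's. The paper disposes of the proposition in one line: the unit of a unital operator system is characterized uniquely as the largest element among the positive elements of norm $\leq 1$, and this characterization is manifestly preserved by a completely isometric complete order isomorphism (here ``isomorphism'' is read so that $\phi^{-1}$ is completely positive as well), whence $\phi(1_E)=1_F$. Your argument never invokes positivity of $\phi^{-1}$: you obtain the lower bound $1_F \leq p := \phi(1_E)$ by applying positivity of $\phi$ alone to the order-unit sandwich $-\|x_0\|\,1_E \leq x_0 \leq \|x_0\|\,1_E$ for the single element $x_0 = \phi^{-1}(1_F)$ (using surjectivity and the fact, which you correctly establish, that $\phi$ maps $E_h$ bijectively onto $F_h$), and you then squeeze with $\|p\|=1$. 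Thus your proof is more self-contained and in fact yields a formally stronger statement: any positive, isometric linear bijection between unital operator systems is automatically unital, with no order hypothesis on the inverse. What the paper's approach buys is brevity, since the transported maximality property identifies the unit instantly; what yours buys is economy of hypotheses, and it makes explicit where surjectivity enters --- your closing remark that it cannot be dropped is correct, as the completely positive complete isometry $\lambda \mapsto \mathrm{diag}(\lambda,0)$ from $\C$ into $M_2(\C)$ shows. Note finally that your last step ($p \geq 1_F$ and $\|p\|=1$ force $p=1_F$) is precisely the maximality property the paper cites, and it can be phrased abstractly via the Archimedean property ($\|p\|\leq 1$ gives $p \leq t\,1_F$ for all $t>1$, hence $p\leq 1_F$, and then $p-1_F \in F_+\cap(-F_+)=\{0\}$), so the appeal to a concrete realization, while legitimate by Choi--Effros, is not needed.
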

\proof
This follows since the unit of a unital  operator system is characterized uniquely as the largest element among positive elements of norm $\leq 1$.
\endproof

\subsubsection{States on operator systems}
\label{sect:states-opsyst}
One of the advantages of working with operator systems is that there is a notion of {\em states}, defined as positive linear functionals of norm $1$. Since for linear functionals $\phi:E \to \C$ we have $\| \phi \|_\cb = \| \phi \|$ ({\em cf.} \cite[Corollary 2.2.3]{ER00} or \cite[Proposition 3.8]{Pau02}), Proposition \ref{prop:rel-maps-opsyst} above implies that a state on $E$ can equivalently be defined to be a linear functional $\phi: E \to \C$ such that 
$$
 \| \phi \| =\phi(1) = 1.
 $$
 This is completely analogous to the case of $C^*$-algebras (see for instance \cite[Proposition II.6.2.5]{Bla06}) and, in fact, for states we automatically have complete positivity (\cite[Proposition 3.8]{Pau02}).

 In any case, we may talk about the {\em state space} $\cS(E)$ of the operator system $E$; it is a convex space which is compact for the weak $*$-topology, so that Choquet theory applies \cite{Phe01}. The {\em pure states} are then given by the extreme points in $\cS(E)$. We call the weak $*$-closure of the set of extreme points in $\cS(E)$ the {\em pure state space}; it will be denoted by $\cP(E)$.

 We also record from \cite[Section II.6.3]{Bla06} that if $E \subseteq A$ with $A$ a $C^*$-algebra, and $\phi$ is a state on $E$, then by the Hahn--Banach Theorem $\phi$ extends to a functional $\psi$ on $A$ of norm one. Since $\psi(1)=1$, $\psi$ is a state on $A$. For pure states on $E$ there is the following well-known result ({\em cf.} \cite[Proposition 2.3.24]{BR87}):. 
\begin{fact}
Let $\phi: E \to \C$ be a pure state. The set $V_\phi$ of extensions of $\phi$ to states on $A$ is a compact convex subset of $\cS(A)$ (in the weak-$*$-topology) and any extreme point of this set is a pure state on $A$. In particular, $\phi$ allows for an extension to a pure state on $A$.
  \end{fact}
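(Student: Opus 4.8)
The plan is to establish the four assertions in turn---nonemptiness, convexity, and compactness of $V_\phi$, together with purity of its extreme points---and then read off the final existence statement from the Krein--Milman theorem. Nonemptiness is already in hand: as recorded just above the statement, Hahn--Banach extends $\phi$ to a norm-one functional $\psi$ on $A$, and $\psi(1)=\phi(1)=1=\|\psi\|$ forces $\psi$ to be a state, so $V_\phi\neq\emptyset$. Convexity is immediate, since a convex combination of state extensions of $\phi$ is again a state on $A$ whose restriction to $E$ is the corresponding convex combination of copies of $\phi$, hence $\phi$ itself.

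For compactness I would introduce the restriction map $r:\cS(A)\to\cS(E)$, $r(\psi)=\psi|_E$. One first checks that $r$ is well defined, i.e. that $\psi|_E$ is genuinely a state on $E$: here $\psi|_E(1)=\psi(1)=1$ (using that $E$ contains the unit), while $\|\psi|_E\|\leq\|\psi\|=1$ and $\|\psi|_E\|\geq|\psi|_E(1)|=1$, so $\|\psi|_E\|=\psi|_E(1)=1$, which by the characterization of states as functionals with $\|\cdot\|=(\cdot)(1)=1$ exhibits $\psi|_E$ as a state. The map $r$ is weak-$*$ continuous, since for each fixed $a\in E$ the evaluation $\psi\mapsto\psi(a)$ is continuous. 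Consequently $V_\phi=r^{-1}(\{\phi\})$ is weak-$*$ closed, and being a closed subset of the weak-$*$ compact state space $\cS(A)$, it is compact.

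The core of the argument is that every extreme point of $V_\phi$ is already extreme in $\cS(A)$. I would take an extreme point $\psi$ of $V_\phi$ and suppose $\psi=t\psi_1+(1-t)\psi_2$ with $\psi_1,\psi_2\in\cS(A)$ and $0<t<1$. Restricting to $E$ gives the convex decomposition $\phi=t\,\psi_1|_E+(1-t)\,\psi_2|_E$ into states on $E$. This is exactly where purity of $\phi$ is used: since $\phi$ is by assumption an extreme point of $\cS(E)$, we must have $\psi_1|_E=\psi_2|_E=\phi$, that is, $\psi_1,\psi_2\in V_\phi$. Extremality of $\psi$ \emph{within} $V_\phi$ then forces $\psi_1=\psi_2=\psi$, so $\psi$ is a pure state on $A$. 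Finally, since $V_\phi$ is a nonempty compact convex subset of $A^*$ in the weak-$*$ topology, the Krein--Milman theorem yields an extreme point, which by the previous step is a pure state on $A$ extending $\phi$.

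The only genuinely delicate points are the well-definedness and weak-$*$ continuity of the restriction map, and the observation that any convex splitting of an extreme point of $V_\phi$ inside the larger set $\cS(A)$ is pushed back into $V_\phi$ by purity of $\phi$; once these are in place, the remainder is routine topology. I expect the purity step to be the conceptual crux, as it is the mechanism converting extremality relative to the restricted problem into genuine extremality in $\cS(A)$.
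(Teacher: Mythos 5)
Your proof is correct in every step. The paper does not actually prove this Fact---it records it as well known, citing Proposition 2.3.24 of Bratteli--Robinson---and your argument (nonemptiness via Hahn--Banach, compactness via the weak-$*$ continuous restriction map, purity of $\phi$ pushing any convex splitting of an extreme point of $V_\phi$ back into $V_\phi$, then Krein--Milman) is precisely the standard proof given in that reference, so you have simply and correctly filled in the argument the paper delegates to the literature.
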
 

\subsection{Non-unital operator systems}

For non-unital operator systems we shall use the results of Werner \cite{Wer02}
together with the following correction needed since it is wrongly stated in that
paper that the state space $\cS_n(E)$ of any operator space equipped with a matrix order is compact. This fails even for $C^*$-algebras 
such as the algebra of sequences tending to $0$ at $\infty$. However the problem
is fixed using the following fact:
\begin{lma}
\label{lma:states-compactness}
Let $\cS_n(E)$ be the state space consisting of positive linear functionals on $M_n(E)$ of norm 1. The rescaled state space $\tilde \cS_n(E):=\{\lambda \phi\mid \phi \in  \cS_n, \lambda\in [0,1]\}$
is weakly compact. Any continuous functional homogeneous of degree 1 on $\tilde S_n(E)$ reaches its maximum
on $\cS_n(E)\subseteq \tilde \cS_n(E)$.
\end{lma}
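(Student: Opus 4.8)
The plan is to reduce both assertions to two standard facts --- the Banach--Alaoglu theorem and the weak-$*$ lower semicontinuity of the dual norm --- after first identifying $\tilde\cS_n(E)$ with a concrete subset of the dual ball. First I would observe that
$$
\tilde\cS_n(E) = \{ \psi \in M_n(E)^* : \psi \geq 0, \ \|\psi\| \leq 1 \},
$$
the set of positive linear functionals on $M_n(E)$ of norm at most $1$. Indeed, every $\lambda\phi$ with $\phi \in \cS_n$ and $\lambda \in [0,1]$ is positive of norm $\lambda \leq 1$; conversely a positive $\psi$ with $\|\psi\| = \lambda \in (0,1]$ equals $\lambda$ times the state $\psi/\lambda$, while $\psi = 0$ is $0\cdot\phi$. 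This identification is the conceptual heart of the correction: allowing the scaling factor to reach $0$ (equivalently, passing to positive functionals of norm $\leq 1$) is exactly what repairs the non-compactness of $\cS_n$ illustrated by $c_0$, where a sequence of states can only escape weakly to $0$, an element excluded from $\cS_n$ but included in $\tilde\cS_n$.

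For weak-$*$ compactness I would show that $\tilde\cS_n(E)$ is a weak-$*$ closed subset of the closed unit ball of $M_n(E)^*$, which is itself weak-$*$ compact by Banach--Alaoglu. Positivity is a weak-$*$ closed condition, since for each fixed $x \in M_n(E)_+$ the evaluation $\psi \mapsto \psi(x)$ is weak-$*$ continuous, so $\{\psi : \psi(x) \geq 0\}$ is closed and the cone of positive functionals is the intersection of these sets over all $x \in M_n(E)_+$. The bound $\|\psi\| \leq 1$ is preserved because the dual norm is weak-$*$ lower semicontinuous, so $\{\psi : \|\psi\| \leq 1\}$ is weak-$*$ closed. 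Intersecting a weak-$*$ closed set with the weak-$*$ compact ball yields a weak-$*$ compact set.

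For the second assertion, let $F$ be continuous and homogeneous of degree $1$ on $\tilde\cS_n(E)$; by compactness it attains its maximum $M$ at some $\psi_0 = \lambda_0\phi_0$ with $\phi_0 \in \cS_n$ and $\lambda_0 \in [0,1]$. Homogeneity gives $F(0)=0$, hence $M \geq 0$, and $F(\psi_0) = \lambda_0 F(\phi_0)$. When $M > 0$ we have $\psi_0 \neq 0$, so $\lambda_0 > 0$ and $F(\phi_0) = M/\lambda_0 \geq M$; since $\phi_0 \in \cS_n \subseteq \tilde\cS_n(E)$ forces $F(\phi_0) \leq M$, I conclude $\lambda_0 = 1$, so the maximum is attained at the state $\phi_0$. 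The step I would flag as the main obstacle to a perfectly clean statement is the \emph{degenerate case} $M = 0$: a genuinely negative degree-one functional (for instance evaluation at a $b \in c_0$ with all coordinates strictly negative, such as $b_k = -1/k$) attains its maximum $0$ on $\tilde\cS_n(E)$ only at the origin, never on $\cS_n$. Thus the assertion is really the substantive statement that a \emph{positive} maximum is reached on $\cS_n$; this already covers the norm- and distance-type functionals (which are nonnegative and homogeneous of degree one) for which the lemma is invoked.
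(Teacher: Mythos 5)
The paper states this lemma without any proof---it is offered as a bare fact needed to correct Werner's erroneous compactness claim---so there is no argument of the authors to compare yours against, and your proposal must be judged on its own merits. Your compactness argument is correct and is the natural one: $\tilde \cS_n(E)$ is identified with the set of positive functionals in the closed unit ball of $M_n(E)^*$, positivity is cut out by the weak-$*$ closed conditions $\psi(x)\geq 0$ for $x\in M_n(E)_+$, the norm bound is weak-$*$ closed by lower semicontinuity of the dual norm, and Banach--Alaoglu finishes the job. (The identification can fail only in the vacuous case $\cS_n(E)=\emptyset$, where both sets are still compact, so nothing is lost.)

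On the second assertion you have gone beyond merely checking the claim, and your observation is correct: the case analysis for $M>0$ is sound, and your counterexample for $M=0$ is valid. Taking $E=c_0$ and $F(\psi)=\psi(b)$ with $b_k=-1/k$, the maximum of $F$ over $\tilde\cS_1(E)$ is $0$, attained only at $\psi=0$, while the supremum of $F$ over $\cS_1(E)$ is also $0$ but is never attained there; so the lemma as literally stated is false, and needs either the hypothesis that the maximum is strictly positive or that $F$ is nonnegative (in which case $M=0$ forces $F\equiv 0$ on $\tilde\cS_n(E)$ and the conclusion is trivial). This restriction is harmless for the purpose the lemma serves in the paper: it is invoked to repair line 7 of the proof of Lemma 4.5 of Werner, where the functional being maximized is the numerical radius $\psi\mapsto|\psi(x)|$, which is nonnegative and positively homogeneous of degree one. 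Your proof therefore establishes exactly the statement the paper actually uses, and the degenerate case you flag is a genuine, if minor, correction to the statement of the lemma itself rather than a gap in your argument.
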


Werner considers {\em matrix-ordered operator spaces}. These are defined to be operator spaces $E$ with a matrix order as in Definition \ref{defn:matrix-order} with the additional properties that
\begin{enumerate}
\item the cones $M_n(E)_+$ are all closed, and,
\item the involution is an isometry on $M_n(E)$.
\end{enumerate}
He then constructs ``partial unitizations''  for arbitrary matrix-ordered operator spaces after proving their uniqueness (\opcit Lemma 4.3). His construction proceeds as follows:
\begin{defn}
\label{defn:unit-opsyst}
Let $E$ be a matrix-ordered operator space and define $A_\eps = A + \eps \id_n$ for every matrix $A \in M_n(\C)$. On the space $E \oplus \C$ we define
\begin{enumerate}
\item $(x,A)^* = (x^*,A^*)$ for all $(x,A) \in M_n(E )\oplus M_n( \C)$,
\item for any $(x,A) \in M_n(E \oplus \C)_h$ we set
$$ (x,A) \geq 0 \qquad \text{iff } \quad A \geq 0 \text{ and } \phi(A_\eps^{-1/2} x A_\eps^{-1/2}) \geq -1
$$
for all $\eps>0$ and $\phi \in \cS_n(E)$.
\end{enumerate}
We denote by $E^\sharp$ the space $E \oplus \C$ equipped with this order structure. 
\end{defn}
When $E$ is the matrix-ordered operator space  associated to a possibly non-unital $C^*$-algebra $B$ this construction agrees with the traditional  adjunction of a unit $B\subset B^\sharp$ (see \opcit Corollary 4.17).
As shown in \opcit Lemmas 4.8 and 4.9, one has
\begin{prop}
\label{prop:unit-opsyst}
$(i)$~Let $E$ be a matrix-ordered operator space. The space $E^\sharp$ defined above is a (unital) operator system. \newline
$(ii)$~Let $T:E \to F$ be a completely contractive and completely positive map between matrix-ordered operator spaces $E,F$. Then the natural unital extension $T^\sharp: E^\sharp \to F^\sharp$ is completely positive.
\end{prop}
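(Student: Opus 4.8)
The plan is to verify directly that $E^\sharp$ satisfies the matrix-ordering axioms of Definition \ref{defn:matrix-order} together with the Archimedean order unit condition for part $(i)$, and for part $(ii)$ to reduce complete positivity of $T^\sharp$ to the single inequality defining the cone. Several axioms in $(i)$ become routine once one records the rescaling identities $(tA)_\eps = t\,A_{\eps/t}$ and $(A+t\id_n)_\eps = A_{t+\eps}$. Invariance of $M_n(E^\sharp)_+$ under positive scalars and the compatibility $B M_n(E^\sharp)_+ B^* \subseteq M_m(E^\sharp)_+$ for $B\in M_{mn}(\C)$ follow because conjugating a state of $E$ by a scalar matrix yields a positive functional which, after normalisation, lies in the rescaled state space $\tilde\cS_n(E)$, and because the defining condition survives passage to $\tilde\cS_n(E)$ by the elementary estimate $\lambda\phi(\,\cdot\,)\geq -\lambda\geq -1$ for $\lambda\in[0,1]$. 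The element $I_n=(0,\id_n)$ is an order unit by taking $t\geq\|A\|+\|x\|_n$ (using $\|\phi\|=1$), and the Archimedean property is essentially built into the $\forall\,\eps$ quantifier: since the positivity of $(x,A+t\id_n)$ reads, via $(A+t\id_n)_\eps=A_{t+\eps}$, as $\phi(A_{t+\eps}^{-1/2}xA_{t+\eps}^{-1/2})\geq -1$ for all $\eps>0$, demanding it for every $t>0$ returns precisely the condition defining $(x,A)\geq 0$ (with $A\geq 0$ obtained by letting $t\to 0$ and using closedness of the cone). Properness reduces to the case $A=0$, where the hypothesis forces $\phi(x)=0$ for all $\phi\in\cS_n(E)$, hence $x=0$ by the separation property of states in a matrix-ordered operator space.

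The one substantial point in $(i)$ is convexity of the cone: given $(x,A),(y,B)\geq 0$ I must show $(x+y,A+B)\geq 0$. Realising $E$ concretely as $M_n(E)\subseteq\B(H^n)$, fixing $\eps>0$, writing $C=A+B$ and choosing $\delta=\eps/2$, the scalar matrices $S_x=A_\delta^{1/2}C_\eps^{-1/2}$ and $S_y=B_\delta^{1/2}C_\eps^{-1/2}$ satisfy the crucial identity $S_x^*S_x+S_y^*S_y=C_\eps^{-1/2}(A+B+2\delta)C_\eps^{-1/2}=I_n$, together with the factorisation $C_\eps^{-1/2}xC_\eps^{-1/2}=S_x^*(A_\delta^{-1/2}xA_\delta^{-1/2})S_x$ and the analogue for $y$. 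I would then extend the state $\phi\in\cS_n(E)$ to a state $\tilde\phi$ of the ambient $\B(H^n)$ (a positive norm-one functional on $M_n(E)$ extends to a state of the enveloping $C^*$-algebra). Using positivity of $(x,A)$ and $(y,B)$ at the parameter $\delta$, each term $\phi(C_\eps^{-1/2}xC_\eps^{-1/2})=(\tilde\phi\circ\mathrm{Ad}_{S_x})(A_\delta^{-1/2}xA_\delta^{-1/2})$ is bounded below by minus the norm of the positive functional $\tilde\phi\circ\mathrm{Ad}_{S_x}$ restricted to $M_n(E)$, which is at most $\tilde\phi(S_x^*S_x)$; summing the two contributions and invoking $S_x^*S_x+S_y^*S_y=I_n$ with $\tilde\phi(I_n)=1$ gives $\phi(C_\eps^{-1/2}(x+y)C_\eps^{-1/2})\geq -1$, as required.

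For $(ii)$ the extension is $T^\sharp(x,A)=(T(x),A)$, which is manifestly unital, and it remains to show each $(T^\sharp)_n$ preserves positivity. Given $(x,A)\geq 0$ the scalar part $A\geq 0$ is unchanged, so I must check that $\psi\big(A_\eps^{-1/2}T_n(x)A_\eps^{-1/2}\big)\geq -1$ for every $\eps>0$ and $\psi\in\cS_n(F)$. As $A_\eps^{-1/2}$ is a scalar matrix and $T_n$, applied entrywise, commutes with conjugation by scalar matrices, this equals $(\psi\circ T_n)\big(A_\eps^{-1/2}xA_\eps^{-1/2}\big)$. Now $\psi\circ T_n$ is positive (a composition of positive maps) with $\|\psi\circ T_n\|\leq\|\psi\|\,\|T_n\|\leq 1$, since $T$ is completely contractive; hence $\psi\circ T_n=\lambda\rho$ for some $\rho\in\cS_n(E)$ and $\lambda\in[0,1]$ (or it is $0$). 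The hypothesis $(x,A)\geq 0$ gives $\rho\big(A_\eps^{-1/2}xA_\eps^{-1/2}\big)\geq -1$, whence the expression equals $\lambda\,\rho\big(A_\eps^{-1/2}xA_\eps^{-1/2}\big)\geq -\lambda\geq -1$.

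The main obstacle, and precisely the reason the correction recorded in Lemma \ref{lma:states-compactness} is indispensable, is that both the convexity step in $(i)$ and the entire argument in $(ii)$ manufacture positive functionals (the restriction of $\tilde\phi\circ\mathrm{Ad}_{S_x}$ to $M_n(E)$, and $\psi\circ T_n$) whose norm is only known to be $\leq 1$, so that they lie in $\tilde\cS_n(E)$ rather than in $\cS_n(E)$. One must therefore know that the defining condition, although quantified over $\cS_n(E)$, persists over the larger weakly compact set $\tilde\cS_n(E)$; this is exactly the scaling inequality $\lambda\phi(\,\cdot\,)\geq -\lambda\geq -1$, and it is the point at which Werner's erroneous compactness claim for $\cS_n(E)$ would derail the argument while the weakly compact $\tilde\cS_n(E)$ of Lemma \ref{lma:states-compactness} repairs it.
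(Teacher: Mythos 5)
Your part $(ii)$ is correct, and the routine verifications in part $(i)$ (order unit, Archimedean property via $(A+t\id_n)_\eps=A_{t+\eps}$, properness via the fact that $\nu_E$ is a norm, and the conjugation axiom, which works once one also chooses $\delta\leq\eps/\Vert B\Vert^2$ in the factorization) are sound in outline; note that the paper itself offers no proof of this Proposition, deferring to Werner's Lemmas 4.8 and 4.9, so you are attempting more than the paper records. The genuine gap is in the step you yourself single out as the substantial one, additivity of the cone. Your argument hinges on the parenthetical claim that a positive norm-one functional on $M_n(E)$ extends to a state of the ambient $\B(H^n)$, and this is false. First, at this stage of the theory no order-compatible concrete realization of the abstract matrix-ordered operator space $E$ is available --- producing one is exactly the content of Werner's theorem, which depends on this Proposition --- so any embedding $M_n(E)\subseteq\B(H^n)$ you may use is a Ruan (norm-only) embedding, and positivity of $\phi$ with respect to the abstract cone bears no relation to $\B(H^n)_+$. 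Second, even for concrete self-adjoint subspaces with inherited order the claim fails: take $E=\C v\subset\C^2\subset M_2(\C)$ with $v=\mathrm{diag}(1,-2)$, so that all matrix cones are $\{0\}$ (trivial cones are expressly allowed, cf.\ the paper's remark on recovering Ruan's theorem), and $\phi(tv):=2t$. Then $\phi$ is positive of norm one, but every state $\omega$ of $M_2(\C)$ satisfies $\omega(v)\in[-2,1]$, so $\phi$ has no state extension. The failure is not cosmetic: the whole point of your matrices $S_x,S_y$ with $S_x^*S_x+S_y^*S_y=\id_n$ is the \emph{joint} bound $\mu_x+\mu_y\leq 1$, which cannot be recovered from the individual estimates $\Vert\phi\circ\mathrm{Ad}_{S_x}\Vert\leq\Vert S_x\Vert^2$, since $\Vert S_x\Vert^2+\Vert S_y\Vert^2$ can be as large as $2$.

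The gap is repairable while keeping your $S_x,S_y$, the factorization, and the rescaling trick intact; note also that positivity of $\phi\circ\mathrm{Ad}_{S_x}$ on $M_n(E)$ never required any extension, since $S_x$ is scalar and the matrix-order axioms of $E$ already give it. (a) Purely operator-space route: the functional $\Phi(z,w):=\phi(S_x^*zS_x+S_y^*wS_y)$ on $M_n(E)\oplus M_n(E)$, viewed as block-diagonal matrices in $M_{2n}(E)$, has norm $\leq 1$, because $S_x^*zS_x+S_y^*wS_y=\alpha(z\oplus w)\alpha^*$ with $\alpha=(S_x^*\ S_y^*)\in M_{n,2n}(\C)$ of norm one and $\Vert z\oplus w\Vert=\max(\Vert z\Vert,\Vert w\Vert)$ by Ruan's axioms; since the dual of an $\ell^\infty$-direct sum is the $\ell^1$-direct sum, this gives exactly $\Vert\phi\circ\mathrm{Ad}_{S_x}\Vert+\Vert\phi\circ\mathrm{Ad}_{S_y}\Vert\leq 1$. (b) Alternatively, extend $\phi$ by Hahn--Banach to a norm-one functional $\tilde\phi$ on $\B(H^n)$ \emph{without} any positivity claim, represent $\tilde\phi=\langle\xi,\pi(\cdot)\eta\rangle$ with $\pi$ a representation and $\Vert\xi\Vert,\Vert\eta\Vert\leq 1$, and apply Cauchy--Schwarz to get $\mu_x\leq\tfrac12\bigl(\langle\xi,\pi(S_x^*S_x)\xi\rangle+\langle\eta,\pi(S_x^*S_x)\eta\rangle\bigr)$ and its analogue for $y$, whence $\mu_x+\mu_y\leq\tfrac12(\Vert\xi\Vert^2+\Vert\eta\Vert^2)\leq 1$. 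With either substitute, your proof --- including your correct emphasis on the role of the rescaled state space $\tilde\cS_n(E)$ of Lemma \ref{lma:states-compactness} --- goes through.
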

For $x\in E$ one lets $\nu^0_E(x):= \sup \{ |\phi(x)|: \phi \in \cS(E)\}$.
The number $\nu^0_E(x)$ is the so-called {\em numerical radius} of $x \in E$; it also makes sense more generally for $x \in M_n(E)$. Moreover, it allows to introduce a norm on $M_n(E)$ as follows (\cite[Lemma 3.1]{Wer02}). 
\begin{lma}
\label{lma:num-radius}
Let $\nu^0_E$ be the numerical radius. 
\newline
$(i)$~The map $\nu_E: x \mapsto \nu_E^0 \left(\begin{smallmatrix} 0 & x \\ x^* & 0 \end{smallmatrix}\right)$ defines a norm on $M_n(E)$ and we have $\nu_E(\cdot) \leq \| \cdot \|$. \newline 
$(ii)$~The inclusion map $\imath_E: E \to E^\sharp$ is completely contractive and completely positive.\newline
$(iii)$~The inclusion $\imath_E$ is a complete isometry when considered as a map from $(E,\nu_E)$ to $E^\sharp$.
\end{lma}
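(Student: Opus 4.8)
The plan is to obtain parts (ii) and (iii) as consequences of a single computation of the order-unit norm on $E^\sharp$, and then read off the remaining norm axioms of part (i). Throughout I write $\tilde x = \left(\begin{smallmatrix} 0 & x \\ x^* & 0 \end{smallmatrix}\right) \in M_{2n}(E)$ for $x \in M_n(E)$, so that by definition $\nu_E(x) = \nu^0_E(\tilde x) = \sup\{|\phi(\tilde x)| : \phi \in \cS_{2n}(E)\}$, noting that $\tilde x$ is self-adjoint and hence $\phi(\tilde x)\in\R$ for every state $\phi$.

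First I would record two symmetry features of the state space that drive everything. For any unitary $u \in M_{2n}(\C)$ the map $y \mapsto u y u^*$ preserves the cone $M_{2n}(E)_+$ by the third axiom of Definition \ref{defn:matrix-order} and fixes $I_{2n}$, so precomposition with it permutes $\cS_{2n}(E)$. Taking $u = \mathrm{diag}(e^{i\theta/2} I_n, e^{-i\theta/2} I_n)$ sends $\tilde x$ to $\widetilde{e^{i\theta} x}$, which yields $\nu_E(\lambda x) = |\lambda|\,\nu_E(x)$; taking $u = \mathrm{diag}(I_n, -I_n)$ sends $\tilde x$ to $-\tilde x$, so the real set $\{\phi(\tilde x) : \phi\in\cS_{2n}(E)\}$ is symmetric about $0$ and hence $\inf_\phi \phi(\tilde x) = -\nu_E(x)$. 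Subadditivity of $\nu_E$ is immediate from the triangle inequality for the supremum, and the estimate $\nu_E(x) \le \|\tilde x\|_{2n} = \|x\|_n$ follows from states having norm one together with the identity $\|\tilde x\|_{2n} = \|x\|_n$. This already gives every norm axiom of part (i) except definiteness.

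The heart of the argument is the identity $\|\imath_{E,n}(x)\|_{E^\sharp} = \nu_E(x)$, which is simultaneously part (iii) and the key input to (i) and (ii). I would compute the order-unit norm of $\imath_{E,n}(x) = (x,0)$ through the Paulsen formula $\|y\|_n = \inf\{t : \left(\begin{smallmatrix} tI_n & y \\ y^* & tI_n \end{smallmatrix}\right) \ge 0\}$ recalled above, which is legitimate since $E^\sharp$ is a genuine operator system by Proposition \ref{prop:unit-opsyst}. In $E \oplus \C$ coordinates the test element is $(\tilde x, tI_{2n})$, and Werner's positivity criterion in Definition \ref{defn:unit-opsyst} unwinds to $t \ge 0$ together with $(t+\eps)^{-1}\phi(\tilde x) \ge -1$ for all $\eps > 0$ and all $\phi \in \cS_{2n}(E)$; letting $\eps \downarrow 0$ this reads $\inf_\phi \phi(\tilde x) \ge -t$, i.e. $t \ge \nu_E(x)$ by the sign-flip symmetry above. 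Taking the infimum over admissible $t$ gives $\|\imath_{E,n}(x)\|_{E^\sharp} = \nu_E(x)$, which is precisely the assertion that $\imath_E : (E,\nu_E) \to E^\sharp$ is completely isometric. Definiteness in part (i) then comes for free: $\nu_E(x) = \|(x,0)\|_{E^\sharp} = 0$ forces $(x,0)=0$ in the operator system $E^\sharp$, hence $x=0$.

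For part (ii) the complete contractivity is immediate from the identity just proved and the bound of part (i): $\|\imath_{E,n}(x)\|_{E^\sharp} = \nu_E(x) \le \|x\|_n$. Complete positivity is a direct check against Definition \ref{defn:unit-opsyst}: for $x \in M_n(E)_+$ one has $\imath_{E,n}(x) = (x,0)$ with scalar part $0 \ge 0$ and $\phi((\eps I_n)^{-1/2} x (\eps I_n)^{-1/2}) = \eps^{-1}\phi(x) \ge 0 \ge -1$ for every state $\phi$, so $(x,0) \ge 0$. I expect the only genuine obstacle to be the central computation, namely correctly pushing Werner's abstract positivity condition through the Paulsen formula and spotting the sign-flip symmetry identifying $\inf_\phi \phi(\tilde x)$ with $-\nu_E(x)$; the compactness supplied by Lemma \ref{lma:states-compactness} can be invoked if one wants the relevant suprema to be attained and the $\eps\downarrow 0$ passage to be clean. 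The one supporting fact I would want to verify carefully is $\|\tilde x\|_{2n} = \|x\|_n$, which rests on the isometric-involution hypothesis built into the definition of a matrix-ordered operator space.
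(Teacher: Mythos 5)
Your proposal cannot be compared to an in-paper argument, because the paper does not prove this lemma at all: it quotes it from Werner \cite{Wer02} (Lemma 3.1 for part $(i)$, Section 4 for the statements about $E^\sharp$). Judged on its own, your architecture is sound and is essentially the right reconstruction: deriving everything from the single identity $\|\imath_{E,n}(x)\|_{E^\sharp}=\nu_E(x)$, obtained by feeding Werner's positivity criterion (Definition \ref{defn:unit-opsyst}) into the order-norm formula, is the correct mechanism. The reduction of the test matrix to $(\tilde x, tI_{2n})$, the $\eps\downarrow 0$ passage, the complete positivity check $\eps^{-1}\phi(x)\geq 0$, and the identity $\|\tilde x\|_{2n}=\|x\|_n$ (via scalar permutation unitaries and Ruan's axioms, as you indicate) are all correct. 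Using Proposition \ref{prop:unit-opsyst} to get definiteness of $\nu_E$ is legitimate within this paper, since that proposition is quoted before the lemma; be aware, though, that in Werner's own development part $(i)$ comes first and feeds into the construction of $E^\sharp$ (definiteness there follows directly from Hahn--Banach separation, using that the cones are closed and proper), so your route inverts the original logical order.

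The one step that genuinely needs repair is the parenthetical claim that $\tilde x$ being self-adjoint forces $\phi(\tilde x)\in\R$ for every state $\phi$. In the generality set up here this is a non sequitur: a state is only required to be positive (nonnegative on the cone) and of norm one, and the cones are allowed not to span the self-adjoint part -- for the trivial matrix order $M_n(E)_+=\{0\}$, which the paper explicitly includes, every norm-one functional is a state, and $i\psi$ with $\psi$ hermitian takes non-real values on self-adjoint elements. Without realness, your sign-flip identity $\inf_\phi\phi(\tilde x)=-\nu_E(x)$ is meaningless, and indeed the inequality $\phi(A_\eps^{-1/2}xA_\eps^{-1/2})\geq -1$ in Definition \ref{defn:unit-opsyst} itself does not parse. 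The repair is to restrict to hermitian states, which is both Werner's convention and harmless: given a positive $\phi$ with $\|\phi\|\leq 1$, pick a unimodular $\lambda$ with $\mathrm{Re}\,\lambda\geq 0$ and $\lambda\phi(\tilde x)\in\R$; then $h:=\tfrac12\bigl(\lambda\phi+(\lambda\phi)^*\bigr)$, where $\psi^*(y):=\overline{\psi(y^*)}$, is hermitian, positive (on the cone $h(c)=\mathrm{Re}(\lambda)\,\phi(c)\geq 0$), of norm $\leq 1$, and satisfies $|h(\tilde x)|=|\phi(\tilde x)|$. So the suprema over all states and over hermitian states coincide on self-adjoint elements, and your computation then goes through verbatim with hermitian states throughout. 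A second, minor, slip with the same root cause: you justify that conjugation by a scalar unitary $u$ permutes $\cS_{2n}(E)$ because it ``fixes $I_{2n}$'', but $M_{2n}(E)$ has no unit in this non-unital setting; the correct reason is that $y\mapsto uyu^*$ is a surjective complete isometry (Ruan's third axiom applied to $u$ and $u^*$) which preserves the cones (third axiom of Definition \ref{defn:matrix-order}) and commutes with the involution, hence preserves positivity, hermitianity and the norm of functionals.
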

 In fact, in \cite[Lemma 4.5]{Wer02} it is shown that if the embedding $E \to E^\sharp$ is completely positive, then it is completely contractive if and only if it is a complete isometry between $(E,\nu_E)$ and $E^\sharp$. It is here that Lemma \ref{lma:states-compactness} should be used to conclude (in line 7 of the proof of \cite[Lemma 4.5]{Wer02}) that the supremum $\nu_E^0$ is actually attained by a $\phi \in \cS_n(E)$.

\begin{corl}
Let $T:E \to F$ be a completely isometric, complete order isomorphism. Then $T^\sharp: E^\sharp \to F^\sharp$ is a (unital) complete order isomorphism. 
\end{corl}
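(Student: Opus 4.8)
The plan is to reduce everything to the functoriality of Werner's unitization combined with Proposition \ref{prop:unit-opsyst}$(ii)$. First I would check that the hypotheses on $T$ supply exactly the input that proposition requires. Since $T$ is completely isometric, each $T_n$ is isometric, so in particular $\|T_n\|\le 1$ and $T$ is completely contractive; and since $T$ is a complete order isomorphism it is, by definition, completely positive. Hence Proposition \ref{prop:unit-opsyst}$(ii)$ applies directly and the natural unital extension $T^\sharp:E^\sharp \to F^\sharp$ is completely positive. It is unital by construction.

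Next I would run the identical argument on the inverse. The map $T^{-1}:F \to E$ is again completely isometric, because the inverse of a completely isometric isomorphism is completely isometric (each $(T^{-1})_n = (T_n)^{-1}$ is isometric), and it is a complete order isomorphism, since the notion of complete order isomorphism is by definition symmetric in $T$ and $T^{-1}$. Thus $T^{-1}$ is completely contractive and completely positive, and a second application of Proposition \ref{prop:unit-opsyst}$(ii)$ gives that $(T^{-1})^\sharp:F^\sharp \to E^\sharp$ is completely positive.

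It then remains to identify $(T^{-1})^\sharp$ with the inverse of $T^\sharp$, and this is the one step deserving care. On the underlying vector space $E^\sharp = E \oplus \C$ the extension acts by $T^\sharp(x,\lambda)=(Tx,\lambda)$, fixing the scalar component, and similarly for $T^{-1}$; from this explicit form one reads off $(T^{-1})^\sharp \circ T^\sharp = \mathrm{id}_{E^\sharp}$ and $T^\sharp \circ (T^{-1})^\sharp = \mathrm{id}_{F^\sharp}$, so $T^\sharp$ is invertible with $(T^\sharp)^{-1} = (T^{-1})^\sharp$. Since both $T^\sharp$ and its inverse are now known to be completely positive, $T^\sharp$ is a complete order isomorphism, and it is unital as noted above. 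The main (and only nontrivial) obstacle is the identification $(T^\sharp)^{-1}=(T^{-1})^\sharp$, which rests on the fact that Werner's construction is a direct-sum extension leaving the adjoined scalar untouched; everything else is a formal consequence of Proposition \ref{prop:unit-opsyst}$(ii)$ applied to $T$ and to $T^{-1}$.
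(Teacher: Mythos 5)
Your proof is correct and is precisely the argument the paper intends: the corollary is stated there without proof as an immediate consequence of Proposition \ref{prop:unit-opsyst}$(ii)$, applied to both $T$ and $T^{-1}$ (each of which is completely contractive and completely positive under the hypotheses), combined with the functoriality of Werner's unitization. Your explicit verification that $(T^\sharp)^{-1}=(T^{-1})^\sharp$, via the formula $T^\sharp(x,\lambda)=(Tx,\lambda)$ fixing the adjoined scalar, correctly fills in the one detail the paper leaves implicit.
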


 In line with the result in \cite{Wer02} we will make the following definition.
 \begin{defn}
A {\em non-unital operator system} is given by a matrix-ordered operator space for which the norm $\nu_E(\cdot)$ coincides with the norm $\| \cdot \|$. 
  
   \end{defn}
For a non-unital operator system the inclusion $\imath_E : E \to E^\sharp$ is a complete isometry, and $E^\sharp$ will be called a {\em unitization} of $E$. 
 
\medskip

 The main result of \cite{Wer02} is then an analogue of the Choi--Effros Theorem for non-unital operator systems. Indeed, since $E^\sharp$ is an (abstract) unital operator system it can be realized as a concrete operator system in $\B(H)$ for some Hilbert space $H$. It then follows that if $\nu_E(\cdot)$ and $\|\cdot \|$ coincide on $M_n(E)$, then we can also realize $E$ via the (completely isometric) inclusion map $E \to E^\sharp$ as a (concrete) non-unital operator system in $\B(H)$ (see \cite[Corollary 4.11]{Wer02}).

Note that in the definition of a matrix-ordered operator space there is no requirement of non-triviality of the positive cone. In particular starting with an operator space $E$ (with isometric involution) one can consider the trivial matrix order $M_n(E)_+=\{0\}$ for all $n$. The norm $\nu_E$ is then the same as the original norm on $E$ since positivity of functionals is automatic. But then the Choi--Effros Theorem applied to the partial unitization $E^\sharp$ implies Ruan's result for operator spaces.

In all the examples of matrix-ordered operator spaces considered in this paper, the following non-triviality condition holds: the cones $M_n(E)_+$ span $M_n(E)$, or more precisely (and for all $n$)
\begin{equation}\label{spanning}
M_n(E)_+-M_n(E)_+=\{x\in M_n(E)\mid x=x^*\}.
\end{equation}
To understand the meaning of this condition in terms of the partial unitization $E^\sharp$ note the following 
\begin{fact} Let $A$ be a unital $C^*$-algebra and $\phi$ a pure state on $A$. Then, with the notations $\ker(\phi)_+:=\ker(\phi)\cap A_+$ and $\ker(\phi)_{sa}:=\ker(\phi)\cap A_{sa}$ one has
$$
\phi(xy)=\phi(x)\phi(y)\qqq x,y\in A\iff \ker(\phi)_+-\ker(\phi)_+=\ker(\phi)_{sa}.
$$	
\end{fact}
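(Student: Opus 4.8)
The plan is to route everything through the single scalar identity $\phi(a^*a)=|\phi(a)|^2$ for all $a\in A$, organised around the \emph{left kernel} $N_\phi:=\{a\in A:\phi(a^*a)=0\}$. Writing $(\pi_\phi,H_\phi,\xi_\phi)$ for the GNS data of $\phi$, the computation $\|\pi_\phi(a)\xi_\phi-\phi(a)\xi_\phi\|^2=\phi(a^*a)-|\phi(a)|^2$ shows that the identity above holds for all $a$ iff $\pi_\phi(a)\xi_\phi=\phi(a)\xi_\phi$ for all $a$, hence (by cyclicity of $\xi_\phi$) iff $H_\phi=\C\xi_\phi$, which is exactly multiplicativity $\phi(xy)=\phi(x)\phi(y)$. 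Since Cauchy--Schwarz gives $|\phi(a)|^2\le\phi(a^*a)$ and hence $N_\phi\subseteq\ker(\phi)$, and since $\phi(a^*a)-|\phi(a)|^2=\phi(b^*b)$ for $b:=a-\phi(a)\cdot 1\in\ker(\phi)$, the whole statement reduces to
\[
\phi\text{ is multiplicative}\iff \ker(\phi)\subseteq N_\phi\iff N_\phi=\ker(\phi).
\]

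For the forward direction I would argue that if $\phi$ is multiplicative then $\ker(\phi)$ is a closed, self-adjoint, two-sided ideal: closed by continuity of $\phi$, self-adjoint because $\phi(a^*)=\overline{\phi(a)}$, and an ideal because $\phi(xa)=\phi(x)\phi(a)$. Thus $\ker(\phi)$ is a (possibly non-unital) $C^*$-subalgebra of $A$. Any $a\in\ker(\phi)_{sa}$ then has its usual decomposition $a=a_+-a_-$ into positive and negative parts; as $a_\pm$ lie in the non-unital $C^*$-algebra generated by $a$, they lie in $\ker(\phi)$ and are positive, so $a_\pm\in\ker(\phi)_+$ and $a\in\ker(\phi)_+-\ker(\phi)_+$. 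The reverse inclusion being immediate, the spanning condition follows.

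The reverse direction carries the real content. First I would verify $\ker(\phi)_+\subseteq N_\phi$: for $b\ge 0$ with $\phi(b)=0$, the commuting positive elements $b$ and $\|b\|\cdot 1-b$ give $b^2\le\|b\|\,b$, whence $0\le\phi(b^2)\le\|b\|\phi(b)=0$, so $\phi(b^*b)=\phi(b^2)=0$ and $b\in N_\phi$. Now I exploit that $N_\phi$ is a left ideal, hence in particular a \emph{linear subspace}: the hypothesis $\ker(\phi)_{sa}=\ker(\phi)_+-\ker(\phi)_+$ writes every self-adjoint element of $\ker(\phi)$ as a difference of two members of $\ker(\phi)_+\subseteq N_\phi$, placing it in $N_\phi$; and for an arbitrary $b\in\ker(\phi)$ the real and imaginary parts both lie in $\ker(\phi)_{sa}$ (because $\phi(b)=0$ forces $\mathrm{Re}\,\phi(b)=\mathrm{Im}\,\phi(b)=0$), so $b\in N_\phi$ as well. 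Hence $\ker(\phi)\subseteq N_\phi$, so $N_\phi=\ker(\phi)$, and multiplicativity follows from the first paragraph.

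The main (indeed only nontrivial) point is the combination $\ker(\phi)_+\subseteq N_\phi$ together with the observation that the spanning hypothesis is precisely what propagates membership in $N_\phi$ from the \emph{cone} $\ker(\phi)_+$ to the whole \emph{subspace} $\ker(\phi)$, using that $N_\phi$ is closed under linear combinations. I would also remark that purity of $\phi$ is never used: the equivalence holds for an arbitrary state, and purity is retained only because it is the pertinent hypothesis for the intended application, where the right-hand condition is exactly the $n=1$ instance of the spanning condition \eqref{spanning} for the operator space $\ker(\phi)$.
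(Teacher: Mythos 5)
Your proof is correct, and while it is built from the same raw materials as the paper's, it is organized along a genuinely different route. The paper proves the implication (spanning $\Rightarrow$ multiplicative) contrapositively through the GNS representation: if $\phi$ is not multiplicative then $\dim H_\phi>1$, so one can pick a self-adjoint $a$ with $\phi(a)=0$ but $\pi_\phi(a)\xi_\phi\neq 0$, and since every $b\in\ker(\phi)_+$ satisfies $\pi_\phi(b)\xi_\phi=0$ (via $\Vert\pi_\phi(b^{1/2})\xi_\phi\Vert^2=\phi(b)=0$), such an $a$ cannot lie in $\ker(\phi)_+-\ker(\phi)_+$. You instead argue directly, taking as hub the left kernel $N_\phi$ and the criterion that $\phi$ is multiplicative iff $\ker(\phi)\subseteq N_\phi$: the inclusion $\ker(\phi)_+\subseteq N_\phi$ (which you obtain algebraically from $b^2\le\Vert b\Vert\, b$ rather than from the GNS vector) combines with linearity of $N_\phi$, the spanning hypothesis, and the splitting into real and imaginary parts to give $\ker(\phi)\subseteq N_\phi$ at once. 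Your forward direction ($a=a_+-a_-$ inside the closed ideal $\ker(\phi)$) and the paper's ($x=|x|-(|x|-x)$ with $\phi(|x|)=0$) are interchangeable functional-calculus decompositions. What your organization buys is precisely your closing remark, which is correct and worth emphasizing: purity is never used, so the equivalence holds for every state of $A$ (consistently so, since a multiplicative functional is automatically pure, and for a non-pure state both sides of the equivalence simply fail). The paper's argument, read closely, also uses only cyclicity and $\dim H_\phi>1$ rather than irreducibility, but your formulation makes this extra generality explicit.
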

\proof If $\phi$ is a morphism $\phi:A\to \C$ and $x=x^*\in A$ fulfills $\phi(x)=0$ then with $\vert x\vert =\sqrt{x^*x}$ one has $\phi(\vert x\vert)=0$ and $x=\vert x\vert-y$ where also $y\in \ker(\phi)_+$. Conversely if the irreducible GNS representation $(H_\phi,\pi_\phi,\xi_\phi)$ is of dimension $>1$ one finds a self-adjoint element $a\in A$ such that $\pi_\phi(a)\xi_\phi\neq 0$ but $\pi_\phi(a)\xi_\phi\perp \xi_\phi$. Then one has $\phi(a)=0$ but $a\notin \ker(\phi)_+-\ker(\phi)_+$
since elements $b\in \ker(\phi)_+$ all fulfill  $\pi_\phi(b)\xi_\phi=0$ since $\Vert\pi_\phi(b^{1/2})\xi_\phi\Vert^2=\phi(b)=0 \Rightarrow \pi_\phi(b)\xi_\phi=0$.\endproof 
Thus one can define in general a {\em character} of a unital operator system as a pure state $\phi$ such that $\ker(\phi)_+-\ker(\phi)_+=\ker(\phi)_{sa}$. Then condition \eqref{spanning} on a matrix-ordered operator space means that the canonical state on the partial unitization $E^\sharp$ is a character.

\subsection{Duals of operator systems}
Already in \cite{CE77} Choi and Effros analyzed the notion of duality for operator systems, which we now briefly discuss here. See also \cite{FP12} for a more recent perspective. In general, duals of operator systems are only matrix-order vector spaces, but in the finite-dimensional case also an Archimedean order unit can be constructed. Since our main interest in this type of duality is for finite-dimensional Toeplitz matrices ({\em cf.} Section \ref{sect:duality-toeplitz} below) we will here restrict to this case.

So let $E$ be a finite-dimensional (abstract) operator system $E$. We let $E^d$ be the dual vector space of $E$ and let $M_n(E^d)$ be paired component-wise with $M_n(E)$:
$$
\phi(x) = (\phi_{ij}) (x_{ij}) = \sum_{ij} \phi_{ij}(x_{ij})
$$
where $\phi= (\phi_{ij}) \in M_n(E^d)$ and $x= (x_{ij}) \in M_n(E)$. We define a matrix order on $E^d$ by
$$
M_n(E^d)_+ = \left \{ \phi \in M_n(E^d): \phi(x) \geq 0 \text{ for all } x \in M_n(E)_+ \right\}.
$$
One quickly checks that this is a matrix-order, since for any $A \in M_{mn}(\C)$ we have 
$$
(A \phi A^*) (x) = \phi (A^t x (A^t)^*)
$$
so that $A \phi A^* \in M_n(E^d)_+$ if $\phi$ is, because $M_m(E)_+$ is closed under conjugation by a scalar-valued matrix.

Let us now consider the existence of an Archimedean order unit. The notion of faithful state makes sense for any operator system: a state $\phi$ is {\em faithful} if $\phi(x)>0$ for $x>0$.

\begin{prop}[Choi--Effros] let $E$ be a finite-dimensional (abstract) operator system $E$.  Let $E^d$ be the dual vector space of $E$, equipped with the above matrix-ordering. A  state $\chi$ on $E$ defines an order unit on $E^d$ if and only if it is faithful. Then $\text{diag} (\chi, \ldots, \chi)$ is an Archimedean order unit on $M_n(E^d)$.  Faithful states exist and endow  $E^d$ with the structure of an operator system. 
\end{prop}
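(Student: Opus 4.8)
The plan is to establish the four assertions of the proposition in sequence, reducing the matrix-level statements to the scalar level wherever possible and then bootstrapping back up. Throughout I work in the finite-dimensional setting, so that $E$ is identified with $\C^N$ as a vector space, all cones are closed, and weak-$*$ compactness of bounded sets is automatic; this is what makes the dual tractable.

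\textbf{Order unit iff faithful.} First I would show that a state $\chi$ on $E$ is an order unit for $E^d$ exactly when it is faithful. Recall $\chi \in (E^d)_h$ since $\chi$ is selfadjoint as a functional. For the forward direction, suppose $\chi$ is an order unit but not faithful, so there is $x_0 \in E_+$ with $x_0 \neq 0$ and $\chi(x_0)=0$. Consider evaluation $\mathrm{ev}_{x_0} \in (E^d)_h$. If $\chi$ were an order unit there would exist $t>0$ with $-t\chi \leq \mathrm{ev}_{x_0} \leq t\chi$ in $E^d$; but $-t\chi \leq \mathrm{ev}_{x_0}$ means $(t\chi + \mathrm{ev}_{x_0})(y) \geq 0$ for all $y \in E_+$, which is fine, while the inequality $\mathrm{ev}_{x_0} \leq t\chi$ forces $(t\chi - \mathrm{ev}_{x_0})(y)\geq 0$ for all $y \in E_+$; specializing to a selfadjoint $y$ and tracking signs I expect to derive a contradiction with $\chi(x_0)=0 < $ the value $\mathrm{ev}_{x_0}$ takes on some positive element on which $\chi$ vanishes. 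Conversely, if $\chi$ is faithful, then for any selfadjoint functional $\psi \in (E^d)_h$ I must produce $t$ with $-t\chi \leq \psi \leq t\chi$, i.e. $t\chi \pm \psi \geq 0$ on $E_+$. Since $E_+$ is a closed cone in finite dimensions and $\chi$ is strictly positive on $E_+ \setminus\{0\}$, the quantity $\inf\{\chi(x) : x \in E_+, \|x\|=1\}$ is attained and positive by compactness of the unit sphere; dividing, $\psi/\chi$ is bounded on this sphere, yielding the required $t$.

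\textbf{Existence of faithful states.} Here I would argue that $E_+$ spans $E_h$ (which holds because $E$ is an operator system, so $1$ is an order unit and every selfadjoint element is a difference of positives) and then use that the cone is closed and salient. Pick a basis and use a strictly-interior averaging argument: for a concrete realization $E \subseteq \B(H)$ with $H$ finite-dimensional, the normalized trace (or any faithful tracial-type state of the ambient matrix algebra restricted to $E$) is faithful on $E_+$. Alternatively, since the pure state space separates points, a strictly convex combination / barycenter of enough states is strictly positive on every nonzero positive element. This is the cleanest route and avoids any ambient $C^*$-algebra.

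\textbf{Archimedean property at all matrix levels.} The main obstacle is upgrading the scalar statement to show $\diag(\chi,\ldots,\chi)$ is an \emph{Archimedean} order unit on $M_n(E^d)$, not merely an order unit. I would unwind definitions: $\diag(\chi,\ldots,\chi)$ acts on $M_n(E)$ by $\phi \mapsto \sum_i \phi(x_{ii}) = (\mathrm{id}\otimes\chi)$ applied and traced. I must show first that it is an order unit, by reducing to the scalar faithfulness via the conjugation-by-scalar-matrix property of the matrix order (Definition~\ref{defn:matrix-order}(3)), compressing to diagonal corners to control off-diagonal entries. For the Archimedean condition, suppose $\Phi \in M_n(E^d)_h$ satisfies $-t\,\diag(\chi) \leq \Phi$ for all $t>0$; I want $\Phi \geq 0$, i.e. $\Phi(X) \geq 0$ for every $X \in M_n(E)_+$. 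Fix such an $X$. The hypothesis gives $\Phi(X) + t\sum_i \chi(X_{ii}) \geq 0$ for all $t>0$; letting $t \downarrow 0$ yields $\Phi(X) \geq 0$ provided $\sum_i \chi(X_{ii})$ is finite, which it is. The delicate point is that this must hold for \emph{every} positive $X$ simultaneously and that the limit is legitimate; here finite-dimensionality and closedness of $M_n(E)_+$ make the closure argument immediate. The fact that such a faithful $\chi$ makes the diagonal an Archimedean unit then assembles the operator-system axioms, so $E^d$ is a bona fide operator system, completing the proof. I expect the hardest bookkeeping to be precisely this matrix-level Archimedean verification, since it requires handling the full cone $M_n(E)_+$ rather than its diagonal elements.
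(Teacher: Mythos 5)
Your strategy is essentially the paper's own: in finite dimensions $E_+$ is a closed, pointed cone with a compact base, so a faithful state is bounded away from zero on that base while every selfadjoint functional is bounded, which gives the order-unit property; the Archimedean property follows by fixing $X \in M_n(E)_+$ and letting $t \downarrow 0$ in $\Phi(X) + t\sum_i \chi(X_{ii}) \geq 0$ (there is nothing delicate here --- positivity is checked one $X$ at a time, exactly as the paper does at the scalar level before declaring the matrix case straightforward); and existence of a faithful state comes from combining states that detect each point of the compact base. Two of your steps, however, fail as written.

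First, in the direction ``order unit $\Rightarrow$ faithful'' you test the order-unit property against $\mathrm{ev}_{x_0}$ and place it in $(E^d)_h$. It does not live there: evaluation at $x_0 \in E$ is a functional on $E^d$, i.e.\ an element of $(E^d)^d \cong E$, so the inequality $\mathrm{ev}_{x_0} \leq t\chi$ compares elements of different spaces and your sign-chase cannot be completed. The correct (and shorter) argument: if $\chi(x_0)=0$ for some $0 \neq x_0 \in E_+$, pick any $\psi \in (E^d)_h$ with $\psi(x_0)>0$ (selfadjoint functionals separate points of $E_h$ in finite dimensions); then $(t\chi - \psi)(x_0) = -\psi(x_0) < 0$ for every $t>0$, so $\psi \not\leq t\chi$ and $\chi$ is not an order unit. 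Second, your route (a) to existence of faithful states rests on a false premise: a finite-dimensional operator system need not admit a concrete realization $E \subseteq \B(H)$ with $H$ finite-dimensional. Operator subsystems of matrix algebras are exact, and there exist non-exact finite-dimensional operator systems (e.g.\ the span of $1$ and the free unitaries $u_i, u_i^*$ inside $C^*(F_k)$), so no trace-restriction argument of this kind can work in general; moreover, for infinite-dimensional $H$ there is no normalized trace on $\B(H)$ to restrict. Your route (b) --- which is exactly the paper's argument --- is the one to keep, made precise by compactness: for each $x$ in the compact base $K$ of $E_+$ choose a state strictly positive at $x$, extract by compactness a finite subfamily $\phi_1,\ldots,\phi_m$ such that $\max_i \phi_i(x) > 0$ for all $x \in K$, and set $\chi = \frac{1}{m}\sum_i \phi_i$.
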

\proof The result follows from the existence of a compact base $K$ for $E_+$. For any $x\in K$ there exists $\phi \in E^d_+$ with $\phi(x)>0$ thus, by compactness there exists a faithful state $\chi: E \to \C$. It is an  order unit for $E^d$ since the compact set $\chi(K)\subset (0,\infty)$ is bounded away from $0$ while $\phi(K)$ is bounded for any $\phi \in E^d_h$.
It is also Archimedean since if $\phi+t \chi\in E^d_+$ for all $t>0$ one, has for any $x \in E_+$, that $\phi(x)+t \chi(x)\geq 0 $ for all $t>0$ and thus  $\phi(x)\geq 0$. The  extension to $M_n(E^d)$ is straightforward. 
\endproof

\begin{lma}
  \label{lma:dual} Let $E$ be a finite-dimensional (abstract) operator system $E$.
There is a complete order isomorphism of operator systems $(E^d)^d \cong E$. 
  \end{lma}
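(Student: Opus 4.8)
The plan is to establish the canonical double-duality isomorphism $(E^d)^d \cong E$ for a finite-dimensional operator system, and the key observation is that since $E$ is finite-dimensional, the underlying vector space isomorphism $E \cong (E^d)^d$ via the evaluation map $\hat x(\phi) = \phi(x)$ is automatic from linear algebra. Thus the entire content of the lemma lies in showing that this canonical linear isomorphism respects the matrix-order structure in both directions, i.e. that it is a \emph{complete} order isomorphism. So first I would fix the evaluation map $\iota: E \to (E^d)^d$ and its matrix amplifications $\iota_n: M_n(E) \to M_n((E^d)^d)$, and reduce the problem to the statement $\iota_n\big(M_n(E)_+\big) = M_n\big((E^d)^d\big)_+$ for every $n$.

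Next I would verify the easy inclusion $\iota_n(M_n(E)_+) \subseteq M_n((E^d)^d)_+$. Unwinding the definition of the dual cone given just above the statement, an element $x \in M_n(E)_+$ maps to $\hat x$, which is positive in $M_n((E^d)^d)$ precisely when $\hat x(\phi) = \phi(x) \geq 0$ for all $\phi \in M_n(E^d)_+$; but this is immediate, since $\phi \in M_n(E^d)_+$ means exactly $\phi(y) \geq 0$ for all $y \in M_n(E)_+$, and $x$ is one such $y$. The substantive direction is the reverse inclusion: given $\xi \in M_n((E^d)^d)_+$, one must produce $x \in M_n(E)_+$ with $\iota_n(x) = \xi$. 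Since $\iota_n$ is already a linear isomorphism there is a unique $x$ with $\iota_n(x) = \xi$, so the real task is to show that this $x$ is \emph{positive}, i.e. that $\phi(x) \geq 0$ for all $\phi \in M_n(E^d)_+$ forces $x \in M_n(E)_+$.

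The hard part will be exactly this reverse inclusion, and it is here that I expect to need a separation/bipolar argument rather than a formal manipulation. The point is that $M_n(E)_+$ is a closed convex cone in the finite-dimensional real space $M_n(E)_h$ (closedness coming from the operator-system structure, where by Proposition~\ref{prop:unit-opsyst} and the Archimedean property the cones are closed), so by the bipolar theorem for cones it equals its own double polar. Concretely, if $x \notin M_n(E)_+$, the Hahn--Banach separation theorem provides a real-linear functional on $M_n(E)_h$ that is nonnegative on the closed cone $M_n(E)_+$ but strictly negative at $x$; such a functional is represented by some $\phi \in M_n(E^d)_+$ (self-adjointness of $\phi$ is arranged by averaging with the involution, using that the cone lives in the self-adjoint part), contradicting $\phi(x) \geq 0$. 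This shows $x \in M_n(E)_+$ and completes the reverse inclusion.

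Finally I would note that the order unit is handled consistently: the faithful state $\chi$ chosen in the preceding proposition to define the Archimedean order unit on $E^d$ double-dualizes to (a positive scalar multiple of) the original order unit of $E$ under $\iota$, so $\iota$ is in fact unital up to the identification and no additional compatibility needs checking beyond what the matrix-cone equalities already give. Assembling the two inclusions for all $n$ yields that $\iota$ and $\iota^{-1}$ are both completely positive, hence $\iota$ is a complete order isomorphism $(E^d)^d \cong E$, as claimed.
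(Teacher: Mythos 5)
Your proof is correct and follows essentially the same route as the paper: the paper's proof is a one-line invocation of the bipolar theorem in finite dimensions, stating $((E^d)^d)_+ = E_+$, and your Hahn--Banach separation argument applied at each matrix level is exactly the standard unpacking of that theorem. The only quibble is the citation of Proposition~\ref{prop:unit-opsyst} for closedness of the cones $M_n(E)_+$ --- that proposition concerns Werner's partial unitization, whereas closedness here follows from the Archimedean property of the matrix order unit together with finite-dimensionality --- but this does not affect the validity of the argument.
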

\proof
This is a straightforward application of the bipolar theorem, stating in the finite-dimensional case that $((E^d)^d)_+ = E_+$.
\endproof

Let us now consider maps between operator systems and their duals. Clearly, if $\phi:E \to F$ there is the induced map $\phi^d : F^d \to E^d$ given by
$$
\phi^d(f) (x) = f( \phi(x)); \qquad (f \in F^d, x \in E).
$$
\begin{prop}
If $E$ and $F$ are operator systems. A linear map $\phi:E \to F$ is completely positive if and only if $\phi^d: F^d \to E^d$ is completely positive. 
  \end{prop}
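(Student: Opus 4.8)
The plan is to reduce everything to the defining component-wise pairing between $M_n(E^d)$ and $M_n(E)$ and then to exploit reflexivity. First I would record the elementary but decisive identity relating the amplifications. Writing $(\phi^d)_n$ for the $n$-th amplification of the dual map and $\phi_n$ for that of $\phi$, a direct computation from the pairing shows that for $f = (f_{ij}) \in M_n(F^d)$ and $x = (x_{ij}) \in M_n(E)$ one has
$$
(\phi^d)_n(f)(x) = \sum_{ij} f_{ij}(\phi(x_{ij})) = f(\phi_n(x)).
$$
In other words, under the pairing the amplification of the dual map is just the (component-wise) dual of the amplification. This is the only genuine calculation in the argument.

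With this in hand the forward implication is immediate. Suppose $\phi$ is completely positive. To see that $\phi^d$ is completely positive I must check that $(\phi^d)_n$ carries $M_n(F^d)_+$ into $M_n(E^d)_+$; by the very definition of the dual cone this amounts to showing $(\phi^d)_n(f)(x) \geq 0$ for every $f \in M_n(F^d)_+$ and every $x \in M_n(E)_+$. But $(\phi^d)_n(f)(x) = f(\phi_n(x))$, and $\phi_n(x) \in M_n(F)_+$ by complete positivity of $\phi$, so $f(\phi_n(x)) \geq 0$ since $f$ lies in the dual cone.

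For the converse I would apply the forward implication to $\phi^d$ itself: if $\phi^d$ is completely positive, then so is $(\phi^d)^d \colon (E^d)^d \to (F^d)^d$. It then remains only to identify $(\phi^d)^d$ with $\phi$. Under the canonical evaluation maps $(E^d)^d \cong E$ and $(F^d)^d \cong F$, which by Lemma \ref{lma:dual} are complete order isomorphisms, a short check shows that $(\phi^d)^d$ is transported to $\phi$: for $x \in E$ regarded as evaluation in $(E^d)^d$ and for $g \in F^d$, one computes $(\phi^d)^d(x)(g) = \phi^d(g)(x) = g(\phi(x))$, so that $(\phi^d)^d(x)$ is precisely evaluation at $\phi(x)$. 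Transporting complete positivity of $(\phi^d)^d$ across these complete order isomorphisms then yields complete positivity of $\phi$.

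The only point requiring care — the main obstacle, such as it is — is the reflexivity step: one must know not merely that the double dual is abstractly isomorphic to $E$, but that the identification is a complete order isomorphism and is compatible with dualization of maps. Both facts are furnished by Lemma \ref{lma:dual} (resting on the bipolar theorem in this finite-dimensional setting), so beyond unwinding the identifications no real difficulty arises.
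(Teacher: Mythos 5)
Your proof is correct and follows essentially the same route as the paper: the same component-wise pairing computation $(\phi^d)_n(f)(x) = f(\phi_n(x))$ establishes the forward implication, and the converse is obtained from Lemma \ref{lma:dual} exactly as the paper does when it notes that reflexivity makes one implication sufficient. Your explicit verification that $(\phi^d)^d$ is transported to $\phi$ under the canonical identifications is a welcome unwinding of a step the paper leaves implicit, but it is not a different argument.
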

\proof
In view of Lemma \ref{lma:dual} it is sufficient to prove one implication. So suppose that $\phi$ is completely positive, {\em i.e.} $\phi_{(n)} \geq 0$ for all $ n \geq 0$. Then $\phi_{(n)}: M_n(F^d) \to M_n(E^d)$ satisfies
$$
\phi_{(n)}^d(\psi)(x) = \phi_{(n)}^d (\psi_{ij} ) (x_{ij}) = \sum_{ij} \phi^d(f_{ij}) (x_{ij}) = \sum_{ij} f_{ij}(\phi(x_{ij}))= \psi(\phi_{(n)}(x))
$$
for $\psi= (\psi_{ij}) \in M_n(F^d), x=(x_{ij}) \in M_n(E)$. Hence if $\psi \geq 0$ it follows that $\phi_{(n)}^d(\psi) \geq 0$.
\endproof

\begin{corl}
\label{corl:dual-extr}
Extreme rays in the cone $E_+$ are in one-to-one correspondence to the pure states of $E^d$ and, vice versa, pure states of $E$ are in one-to-one correspondence to extreme rays in the cone $(E^d)_+$.
  \end{corl}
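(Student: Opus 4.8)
The plan is to establish the first correspondence directly and then obtain the second by applying it with $E^d$ in place of $E$, using the identification $(E^d)^d \cong E$ from Lemma \ref{lma:dual}. So it suffices to show that the pure states of $E^d$ — the extreme points of $\cS(E^d)$ — are in bijection with the extreme rays of $E_+$.

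First I would fix a faithful state $\chi$ on $E$, which by the preceding Choi--Effros proposition serves as the Archimedean order unit of $E^d$. A state on $E^d$ is then precisely a functional $\omega \in (E^d)^d$ with $\omega \geq 0$ and $\omega(\chi) = 1$. Using Lemma \ref{lma:dual} I identify $(E^d)^d$ with $E$; under this identification $\omega$ corresponds to an element $x \in E$, the positivity condition $\omega \geq 0$ becomes $x \in E_+$ by the bipolar theorem (which gives $((E^d)^d)_+ = E_+$), and the normalization $\omega(\chi)=1$ becomes $\chi(x) = 1$. Hence the state space $\cS(E^d)$ is affinely isomorphic to the slice $K = \{ x \in E_+ : \chi(x) = 1\}$ of the positive cone.

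Next I would observe that $K$ is a \emph{compact base} of $E_+$: faithfulness of $\chi$ means $\chi(x) > 0$ for every nonzero $x \in E_+$, so each ray $\R_{\geq 0}\, x$ meets $K$ in exactly one point, while compactness is exactly the boundedness-and-closedness of this slice established in the proof of the Choi--Effros proposition. Since an affine isomorphism carries extreme points to extreme points, the pure states of $E^d$ correspond bijectively to the extreme points of $K$. It then remains to invoke the elementary convexity fact that, for a compact base $K$ of a cone $C$ in a finite-dimensional space, the extreme points of $K$ are precisely the generators of the extreme rays of $C$. Running the same argument with $E$ replaced by $E^d$, and using $(E^d)^d \cong E$, yields the reverse correspondence between pure states of $E$ and extreme rays of $(E^d)_+$.

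The main obstacle I expect is the bookkeeping in the first step: making the pairing, the bipolar identification $((E^d)^d)_+ = E_+$, and the choice of $\chi$ as Archimedean order unit fit together so that $\cS(E^d)$ is honestly realized as the base $K$. Once this affine identification is in place, the passage from extreme points of the base to extreme rays of the cone is routine, and the use of a faithful $\chi$ is precisely what guarantees $K$ is a genuine compact base rather than merely a slice.
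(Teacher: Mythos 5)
Your proof is correct and is essentially the argument the paper intends: the corollary is stated there without any proof, as an immediate consequence of the preceding results, and what you write out — identifying $\cS(E^d)$ with the compact base $\{x \in E_+ : \chi(x)=1\}$ via the bipolar identification $((E^d)^d)_+ = E_+$, invoking the standard correspondence between extreme points of a base and extreme rays of the cone, and then swapping the roles of $E$ and $E^d$ — is precisely the intended filling-in. The only detail worth recording explicitly is that in the swapped direction the order unit of $(E^d)^d$ can be taken to be evaluation at $1_E$ (which is a faithful state on $E^d$), so that the complete order isomorphism $(E^d)^d \cong E$ matches state spaces and hence pure states.
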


\subsection{$C^*$-envelopes of operator systems}
In \cite{Arv69} Arveson introduced the notion of a $C^*$-envelope of an operator system. Their existence and uniqueness was established in full generality by Hamana in \cite{Ham79} based on the theory of injective envelopes (see also \cite[Ch. 15]{Pau02} and \cite[Section 4.3]{BM04}). More recently, in \cite{Arv03,Arv08} Arveson revisited his original approach (using so-called boundary representations) to the problem of $C^*$-envelopes, basing himself on the work of Dritschel and McCullough \cite{DM05}. In this context, we also mention the paper by Arveson's student and grand-student  \cite{DK15}. We here briefly recall some of these notions and the main result. We allow for non-unital operator systems. 

\begin{defn}\label{defnextension} 
Let $E$ be an operator system. A {\em \sext-extension} $\kappa: E \to A$ of $E$ is given by a completely isometric and completely positive map such that $A = C^*(\kappa(E))$ and  $\kappa^\sharp: E^\sharp \to A^\sharp$
 is a complete order isomorphism onto its range. 
\end{defn}
 The above definition contains the usual one of a $C^*$-extension in the case of unital operator systems ({\em i.e.} a unital order isomorphism onto its range, {\em cf.} \cite[Definition 2.1]{Ham79} as we show now:
 \begin{lem}\label{lemm-1}
Let  $E$ be a unital operator system. Then any $C^*$-extension of $E$ is a \sext-extension.
 \end{lem}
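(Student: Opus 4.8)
The plan is to note that, for a \emph{unital} operator system $E$, the requirements defining a \sext-extension and those defining an ordinary (unital) $C^*$-extension differ only in the clause about the partial unitizations. Indeed, if $\kappa\colon E\to A$ is a unital $C^*$-extension, then by definition $\kappa$ is a unital complete order isomorphism onto its range with $A=C^*(\kappa(E))$ (and $A$ unital, $\kappa(e)=1_A$); by Proposition \ref{prop:rel-maps-opsyst} a unital complete order injection is automatically completely isometric and completely positive, so $\kappa$ already fulfils every condition of Definition \ref{defnextension} except possibly that $\kappa^\sharp\colon E^\sharp\to A^\sharp$ be a complete order isomorphism onto its range. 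The entire proof therefore reduces to establishing this last property.

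The key step I would isolate is a structural description of the partial unitization of a unital system: for any unital operator system $F$ with unit $e$ there is a complete order isomorphism
\[
\Phi_F\colon F^\sharp \xrightarrow{\ \cong\ } F\oplus\C,\qquad (x,\lambda)\mapsto (x+\lambda e,\lambda),
\]
onto the direct sum operator system $F\oplus\C$ (with positive cones $M_n(F)_+\oplus M_n(\C)_+$). Writing $\widehat A\in M_n(F)_h$ for the image of $A\in M_n(\C)_h$ under the unital inclusion $M_n(\C)\hookrightarrow M_n(F)$, the point is to prove that $(x,A)\in M_n(F^\sharp)_+$ if and only if $A\ge 0$ and $x+\widehat A\ge 0$ in $M_n(F)$. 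Starting from Werner's Definition \ref{defn:unit-opsyst}, the condition $\phi(A_\eps^{-1/2}xA_\eps^{-1/2})\ge -1$ for all $\phi\in\cS_n(F)$ is, since states detect positivity in the unital system $M_n(F)$, exactly the statement $A_\eps^{-1/2}xA_\eps^{-1/2}+I_n\ge 0$. Conjugating by the scalar matrix $A_\eps^{1/2}$ and using $A_\eps^{1/2}I_nA_\eps^{1/2}=\widehat{A_\eps}=\widehat A+\eps I_n$ turns this into $x+\widehat A+\eps I_n\ge 0$; imposing it for all $\eps>0$ and invoking the Archimedean order unit property of $I_n$ in $M_n(F)$ yields precisely $x+\widehat A\ge 0$. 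This matrix-level positivity computation is the heart of the argument and the step I expect to be the main obstacle.

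Granting the claim, I would apply it to both $E$ and $A$, giving $\Phi_E\colon E^\sharp\cong E\oplus\C$ and $\Phi_A\colon A^\sharp\cong A\oplus\C$. Using $\kappa(e)=1_A$, a direct computation shows that under these identifications the unital extension becomes the diagonal map, namely $\Phi_A\circ\kappa^\sharp\circ\Phi_E^{-1}(x,\lambda)=(\kappa(x),\lambda)$, i.e.\ $\kappa^\sharp$ corresponds to $\kappa\oplus\mathrm{id}_\C\colon E\oplus\C\to A\oplus\C$. Since positivity in a direct sum of operator systems is checked componentwise, and $\kappa$ is a complete order isomorphism onto $\kappa(E)$ while $\mathrm{id}_\C$ trivially is one, the map $\kappa\oplus\mathrm{id}_\C$ is a complete order isomorphism onto its range $\kappa(E)\oplus\C$. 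Transporting back through $\Phi_E$ and $\Phi_A$ then shows that $\kappa^\sharp\colon E^\sharp\to A^\sharp$ is a complete order isomorphism onto its range, which is the only condition that remained to be verified.

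As an alternative to the explicit computation in the last paragraph, one may corestrict $\kappa$ to a completely isometric complete order isomorphism $E\to\kappa(E)$ and apply the Corollary following Lemma \ref{lma:num-radius} to deduce that $\kappa^\sharp\colon E^\sharp\to\kappa(E)^\sharp$ is a complete order isomorphism; the claim above is then needed only to identify $\kappa(E)^\sharp$ with the subsystem $\kappa(E)\oplus\C\subseteq A^\sharp$, i.e.\ to see that the inclusion $\kappa(E)\hookrightarrow A$ induces a complete order injection of unitizations. In either route the crux is the same: the compatibility of Werner's unitization with the unital structure, encapsulated in the decomposition $F^\sharp\cong F\oplus\C$.
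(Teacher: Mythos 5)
Your proposal is correct, but it follows a genuinely different route from the paper's own proof, which is a two-line comparison of Werner's positivity conditions: since positive norm-one functionals on a unital operator system are automatically unital, the states of $E$ are exactly the restrictions via $\kappa$ of the states of $A$ (restriction preserves being a state by automatic unitality, and every state of $E$ extends by Hahn--Banach), so the defining conditions of Definition \ref{defn:unit-opsyst} for $M_n(E^\sharp)_+$ and for $M_n(A^\sharp)_+$ match term by term and $\kappa^\sharp$ is a complete order isomorphism onto its range. You instead prove a stronger, reusable structural statement: for any unital operator system $F$ the partial unitization splits, $F^\sharp\cong F\oplus\C$ via $(x,\lambda)\mapsto(x+\lambda e,\lambda)$ — the operator-system analogue of Werner's Corollary 4.17 \cite{Wer02} for unital $C^*$-algebras — after which the lemma reduces to the observation that $\kappa\oplus\mathrm{id}_\C$ is a complete order injection componentwise. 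Your derivation of the splitting is sound: it uses that states of a unital (Archimedean) system detect positivity at every matrix level, conjugation by the invertible scalar matrix $A_\eps^{1/2}$, and the Archimedean property of $I_n$; in effect you replace the paper's Hahn--Banach extension of states by the order-detection property of states, both standard facts for unital systems. What each approach buys: the paper's argument is shorter and never leaves the level of states; yours isolates a lemma that explicitly identifies $\kappa(E)^\sharp$ as the subsystem $\kappa(E)\oplus\C$ of $A^\sharp$, which also makes statements such as Corollary \ref{corext} and part $(ii)$ of Theorem \ref{sext-envelope} transparent, at the cost of carrying out the matrix-level positivity computation that the paper's state correspondence lets one avoid.
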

\proof The proof is based on the fact that states on unital operator systems, in the sense of positive linear functionals of norm 1, are automatically unital. Thus given a $C^*$-extension $\kappa: E \to A$ states on $E$ are restrictions of states on $A$ and one obtains that $\kappa^\sharp: E^\sharp \to A^\sharp$
 is a complete order isomorphism onto its range. \endproof 
In fact in the special case of unital operator systems the notion of \sext-extension is more general than the usual notion, as shown by the following:
\begin{ex}\label{sim-ex} Let $E$ be the smallest unital operator system consisting of scalar multiples of the unit $1_E$. Let $A:=C_0([0,\infty))$ be the $C^*$-algebra of continuous functions vanishing at $\infty$ on $[0,\infty)$. Let $\kappa :E \to A$ be given  by $\kappa(1_E)=h$ with $h(x):=\exp(-x)$. By construction $\kappa$ is a completely isometric and completely positive map and its range generates $A$ as a $C^*$-algebra. Moreover the map $\kappa^\sharp: E^\sharp \to A^\sharp$ is a complete order isomorphism onto its range since evaluation at $0\in [0,\infty)$ gives a completely positive retraction $\sigma:A\to E$ of the map $\kappa$.
	
\end{ex}

Let $\phi:E \to F$ be a completely isometric, complete order isomorphism of operator systems. We will say that two \sext-extensions $\kappa :E \to A$ and $\lambda :F \to B$ are {\em equivalent} if there is a $*$-isomorphism $\rho: A \to B$ such that $\rho \circ \kappa = \lambda \circ \phi$. 

\begin{defn}\label{defn-sext-envelope}
  Let $E$ be an operator system. A {\em \sext-envelope} is a \sext-extension $\kappa: E \to A$ with the following universal property: for every \sext-extension $(B,\lambda)$ there exists a unique surjective $*$-homomorphism $\rho: B \to A$ such that $\rho \circ \lambda = \kappa$. 
\end{defn}
Existence of the $C^*$-envelope for unital operator systems was shown by Hamana in \cite{Ham79} and we refer to that paper and \cite[Ch. 15]{Pau02} and \cite[Section 4.3]{BM04} for the proof. We now deal with the non-unital case.

\begin{thm}\label{sext-envelope}
 $(i)$~The \sext-envelope of a non necessarily unital operator system $E$ exists and is unique (up to equivalence).\newline
 $(ii)$~If the system is unital the \sext-envelope is equal to the $C^*$-envelope.
  \end{thm}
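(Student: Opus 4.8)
The plan is to reduce the general (non-unital) case to Hamana's theorem for the unital operator system $E^\sharp$. First I form the partial unitization $E^\sharp$, which by Proposition~\ref{prop:unit-opsyst} is a unital operator system, and apply Hamana \cite{Ham79} to obtain its $C^*$-envelope $j\colon E^\sharp \to \mathcal C$, where $\mathcal C := C^*_{\env}(E^\sharp)$; thus $j$ is a unital complete order injection, $\mathcal C = C^*(j(E^\sharp))$, and $j$ has the usual quotient universal property among unital $C^*$-extensions of $E^\sharp$. Composing with the canonical inclusion $\imath_E\colon E \to E^\sharp$ from Lemma~\ref{lma:num-radius}, I set
$$
A := C^*\!\big(j(\imath_E(E))\big) \subseteq \mathcal C, \qquad \kappa := j\circ\imath_E\colon E \to A,
$$
and claim that $(A,\kappa)$ is the \sext-envelope.

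First I check that $\kappa$ is a \sext-extension in the sense of Definition~\ref{defnextension}. Complete positivity and complete isometry are inherited from $\imath_E$ and $j$, and $A=C^*(\kappa(E))$ holds by construction, so the only delicate point is that $\kappa^\sharp\colon E^\sharp\to A^\sharp$ is a complete order isomorphism onto its range. Complete positivity of $\kappa^\sharp$ is Proposition~\ref{prop:unit-opsyst}(ii). For the converse I use the canonical unital $*$-homomorphism $\pi\colon A^\sharp\to\mathcal C$ restricting to the inclusion $A\hookrightarrow\mathcal C$: unwinding the definition of the unital extension gives $\pi\circ\kappa^\sharp=j$. Since $j$ is injective so is $\kappa^\sharp$, and since $\pi$ is a $*$-homomorphism (hence completely positive) while $j$ is a complete order injection, any $u$ at any matrix level with $\kappa^\sharp(u)\ge 0$ obeys $j(u)=\pi(\kappa^\sharp(u))\ge 0$ and therefore $u\ge 0$. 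This yields the complete order isomorphism onto the range, and crucially the argument is insensitive to whether $\pi$ is injective, so no case distinction on the unitality of $A$ is required.

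Next I establish the universal property. Let $\lambda\colon E\to B$ be any \sext-extension. By definition $\lambda^\sharp\colon E^\sharp\to B^\sharp$ is a unital complete order injection; as $B=C^*(\lambda(E))$ is a $C^*$-algebra its partial unitization agrees with the ordinary one, whence $B^\sharp=C^*(\lambda^\sharp(E^\sharp))$ and $\lambda^\sharp$ is a unital $C^*$-extension of $E^\sharp$. Hamana's universal property then produces a unique surjective unital $*$-homomorphism $\sigma\colon B^\sharp\to\mathcal C$ with $\sigma\circ\lambda^\sharp=j$. Because $*$-homomorphisms preserve generated $C^*$-algebras, $\sigma(B)=\sigma(C^*(\lambda(E)))=C^*(\kappa(E))=A$, so $\rho:=\sigma|_B\colon B\to A$ is a surjective $*$-homomorphism with $\rho\circ\lambda=\kappa$; it is unique since any two $*$-homomorphisms agreeing on the generating set $\lambda(E)$ agree on $B$. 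This proves that $(A,\kappa)$ is a \sext-envelope. Uniqueness up to equivalence is then formal: given two \sext-envelopes, the universal property produces comparison $*$-homomorphisms in both directions, and invoking the uniqueness clause with $B=A$ and $\lambda=\kappa$ identifies their composites with the identity, so they are mutually inverse. This settles $(i)$.

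For $(ii)$, let $E$ be unital. By Lemma~\ref{lemm-1} the ordinary $C^*$-envelope $j_u\colon E\to C^*_{\env}(E)$ is a \sext-extension, so the universal property just proved yields a surjection $C^*_{\env}(E)\to A$; it remains to see it is an isomorphism, i.e.\ that the construction above recovers the usual envelope. The key lemma is that for unital $E$ the partial unitization splits as an operator-system direct sum $E^\sharp\cong E\oplus\C$, in which $\imath_E(1_E)$ and $e-\imath_E(1_E)$ are the complementary units; this I would deduce from the uniqueness of Werner's partial unitization \cite[Lemma 4.3]{Wer02} by exhibiting $E\oplus\C$ as a partial unitization of the unital system $E$. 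Granting the splitting, $C^*_{\env}(E^\sharp)\cong C^*_{\env}(E)\oplus\C$, the image $j(\imath_E(E))$ lies in the first summand, and hence $A=C^*(j(\imath_E(E)))\cong C^*_{\env}(E)$ with $\kappa$ corresponding to $j_u$, forcing the surjection above to be an isomorphism. I expect this splitting to be the main obstacle, since it is exactly where one must reconcile the two competing order units on $E^\sharp$ — the adjoined unit $e$ and the image $\imath_E(1_E)$ of the original unit — and confirm that the latter becomes a projection, namely the unit of $A$, inside the $C^*$-envelope.
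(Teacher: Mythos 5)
Your part $(i)$ is, up to notation, the paper's own proof: the same model $A=C^*(j(\imath_E(E)))\subseteq C^*_\env(E^\sharp)$, the same verification of the \sext-extension property via the canonical $*$-homomorphism $A^\sharp\to C^*_\env(E^\sharp)$ (the paper's $\beta$, your $\pi$), the same derivation of the universal property by passing to $\lambda^\sharp\colon E^\sharp\to B^\sharp$ and restricting Hamana's surjection, and the same formal uniqueness argument.

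Part $(ii)$ is where you genuinely diverge. The paper stays formal: the surjection $\rho\colon C^*_\env(E)\to A$ with $\rho\circ\imath_E=\kappa$ forces $A$ to be unital with $1_A=\rho(1)$, whence $\kappa(1_E)=\rho(\imath_E(1_E))=1_A$; thus $\kappa$ is itself a unital $C^*$-extension, and Hamana's universal property supplies the reverse surjection, the two composites being the identity on generating sets. You instead compute the model from $(i)$ outright via the splitting of the unitization. Your key lemma is true, and you do not even need Werner's uniqueness result \cite[Lemma 4.3]{Wer02} to get it: it follows directly from Definition \ref{defn:unit-opsyst}. For unital $E$ the functionals in $\cS_n(E)$ are exactly the unital positive functionals on $M_n(E)$, and positivity in an Archimedean order-unit space is detected by its states, so the condition $\phi(A_\eps^{-1/2}xA_\eps^{-1/2})\geq -1$ for all $\eps>0$ and all $\phi\in\cS_n(E)$ is equivalent to $x+A\otimes 1_E\geq 0$ in $M_n(E)$; hence $(x,A)\mapsto (x+A\otimes 1_E,\,A)$ is a unital complete order isomorphism of $E^\sharp$ onto the direct-sum operator system $E\oplus\C$, under which $\imath_E(1_E)$ and $e-\imath_E(1_E)$ become the two summand units, exactly as you predict. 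You then assert, but must also prove, that $C^*_\env(E\oplus\C)\cong C^*_\env(E)\oplus\C$; this follows from Proposition \ref{prop:silov}: a boundary ideal for $E\oplus\C$ inside $C^*_\env(E)\oplus\C$ has the form $I_1\oplus I_2$, and complete isometry of the quotient map on $E\oplus 0$ and on $0\oplus\C$ forces $I_1=0$ and $I_2=0$, so the \v{S}ilov ideal is trivial. Granting these two facts, uniqueness of the Hamana envelope matches $j$ with $j_u\oplus\mathrm{id}_\C$, so $A$ corresponds to $C^*_\env(E)\oplus 0$ and $\kappa$ to $j_u$, which is statement $(ii)$ (and, as a bonus, a direct proof of Corollary \ref{corext}). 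So your route for $(ii)$ is correct once these two structural lemmas are written out; what it buys is an explicit description of $C^*_\env(E^\sharp)$ for unital $E$, at the cost of more work than the paper's short interplay of the two universal properties, which needs no structure theory at all.
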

\proof 
$(i)$~Let us show existence of a \sext-envelope in the case that $E \subseteq \B(H)$ is non-unital. Let $ E^\sharp = E \oplus \C$ be the  unitization of $E$ as defined in Definition \ref{defn:unit-opsyst}; it is a unital operator system and so it has a $C^*$-envelope; let us denote this by $\kappa: E^\sharp \to B$.  We claim that $A=C^*(\kappa(E)) \subset B$ is a \sext-envelope of $E$. First by  Corollary 4.17 of \cite{Wer02} the unitization $C^*(\kappa(E))^\sharp$ is the usual $C^*$-algebra unitization. The map $\imath_E :E \to E^\sharp$ is completely isometric and completely positive and so is $\alpha=\kappa\circ\imath_E :E \to A$. To show that $\alpha$ is  a \sext-extension one needs to prove that $\alpha^\sharp:E^\sharp\to A^\sharp$ is a complete order isomorphism on its range.  Since $B$ is a unital $C^*$-algebra and $A\subset B$ a $C^*$-subalgebra one has a canonical morphism $\beta : A^\sharp\to B$. It extends the inclusion  by sending the adjoined unit of $A^\sharp$ to $1_B$. Moreover  
$$
\beta \circ \alpha^\sharp= \kappa 
$$
If an element $x\in \alpha^\sharp(E^\sharp)$,  is positive in $A^\sharp$ then $\beta(x)\in B$ is positive and since $\kappa: E^\sharp \to B$ is an order isomorphism on its range there exists uniquely a positive element $y\in E^\sharp$ with $\kappa(y)=\beta(x)$. Let $z\in E^\sharp$ with $\alpha^\sharp(z)=x$, one has $$\kappa(z)=\beta \circ \alpha^\sharp(z)=\beta(x)=\kappa(y)$$ and hence $z=y$ since $\kappa$ is an injection, so that $z$ is positive. The same argument applies to matrices and shows that $\alpha$ is a \sext-extension. Now suppose that $\lambda: E \to C$ is some other \sext-extension. Then $C^\sharp$ is  the $C^*$-algebra unitization of $C$ and $\lambda^\sharp:E^\sharp \to C^\sharp$ is a (unital) $C^*$-extension by Definition \ref{defnextension}. Hence there exists a surjective $*$-homomorphism $\rho: C^\sharp \to B$, with $B$ as defined above, such that $\rho\circ \lambda^\sharp = \kappa$. The $*$-homomorphism given by the restriction $\rho|_C: C \to B$ lands in $A=C^*(\kappa(E))$ since $C = C^*(\lambda(E))$ by Definition \ref{defnextension}. Thus we find that $\rho'=\rho|_C : C \to A$ is a surjection, and that $\rho'\circ \lambda = \kappa\circ\imath_E= \alpha$  as desired.

For uniqueness, assume that $\kappa: E \to A$ and $\lambda:E \to B$ are two \sext-envelopes of $E$. The universal property of both give two surjective $*$-homomorphisms $\sigma :A \to B$ and $\rho: B \to A$ such that $\sigma \circ \kappa = \lambda$ and $\rho \circ \lambda =\kappa$. As a consequence $\rho \circ \sigma \circ \kappa = \kappa$, that is to say, $\rho \circ \sigma$ is the identity when restricted to $\kappa(E) \subset A$. But since $\rho$ and $\sigma$ are $*$-homomorphisms and $A$ is generated by $\kappa(E)$ it follows that $\rho \circ \sigma = \text{id}_A$. Similarly, we find $\sigma \circ \rho= \text{id}_B$ so that $A \cong B$, compatibly with the extension maps $\kappa$ and $\lambda$.\newline
$(ii)$~Let  $E$ be a unital operator system, and $\imath_E: E \to C^*_{\rm env}(E)$ its $C^*$-envelope. Let $\kappa:E\to A$ be the \sext-envelope of $E$. Then by Lemma \ref{lemm-1}, $\imath_E$ is a \sext-extension. Thus by the universal property of the \sext-envelope, there exists a unique surjective $*$-homomorphism $\rho: C^*_{\rm env}(E) \to A$ such that $\rho \circ \imath_E = \kappa$. Since the $C^*$-algebra $C^*_{\rm env}(E)$ is unital it follows that $A$ is unital, with unit $1_A$ equal to $\rho(1)$. Let $1_E$ be the unit of the operator system $E$. It follows that 
$$
\kappa(1_E)=\rho \circ \imath_E(1_E)=\rho(1)=1_A.
$$ 
This shows that $\kappa:E\to A$ is a $C^*$-envelope. Thus by the universal property of $C^*$-envelopes there exists uniquely a surjective  $*$-homomorphism $\rho': A\to C^*_{\rm env}(E)$ such that $\rho' \circ \kappa= \imath_E$. One then concludes that $\rho'$ is the inverse of $\rho$ since both $C^*_{\rm env}(E)$ and $A$ are generated by the image of the operator system $E$.
\endproof

\begin{corl} \label{corext} Let  $E$ be a unital operator system. Then
$C^*_{\rm env}(E)$ is the $C^*$-algebra generated by $E$ in $C^*_{\rm env}(E^\sharp)$.
\end{corl}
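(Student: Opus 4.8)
The plan is to obtain the statement by combining the two parts of Theorem~\ref{sext-envelope}, after observing that the explicit construction used to prove part~$(i)$ already realizes the \sext-envelope \emph{inside} $C^*_{\rm env}(E^\sharp)$.

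First I would revisit the construction in the proof of Theorem~\ref{sext-envelope}$(i)$. Although it is phrased for a non-unital $E$, it uses only that $E^\sharp$ is a unital operator system (so that the $C^*$-envelope $\kappa: E^\sharp \to B:=C^*_{\rm env}(E^\sharp)$ exists), that $\imath_E: E \to E^\sharp$ is a completely isometric complete order injection, and that $C^*(\kappa(E))^\sharp$ is the usual $C^*$-algebra unitization. All three remain valid when $E$ is unital, so the argument applies verbatim and exhibits $A:=C^*(\kappa(E)) \subseteq C^*_{\rm env}(E^\sharp)$, together with $\alpha=\kappa\circ\imath_E$, as a \sext-envelope of $E$.

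Next I would invoke Theorem~\ref{sext-envelope}$(ii)$: since $E$ is unital, its \sext-envelope coincides with the $C^*$-envelope $\imath_E: E \to C^*_{\rm env}(E)$. By the uniqueness of the \sext-envelope (also part~$(i)$), the two realizations are identified by a $*$-isomorphism $C^*_{\rm env}(E) \cong A$ intertwining $\imath_E$ and $\alpha$. As $A=C^*(\kappa(E))$ is by construction the $C^*$-subalgebra of $C^*_{\rm env}(E^\sharp)$ generated by the image of $E$, this is precisely the asserted identification.

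The one point deserving care --- and the step I would treat as the main obstacle --- is to confirm that the construction of part~$(i)$ is legitimate for unital $E$, i.e. that a unital operator system is in particular a non-unital operator system in the sense of Werner, so that Lemma~\ref{lma:num-radius} makes $\imath_E$ completely isometric. Concretely this amounts to checking $\nu_E=\|\cdot\|$ on every $M_n(E)$. I would verify this by embedding $E$ completely isometrically and completely order isomorphically into the unital $C^*$-algebra $C^*_{\rm env}(E)$: for the self-adjoint element $\left(\begin{smallmatrix} 0 & x \\ x^* & 0\end{smallmatrix}\right)$ the numerical radius equals the norm in a unital $C^*$-algebra, and since states restrict from $C^*_{\rm env}(E)$ to $E$ the numerical radii computed over $E$ and over $C^*_{\rm env}(E)$ agree. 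This gives $\nu_E=\|\cdot\|$ and the corollary follows.
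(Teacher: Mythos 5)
Your proposal is correct and takes essentially the same route as the paper: the paper's proof likewise observes that, by the construction in the proof of Theorem \ref{sext-envelope}$(i)$, the $C^*$-algebra generated by $E$ inside $C^*_{\rm env}(E^\sharp)$ is the \sext-envelope of $E$, and then identifies it with $C^*_{\rm env}(E)$ via part $(ii)$ together with uniqueness. Your additional verification that a unital operator system satisfies Werner's condition $\nu_E=\Vert\cdot\Vert$ (so that the construction written for the non-unital case legitimately applies) is a worthwhile detail that the paper leaves implicit.
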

\proof By construction the $C^*$-algebra generated by $E$ in $C^*_{\rm env}(E^\sharp)$ is the \sext-envelope of $E$ and by $(ii)$ of Theorem \ref{sext-envelope} it coincides with the $C^*$-envelope: $\imath_E: E \to C^*_{\rm env}(E)$.\endproof

\begin{rem}\label{rem:example} It is important in definition \ref{defnextension} to assume that the associated map $\kappa^\sharp$ is an order isomorphism with its range. The following example shows that if one drops this hypothesis the \sext-envelope no longer exists. Consider the non-unital system $S$ formed of a single self-adjoint $H$ with $\Vert H\Vert=1$, and where
the positive cone is $\{0\}$. Then if one weakens definition \ref{defnextension} by dropping the requirement on $\kappa^\sharp$, a $C^*$-extension $\kappa: E \to A$ is simply a self-adjoint generating element  $h\in A$ of norm $\Vert h\Vert = 1$. In particular one can have $h>0$ and one sees that this rules out the existence of a $C^*$-envelope since $h$ does not contain $-1$ in its spectrum. But the system $S^\sharp$ does have a $C^*$-envelope which is the $C^*$-algebra $C(\{\pm 1\})$ and where $H$ is the function $H(\pm 1):=\pm 1$.
	
\end{rem}

Part $(ii)$ of Theorem \ref{sext-envelope} shows that we can drop the distinction between $C^*$-envelope and \sext-envelope. In the following, we will use the terminology $C^*$-envelope of $E$ and denote it as $\imath_E: E \to C^*_{\rm env}(E)$. 

\subsection{ \v{S}ilov boundary ideals}
There is a useful description of the $C^*$-envelope in terms of \v{S}ilov boundary ideals that we now recall \cite{Arv69,Ham79}. We shall only use it in the unital case and restrict to this case in this subsection.

\begin{defn}
  Let $E$ be a unital operator system and $\kappa: E \to A$ a $C^*$-extension. A {\em boundary ideal} for the extension is a closed two-sided ideal $I \subseteq A$ such that the quotient map $q: A \to A/I$ is completely isometric on $\kappa(E) \subseteq A$.

A boundary ideal is called the {\em \v{S}ilov ideal} if it contains every other boundary ideal. 
  \end{defn}



Before we analyze the relation between the \v{S}ilov boundary ideal and the $C^*$-envelope of operator systems, let us spend a few words on the topological origin of these boundary ideals \cite[Sect. 2.1]{Arv69} ({\em cf.} \cite[Chapter 6]{Phe01}). Namely, let $X$ be a compact Hausdorff topological space and consider a linear subspace $E \subseteq C(X)$ that contains the constants and separates points of $X$. Then there is a smallest closed subset $K \subseteq X$ such that every function in $E$ achieves its maximal absolute value on $K$. This is called the {\em \v{S}ilov boundary} of $X$ relative to $E$. In terms of the corresponding ideals we then find that
$$
J = \{ f \in C(X): f(K) = 0\} 
$$
and that the quotient norm in $C(X)/J$ is
$$
\| f|_K \| = \sup_{x \in K} |f(x)|.
$$
But then, to say that $f$ attains its maximum on $K \subseteq X$ amounts to saying that $\|f|_K\| = \|f\|$. Thus, given the one-to-one correspondence between closed subsets in $X$ and closed ideals in $C(X)$ we find that $J$ is a \v{S}ilov ideal for an $E \subseteq C(X)$ if and only if $K$ is \v{S}ilov boundary for $E$.

\begin{ex}
  \label{ex:disc-alg}
  The traditional example of the \v{S}ilov boundary is given by the continuous harmonic functions $C_{\text{harm}}(\bar {\mathbb D})$ on the closed disc. 
  Then by the maximum modulus principle any harmonic function attains its maximum at the boundary of $\mathbb D$. The \v{S}ilov boundary for this operator system is thus given by the circle and the \v{S}ilov boundary ideal is $C_0(\mathbb D)$. 
  \end{ex}

We now return to the description of the $C^*$-envelopes using \v{S}ilov boundary ideals. Note that the following result is nothing but a reformulation of the results in \cite{Ham79}, very much in line with \cite{Arv69}. 

\begin{prop}
\label{prop:silov}
Let $E$ be a unital operator system and let $ \kappa: E \to A$ be a $C^*$-extension. Then there exists a (necessarily unique) \v{S}ilov boundary ideal $J$. Moreover, the $C^*$-envelope $C^*_\env(E)$ is $*$-isomorphic to $A/J$. 
  \end{prop}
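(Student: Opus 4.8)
The plan is to leverage the already-established existence of the $C^*$-envelope (Theorem \ref{sext-envelope}) to manufacture a candidate ideal $J$, and then to prove maximality by testing each boundary ideal against the envelope through its universal property.

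First I would apply the universal property of the $C^*$-envelope to the given extension $(A,\kappa)$ itself. This produces a surjective $*$-homomorphism $\rho: A \to C^*_\env(E)$ with $\rho \circ \kappa = \imath_E$. Setting $J := \ker \rho$ gives $A/J \cong C^*_\env(E)$, compatibly with the quotient map, which is exactly the second assertion provided $J$ turns out to be the \v{S}ilov ideal. That $J$ is at least a boundary ideal is immediate: both $\kappa$ and $\imath_E = \rho \circ \kappa$ are completely isometric, so the restriction of the quotient map $q: A \to A/J$ (which corresponds to $\rho$ under the above isomorphism) to $\kappa(E)$ is completely isometric.

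The heart of the argument is maximality. Given any boundary ideal $I$ with quotient $q_I : A \to A/I$, I would show that $q_I \circ \kappa: E \to A/I$ is again a \sext-extension. It is completely isometric by the very definition of a boundary ideal, unital because $\kappa$ is unital and $q_I$ preserves the unit, and completely positive as a composite of completely positive maps; by Proposition \ref{prop:rel-maps-opsyst}$(3)$ a unital completely isometric map is automatically a complete order injection. Since $q_I$ is a surjective $*$-homomorphism and $A = C^*(\kappa(E))$, we also have $A/I = C^*(q_I(\kappa(E)))$, so Lemma \ref{lemm-1} identifies $q_I \circ \kappa$ as a \sext-extension. The universal property then yields a surjective $*$-homomorphism $\pi: A/I \to C^*_\env(E)$ with $\pi \circ q_I \circ \kappa = \imath_E$. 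Now $\pi \circ q_I$ and $\rho$ are two $*$-homomorphisms $A \to C^*_\env(E)$ agreeing on the generating set $\kappa(E)$, hence $\pi \circ q_I = \rho$, and therefore $I = \ker q_I \subseteq \ker(\pi \circ q_I) = \ker \rho = J$. Thus $J$ contains every boundary ideal and is the (necessarily unique, being a largest element) \v{S}ilov ideal.

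I expect the main obstacle to be the verification that a boundary ideal really yields a \sext-extension rather than merely a completely isometric map: the promotion from ``completely isometric and unital'' to ``complete order isomorphism onto the range'' is precisely where Proposition \ref{prop:rel-maps-opsyst}$(3)$ and Lemma \ref{lemm-1} enter, and where the hypothesis that $E$ is unital is indispensable. Everything else is a formal consequence of the universal property of the envelope together with the fact that $A$ is generated as a $C^*$-algebra by $\kappa(E)$.
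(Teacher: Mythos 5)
Your proposal is correct and follows essentially the same route as the paper: define $J$ as the kernel of the surjective $*$-homomorphism furnished by the universal property of $C^*_\env(E)$, then show any boundary ideal $I$ gives a $C^*$-extension $q_I\circ\kappa$, apply the universal property again, and conclude $I\subseteq J$ from agreement of the two $*$-homomorphisms on the generating set $\kappa(E)$. The only (minor) difference is that you make explicit, via Proposition \ref{prop:rel-maps-opsyst}$(3)$ and Lemma \ref{lemm-1}, the promotion of ``unital completely isometric'' to ``complete order injection'' needed to see $q_I\circ\kappa$ as an extension, a step the paper's proof leaves implicit.
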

\begin{proof}
From the universal property of the $C^*$-envelope $\imath_E :E \to C^*_\env(E)$ it follows that there is a surjective $*$-homomorphism $\pi: A \to C^*_\env(E)$ such that $\pi \circ \kappa = \imath_E$. Hence there is a $*$-isomorphism $\tilde \pi : A /\ker \, \pi \to C_\env^*(E)$ such that
$$ 
\xymatrix@C+2pc{
E \ar@{=}[d] \ar^{\imath_E}[r]  &C^*_\env(E)\\
E \ar[r]^{q \circ \kappa}  & {A/\ker \, \pi} \ar[u]_{\tilde \pi}
}
$$
is a commutative diagram, where $q :A \to A/\ker \, \pi$ denotes the quotient map. We claim that $J= \ker \, \pi$ is the \v{S}ilov boundary ideal.

Indeed, $J$ is a boundary ideal since the restriction of $q: A  \to A/\ker \, \pi$ to $\lambda(E)$ is $q \circ \kappa = \tilde \pi^{-1} \circ \imath_E$ which is surely a complete order isomorphism onto its range.

Next, let $I \subseteq A$ be any boundary ideal with $q' : A \to A/I$ the corresponding quotient map. Then $q' \circ \kappa :E \to A/I$ is a $C^*$-extension of $E$ and thus, by the universal property of $C^*_\env(E) \cong A/J$ there is a surjective $*$-homomorphism
$$
\rho:A /I \to A/J
$$
such that $\rho \circ q' \circ \kappa = q\circ \kappa$. This means that $\rho(x+I) = x+J$ for all $x \in \kappa(E) \subseteq A$. Since $\rho$ is a $*$-homomorphism and $A$ is generated by $\kappa(E)$ it follows that $\rho(x+I) = x+J$ for all $x \in A$. In particular, for each $x \in I$ we have
$$
0+J = \rho(0+I) = \rho(x+I) = x+J
$$
so that $I \subseteq J$. 
\end{proof}

\begin{corl}
  \label{corl:simple-env}
Let $E$ be a unital operator system and let $\kappa :E \to A$ be a $C^*$-extension. If $A$ is a simple $C^*$-algebra then $C_\env^*(E)$ is isomorphic to $A$.
  \end{corl}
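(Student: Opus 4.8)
The plan is to reduce everything to Proposition \ref{prop:silov}, which already identifies the $C^*$-envelope with the quotient of any $C^*$-extension by its \v{S}ilov boundary ideal. First I would invoke that proposition for the given extension $\kappa: E \to A$ to produce the (necessarily unique) \v{S}ilov boundary ideal $J \subseteq A$, together with the $*$-isomorphism $C^*_\env(E) \cong A/J$. With this in hand the entire statement hinges on identifying $J$, and the strategy is simply to show that simplicity of $A$ forces $J$ to be the zero ideal.

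Next I would use simplicity of $A$ directly: since $J$ is by definition a closed two-sided ideal of $A$, and $A$ is simple, the only possibilities are $J = \{0\}$ or $J = A$. The remaining work is to exclude the second case. Here I would recall that for a unital operator system a $C^*$-extension is unital, so that $\kappa(1_E) = 1_A$; in particular $\kappa(E)$ contains the nonzero element $1_A$, the algebra $A$ being simple and hence nonzero. If $J = A$, then the quotient map $q : A \to A/A = 0$ would send $\kappa(E)$ to $0$ and so could not be completely isometric on $\kappa(E)$, contradicting the defining property of a boundary ideal. Therefore $J = \{0\}$.

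Combining the two steps gives $C^*_\env(E) \cong A/\{0\} \cong A$, as claimed. The only point that requires a moment's care is the exclusion of $J = A$, and even this is nothing more than the observation that a boundary ideal is necessarily proper once $\kappa(E) \neq 0$, together with the fact that a simple $C^*$-algebra has no nontrivial proper closed two-sided ideals. I do not expect any genuine obstacle: everything beyond this single observation is a direct appeal to Proposition \ref{prop:silov} and to the unitality of $C^*$-extensions in the unital setting.
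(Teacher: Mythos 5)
Your proof is correct and follows essentially the same route as the paper's: invoke Proposition \ref{prop:silov} to identify $C^*_\env(E)$ with $A/J$ for the \v{S}ilov boundary ideal $J$, then use simplicity of $A$ to conclude $J$ is trivial. You are in fact slightly more careful than the paper, which loosely says there are ``no two-sided ideals''; your explicit exclusion of $J=A$ (via $\kappa(1_E)=1_A\neq 0$, so the quotient map could not be completely isometric on $\kappa(E)$) fills in the small point that a boundary ideal must be proper.
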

\proof
Since $A$ is simple there are no two-sided ideals and in particular the \v{S}ilov boundary ideal is trivial. In other words, the $C^*$-envelope of $E$ is given by $A$. 
\endproof

\begin{ex}\label{ex:disc-alg1}
  Returning to the harmonic functions on the closed disc of Example \ref{ex:disc-alg} we see that $C^*_\env(C_{\text{harm}}(\bar {\mathbb D})) \cong C(\overline{\mathbb{D}}) /C_0(\mathbb D) \cong C(S^1)$.   
  \end{ex}

\begin{ex}
  Let $\theta$ be an irrational real number and let $U$ and $V$ be two unitary operators in a Hilbert space such that
  $$
VU = e^{ 2 \pi i \theta} VU.
$$
The linear span $E$ of the operators $U$ and $V$ together with their adjoints and the identity is a operator system. It is a subspace of the $C^*$-algebra generated by $U$ and $V$, which is of course nothing but the noncommutative torus $A_\theta$. Since the latter is simple Corollary \ref{corl:simple-env} applies and we conclude that the $C^*$-envelope of $E$ is given by the noncommutative torus $A_\theta$.
\end{ex}

\begin{rem}
  We urge the reader to transpose the above results to the non-unital case by replacing $C^*$-extensions by $C^\sharp$-extensions.
  \end{rem}

\subsection{Stable equivalence for operator systems}
Before we can introduce the notion of stable equivalence, we will need to briefly digress on tensor products of operator systems. We will base our approach on \cite{KPTT11} which is focusing completely on operator systems. In fact, the authors develop tensor products from the point of view of abstract operator systems (that is to say, for matrix-ordered order unit spaces). However, the link to concrete operator systems such as $E \subseteq \B(H)$ is also worked out ({\em cf.} \cite[Theorem 4.4]{KPTT11}), and allows us to here make the following `hands-on' definition. For the development of tensor products in the case of non-unital (abstract) operator systems, we refer to \cite{LiNg12}
\begin{defn}
Let $E \subset \B(H)$ and $F \subseteq \B(K)$ be operator systems. We define the {\em minimal tensor product} $E \otimes_\min F$ of $E$ and $F$ as the following norm closure
$$
E \otimes_\min F = \overline{E \otimes F} \subseteq \B( H \otimes K),
$$
where the algebraic tensor product $E \otimes F$ is naturally embedded in $\B(H \otimes K)$.
\end{defn}
The construction of the minimal tensor product only depends on the abstract operator system structure, and it defines a bi-functor from operator systems to operator systems \cite[Theorem 4.6]{KPTT11}. In fact, this can also be seen from the concrete viewpoint that we have adopted in the above definition. Indeed, if $\phi: E \to E'$ and $\psi : F \to F'$ are complete order isomorphisms ({\em i.e.} completely isometric isomorphisms) then \cite[Proposition 8.1.5]{ER00} shows that $\phi \otimes \psi$ induces a complete order isomorphism from $E \otimes_\min F$ to $E' \otimes_\min F'$.


We now come to the main topic of this section, which is stable equivalence of operator systems. 
Let $\K$ denote the $C^*$-algebra of compact operators. 

\begin{defn}
We say that two operator systems $E$ and $F$ are {\em stably equivalent} if $E \otimes_\min \K$ and $F \otimes_\min \K$ are complete order isomorphic operator systems. 
\end{defn}
It is immediate that this is an equivalence relation. 
The advantage of working with the minimal tensor product should now become clear. In fact, if $E \subseteq \B(H)$ and $\K \equiv \K(K)$ for Hilbert spaces $H$ and $K$ we find that $E \otimes_\min \K(K) $ is
the closure of $E \otimes  \K_0(K)$ in $\B(H \otimes K)$ where $\K_0(K)$ denote finite-rank operators. 

We expect that the above notion of stable equivalence is related to a notion of Morita equivalence for operator systems, similar to what happens in the case of $C^*$-algebras \cite{BDR77}, operator algebras \cite{BM04} and operator spaces \cite{EK17}. It is an interesting open problem to develop such a notion and see how it reduces to the familiar notion of Morita equivalence. In any case, we record the following result.

\begin{prop}\label{prop.stabilize}  $(i)$~Let $E$ be a unital operator system. The $C^*$-envelope of the stabilization $E \otimes_\min \K$ is isomorphic to the stabilization $C^*_\env(E) \otimes \K$ of the $C^*$-envelope.\newline
$(ii)$~Let $E$ and $F$ be stably equivalent unital operator systems. Then $C^*_\env(E)$ and $C^*_\env(F)$ are stably equivalent $C^*$-algebras. 
\end{prop}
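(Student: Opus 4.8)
The plan is to prove $(i)$ first and then deduce $(ii)$ formally. Write $A:=C^*_\env(E)$ with its embedding $\imath_E:E\to A$, set $S:=E\otimes_\min\K$ and $\alpha:=\imath_E\otimes\id:S\to A\otimes\K$ (the $C^*$-tensor product, which is unambiguous since $\K$ is nuclear). First I would check that $\alpha$ is a \sext-extension of the non-unital operator system $S$. Since $E$ is unital, $\imath_E$ is a unital complete order injection, hence completely isometric by Proposition \ref{prop:rel-maps-opsyst}; functoriality of the minimal tensor product (\cite[Proposition 8.1.5]{ER00}) then makes $\alpha$ a complete order isomorphism of $S$ onto $\imath_E(E)\otimes_\min\K\subseteq A\otimes\K$. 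Consequently, by the Corollary following Lemma \ref{lma:num-radius}, $\alpha^\sharp$ is a complete order isomorphism onto its range, and since $A\otimes\K$ is a $C^*$-algebra its partial unitization $(A\otimes\K)^\sharp$ is the ordinary $C^*$-unitization (\cite[Corollary 4.17]{Wer02}); this is exactly the condition in Definition \ref{defnextension}. That $\alpha(S)$ generates $A\otimes\K$ follows from $\imath_E(E)\ni 1_A$ together with an approximate-unit argument: for $a_1,\dots,a_m\in\imath_E(E)$, $k\in\K$ and an approximate unit $\{e_\lambda\}$ of $\K$ one has $(a_1\otimes e_\lambda)\cdots(a_m\otimes e_\lambda)(1_A\otimes k)=a_1\cdots a_m\otimes e_\lambda^m k\to a_1\cdots a_m\otimes k$, and such products span a dense subspace of $A\otimes\K$.

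It remains to see that $\alpha$ is in fact the \sext-envelope, and this identification is the heart of the matter. Here I would reduce to the unital \v{S}ilov theory by passing to the unitization. By the non-unital envelope construction in the proof of Theorem \ref{sext-envelope}, $C^*_\env(S)=C^*\big(\text{image of }S\text{ in }C^*_\env(S^\sharp)\big)$, so it suffices to compute $C^*_\env(S^\sharp)$. The map $\alpha^\sharp:S^\sharp\to(A\otimes\K)^\sharp$ realizes the unital operator system $S^\sharp$ as a generating unital operator subsystem of the $C^*$-algebra $(A\otimes\K)^\sharp$, that is, a unital $C^*$-extension, so Proposition \ref{prop:silov} provides a \v{S}ilov ideal $J\unlhd(A\otimes\K)^\sharp$ with $C^*_\env(S^\sharp)\cong(A\otimes\K)^\sharp/J$.

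The key computation is that $J=0$. Since $A\otimes\K$ is an essential ideal of its unitization (it is non-unital), any proper ideal of $(A\otimes\K)^\sharp$ is a closed ideal of $A\otimes\K$; and because $\K$ is a simple $C^*$-algebra, every such ideal has the form $I\otimes\K$ for a unique closed ideal $I\unlhd A$, this correspondence being inclusion-preserving. For the quotient by $I\otimes\K$ the induced map on $S^\sharp$ is, up to the fixed $\K$-leg, the amplification of $A\to A/I$; using a rank-one projection $p\in\K$ and the complete isometry $\imath_E(E)\cong\imath_E(E)\otimes p$, functoriality of $\otimes_\min$ shows that $I\otimes\K$ is a boundary ideal for $S^\sharp$ if and only if $I$ is a boundary ideal for $E$ in $A=C^*_\env(E)$. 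As $A$ is the $C^*$-envelope of $E$, its \v{S}ilov ideal is trivial, so the largest such $I$ is $0$; by the order-preserving correspondence the \v{S}ilov ideal $J$ of $S^\sharp$ is $0\otimes\K=0$. Therefore $C^*_\env(S^\sharp)=(A\otimes\K)^\sharp$ and $C^*_\env(S)=C^*(\alpha(S))=A\otimes\K$, proving $(i)$. The main obstacle is precisely this control of the ideal structure of the stabilization and its transfer to boundary ideals of $E$; everything else is functoriality.

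For $(ii)$, stable equivalence of $E$ and $F$ means there is a complete order isomorphism $E\otimes_\min\K\cong F\otimes_\min\K$. The $C^*$-envelope is an invariant of the complete order isomorphism class (uniqueness in Theorem \ref{sext-envelope}), so $C^*_\env(E\otimes_\min\K)\cong C^*_\env(F\otimes_\min\K)$. Applying $(i)$ to both sides yields $C^*_\env(E)\otimes\K\cong C^*_\env(F)\otimes\K$, which is exactly the statement that $C^*_\env(E)$ and $C^*_\env(F)$ are stably equivalent $C^*$-algebras.
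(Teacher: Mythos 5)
Your skeleton (show $\alpha=\imath_E\otimes\mathrm{id}$ is a \sext-extension, reduce to the ideal structure of $A\otimes\K$, kill the boundary ideal $I\otimes\K$ with a minimal projection, deduce $(ii)$ formally) is the paper's, but the step you dismiss as routine functoriality is where the genuine gap lies — and it is exactly where the paper spends its effort. The Corollary following Lemma \ref{lma:num-radius}, applied to the complete order isomorphism $\alpha\colon S\to\alpha(S)$, gives a complete order isomorphism of $S^\sharp$ onto the \emph{abstract} partial unitization $\alpha(S)^\sharp$ of Definition \ref{defn:unit-opsyst}, in which positivity of $(y,\lambda)$ is tested against states of $\alpha(S)$. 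Definition \ref{defnextension} demands something different: that $\alpha^\sharp$ be a complete order isomorphism onto its range inside the $C^*$-unitization $(A\otimes\K)^\sharp$, i.e.\ onto the \emph{concrete} unitization $\alpha(S)+\C 1$ with its $C^*$-order. You silently identify these two order structures, and the implication you are effectively using (completely isometric $+$ completely positive $+$ generating $+$ complete order isomorphism onto the image $\Rightarrow$ \sext-extension) is false in general. Concretely: let $k$ be a self-adjoint compact operator with eigenvalues $1$ and $-1/2$, and $X=\C k\subset C^*(k)$. The inclusion is completely isometric, completely positive, generating, and a complete order isomorphism onto its image (both cones are $\{0\}$ since $k$ has mixed signs); the functionals $k\mapsto\pm1$ are states of $X$, so positivity of $(\mu k,\lambda)$ in $X^\sharp$ forces $|\mu|\le\lambda$, whereas in the $C^*$-unitization $2k+1\ge 0$. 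Hence the unitized inclusion is not an order isomorphism onto its range, i.e.\ this is not a \sext-extension. The missing ingredient is precisely the unitality of $E$: the paper's hands-on argument compresses a positive element of the range of $\alpha^\sharp$ by the projections onto $H\otimes K_n$, lands in the \emph{unital} operator systems $M_n(E)$ — where states are automatically unital, so abstract and concrete positivity agree — and then takes a norm limit. Without an argument of this kind your first step is unjustified, and with it the identification $C^*_\env(S^\sharp)\cong(A\otimes\K)^\sharp$ on which your \v{S}ilov computation rests.

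Two smaller points. Your claim that every proper ideal of $(A\otimes\K)^\sharp$ lies in $A\otimes\K$ does not follow from essentiality: for $B=C_0(\R)$ the ideal $\{f\in B^\sharp: f(0)=0\}$ is proper and not contained in $B$. The claim does hold here, but for a different reason: if $J\not\subseteq A\otimes\K$ then $J+(A\otimes\K)=(A\otimes\K)^\sharp$, so the unital quotient $(A\otimes\K)^\sharp/J$ would be isomorphic to some $(A/I)\otimes\K$, which is never unital unless zero. Alternatively one can bypass the unitization altogether, as the paper does: the universal property of the \sext-envelope applied to the non-unital system $S$ directly yields a surjection $C^*_\env(E)\otimes\K\to C^*_\env(S)$ whose kernel is an ideal of $A\otimes\K$, hence of the form $J_0\otimes\K$. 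Your remaining steps — the minimal-projection argument showing the resulting $I$ is a boundary ideal for $E$ in $C^*_\env(E)$, hence zero, and the formal deduction of $(ii)$ from $(i)$ — agree with the paper and are fine once the first step is repaired.
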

\proof $(i)$~Let $\kappa:E \to C^*_\env(E)$ be the $C^*$-envelope of the operator system $E$; and consider the map 
$
\alpha:=\kappa \otimes_\min {\rm id} : E \otimes_\min \K \to C^*_\env(E) \otimes_\min \K$. It makes sense since one can realize $
\kappa$ as an inclusion of  concrete operator systems. Let us show that $\alpha$ is a \sext-extension in the sense of Definition \ref{defnextension}. It is by construction a complete isometry and is completely positive. We need to show that  $
\alpha^\sharp  : (E \otimes_\min \K)^\sharp \to (C^*_\env(E) \otimes_\min \K)^\sharp$
is a complete order isomorphism with its range $R\subset (C^*_\env(E) \otimes_\min \K)^\sharp $. Let then $x=y+\lambda 1\in R$ where $y=\alpha(z)$ for some $z\in E \otimes_\min \K$ and $\lambda \in \R$. Assume that $x\geq 0$. This means that $\lambda \geq 0$ and that $y\geq -\lambda 1$ as concrete operators, \ie in $\B(H \otimes K)$. Let then $K_n\subset K$ be an increasing sequence of $n$-dimensional subspaces whose union is dense in $K$. Let $e_n\in \B(H \otimes K)$ be the orthogonal projection on  $H \otimes K_n$. One has $e_nye_n\geq -\lambda e_n$. Let then $z_n\in M_n(E)\subset   E \otimes_\min \K $ with $e_nye_n=\alpha(z_n)$. Since $
\kappa$ is an inclusion of  concrete operator systems one has $z_n+\lambda e_n\geq 0$. The element $z_n+\lambda 1\in (M_n(E))^\sharp $ is positive in the sense of Definition \ref{defn:unit-opsyst}. Indeed $M_n(E)$ is a unital operator system so its state space in the sense of positive linear functionals on $M_n(E)$ of norm 1 is the same as the ordinary state space  of positive functionals equal to $1$ on the unit $e_n$. Thus since $z_n+\lambda e_n\geq 0$ the positivity condition of Definition \ref{defn:unit-opsyst} holds.  We thus obtain a sequence of positive elements $v_n=z_n+\lambda 1\in (E \otimes_\min \K)^\sharp$ which is norm convergent and whose norm limit $v$ is positive and fulfills $\alpha^\sharp(v)=x$. The similar argument also applies when passing to matrices. 
 Thus we conclude that $C^*_\env(E) \otimes \K$ is a \sext-extension of $E \otimes_\min \K$. Let then $C^*(E \otimes_\min \K)=C^\sharp(E \otimes_\min \K)$ be the $C^*$-envelope of $E \otimes_\min \K$ which exists uniquely by Theorem  \ref{sext-envelope}. From the universal property of Definition \ref{defn-sext-envelope} it then follows that there is an ideal $J$ in $C^*_\env(E) \otimes \K$ such that $(C^*_\env(E) \otimes \K) /J  \cong C^*(E \otimes_\min \K)$. A closed two-sided ideal in the tensor product $A\otimes \K$ of a $C^*$-algebra $A$ by $\K$ is necessarily of the form $J_0\otimes \K$, where $J_0$ is the  closed ideal of $A$ defined by $J_0=\{a\in A \mid a\otimes k\in J, \forall k\in \K\}$. Thus we have here  $J=J_0\otimes \K$. By definition of $J$ the quotient map
$$
q : C^*_\env(E) \otimes \K \to (C^*_\env(E) \otimes \K)/J
$$
restricts to a complete isometry on $\kappa(E \otimes_\min \K) \subseteq C^*_\env(E) \otimes \K$. In particular, with $e\in \K$ a minimal projection,  the restriction of $q $ to $\imath_E(E) \otimes e$ is a complete isometry, and it agrees with the quotient morphism 
$q_0:C^*_\env(E)\to C^*_\env(E)/J_0$. Hence  $J_0$ is a boundary ideal for the $C^*$-envelope $\imath_E: E \to C^*_\env(E)$, so $J_0$ is contained in the \v{S}ilov boundary for the $C^*$-envelope which is $0$. Thus $J_0=0$ and the proof is complete. \newline
$(ii)$~Follows from $(i)$.
\endproof

\begin{corl}
Suppose $E$ and $F$ are unital $C^*$-algebras. If $E$ and $F$ are stably equivalent as operator systems, they are also stably equivalent as $C^*$-algebras.
\end{corl}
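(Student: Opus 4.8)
The plan is to reduce the statement entirely to Proposition~\ref{prop.stabilize}(ii), whose hypotheses are satisfied as soon as we regard the unital $C^*$-algebras $E$ and $F$ as unital operator systems. The only extra input needed beyond that proposition is the standard identification of the $C^*$-envelope of a unital $C^*$-algebra with itself, so the real content of the proof lies in this one observation.

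First I would establish that for any unital $C^*$-algebra $A$, viewed as an operator system, one has $C^*_\env(A) \cong A$. I would deduce this from the \v{S}ilov boundary ideal description in Proposition~\ref{prop:silov}, taking as $C^*$-extension the identity map $\mathrm{id} : A \to A$; this is a unital complete order isomorphism onto its range and $A = C^*(\mathrm{id}(A))$, so it is indeed a $C^*$-extension. A boundary ideal is by definition a closed two-sided ideal $I \subseteq A$ for which the quotient $q : A \to A/I$ is completely isometric on $\mathrm{id}(A) = A$, i.e. for which each $q_n : M_n(A) \to M_n(A)/M_n(I)$ is isometric. Since $q_n$ is a $*$-homomorphism between $C^*$-algebras it is isometric precisely when it is injective, which forces $M_n(I) = 0$ and hence $I = 0$. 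Thus the only boundary ideal, and therefore the \v{S}ilov boundary ideal, is $\{0\}$, and Proposition~\ref{prop:silov} gives $C^*_\env(A) \cong A/\{0\} = A$.

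With this in hand the corollary follows immediately. By hypothesis $E$ and $F$ are unital operator systems that are stably equivalent as operator systems, so Proposition~\ref{prop.stabilize}(ii) yields that $C^*_\env(E)$ and $C^*_\env(F)$ are stably equivalent $C^*$-algebras. Substituting the isomorphisms $C^*_\env(E) \cong E$ and $C^*_\env(F) \cong F$ from the previous step, we conclude that $E$ and $F$ are stably equivalent $C^*$-algebras.

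I do not expect a genuine obstacle here, since the substantive work is already packaged inside Proposition~\ref{prop.stabilize}. The only point requiring a little care is the identification $C^*_\env(A) = A$: one must use the \emph{complete} (not merely ordinary) isometry condition, testing $q_n$ at every matrix level rather than $q$ alone, because stable equivalence is defined through complete order isomorphisms of $E \otimes_\min \K$ and $F \otimes_\min \K$. This is exactly why checking injectivity of each $q_n$ is the correct computation.
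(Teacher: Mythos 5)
Your proof is correct and takes essentially the same route as the paper, whose entire argument is to invoke Proposition~\ref{prop.stabilize}$(ii)$ together with the identifications $E = C^*_\env(E)$ and $F = C^*_\env(F)$; your elaboration of these identifications via the \v{S}ilov ideal description of Proposition~\ref{prop:silov} is a valid justification of that standard fact. One small correction to your closing remark: ordinary isometry of $q$ at the $n=1$ level already forces $I=0$ (an isometric map kills no nonzero element of $I$), so the complete isometry hypothesis, while part of the definition of a boundary ideal, is not actually needed for that step.
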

\proof
 This follows from Proposition \ref{prop.stabilize} $(ii)$ since $E = C^*_\env(E)$ and $F = C^*_\env(F)$. \endproof

\subsection{Propagation number as an invariant of operator systems}

Let $E$ be an operator system. For an integer $n>0$ one lets \Red{$E^{\circ n}$} be the norm closure of the linear span of products of $\leq n$ elements of $E$. It is an operator system. 
\begin{defn}
The {\em propagation number} ${\rm prop}(E)$ of the operator system $E$ is defined as the smallest integer $n$ such that $\imath_E(E)^{\circ n}\subseteq C_{\rm env}^*(E)$ is a $C^*$-algebra. 
  \end{defn}
When no such $n$ exists one lets ${\rm prop}(E)=\infty$. 

\begin{ex}\label{ex:disc-alg2} Returning to the example \ref{ex:disc-alg} of harmonic functions in the disk one has by \ref{ex:disc-alg1} that $C^*_\env(C_{\text{harm}}(\bar {\mathbb D})) \cong C(\overline{\mathbb{D}}) /C_0(\mathbb D) \cong C(S^1)$. The Poisson kernel 
$$
P(z, e^{it}):=\frac{1-\vert z\vert^2}{\vert e^{it}-z\vert^2}
$$
gives the canonical linear section $P:C(S^1)\to C_{\text{harm}}(\bar {\mathbb D})$ by the Poisson integral
$$
P(f)(z):=\frac{1}{2\pi}\int_{-\pi}^\pi P(z,e^{it})f(t)dt
$$
and (see \cite{Rudin} Theorem 11.8) this map is an isomorphism of operator systems. In particular the propagation number of $C_{\text{harm}}(\bar {\mathbb D})$ is equal to $1$. Note that this example shows that morphisms of operator systems between $C^*$-algebras are not in general morphisms of $C^*$-algebras.
\end{ex}

\begin{prop}
  The propagation number is invariant under completely isometric, complete order isomorphisms of operator systems.
  \end{prop}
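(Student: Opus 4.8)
The plan is to reduce the statement to the functoriality of the $C^*$-envelope. Concretely, I would first show that a completely isometric, complete order isomorphism $\phi\colon E\to F$ of operator systems induces a $*$-isomorphism $\Phi\colon C^*_\env(E)\to C^*_\env(F)$ intertwining the canonical embeddings, in the sense that $\Phi\circ\imath_E=\imath_F\circ\phi$. Once this is available, the propagation number — which by definition refers only to the image $\imath_E(E)\subseteq C^*_\env(E)$, to its iterated products $\imath_E(E)^{\circ n}$, and to whether these span $C^*$-subalgebras — can be read off as an invariant.

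For the first step, observe that $\imath_F\circ\phi\colon E\to C^*_\env(F)$ is again a \sext-extension of $E$. Indeed, it is completely isometric and completely positive as a composition of such maps; its image generates $C^*_\env(F)$ as a $C^*$-algebra, because $\phi$ is surjective and hence $C^*\big((\imath_F\circ\phi)(E)\big)=C^*(\imath_F(F))=C^*_\env(F)$; and its unitization $(\imath_F\circ\phi)^\sharp=\imath_F^\sharp\circ\phi^\sharp$ is a complete order isomorphism onto its range, since $\phi^\sharp$ is a (unital) complete order isomorphism (the Corollary following Lemma~\ref{lma:num-radius}) while $\imath_F^\sharp$ is one onto its range because $\imath_F$ is a \sext-extension (Definition~\ref{defnextension}). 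The universal property of the \sext-envelope $\imath_E$ (Definition~\ref{defn-sext-envelope}, Theorem~\ref{sext-envelope}) then yields a unique surjective $*$-homomorphism $\rho\colon C^*_\env(F)\to C^*_\env(E)$ with $\rho\circ\imath_F\circ\phi=\imath_E$. Applying the same reasoning to $\phi^{-1}\colon F\to E$ produces a surjective $*$-homomorphism $\sigma\colon C^*_\env(E)\to C^*_\env(F)$ with $\sigma\circ\imath_E=\imath_F\circ\phi$. Then $\rho\circ\sigma$ fixes $\imath_E(E)$ pointwise and $\sigma\circ\rho$ fixes $\imath_F(F)$ pointwise; as these sets generate the respective $C^*$-envelopes, both composites are the identity, exactly as in the uniqueness argument of Theorem~\ref{sext-envelope}. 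Hence $\Phi:=\sigma$ is the desired $*$-isomorphism, and the whole argument applies verbatim in the non-unital case since it only uses the universal property as formulated for the \sext-envelope.

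For the second step I would exploit that $\Phi$ is an isometric $*$-isomorphism with $\Phi(\imath_E(E))=\imath_F(\phi(E))=\imath_F(F)$. Being multiplicative and linear, $\Phi$ carries the linear span of products of $\leq n$ elements of $\imath_E(E)$ onto the linear span of products of $\leq n$ elements of $\imath_F(F)$; being a norm-preserving homeomorphism, it commutes with taking norm closures, so that $\Phi\big(\imath_E(E)^{\circ n}\big)=\imath_F(F)^{\circ n}$ for every $n$. Since $\Phi$ is a $*$-isomorphism, the norm-closed self-adjoint subspace $\imath_E(E)^{\circ n}$ is closed under multiplication — i.e.\ is a $C^*$-subalgebra — if and only if its image $\imath_F(F)^{\circ n}$ is. Therefore the smallest $n$ for which $\imath_E(E)^{\circ n}$ is a $C^*$-algebra equals the smallest such $n$ on the $F$ side (with the convention $\infty$ when none exists), giving $\mathrm{prop}(E)=\mathrm{prop}(F)$.

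The only genuinely delicate point is the first step, and within it the verification that $\imath_F\circ\phi$ really is a \sext-extension rather than a mere completely positive complete isometry: it is the condition on the unitization $(\imath_F\circ\phi)^\sharp$ that makes the universal property applicable, and it is precisely here that one must invoke the preservation of complete order isomorphisms under $E\mapsto E^\sharp$. Everything afterwards — the two-sided-inverse argument producing $\Phi$, and the transport of the product filtration $(\cdot)^{\circ n}$ through the $*$-isomorphism — is formal, relying only on $\Phi$ being an isometric algebra isomorphism and $\phi$ being onto.
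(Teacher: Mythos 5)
Your proof is correct and follows essentially the same route as the paper: both deduce from the universal property of the ($C^\sharp$-)envelope a $*$-isomorphism $\tilde\phi\colon C^*_{\rm env}(E)\to C^*_{\rm env}(F)$ intertwining the canonical embeddings, and then transport the filtration $\imath_E(E)^{\circ n}$ through it. The paper simply cites this functoriality as the "uniqueness property" of $C^*$-envelopes, whereas you spell out the verification (that $\imath_F\circ\phi$ is a \sext-extension and that the two induced surjections are mutually inverse), which is a faithful elaboration rather than a different argument.
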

\proof
This follows from the uniqueness property of the $C^*$-envelopes: given a complete order isomorphism $\phi:E \to F$ of two operator systems there is a commuting diagram:
$$
\xymatrix{
  E \ar[d]_{\imath_E} \ar[r]_{\phi} & F \ar[d]_{\imath_F}\\
  C_{\rm env}^*(E) \ar[r]_{\tilde\phi} & C_{\rm env}^*(F)
}
$$
where $\tilde \phi$ is a $*$-isomorphism. Hence if ${\rm prop}(E)=n$ then we obtain that
$$
(\imath_F(F))^{\circ n} = \left(\imath_F(\phi(E))\right)^{\circ n} = \left(\tilde \phi (\imath_E(E))\right)^{\circ n} = \tilde \phi \left((\imath_E(E))^{\circ n}\right) = C_{\rm env}^*(F),
$$
and ${\rm prop}(F) \leq n = {\rm prop}(E)$. Similarly we find ${\rm prop}(E) \leq {\rm prop}(F)$ which completes the proof. 
\endproof

\begin{prop}
The  propagation number is invariant under stable equivalence, {\em i.e.}, for any unital operator system $E$ we have
$$
{\rm prop}(E) = {\rm prop}(E \otimes_\min \K)
$$
where $\K$ is the $C^*$-algebra of compact operators.
\end{prop}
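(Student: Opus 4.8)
The plan is to reduce everything to a single identity at the level of the $C^*$-envelope. Write $F := E \otimes_\min \K$, $B := C^*_\env(E)$, and $A_n := (\imath_E(E))^{\circ n} \subseteq B$. By Proposition \ref{prop.stabilize}$(i)$ we may identify $C^*_\env(F)$ with $B \otimes \K$, the embedding $\imath_F$ being $\imath_E \otimes \mathrm{id}$. I would first establish the key identity
$$
(\imath_F(F))^{\circ n} = \overline{A_n \otimes \K} \subseteq B \otimes \K
$$
for every $n \ge 1$, and then prove that $A_n$ is a $C^*$-algebra if and only if $\overline{A_n \otimes \K}$ is. Granting these two facts, the set of $n$ for which $(\imath_E(E))^{\circ n}$ is a $C^*$-algebra coincides with the set of $n$ for which $(\imath_F(F))^{\circ n}$ is, so their minima---namely $\mathrm{prop}(E)$ and $\mathrm{prop}(F)$---agree.

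For the identity I would argue by two containments. Since $\imath_F(F)$ is the closure of $\imath_E(E) \otimes \K$, a product of $m \le n$ of its elements lies, by joint continuity of multiplication together with distributivity, in the closed span of the elements $(a_1 \cdots a_m) \otimes (k_1 \cdots k_m)$ with $a_i \in \imath_E(E)$ and $k_i \in \K$; here $a_1 \cdots a_m \in A_n$ and $k_1 \cdots k_m \in \K$, which gives ``$\subseteq$''. Conversely, for a product $p = a_1 \cdots a_m$ with $m \le n$ and any $k \in \K$, I would factor $k = k_1 \cdots k_m$ (possible up to norm approximation, since $\K$ is a $C^*$-algebra and hence $\overline{\K^{\circ m}} = \K$) to write $p \otimes k = \prod_i (a_i \otimes k_i) \in (\imath_F(F))^{\circ m} \subseteq (\imath_F(F))^{\circ n}$; passing to linear spans and norm closures then yields ``$\supseteq$''. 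Observe that this uses only that the defining family for $(\,\cdot\,)^{\circ n}$ consists of products of \emph{at most} $n$ elements, so no unitality of $F$ is required.

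The equivalence ``$A_n$ is a $C^*$-algebra $\iff \overline{A_n \otimes \K}$ is a $C^*$-algebra'' is where the real work lies, and I expect the reverse implication to be the main obstacle. The forward direction is immediate: if $A_n \subseteq B$ is a $C^*$-subalgebra, then $\overline{A_n \otimes \K} = A_n \otimes \K$ is a minimal tensor product of two $C^*$-algebras, hence a $C^*$-algebra. For the converse I would fix a rank-one projection $e \in \K$ and a state $\omega$ on $\K$ with $\omega(e) = 1$, and use the slice map $\mathrm{id} \otimes \omega : B \otimes \K \to B$, which on the minimal tensor product is a well-defined completely positive contraction and maps $\overline{A_n \otimes \K}$ into the closed subspace $A_n$. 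Now $A_n$ is by construction norm closed and self-adjoint (as $E$ is self-adjoint, $(y_1 \cdots y_m)^* = y_m^* \cdots y_1^* \in A_n$), so it is a $C^*$-algebra precisely when it is closed under multiplication. Given $a, b \in A_n$, the assumption that $\overline{A_n \otimes \K}$ is multiplicatively closed gives $(a \otimes e)(b \otimes e) = ab \otimes e \in \overline{A_n \otimes \K}$, and applying the slice map yields $ab = (\mathrm{id} \otimes \omega)(ab \otimes e) \in A_n$. Hence $A_n$ is closed under products, which completes the equivalence and hence the proof.
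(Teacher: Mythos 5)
Your proof is correct, and its first half coincides with the paper's: the identification $C^*_\env(E\otimes_\min\K)\cong C^*_\env(E)\otimes\K$ with $\imath_F=\imath_E\otimes\mathrm{id}$ (Proposition \ref{prop.stabilize}$(i)$), together with the identity $(\imath_F(F))^{\circ n}=\overline{A_n\otimes\K}$ (in your notation $F=E\otimes_\min\K$, $A_n=(\imath_E(E))^{\circ n}$), which the paper uses as the displayed chain of equalities proving ${\rm prop}(E\otimes_\min\K)\leq{\rm prop}(E)$. Where you genuinely reorganize is the hard direction. The paper assumes ${\rm prop}(E\otimes_\min\K)=n$, observes that then $(\imath_F(F))^{\circ n}$ is all of $C^*_\env(E)\otimes\K$, approximates an arbitrary $x\otimes e$ (with $x\in C^*_\env(E)$ and $e$ a minimal projection) by sums of products $(a_1\otimes k_1)\cdots(a_n\otimes k_n)$, and compresses, using $ek_1\cdots k_ne=\lambda e$, to get $x\in A_n$. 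You instead prove the level-by-level equivalence that $A_n$ is a $C^*$-algebra if and only if $\overline{A_n\otimes\K}$ is, the nontrivial implication coming from the slice map $\mathrm{id}\otimes\omega$ applied to $(a\otimes e)(b\otimes e)=ab\otimes e$. The two descent devices are the same trick in different clothing: your slice against the vector state of $e$ is precisely the paper's compression by $1\otimes e$. Still, the packagings buy different things. Your version gives the sharper structural fact that the whole filtration $n\mapsto(\imath_F(F))^{\circ n}$ matches $n\mapsto\overline{A_n\otimes\K}$ and that closure under products transfers at every level separately; in particular it needs only the bare hypothesis that $(\imath_F(F))^{\circ n}$ is multiplicatively closed, never the observation (implicit in the paper) that a $C^*$-subalgebra of the envelope containing $\imath_F(F)$ must equal the whole envelope. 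The paper's version is shorter, comparing the two minima directly without isolating the per-$n$ statement.
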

\proof
Suppose that ${\rm prop}(E)=n$. Then
\begin{align*}
\imath_{E \otimes_\min \K} (E \otimes_\min \K)^{\circ n} =
\imath_{E \otimes_\min \K} (E \otimes_\alg \K)^{\circ n}  = \imath_{E}(E)^{\circ n} \otimes \K = C^*_\env(E) \otimes \K.
\end{align*}
We conclude that ${\rm prop}(E \otimes_\min \K) \leq n = {\rm prop}(E)$. 

In the other direction, let us now assume that ${\rm prop} (E \otimes_\min \K) = n$. Then by Proposition \ref{prop.stabilize}, one has $C^*_\env(E \otimes \K) =C^*_\env(E) \otimes \K $ and thus the latter is the norm closure of the linear span of the products of at most $n$ elements of  $E \otimes_\alg \K$. Let $x\in C^*_\env(E)$ and $e\in \K$ a minimal projection. One can thus approximate $x\otimes e$ by a finite sum of elements of the form 
$$
m=(a_1\otimes k_1)\cdots (a_n\otimes k_n), \qquad a_j\in E, \ k_j\in \K.
$$
One thus obtains a good approximation of $x$ by the sum of the 
$$
\lambda\, a_1\cdots a_n, \qquad e k_1\cdots k_n e=\lambda e
$$
 from which it follows that ${\rm prop}(E) \leq n = {\rm prop}(E \otimes_\min \K)$.
\endproof

\section{Spectral truncations}
\label{sect:truncations}
The basic paradigm in noncommutative geometry is given by a so-called {\em spectral triple} $(\A,\H,D)$, combining a $*$-algebra of bounded operators on Hilbert space $\H$ with a self-adjoint operator $D$ with compact resolvent and bounded commutators $[D,a]$ for all $a \in \A$. The typical example is given by a compact Riemannian spin manifold $M$ where $\A= C^\infty(M)$ and $D$ is the Dirac operator acting on $L^2$-spinors. In fact, it is possible \cite{C08} to reconstruct a Riemannian spin manifold from any spectral triple $(\A,\H,D)$ satisfying certain conditions, including commutativity of $\A$. 

As we explained in the introduction, our goal is to extend this approach of geometry to cases where (possibly) only part of the spectrum of $D$ is available. Most naturally, we may consider a spectral triple $(\A,\H,D)$ where we take a cutoff of the operator $D$, described by means of a spectral projection $P$, projecting onto a finite number of the eigenvectors of $D$. Clearly, such an operator commutes with $D$ and, in fact, any operator that commutes with $D$ also commutes with $P$ so that the truncation respects the group of isometries of the geometry. The operator $D$ is thus replaced by $PD = PDP = DP$ acting as an operator on the Hilbert space $P \H$.

Clearly, the $*$-algebra $\A$ does not act on $P\H$ any more. However, we may form the space $P\A P$ of operators which is invariant under the involution. In other words, $P\A P$ is an operator system in $\B(P\H)$. We thus come to consider the triple $(P\A P, P \H, PDP)$.

The advantage of this spectral description is that we now have the possibility to work with a possibly finite-dimensional truncation of the spectral geometry, while keeping all the isometries of the original spectral triple intact. Indeed, the isometries of the latter are given by unitaries $U$ such that $U DU^*=D$ and $U \A U^* = \A$. Now, since $P$ commutes with such $U$, it follows that $U$ also acts unitarily on $P\H$ while it maps $P\A P$ and $D$ to itself.

We thus arrive at the following generalization of spectral triples with $*$-algebras of bounded operators replaced by operator systems.

\begin{defn} An {\em operator system spectral triple} is a triple $(\E,\H,D)$ where $\E$ is a dense subspace of a (concrete) operator system $E$ in $\mathcal B(\H)$, $\H$ is a Hilbert space and $D$ is a self-adjoint operator in $\H$ with compact resolvent and such that $[D,T]$ is a bounded operator for all $T \in \E$.
\end{defn}

Since states are perfectly defined on operator systems, the above triple induces a (generalized) distance function on $\cS(E)$ by setting
\begin{equation}
\label{eq:dist}
d(\phi,\psi) = \sup_{x \in \E} \left\{ | \phi(x) - \psi(x)|: \|[D, x]\| \leq 1 \right\}.
\end{equation}
If $\E = \A$ is a $*$-algebra then this reduces to the usual distance function \cite{C94} on the state space of the $C^*$-algebra $A = \overline \A$ . It also agrees with the definition of quantum metric spaces based on order-unit spaces given in \cite{Rie00,Ker03,Lat16}. It has been studied for truncations in \cite{ALM14,GS19a,GS19b}. The properties of this metric distance function and the notions of Gromov--Hausdorff convergence it gives rise to will be studied elsewhere. We will here focus on a detailed analysis on the topological structure. We start with the simplest case given by spectral truncations of the circle. As we will find the theory is extremely rich, in fact, already in a spectral truncation of the circle of rank 3.

\subsection{Spectral truncation of the circle}
\label{sect:trunc-circle}
We consider $C^\infty(S^1) \subset C(S^1)$ acting as bounded multiplication operators on $L^2(S^1)$. 
An orthonormal basis of $L^2(S^1)$ is given in terms of Fourier theory: $e_k (x)= e^{ikx}$ for $k \in \Z$. These are of course eigenvectors for the Dirac operator $-i d/dx$ on the circle. We consider a spectral truncation defined by the orthogonal projection $P_{n}$ onto span$_\C\{ e_{1},e_{2},\ldots , e_{n}\}$ for some fixed $n > 0$. We will also write simply $P=P_{n}$. The space $P C(S^1)P$ is an operator system and 
an arbitrary element $T=PfP$ in $\Toep{n} = PC(S^1)P$ can be written as the following matrix with respect to the orthonormal basis span$_\C\{ e_{k}\}_{k=1}^n$:
\begin{equation}\label{toeplitz}
P f P \sim \begin{pmatrix}
a_0 & a_{-1} & a_{-2} &a_{-3} & \ldots& a_{-n+1} \\
a_1 & a_0 & a_{-1} & a_{-2}&\ldots &a_{-n+2}\\ 
a_2 & a_1 & a_0 & a_{-1} & \ldots & a_{-n+3}\\
a_3 & a_2 & a_1 & a_{0} & \ldots & a_{-n+4}\\
\vdots & \vdots & \vdots & \vdots & \ddots & \vdots\\
a_{n-1} & a_{n-2} & a_{n-3} & a_{n-4}  & \cdots & a_{0}
\end{pmatrix}
\end{equation}
in terms of the Fourier coefficients $\{a_{n}\}_{n \in \Z}$ of $f \in C^\infty(S^1)$. Such matrices with constants along all diagonals are called {\em Toeplitz matrices}. Hence the spectral truncation $P_n C(S^1)P_n$ is given by the {\em Toeplitz operator system} of all $n \times n$ Toeplitz matrices. A fully general analysis of the Toeplitz system will be postponed to the next section, including the $C^*$-envelope, the propagation number, the extreme rays in the cone of positive elements, the pure state spaces, {\em et cetera}.

In the next subsection we anticipate this discussion and lift the curtain on the rich structure that is found already in the simplest non-trivial case, namely when $n=3$.

\subsection{State space of the truncated circle ($n=3$)}
We shall proceed by discussing in details the simplest non-trivial case 
 which is $n=3$, but before entering in the details we list some properties which ought to be shared with the general case:
 
 \begin{itemize}
 \item The extreme rays of the positive cone $O_+$ of the operator space form a circle.  More precisely they are proportional to rank one self-adjoint idempotents  which belong to $O$. They remain extreme rays in the cone of positive matrices.  
 \item  The boundary $\partial O_+$ of the base of the positive cone $O_+$ is the closure of a component of the complement of singular points in a  rational algebraic hypersurface $H$.
 \item The extreme points of the base of the positive cone $O_+$ are singular points of the above hypersurface $H$.

 \item The boundary $\partial \cS$ of the state space is a component of the complement of singular points in a  rational algebraic hypersurface $K$.

  \item The extreme points $E$ of  the state space coincide with the singular points of the above hypersurface $K$  and form the quotient of a torus by the symmetric group.

 \item Both hypersurfaces $H$ and $K$ are the union of a pencil of lines obtained from pairs of points of the singular set.
  \end{itemize}
In the case of the base of the cone $O_+$ the pencil of lines is of dimension $2$ (for $n=3$) and its elements are parametrized by arbitrary pairs of points of the curve $\Gamma$ of extreme points. In the case of the state space the singular set of $K$ is two dimensional and the pencil of lines is formed of lines joining an arbitrary point of the singular set with a precisely defined corresponding point. The pencil of lines is also two dimensional.

 \subsubsection{The positive cone $O_+$ and its extreme rays}
 The self-adjoint elements of the truncated operator space $P_3C(S^1)P_3$ are matrices of the form  
$$
\left(
\begin{array}{ccc}
 u & a-i b & c-i d \\
 a+b i & u & a-i b \\
 c+d i & a+b i & u \\
\end{array}
\right),\ \ (a,b,c,d,u)\in \R^5.
$$
They form a $5$-dimensional real vector space $O$ and what matters is to understand the positive cone  $O_+$ for the operator norm. The state space is then obtained using the dual cone and intersecting with the hyperplane $\phi(1)=1$. The coefficients of the characteristic polynomial of the above matrix are the following
\begin{multline*}
\bigg\{-2 a^2 (c-u)-4 a b d+2 b^2 (c+u)+u \left(c^2+d^2-u^2\right),\\-2 a^2-2 b^2-c^2-d^2+3 u^2,-3 u,1\bigg\}
\end{multline*}
and they give a map $\psi:O\to \R^3$. 
\begin{lem}
\label{lma:cone-det0}
  The cone $O_+$ of positive elements is the closure of the open component of $(0,0,0,0,1)$ in the complement of the hypersurface 
$$
H:=\{(a,b,c,d,u)\mid 2 a^2 c-2 a^2 u+4 a b d-2 b^2 c-2 b^2 u-c^2 u-d^2 u+u^3=0\}
$$	
\end{lem}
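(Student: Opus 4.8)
The plan is to recognize $H$ as the zero locus of the determinant and then to identify the connected component of the identity matrix with the positive-definite Toeplitz matrices by a continuity argument on eigenvalues. First I would observe that the polynomial cutting out $H$ is, up to sign, exactly the determinant of the general self-adjoint Toeplitz matrix $M=M(a,b,c,d,u)$ displayed above. Indeed, writing $p=a-ib$ and $q=c-id$ and expanding along the first row one finds
$$
\det M = u^3 - 2u(a^2+b^2) - u(c^2+d^2) + 2(a^2-b^2)c + 4abd,
$$
which is precisely the negative of the listed constant coefficient of the characteristic polynomial and coincides with the defining polynomial of $H$. Hence $H=\{M:\det M=0\}$, and its complement $O\setminus H$ is exactly the set of invertible self-adjoint Toeplitz matrices.

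Next I would use that on $O\setminus H$ no eigenvalue vanishes, so the signature $(n_+,n_-)$ is locally constant: along any continuous path in $O\setminus H$ the eigenvalues vary continuously and never meet $0$, hence never change sign. The point $(0,0,0,0,1)$ corresponds to the identity matrix $I$, whose eigenvalues are all equal to $1$, so the connected component $C$ of $I$ in $O\setminus H$ consists entirely of matrices with three strictly positive eigenvalues, \ie positive-definite Toeplitz matrices. Conversely, the positive-definite Toeplitz matrices form a convex, hence connected, subset of $O\setminus H$ containing $I$, and therefore lie in $C$. This shows that $C$ equals the set of positive-definite elements of $O$.

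Finally I would pass to closures. The inclusion $\overline{C}\subseteq O_+$ is immediate, since eigenvalues depend continuously on the matrix entries and the Toeplitz subspace is closed, so any limit of positive-definite matrices is positive semidefinite. For the reverse inclusion, given $M\in O_+$ the matrix $M+\eps I$ lies in $C$ for every $\eps>0$, because adding $\eps I$ stays inside the Toeplitz subspace $O$ and shifts all eigenvalues up by $\eps$; letting $\eps\to0$ gives $M\in\overline{C}$. Hence $O_+=\overline{C}$, as asserted. I expect the determinant identification in the first step to carry the only real computational content; the topological identification of $C$ and the closure argument are routine, the single point requiring care being the constancy of signature on each component of $O\setminus H$, which is what pins down $C$ as exactly the positive-definite cone rather than some larger region.
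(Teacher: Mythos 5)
Your proposal is correct and follows essentially the same route as the paper's own proof: both identify $H$ with the vanishing locus of $\det M$ (your expansion of the determinant is right and matches the cubic defining $H$), use continuity of the real eigenvalues along paths in $O\setminus H$ to identify the component of the identity with the positive-definite Toeplitz matrices, and recover $O_+$ as the closure of that component via convexity. The only difference is presentational: you spell out the determinant computation and the closure step (via $M+\eps I$) explicitly, which the paper treats tersely.
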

\proof The elements of the cone $O_+$ are the  elements of $O$ whose eigenvalues are positive. Since all elements of $O$ are self-adjoint matrices, their eigenvalues are real. The cone $O_+$ is convex and is the closure of its interior which consists of 
matrices $A$ whose eigenvalues are strictly positive. For $A\in O_+$ is strictly positive, the segment joining $A$ to the identity matrix $1$ stays inside $O_+$ and thus $A$ belongs to the open component of $(0,0,0,0,1)$ in the complement of the hypersurface $H$. Conversely on a path in the complement of the hypersurface $H$ joining $1$ to $A$ the eigenvalues remain positive since they vary continuously and cannot vanish as their product is given by the determinant of the matrix which is
$$
2 a^2 c-2 a^2 u+4 a b d-2 b^2 c-2 b^2 u-c^2 u-d^2 u+u^3
$$
equal to the cubic polynomial which defines $H$. \endproof  
Since the trace of a matrix is the sum of its eigenvalues one knows that it is $>0$ on the cone  $O_+$ and hence the natural basis of the cone $O_+$ is its intersection $O_{+,1}$ with the hyperplane $u=1$. 
The hypersurface $H$ is then determined by the three dimensional zero set $Z$ of the polynomial 
$$
\delta(a,b,c,d)=2 a^2 (c-1)+4 a b d-2 b^2 (c+1)-c^2-d^2+1
$$
and $O_{+,1}$ is the closure of the open component $B$ of $(0,0,0,0)$ in the complement of $Z$. The singularities of the hypersurface $Z$ correspond to the points of $Z$ at which the gradient of $\delta$ vanishes. This gradient is given by 
$$
\nabla\delta=\left\{4 a (c-1)+4 b d,4 a d-4 b (c+1),2 \left(a^2-b^2-c\right),4 a b-2 d\right\}
$$
and the points of $Z$ on which $\nabla\delta$ vanishes form the rational curve 
$$
\Gamma:=\{(a,b,c,d)\mid a^2+b^2=1, \ c=-1+2a^2, \ d=2 ab\}.
$$
It is parametrized in the form 
$$
(a,b,c,d)=(\cos(x),\sin(x),\cos(2x),\sin(2x))=\gamma(x)\in \Gamma.
$$
\begin{figure}
\centering
\includegraphics[scale=.5]{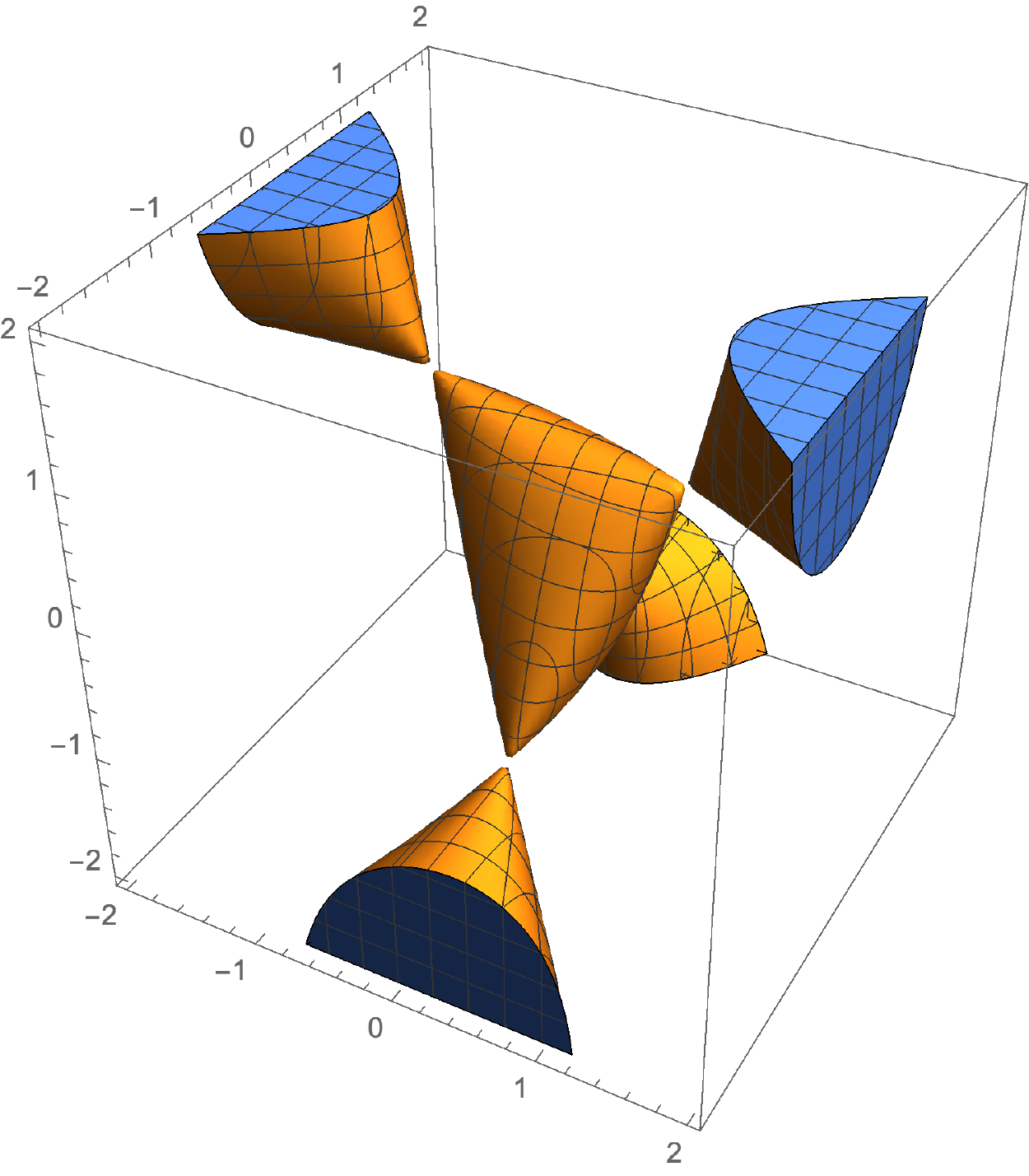}
\caption{The hypersurface $Z$ intersected with the hyperplane $d=-4/10$. One can see $6$ components of the complement, the central one is convex}
\end{figure}
\begin{lem}\label{mapsig} The boundary $\partial B$ of  $B$  is the range of  
$\sigma:\T^2\times [0,1]\to Z\subset \R^4$ 
$$
\sigma(x,y,s):=s \gamma(x)+(1-s)\gamma(y).
$$
\end{lem}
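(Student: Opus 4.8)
The plan is to reinterpret the whole statement in terms of positive semidefinite Toeplitz matrices and then reduce the geometric claim to the classical Carath\'eodory--Fej\'er factorization. First I would recall that a point $(a,b,c,d)$ in the slice $u=1$ is exactly the matrix
$$
T=\begin{pmatrix} 1 & a-ib & c-id \\ a+ib & 1 & a-ib \\ c+id & a+ib & 1\end{pmatrix},
$$
that by Lemma \ref{lma:cone-det0} the base $O_{+,1}=\overline B$ consists of the positive semidefinite such $T$, and that its interior $B$ consists of the strictly positive definite ones (these form a convex, hence connected, open set containing the identity, and inside the positive cone nonsingularity is equivalent to positive definiteness). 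Since $\det T=\delta(a,b,c,d)$ and $B$ is a connected component of the complement of $Z=\{\delta=0\}$, we have $B\cap Z=\emptyset$, so
$$
\partial B=\overline B\cap Z=\{\,(a,b,c,d)\mid T\geq 0,\ \det T=0\,\};
$$
that is, $\partial B$ is precisely the set of singular (rank $\leq 2$) positive semidefinite matrices of the above form.

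The next step is to identify the curve $\Gamma$ with the rank-one boundary points. Writing $v_x=(1,e^{ix},e^{2ix})^t$, a direct check shows that $v_xv_x^*$ is Toeplitz with unit diagonal and that its off-diagonal data are exactly $\gamma(x)$. Conversely, any rank-one positive $T$ with unit diagonal is $vv^*$ with $|v_j|=1$, and the Toeplitz condition forces the phases of the $v_j$ into arithmetic progression, so $T=v_xv_x^*$ up to an irrelevant global phase. Hence $\Gamma=\{\gamma(x)\}=\operatorname{range}\bigl(\sigma(\cdot,\cdot,1)\bigr)$ is precisely the rank-one part of $\partial B$.

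It then remains to treat the rank-two part, which is the heart of the matter. On one side, each $\sigma(x,y,s)=s\gamma(x)+(1-s)\gamma(y)$ corresponds to $s\,v_xv_x^*+(1-s)\,v_yv_y^*$, a convex combination of positive matrices with unit diagonal, hence positive with unit diagonal and of rank $\leq 2$, therefore singular; by the first paragraph it lies in $\overline B\cap Z=\partial B$, giving $\operatorname{range}(\sigma)\subseteq\partial B$. For the reverse inclusion I would invoke the Carath\'eodory--Fej\'er theorem: a rank-$r$ positive semidefinite Toeplitz matrix admits a representation $T=\sum_{j=1}^r\rho_j\,v_{x_j}v_{x_j}^*$ with $\rho_j>0$ and distinct nodes $x_j$. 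Applied to a rank-two boundary point, the unit diagonal forces $\rho_1+\rho_2=1$, whence $T=s\gamma(x_1)+(1-s)\gamma(x_2)=\sigma(x_1,x_2,s)$ with $s=\rho_1$; the rank-one points are already covered by $\Gamma$. Thus $\partial B\subseteq\operatorname{range}(\sigma)$, and combining the two inclusions proves $\partial B=\operatorname{range}(\sigma)$.

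The main obstacle is this last factorization step, i.e. producing two nodes \emph{on the unit circle}. For general $n$ this is exactly Carath\'eodory's theorem (reproved later in the paper via duality), but for the present case $n=3$ it can be carried out by hand so as to keep the lemma self-contained: a singular rank-two $T$ has a one-dimensional kernel spanned by some $w=(w_0,w_1,w_2)$, and the classical positivity argument shows that the quadratic $w_0+w_1z+w_2z^2$ has both roots on $|z|=1$, say $e^{ix_1}$ and $e^{ix_2}$; the Vandermonde vectors $v_{x_1},v_{x_2}$ then span the range of $T$ and yield the required positive weights $\rho_1,\rho_2$. I would spell out this explicit kernel-polynomial computation in place of citing the general theorem.
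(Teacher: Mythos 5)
Your proposal is correct in outline, but it follows a genuinely different route from the paper. The paper's proof is computational and topological: it checks $\delta(\sigma(x,y,s))=0$ by expanding in powers of $s$; it shows $\Gamma\subset\partial B$ via the explicit path $p(t,x)$ with $\delta(p(t,x))=(t-1)^2(2t+1)$, so that convexity of $B$ gives $\mathrm{range}(\sigma)\subseteq\partial B$; and it proves surjectivity by computing the minors of the Jacobian of $\sigma$, noting that all critical values lie on $\Gamma$, and running an open--closed connectedness argument on $\partial B\setminus\Gamma$ (a curve cannot disconnect the topological $3$-sphere $\partial B$). You instead translate the statement into matrix rank: $\partial B$ is the set of singular positive semidefinite Toeplitz matrices with unit diagonal, $\Gamma$ is the rank-one stratum, and surjectivity becomes the Carath\'eodory--Fej\'er/Vandermonde factorization of a rank-two singular matrix. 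This is precisely the mechanism the paper develops later for general $n$ (Proposition \ref{proptoeplitzextremepoints}, Corollary \ref{corl:toeplitz-decomp-rank}, Theorem \ref{thm:toeplitz-decomp}, proved there via duality with $\CZ{n}$ and independently of Lemma \ref{mapsig}, so no circularity); your proof anticipates that structural explanation in the case $n=3$. What your route buys is conceptual clarity and no Jacobian or connectedness arguments; what it costs is that the lemma is no longer self-contained unless the factorization is proved by hand, as you propose.

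Since the entire weight of your argument rests on the deferred ``classical positivity argument,'' you should be aware that it contains three sub-steps, two of which your sketch glosses over. (i) One must first show the kernel polynomial is genuinely quadratic with $w_0\neq 0$: this follows because the leading $2\times 2$ principal submatrix $T_2$ is positive definite, which itself uses the Toeplitz shift identity (if $u^*T_2u=0$ then both $(u_0,u_1,0)$ and $(0,u_0,u_1)$ lie in the one-dimensional kernel of $T$, forcing $u=0$). (ii) Roots on the circle: writing $p=(z-z_0)q$ and using that the $T$-form takes the same value $Q=q^*T_2q>0$ on the embeddings of $q$ in the first and in the last two coordinates, Cauchy--Schwarz gives $0=w^*Tw\geq Q(1-|z_0|)^2$, hence $|z_0|=1$. (iii) The two circle roots must be \emph{distinct}, and the cross terms must be eliminated: if the roots coincided, the $S^1$-symmetry lets you put the double root at $z=1$, and then $Tw=0$ with $w=(1,-2,1)$ forces $t_1=1+i\beta$, $t_2=1+2i\beta$, whose $2\times2$ minor is $-\beta^2<0$ unless $\beta=0$, in which case $T=\gamma(0)$ has rank one, a contradiction; and once $e^{ix_1}\neq e^{ix_2}$, knowing that $v_{x_1},v_{x_2}$ span the range of $T$ only yields $T=\sum_{j,k}c_{jk}\,v_{x_j}v_{x_k}^*$ with $(c_{jk})$ Hermitian, so you must invoke the Toeplitz condition once more to kill the off-diagonal coefficients before reading off $\rho_1+\rho_2=1$ from the unit diagonal. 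With these points written out, your proof is complete and correct.
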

\proof One first checks that $\sigma(x,y,s)\in Z$. One has
$$
\delta(\sigma(x,y,s))=(s \sin (2 x)+(1-s) \sin (2 y))^2-2 (s \cos (2 x)+$$
$$+(1-s) \cos (2 y)-1) (s \cos (x)+(1-s) \cos (y))^2+ 
(s \cos (2 x)+(1-s) \cos (2 y))^2$$
$$-4 (s \sin (x)+(1-s) \sin (y)) (s \sin (2 x)+(1-s) \sin (2 y)) (s \cos (x)+(1-s) \cos (y))$$
$$+2 (s \sin (x)+(1-s) \sin (y))^2 (s \cos (2 x)+(1-s) \cos (2 y)+1)-1.
$$
After expanding in powers of $s$ one finds that all coefficients vanish identically. 

Thus the range of $\sigma$ is contained in the hypersurface $Z$ and we now verify that it is contained in the boundary of the component $B$ of the complement of $Z$. Let us show that any element $\gamma(x)\in \Gamma$ belongs to the boundary of $B$. One considers the path given (with parameter $t\in [0,1]$) by
$$
t\mapsto p(t,x):= (1-t)/3 \left(\gamma(x)+\gamma(x+2\pi/3)+\gamma(x+4\pi/3) \right)+t\gamma(x).
$$
One finds that independently of $x\in [0,2\pi]$ one has $\delta(p(t,x))=(t-1)^2 (2 t+1)$, while one has $p(0,x)=0$ and $p(1,x)=\gamma(x)$. This shows that $\gamma(x)\in \Gamma$ belongs to the boundary of $B$. It follows by convexity of $B$ that the range of $\sigma$ is contained in the boundary of $B$. 

The minors of the Jacobian of the map $\sigma$ are 
$$
\left(
\begin{array}{c}
 -8 (s-1) s \sin ^4\left(\frac{x-y}{2}\right) \sin (x+y) \\ 8 (s-1) s \cos (x+y) \sin ^4\left(\frac{x-y}{2}\right) \\ 16 (s-1) s \sin ^4\left(\frac{x-y}{2}\right) (\sin (x)+\sin (y)) \\ -16 (s-1) s (\cos (x)+\cos (y)) \sin ^4\left(\frac{x-y}{2}\right)\\
\end{array}
\right).
$$
They all vanish if and only if $(s-1) s \sin ^4\left(\frac{x-y}{2}\right)=0$. This holds only if $s\in \{0,1\}$ or if the pair $(x,y)$ belongs to the diagonal $\Delta\subset \T^2$. In both cases the critical value is on the curve $\Gamma$. The complement of $\Gamma$ in $\partial B$ is connected since $\partial B$ is the boundary of the convex body $B$ (the boundedness of $B$ follows from the boundedness of positive matrices of fixed trace) and is thus homeomorphic to a three dimensional sphere which the curve $\Gamma$ cannot disconnect. Thus the intersection of the range of $\sigma$ with the complement of $\Gamma$ in $\partial B$ is both open and closed (since the domain $\T^2\times [0,1]$ is compact and critical values lie in $\Gamma$) and is thus equal to the complement of $\Gamma$ in $\partial B$. Since  $\Gamma$ lies in the range of $\sigma$ one gets the required surjectivity. \endproof 
\begin{prop} \label{extrepts} The extreme points of the convex set $O_{+,1}$ form the curve $\Gamma$. 	
\end{prop}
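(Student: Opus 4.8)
The plan is to prove the two inclusions separately, using Lemma~\ref{mapsig} for one direction and the rank-one description of $\Gamma$ for the other. Throughout I regard $O_{+,1}=\overline{B}$ as a compact convex body sitting full-dimensionally inside the hyperplane $\{u=1\}\cong\R^4$: compactness was already observed in the proof of Lemma~\ref{mapsig} (positive matrices of fixed trace form a bounded set), while $B$ is open and nonempty and hence full-dimensional. Consequently, by elementary finite-dimensional convexity, every extreme point of $O_{+,1}$ must lie on the relative boundary $\partial B$.

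First I would show that each $\gamma(x)$ is an extreme point. Writing $v_x=(1,e^{ix},e^{2ix})^t$ and recalling that $a-ib=e^{-ix}$ and $c-id=e^{-2ix}$ along $\Gamma$, the matrix attached to $\gamma(x)$ is precisely $v_xv_x^*$, a rank-one positive matrix of trace $3$. Rank-one positive matrices span the extreme rays of the full cone $M_3(\C)_+$, equivalently they are the extreme points of the slice $\{A\geq 0:\tr A=3\}$: if $v_xv_x^*=\tfrac12(A+B)$ with $A,B\geq 0$, then $0\leq A\leq 2v_xv_x^*$ forces every vector $w\perp v_x$ into $\ker A$, so $A$ is supported on the line $\C v_x$, and the trace constraint then pins $A=v_xv_x^*$, whence also $B=v_xv_x^*$. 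Since $O_{+,1}$ is contained in this slice, extremality in the larger set descends to extremality in $O_{+,1}$, giving $\Gamma\subseteq\{\text{extreme points of }O_{+,1}\}$.

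For the reverse inclusion I would invoke Lemma~\ref{mapsig} directly. Let $p$ be an extreme point of $O_{+,1}$; by the first paragraph $p\in\partial B$, so by Lemma~\ref{mapsig} there exist $x,y\in\T$ and $s\in[0,1]$ with $p=\sigma(x,y,s)=s\gamma(x)+(1-s)\gamma(y)$. Both $\gamma(x)$ and $\gamma(y)$ lie in $\Gamma\subseteq\partial B\subseteq O_{+,1}$, so this exhibits $p$ as a convex combination of two points of the body. Extremality forbids a nontrivial such combination of distinct points, whence either $s\in\{0,1\}$ or $\gamma(x)=\gamma(y)$; in every case $p\in\Gamma$. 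Combining the two inclusions yields the claim.

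Finally, the genuine mathematical content has already been absorbed into Lemma~\ref{mapsig}: once one knows that $\partial B$ is covered by the secant segments $s\gamma(x)+(1-s)\gamma(y)$ of $\Gamma$, the identification of the extreme points is a short convexity argument. The only points requiring care are the two reductions used above, namely that extreme points lie on $\partial B$ (needing $B$ full-dimensional and bounded) and that extremality transfers from the trace-$3$ slice of $M_3(\C)_+$ down to the sub-body $O_{+,1}$, both of which are routine. I therefore do not expect any serious obstacle beyond faithfully citing Lemma~\ref{mapsig}.
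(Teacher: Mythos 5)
Your proof is correct and follows essentially the same route as the paper: Lemma~\ref{mapsig} forces every extreme point (which must lie on $\partial B$) into a secant segment of $\Gamma$, while each $\gamma(x)$, being a rank-one positive matrix of trace $3$, is already extreme in the slice $\{A\geq 0:\tr A=3\}$ of $M_3(\C)_+$ and hence in $O_{+,1}$. You merely spell out two steps the paper leaves implicit (extreme points lying on the boundary, and the extremality of rank-one matrices in the trace slice), which is fine.
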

\proof The extreme points of $O_{+,1}$ belong to the boundary $\partial B$ and hence to the range of the map $\sigma$ of Lemma \ref{mapsig}. The formula defining $\sigma$ shows that provided $\gamma(x)\neq \gamma(y)$ all the non-trivial convex combinations are not extreme points and thus the extreme points of $\partial B$ are contained in $\Gamma$. To see that any element $\gamma(x)\in \Gamma$ is an extreme point we note that the Toeplitz matrix associated to $\gamma(x)$ is of the form 
$$
\begin{pmatrix} 1 & e^{-ix}  & e^{-2ix}  \\
 e^{ix} & 1 & e^{-ix}  \\
 e^{2ix}  & e^{ix}  & 1 \\
\end{pmatrix}=
\begin{pmatrix} 1 & 0  &0  \\
 0 & e^{ix} & 0  \\
 0  & 0  & e^{2ix} \\
\end{pmatrix}
\begin{pmatrix}1 & 1  & 1  \\
 1 & 1 & 1  \\
 1  & 1  & 1 \\
\end{pmatrix}
\begin{pmatrix}1 & 0  &0  \\
 0 & e^{-ix} & 0  \\
 0  & 0  & e^{-2ix} \\
\end{pmatrix}
$$
which is three times a rank one idempotent and is already an extreme point among positive matrices with fixed trace. \endproof 

\subsubsection{The state space and the pure states}
The tangent hyperplane to $B$ at any of the points $\sigma(x,y,s)$ for $s\in (0,1)$ is 
governed by the equation obtained by differentiating $\delta$ at such points which gives, up to the overall factor $8 (s-1) s \sin ^2\left(\frac{x-y}{2}\right)$ the vector
$$\{2 (\cos (x)+\cos (y)),2 (\sin (x)+\sin (y)),-\cos (x+y),-\sin (x+y)\}  . $$
The associated linear form evaluated at any of the points $\sigma(x,y,s)$ takes the value $\cos (x-y)+2$.  Replacing $x\mapsto x+\pi$ and $y\mapsto y+\pi$ this provides us with a two-parameter family of supporting hyperplanes for the cone $O_+$ given by the equation $
L(a,b,c,d,u)=0$ where 
$$L(a,b,c,d,u)=2 a (\cos (x)+\cos (y))+2 b (\sin (x)+\sin (y))+$$
$$+ c \cos (x+y)+ d \sin (x+y)+ u (\cos (x-y)+2).
$$
In order to obtain a state one normalizes $L$. This gives us the following map $\epsilon:\T^2\to O^*$ from the two torus $\T^2$ to the state space $\cS$ of $O$
$$
\epsilon(x,y)(a,b,c,d,u):=\frac{1}{\cos (x-y)+2}L(a,b,c,d,u).
$$
Since the circle $x^2+y^2=1$ is a rational curve with rational parametrization given by 
$$
x=\frac{2 t}{t^2+1}, \ \ y=\frac{1-t^2}{t^2+1}
$$
one obtains a rational parametrization of the range of the map $\epsilon$ in the form
$$
W=\frac{4 \left(t^2 v+t v^2+t+v\right)}{t^2 \left(3 v^2+1\right)+4 t v+v^2+3},X=\frac{4-4 t^2 v^2}{3 t^2 v^2+t^2+4 t v+v^2+3},$$
$$Y=\frac{t^2 \left(-\left(v^2-1\right)\right)+4 t v+v^2-1}{t^2 \left(3 v^2+1\right)+4 t v+v^2+3},Z=\frac{2 \left(t^2 (-v)-t v^2+t+v\right)}{t^2 \left(3 v^2+1\right)+4 t v+v^2+3}.
$$
By elimination of the variables $(t,v)$ one obtains that the range is contained in the following two quartic hypersurfaces 
$$
W^2 X^2+2 W^2 Z^2-4 W X Z+2 X^2 Z^2=0
$$
and 
$$
W^2 \left(X^2+4 Z^2\right)+4 Z^2 \left(X^2+4 \left(Y^2+Z^2-1\right)\right)=0.
$$
\begin{figure}
\centering
\includegraphics[scale=.5]{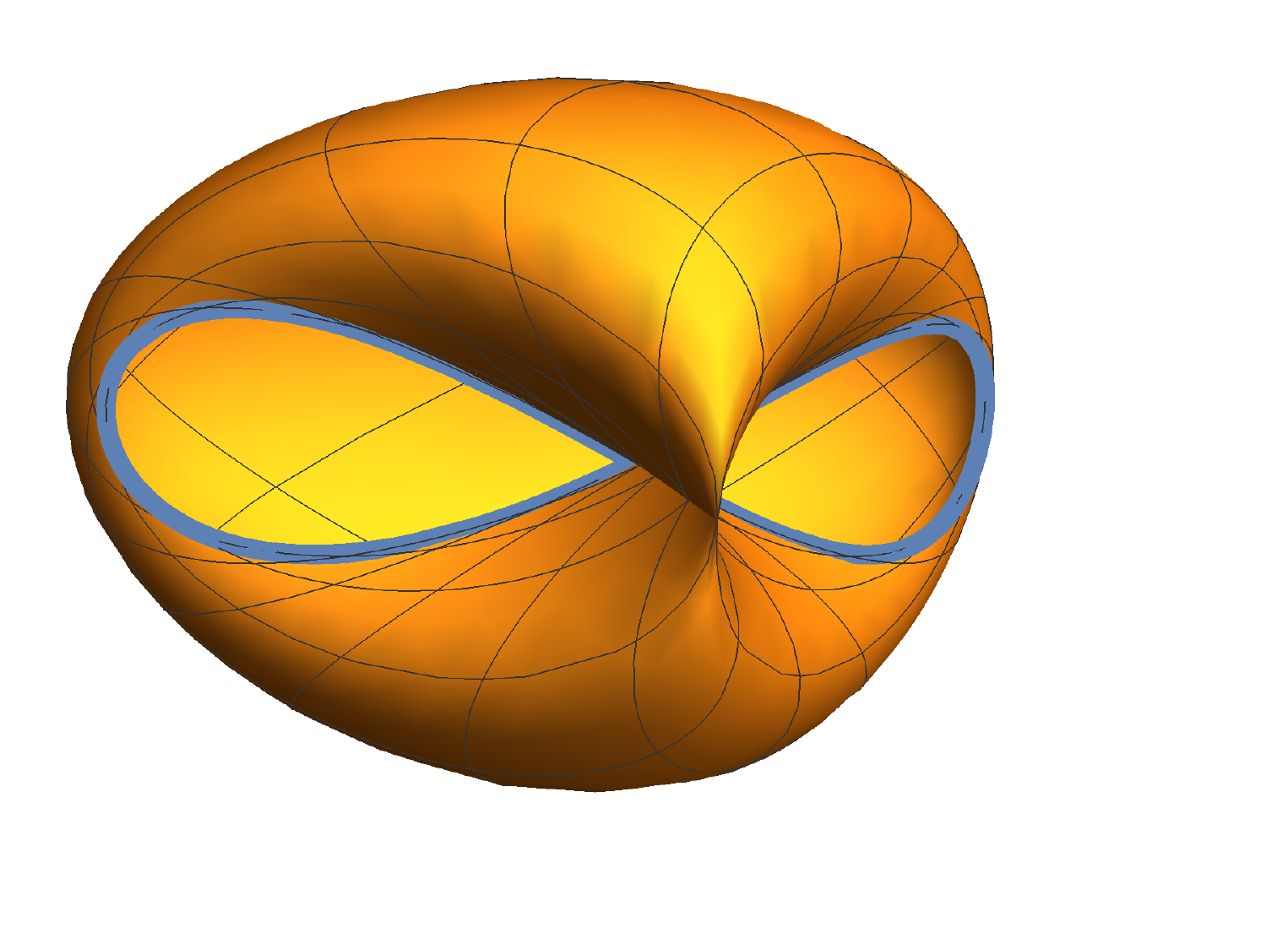}
\caption{The surface $\Sigma$ in $\R^3$.}\label{surface}
\end{figure} 
These equations allow one to express $W$ as follows
$$
W=\frac{-X^4 Z-8 X^2 Y^2 Z-8 X^2 Z^3+8 X^2 Z-16 Y^2 Z^3-16 Z^5+16 Z^3}{2 X \left(X^2+4 Z^2\right)}
$$
and one just needs to understand the surface $\Sigma$ in $\R^3$ given by the parametrization by $(X,Y,Z)(t,v)$ or by the equation
\begin{equation}\label{equmoeb}
X^4+8 X^2 Y^2+8 X^2 Y+8 X^2 Z^2+16 Y^2 Z^2+16 Z^4-16 Z^2=0.
\end{equation}
It is represented in Figure \ref{surface}.

The boundary $\partial\cS$ of the state space $\cS$ is contained in an algebraic hypersurface which is determined as follows. The linear form 
$$
L(a,b,c,d,u)=aW+bX+cY+dZ+u
$$
belongs to the state space if and only if it takes positive values on the extreme points of the base of the cone $O_+$. This means using Proposition \ref{extrepts} and the rational parametrization of the curve $\Gamma$ that the product 
$$
(-t^4 X-t^4 Y+t^4+2 t^3 W-4 t^3 Z+6 t^2 Y+2 t^2+2 t W+4 t Z+X-Y+1)(1+t^2)^{-2}
 $$
 only takes positive values. This thus means that the polynomial first factor $P(t)$ is positive for all $t\in \R$. If $X+Y=1$, $P(t)$ is of degree $3$ and hence cannot be positive for all $t\in \R$  unless the coefficient $2W-4Z$ of $t^3$ vanishes.  Since the boundary $\partial\cS$ is topologically a sphere we can ignore this codimension $2$ situation. One then has $X+Y<1$ \ie  the coefficient of $t^4$ is positive. Then $P(t)$ is positive for all $t\in \R$ if and only if it takes positive values where its derivative vanishes:
 $$
 P'(t)=0\Rightarrow P(t)\geq 0.
 $$
  This shows that in the boundary $\partial\cS$ one has a common root for $P$ and $P'$. Thus $\partial\cS$
 is contained in the zero set of the discriminant which is the following polynomial 
$$
d(W,X,Y,Z)=W^6+3 W^4 X^2+15 W^4 Y^2-18 W^4 Y-12 W^4 Z^2-W^4+108 W^3 X Y Z$$ $$-36 W^3 X Z+3 W^2 X^4-78 W^2 X^2 Y^2+84 W^2 X^2 Z^2-2 W^2 X^2+48 W^2 Y^4-144 W^2 Y^3$$ $$+96 W^2 Y^2 Z^2+80 W^2 Y^2-144 W^2 Y Z^2+16 W^2 Y+48 W^2 Z^4+80 W^2 Z^2-108 W X^3 Y Z$$ $$-36 W X^3 Z-288 W X Y^2 Z-288 W X Z^3+32 W X Z+X^6+15 X^4 Y^2+18 X^4 Y-12 X^4 Z^2$$ $$-X^4+48 X^2 Y^4+144 X^2 Y^3+96 X^2 Y^2 Z^2+80 X^2 Y^2+144 X^2 Y Z^2-16 X^2 Y+48 X^2 Z^4$$ $$+80 X^2 Z^2-64 Y^6-192 Y^4 Z^2+128 Y^4-192 Y^2 Z^4+256 Y^2 Z^2-64 Y^2-64 Z^6+128 Z^4-64 Z^2.$$

\begin{figure}
\centering
\includegraphics[scale=.5]{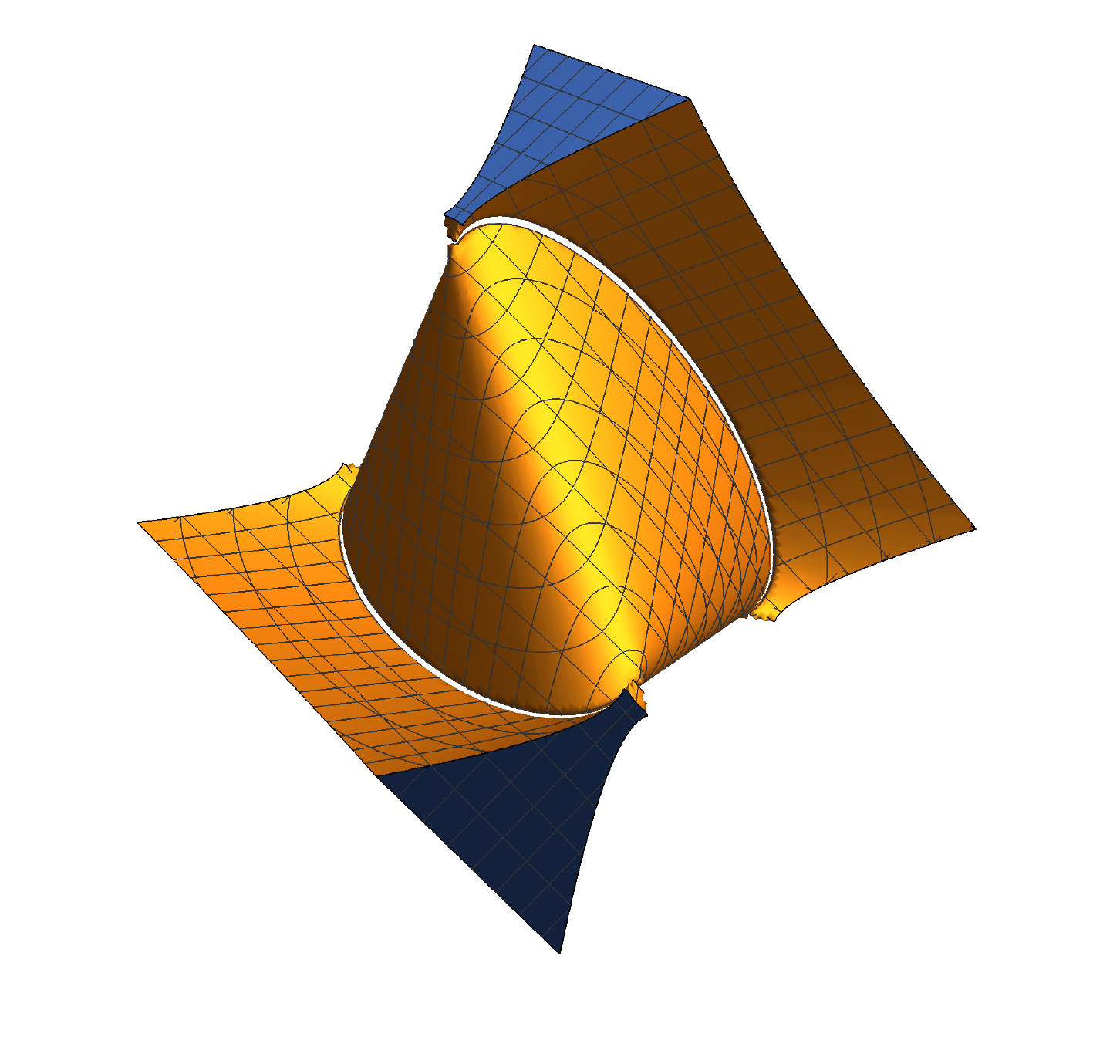}
\caption{Intersection of the hypersurface $d(W,X,Y,Z)=0$ with $Z=0$}\label{hypersurface}
\end{figure}
\begin{figure}
\centering
\includegraphics[scale=.5]{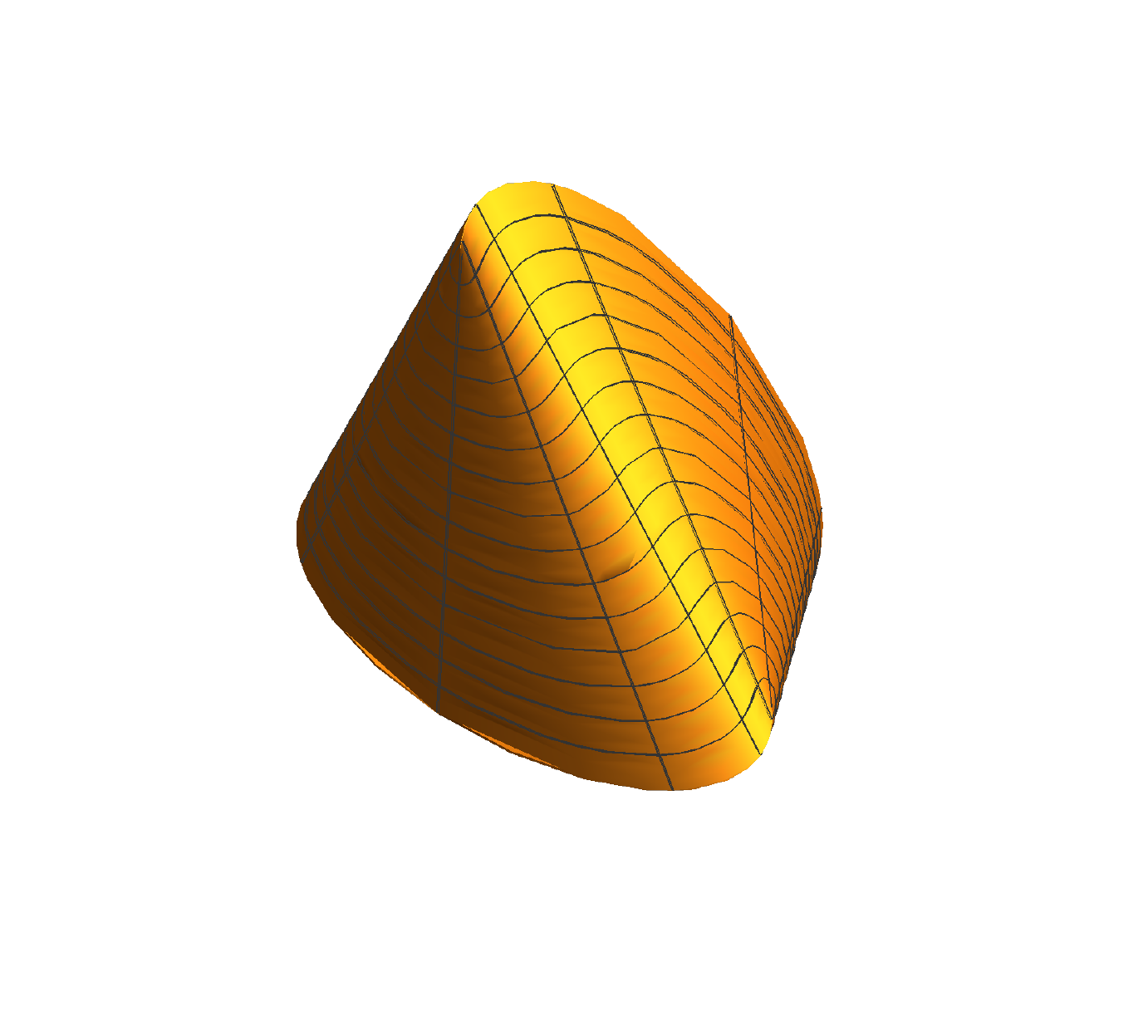}
\caption{State space intersected  with $Z=0$}\label{hypersurface1}
\end{figure} 
We  thus get the following  
\begin{lem} The boundary $\partial \cS$ of the state space $\cS$ is contained in the hypersurface  $K:=\{(W,X,Y,Z)\mid d(W,X,Y,Z)=0\}$.
\end{lem}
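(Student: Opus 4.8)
The plan is to translate membership in the state space $\cS$ into the positivity of the single-variable quartic $P(t)$ displayed above, and then to recognise $\partial\cS$ as exactly the locus where this positivity fails to be strict. First I would observe that a normalized functional $L(a,b,c,d,u)=aW+bX+cY+dZ+u$ is a state precisely when it is non-negative on the compact convex base $O_{+,1}$ of the positive cone (the condition $L(1)=1$ is built into the normalization of the $u$-coefficient). By Proposition \ref{extrepts} the base $O_{+,1}$ is the closed convex hull of the curve $\Gamma$ of its extreme points, so non-negativity on $O_{+,1}$ is equivalent, by Krein--Milman, to $L(\gamma(x))\geq 0$ for all $x\in\T$. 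Substituting the rational parametrization of $\Gamma$ converts this into the requirement that $P(t)=(1+t^2)^2\,L(\gamma)$ be non-negative for every $t\in\R$; the value of $L(\gamma)$ at the single point missed by the parametrization is recorded by the leading coefficient $1-X-Y$ of $P$.

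Next I would separate interior from boundary through the strictness of this inequality. If $P(t)>0$ for all $t$ and $1-X-Y>0$, then after one-point compactifying $t$ to the circle $\T$ the continuous function $L(\gamma)$ is bounded below by a positive constant; since the coordinate functions $\cos x,\sin x,\cos 2x,\sin 2x$ are all bounded by $1$, any sufficiently small perturbation of $(W,X,Y,Z)$ keeps $L(\gamma)>0$ and hence remains a state. Thus such an $L$ is interior to $\cS$, and consequently every boundary point must satisfy $\min_t P(t)=0$, attained at some $t_0\in\R$. Because $P\geq 0$ with minimal value $0$ at an interior point $t_0$, that root is at least double, so $P(t_0)=P'(t_0)=0$ and $P,P'$ share a real root. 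The only exception is the degree drop $X+Y=1$, which (together with the forced vanishing of the $t^3$ coefficient $2W-4Z$) cuts out a stratum of codimension two.

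Finally, a quartic and its derivative have a common root exactly when the discriminant of the quartic vanishes, this discriminant being, up to sign and a power of the leading coefficient, the resultant $\mathrm{Res}(P,P')$; evaluating it produces precisely the sextic $d(W,X,Y,Z)$ displayed above. Hence every non-degenerate boundary point lies in $K=\{d=0\}$, and since $K$ is Zariski-closed and the degenerate stratum $X+Y=1$ is lower-dimensional in the topological $3$-sphere $\partial\cS$, passing to closures gives $\partial\cS\subseteq K$. I expect the only genuinely delicate point to be this clean interior/boundary dichotomy together with the dismissal of the degree-drop stratum; the passage from the common-root condition to the explicit polynomial $d$ is a purely mechanical resultant computation best left to a computer algebra system. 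Note also that only the implication \emph{boundary} $\Rightarrow$ \emph{double real root} $\Rightarrow$ $d=0$ is needed for the asserted containment, so the possibility of complex double roots of $P$ causes no difficulty.
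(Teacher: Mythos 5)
Your proof is correct and follows essentially the same route as the paper: reduce the state condition to nonnegativity of the quartic $P(t)$ via the extreme points of the base $O_{+,1}$ (the curve $\Gamma$) and its rational parametrization, argue that a boundary point forces a double real root of $P$ (modulo the codimension-two stratum $X+Y=1$, $2W-4Z=0$), and conclude that the discriminant $d(W,X,Y,Z)$ vanishes there. You in fact supply some details the paper leaves implicit --- the perturbation argument showing strict positivity gives an interior point, the role of the point at infinity recorded by the leading coefficient $1-X-Y$, and the closure argument dismissing the degenerate stratum --- but the underlying approach is identical.
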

We are now ready to determine the extreme points of the state space of the operator system $O$.

\begin{thm} \label{thmextremepoints}The map $\epsilon:\T^2\to O^*$ from the two torus $\T^2$ to the state space $\cS$ of $O$ defines a surjective double cover of the set $E$ of extreme points, and $E$ is a M\"obius strip with boundary the image $\epsilon(\Delta)$ of the diagonal. \newline
The M\"obius strip $E$ lies in the singular set of the hypersurface $d(W,X,Y,Z)=0$.\newline
The following map $\beta:\T^2\times [0,1]\to O^*$ is a surjection on the boundary $\partial \cS$ of the state space $\cS$
$$
\beta(x,y,s):= s\epsilon(x,y)+(1-s)\epsilon(x,y+\pi).
$$
\end{thm}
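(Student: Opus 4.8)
The plan is to reduce everything to the realization $\gamma(z)=v_zv_z^*$ of the extreme points of the base $O_{+,1}$ (Proposition \ref{extrepts}), where $v_z=(1,e^{iz},e^{2iz})^t$, together with the elementary fact that, since $O\subseteq M_3(\C)$, every state on $O$ extends to a state on $M_3(\C)$ and every pure state extends to a vector state $\omega_\xi(T)=\langle\xi,T\xi\rangle$. The starting computation is to evaluate $\epsilon(x,y)$ on the curve $\Gamma$: substituting $\gamma(z)$ into the explicit form of $L$ and collapsing the trigonometric sums gives
\begin{equation*}
\epsilon(x,y)(\gamma(z))=\frac{2\,(1+\cos(z-x))(1+\cos(z-y))}{2+\cos(x-y)}.
\end{equation*}
This is manifestly nonnegative (reconfirming that $\epsilon(x,y)$ is a state) and vanishes precisely at $z\in\{x+\pi,\,y+\pi\}$. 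In particular $\epsilon(x,y)=\epsilon(y,x)$, so $\epsilon$ factors through the symmetric square $\mathrm{Sym}^2(\T)=\T^2/(x\leftrightarrow y)$.

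Next I would prove extremality and the double-cover property together. Writing $\epsilon(x,y)=\omega_\rho|_O$ for an extension to $M_3(\C)$, the two vanishing conditions $\omega_\rho(\gamma(x+\pi))=\omega_\rho(\gamma(y+\pi))=0$ force $v_{x+\pi},v_{y+\pi}\in\ker\rho$ (as $\rho\geq 0$), so the range of $\rho$ lies in the line $\{v_{x+\pi},v_{y+\pi}\}^\perp$, which is one-dimensional for $x\not\equiv y$. Hence $\rho$ is the rank-one projection onto that line, and $\epsilon(x,y)$ is the unique state vanishing on $\gamma(x+\pi)$ and $\gamma(y+\pi)$. Uniqueness yields extremality: if $\epsilon(x,y)=s\phi_1+(1-s)\phi_2$ with $\phi_i\in\cS$ and $s\in(0,1)$, positivity forces each $\phi_i$ to vanish on those two points, whence $\phi_1=\phi_2=\epsilon(x,y)$ (the diagonal values, and the fact that these exhaust the extreme points, follow from the face analysis below). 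It also gives injectivity of $\epsilon$ on $\mathrm{Sym}^2(\T)$, since $\{x,y\}$ is recovered as the $\pi$-shift of the zero set of $\epsilon(x,y)$ on $\Gamma$. Invoking the classical fact that $\mathrm{Sym}^2(\T)$ is a Möbius band whose single boundary circle is the image of the diagonal, I conclude that $\epsilon$ is a surjective double cover of $E=\epsilon(\T^2)$ and that $E$ is a Möbius strip with boundary $\epsilon(\Delta)$.

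To identify $E$ with the \emph{whole} set of extreme points and to prove surjectivity of $\beta$, I would analyze the faces $F_{z_0}=\{\phi\in\cS:\phi(\gamma(z_0))=0\}$. A boundary state attains its minimum $0$ over the compact base $O_{+,1}$ at an extreme point, so every $\phi\in\partial\cS$ lies in some $F_{z_0}$ and $\partial\cS=\bigcup_{z_0}F_{z_0}$. A state in $F_{z_0}$ extends to $\omega_\rho$ with $\rho$ supported on the plane $v_{z_0}^\perp\cong\C^2$, so $F_{z_0}$ is the image of the $2\times 2$ state space under restriction to $O$. Conjugating by the diagonal unitaries $\diag(1,e^{i\theta},e^{2i\theta})$, which act on $O$ and send $\gamma(z)\mapsto\gamma(z+\theta)$, reduces matters to a single $z_0$; there a direct computation of the restriction map shows that it collapses the Bloch ball onto a two-dimensional convex set whose extreme points are exactly the curve $\{\epsilon(z_0-\pi,y):y\in\T\}$. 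This proves that every extreme point of $\cS$ is some $\epsilon(x,y)$, so $E$ is precisely the extreme set. Since both endpoints of $\beta(z_0-\pi,\cdot,\cdot)$ vanish on $\gamma(z_0)$, its image lies in $F_{z_0}$, and the antipodal chords $[\epsilon(x,y),\epsilon(x,y+\pi)]$ sweep out this two-dimensional face, giving surjectivity of $\beta$ onto $\partial\cS$.

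Finally, for $E\subseteq\mathrm{Sing}(K)$ I would argue that at $\epsilon(x,y)$ the positivity polynomial $P(t)$ (whose nonnegativity cuts out $\cS$, and whose discriminant is $d$) acquires \emph{two} double roots, one at each of the zeros $\gamma(x+\pi),\gamma(y+\pi)$ where the nonnegative function $\epsilon(x,y)(\gamma(\cdot))$ touches $0$; a configuration of two double roots is a codimension-two degeneracy of the discriminant locus, forcing $\nabla d=0$. In practice I would confirm this by substituting the rational parametrization of $\epsilon$ into $\nabla d$ and checking that it vanishes identically. The main obstacle is the convex-geometric step of the previous paragraph: establishing that the $2\times 2$ state space collapses to a two-dimensional face with exactly the $\epsilon$-curve as its extreme points, and that the specific antipodal chords fill that face without gaps. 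I expect the explicit rational parametrizations already introduced in the text to be the most efficient route, reducing these covering statements to the identity of two parametrized surfaces.
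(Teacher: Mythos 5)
Your proposal is correct, and it reaches the theorem by a genuinely different route than the paper. The paper's proof is computational--topological: it computes the Jacobian of $\epsilon$ to show it is an immersion off the diagonal, recovers the unordered pair $\{e^{ix},e^{iy}\}$ algebraically from the components of $\epsilon(x,y)$ to get injectivity on $\T^2$ modulo the flip, verifies $\nabla d=0$ on $\Sigma=\epsilon(\T^2)$ by direct substitution, and proves surjectivity of $\beta$ by a differential-topological argument (critical set of $\beta$, an open-and-closed argument in the complement of the critical circle, Brouwer's invariance of domain, and the connectedness of the complement of a M\"obius band in $S^3$); notably, the extremality of the points of $\Sigma$ themselves is handled only indirectly, by showing every other boundary point is a proper convex combination. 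You instead exploit the ambient $M_3(\C)$: extending states by Hahn--Banach, positivity gives $\omega_\rho(\gamma(z))=\langle v_z,\rho v_z\rangle=0\Rightarrow\rho v_z=0$, so the unique state annihilating $\gamma(x+\pi)$ and $\gamma(y+\pi)$ is the rank-one vector state on $\{v_{x+\pi},v_{y+\pi}\}^\perp$; this one lemma delivers extremality, injectivity on the symmetric square, and the double-cover statement simultaneously, and together with your face analysis it supplies the inclusion $\Sigma\subseteq E$ (including the diagonal points, as extreme points of exposed faces) which the paper's proof leaves implicit. You likewise replace the topological surjectivity proof for $\beta$ by the exposed-face decomposition $\partial\cS=\bigcup_{z_0}F_{z_0}$, identifying $F_{z_0}$ as the restriction to $O$ of the Bloch ball of $v_{z_0}^\perp$. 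The computation you defer does come out exactly as you predict: by Fej\'er--Riesz, a state in $F_{z_0}$ corresponds to $a(z)=|z-e^{-iz_0}|^2 b(z)$ with $b(z)=\beta_0+\beta_1 z+\bar\beta_1\bar z\geq 0$ normalized, so $F_{z_0}$ is a filled ellipse in the $\beta_1$-plane whose boundary is precisely $\{\epsilon(z_0-\pi,y):y\in\T\}$, and the chords $[\epsilon(x,y),\epsilon(x,y+\pi)]$ have endpoints $\beta_1=-c\bar\zeta$ and $\beta_1=c'\bar\zeta$, hence all pass through the interior point $\beta_1=0$ and sweep the disk; so there is no gap. Your route is essentially the duality/Fej\'er--Riesz mechanism that the paper deploys only in Section \ref{sect:toeplitz} for general $n$ (Proposition \ref{proptoeplitzextremepoints}): it buys conceptual clarity and immediate generalizability, while the paper's route stays self-contained in the explicit $n=3$ coordinates at the price of delicate topology. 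Finally, your discriminant argument for $E\subseteq{\rm Sing}(K)$ (two double roots of $P(t)$ force $\nabla d=0$ via the chain rule through the affine coefficient map) is sound, with the direct substitution you mention as fallback being exactly the paper's own check.
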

\proof The sum of squares of the numerators of the minors of the Jacobian of the map $\epsilon$ simplifies to 
\begin{multline*}
  2 \left(3 \sin \left(\frac{x-y}{2}\right)+\sin \left(\frac{3 (x-y)}{2}\right)\right)^2  \\
  \times (8 \cos (x-y)+4 \cos (2 (x-y))+\cos (3 (x-y))+7)
\end{multline*}
which only depends upon $x-y$ and whose graph, as a function of the single variable $x-y$, is shown in Figure \ref{grapheps}.
\begin{figure}
\centering
\includegraphics[scale=.6]{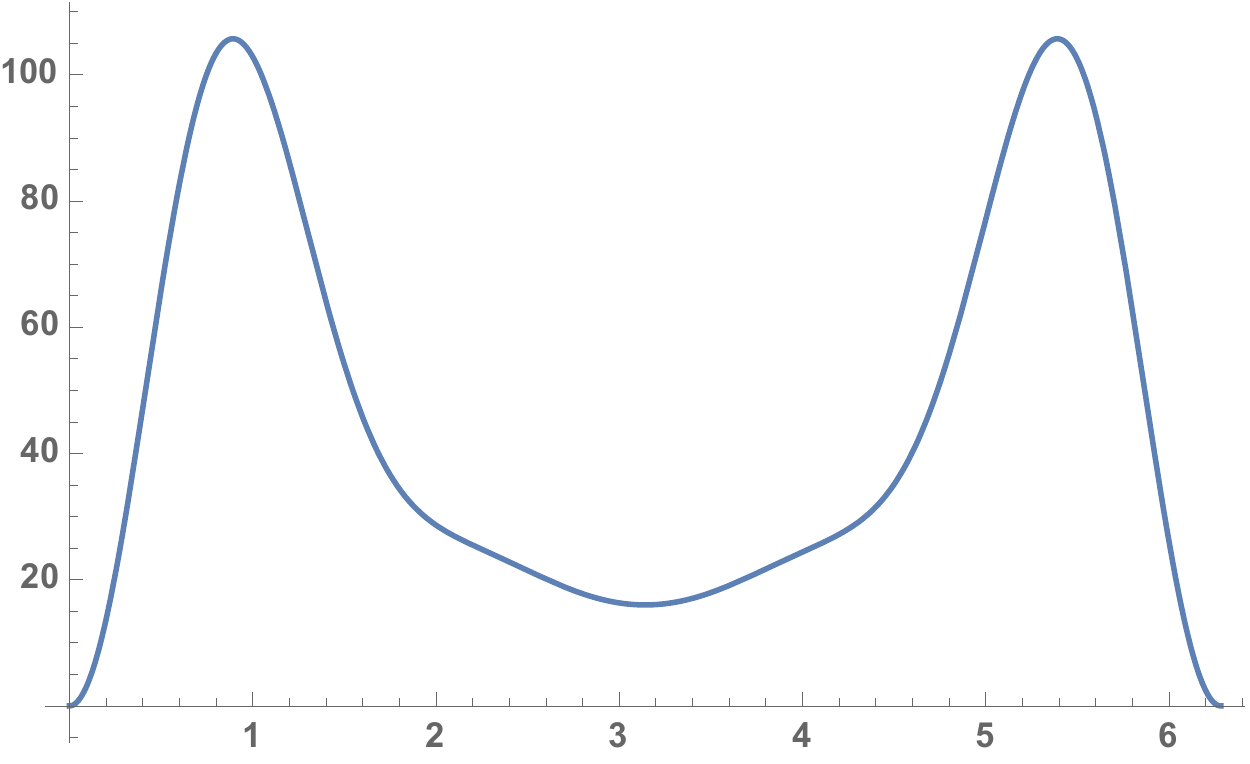}
\caption{The graph of the numerator of the size$^2$ of Jacobian}\label{grapheps}
\end{figure}
 This shows that the  map $\epsilon:\T^2\to O^*$ is an immersion except on the diagonal $\Delta$. 
One has $\epsilon(x,y)=\epsilon(y,x)$ so that $\epsilon$ passes to the quotient of $\T^2$ by the flip $s(x,y):=(y,x)$. Let us show that one thus obtains an injection of $\T^2/s$ in the state space $\cS$. One rewrites 
the components of $\epsilon(x,y)$ as a pair of complex numbers in terms of $u=e^{ix}$ and $v=e^{iy}$ using 
$$
2\cos(x-y)=u/v+v/u, \ uv= \cos(x+y)+i \sin(x+y),
$$
so that $\epsilon(x,y)$ determines the two complex numbers
$$
A=(u+v)/(4+u/v+v/u), \ \ B=uv/(4+u/v+v/u).
$$
One has $A/B=1/u+1/v$ and moreover 
$$
B-A^2=\frac{2 u^3 v^3}{\left(u^2+4 u v+v^2\right)^2}, \ \ B^2=\frac{u^4 v^4}{\left(u^2+4 u v+v^2\right)^2}
$$
which determines the product $uv$ as $uv=2B^2/(B-A^2)$. Now both $u,v$ are complex numbers of modulus one and one knows 
$$
1/u+1/v=A/B, \ \ †1/(uv)=\frac 12(B-A^2)/B^2
$$
so that $u,v$ are determined up to the flip. On the diagonal $\Delta\subset \T^2$ the map $\epsilon$ simplifies to
$$
\epsilon(x,x)= \left(\frac{4 \cos (x)}{3},\frac{4 \sin (x)}{3},\frac{1}{3} \cos (2 x),\frac{1}{3} \sin (2 x)\right)
$$
which is a smooth embedding of $S^1$ in $O^*$. This curve $\epsilon(\Delta)$ is the boundary of 
the surface $\Sigma=\epsilon(\T^2)$ which by the above discussion is the quotient of the two torus $\T^2$ by the flip, or equivalently the space of un-ordered pairs of complex numbers $u,v$ of modulus one. Such pairs form a M\"obius strip since setting $uv=\lambda^2$ the pair is determined by the element 
$$\frac{u+v}{2\lambda}\in [-1,1]$$ 
which depends up to sign on the choice of the square root $\lambda$. Thus the monodromy along the circle is the map $x\mapsto -x$ on the interval and the total space is  a M\"obius strip. One checks by direct computation that the gradient of $d$ vanishes on $\Sigma$ so that the latter is contained in the singular set of the hypersurface $K$. It remains to show that $\Sigma=\epsilon(\T^2)$ is the set $E$ of extreme points of the state space $\cS$. Let us first show that all other points of the boundary $\partial \cS$ are not extreme. This will follow if we show, as stated in the theorem, that the map $\beta$ is a surjection to the boundary $\partial \cS$. One first checks by direct computation that $d(\beta(x,y,s))=0$ for all $x,y\in \T$ and $s\in \R$. Moreover the computation of the gradient of $d$ on $\beta(x,y,s)$ gives the expression 
$$
\frac{32 (s-1) s (12 s \cos (x-y)-6 \cos (x-y)-\cos (2 (x-y))-5)^3}{\left(\cos ^2(x-y)-4\right)^4}
$$
multiplied by the non-zero vector $(\cos (x),\sin (x),-\cos (2 x),-\sin (2 x))$. One can thus check that $\beta(x,y,s)$ is a non-singular point of the hypersurface $K$ when $s\in (0,1)$ while, as seen above, it is a singular point for $s=0$ and $s=1$.

 The numerator of the sum of squares of minors of the Jacobian of $\beta$ simplifies to 
$$
8 \big((96 s^2-96 s+35) \cos (x-y)-2 (2 s-1) (5 \cos (2 (x-y))+13)+\cos (3 (x-y))\big)^2
$$
which only depends upon $u:=x-y$ and $s$. Solving in $s$ gives two solutions
$$
s\to \frac{1}{4} (\cos (u)+2), \ \ s\to \frac{1}{12} (6 \cos (u)+\cos (2 u)+5) \sec (u).
$$
The second solution lies outside the interval $[0,1]$. The first solution singles out the critical set of $\beta$ as the two dimensional graph 
$$
G=\{(x,y,\frac{1}{4} (\cos (x-y)+2))\mid x,y \in \T\}.
$$
The critical values form a circle $C$ as obtained using the equality 
$$
\beta(x,y,\frac{1}{4} (\cos (x-y)+2))=(\cos (x),\sin (x),0,0).
$$
 Let $U$ be the range of the restriction of  $\beta$ to $\T^2\times (0,1)$. It is contained  in the complement of the singular set in the  hypersurface $K$, and it is connected (as the image of a connected set by a continuous map). It follows that $U$ is contained in a single component $V$ of the complement of the singular set in the  hypersurface $K$.  The intersection of $U$ with the complement $V\setminus C$ of $C$ in $V$ is both open and closed: it is open because the map $\beta$ is open at any element of the pre-image, it is closed since it agrees with the intersection of $\beta( \T^2\times[0,1])$ with $V\setminus C$.  Now since $C$ is a one dimensional circle and $V$ is three dimensional the  complement $V\setminus C$ of $C$ in $V$ is connected. It follows that $U=V$ since the circle $C$ is by construction in $U$. It remains to show that $V$ is equal to the boundary  $\partial S$. We know that $\partial S$ is homeomorphic to a sphere $S^3$ and that it is contained in the hypersurface $K$. The above proof shows that $V\subset \partial S$ and hence that the same holds for the closure of $V$, one has $\bar V\subset \partial S$.  Now the boundary  $\partial S$ is necessarily a union of closures of components of the complement of the singular set in the  hypersurface $K$. This follows from  the invariance of domain \cite{Bro21}.  Indeed  since the boundary $\partial S$ is a topological sphere contained in $K$, whenever a point $x \in  \partial S$ is a non-singular
point of $K$,  one can consider the injective continuous map from a small neighborhood of $x$ in  $\partial S$ to $K$ given by inclusion and, since a neighborhood of $x$ in $K$ is a standard ball one knows by the result
of Brouwer  that the image of the map is open. This suffices to show that the intersection of $\partial S$ with the complement of 
the singular set in $K$ is both closed and open and is thus a union of components. Now if  $\partial S$ contained another component than $V$ it would become disconnected after removing the boundary of $V$. But the boundary of $V$ is the M\"obius strip $\Sigma=\epsilon(\T^2)$ and  the complement of a M\"obius strip in the sphere $S^3$  is always connected. This shows that $V$ is equal to the boundary  $\partial S$ and completes the proof of the theorem.  
\endproof

\section{Toeplitz operator systems: general structure}
\label{sect:toeplitz}
This section contains a detailed analysis of the Toeplitz matrices, viewed as operator systems inside the matrices with complex coefficients. We will apply the techniques and concepts developed in Section \ref{sect:op-syst} and find that the rich structure found by direct computations in the previous section exists for general $n$. 

\subsection{Toeplitz matrices}
Let us start by giving the general definition of the Toeplitz operator system. 

\begin{defn}
  The operator system $\Toep{n}\subset M_n(\C)$ of $n \times n$ complex-valued {\em Toeplitz matrices} is defined by the vector space of matrices with constant diagonals, {\em i.e.} of the form
  $$
  T := \begin{pmatrix} t_{k-l} \end{pmatrix}_{kl}=\begin{pmatrix} t_0 & t_{-1} & \cdots &t_{-n+2} &t_{-n+1} \\
    t_1 & t_0 & t_{-1} & & t_{-n+2}\\
    \vdots & t_1 &t_0 &\ddots & \vdots\\
t_{n-2} & & \ddots & \ddots & t_{-1} \\
t_{n-1} & t_{n-2} & \cdots & t_1 & t_0\\
  \end{pmatrix} 
  $$
  with $t_k \in \C$ for $k =-n+1 ,\ldots, n-1$. 
  \end{defn}

\begin{prop}
  We have the following isomorphism of $C^*$-algebras:
$$
C^*_\env(\Toep{n} )\cong M_{n}(\C).
$$
Moreover, independently of $n$ one has ${\rm prop}(\Toep{n})=2$.
\end{prop}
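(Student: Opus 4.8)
The plan is to handle the two assertions in turn, both of which I expect to reduce to elementary manipulations of the truncated shift. Throughout, let $S\in\Toep{n}$ denote the Toeplitz matrix with $t_1=1$ and all other entries zero, i.e. the matrix with $1$'s on the first subdiagonal, so that $S=\sum_{q=1}^{n-1}e_{q+1,q}$ in terms of the standard matrix units $e_{ij}$ of $M_n(\C)$; note that $1,S,S^*$ all belong to $\Toep n$.

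For the isomorphism $C^*_\env(\Toep n)\cong M_n(\C)$, the first step is to show that the $C^*$-algebra generated by $\Toep n$ inside $M_n(\C)$ is everything. A short computation gives $SS^*=\diag(0,1,\dots,1)$, hence $1-SS^*=e_{11}$, and more generally I would record the single telescoping identity
$$
e_{ij}=S^{i-1}(S^*)^{j-1}-S^{i}(S^*)^{j},
$$
which follows from $e_{ij}=S^{i-1}e_{11}(S^*)^{j-1}$ together with $S^{a}(S^*)^{b}=\sum_{q}e_{q+a,\,q+b}$. This exhibits every matrix unit in the $*$-algebra generated by $\Toep n$, so the generated $C^*$-algebra is $M_n(\C)$. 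Since $\Toep n$ is a concrete operator system, the inclusion $\Toep n\hookrightarrow M_n(\C)$ is a unital complete order injection whose range generates $M_n(\C)$, hence a $C^*$-extension; as $M_n(\C)$ is simple, Corollary \ref{corl:simple-env} gives at once $C^*_\env(\Toep n)\cong M_n(\C)$.

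For the propagation number (assuming $n\ge 2$) I would prove the two inequalities. The lower bound ${\rm prop}(\Toep n)\ge 2$ is immediate: $SS^*=\diag(0,1,\dots,1)$ is not a Toeplitz matrix, so $\Toep n$ is not closed under multiplication and therefore is not a $C^*$-algebra. For the upper bound, the very same identity does the work: in $e_{ij}=S^{i-1}(S^*)^{j-1}-S^{i}(S^*)^{j}$ each of the two summands is a product of exactly two Toeplitz matrices, so every matrix unit lies in the linear span of products of at most two elements of $\Toep n$. Consequently $\Toep{n}^{\circ 2}=M_n(\C)=C^*_\env(\Toep n)$, which is a $C^*$-algebra, giving ${\rm prop}(\Toep n)\le 2$. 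Combining the bounds yields ${\rm prop}(\Toep n)=2$.

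I do not expect a genuine obstacle: the whole argument is powered by the one algebraic identity expressing matrix units as differences of products of two shift-powers, and verifying it is just the telescoping sum above once the index ranges in $S^{a}(S^*)^{b}=\sum_{q}e_{q+a,\,q+b}$ are written out carefully. The only caveat worth flagging is the degenerate case $n=1$, where $\Toep 1=\C$ is already a $C^*$-algebra and the propagation number equals $1$; so the second statement is understood for $n\ge 2$.
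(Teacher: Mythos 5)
Your proof is correct and follows essentially the same route as the paper: the envelope is identified via simplicity of $M_n(\C)$ together with Corollary \ref{corl:simple-env}, and the bound ${\rm prop}(\Toep{n})\le 2$ is obtained by writing each matrix unit as a linear combination of products of two shift-power Toeplitz matrices, which is exactly the paper's computation with the diagonal basis $\tau_j$, just organized as a telescoping identity. Your two additions --- the explicit lower bound via $SS^*\notin\Toep{n}$ and the caveat that $n=1$ gives propagation number $1$ --- are correct points that the paper leaves implicit or unstated.
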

\proof
Note that the Toeplitz matrices generate all complex matrices so that $M_{n}(\C)$ is a $C^*$-extension of $\Toep{n} $. Since $M_{n}(\C)$ is simple, it follows from Corollary \ref{corl:simple-env} that this extension is the $C^*$-envelope.

In order to compute the propagation number we use a basis $\{ \tau_j\}_{j=-n+1,\ldots, n-1}$ for the Toeplitz matrices given by $1$'s on the $j$'th diagonal and zeroes elsewhere, {\em i.e.} for positive $k$ we have 
$$
\tau_k = \sum_{i=1}^{n-k} e_{i,i+k}; \qquad \tau_{-k} = \sum_{i=1}^{n-k} e_{i+k,i}.
$$
It then follows that for any $k, p \geq 0$ we have 
$$
\tau_k \tau_{-p-k} = \sum_{j=1}^{n-p-k} e_{j+p,j}
$$
which is a matrix that only has non-zero entries on the $(-p)$'th diagonal: there it is equal to 1 except for the last $k$ entries of that diagonal (where it vanishes). Since $k$ and $p$ are arbitrary we get all matrix units $e_{ij}$ for $i \geq j$. Taking adjoints we also get the lower-diagonal matrix units and the proof is complete.
\endproof



\subsection{The Fej\'er--Riesz operator system}

We   consider functions on $S^1$ with only a finite number of non-zero Fourier coefficients. More precisely we define as follows the so-called {\em Fej\'er--Riesz operator system} $\CZ{n}$:
\begin{equation}
  \label{eq:CstZn}
  \CZ{n} = \left\{ a = (a_k)_{k \in \Z}: \text{supp}( a) \subset (-n,n)  \right\}.
\end{equation}
The elements in $\CZ{n}$ are thus given by sequences with finite support 
$$
a = (\ldots, 0 ,a_{-n+1},a_{-n+2},\ldots, a_{-1}, a_0, a_1, \ldots, a_{n-2},a_{n-1},0,\ldots)
$$
and this allows to view $\CZ{n}$ as an operator subsystem of the convolution $C^*$-algebra $C^*(\Z) \cong C(S^1)$. 
The adjoint $a \mapsto a^*$ is given by $a^*_k = \overline a_{-k}$. Note that self-adjoint elements in $\CZ{n}$ are thus given by so-called {\em palindromic} sequences for which $\overline a_k = a_{-k}$ for $|k| \leq n-1$. Also, an element $a$ is positive, $a \geq 0$, if and only if it is positive in $C^*(\Z) \cong C(S^1)$, \ie   $\sum_k a_k \zeta^k$ for $|\zeta| = 1$ defines a positive function on $S^1$.

\begin{prop}
  \begin{enumerate}
  \item The following defines an action of $S^1$ by complete order automorphisms of $\CZ{n}$:
    $$
(a_k) \mapsto (\lambda^k a_k ) ; \qquad (\lambda \in S^1).
    $$
  \item The \v{S}ilov boundary of the operator system $\CZ{n}$ is $S^1$.
  \item The $C^*$-envelope of $\CZ{n}$ is given by $C^*(\Z)$.
  \item The propagation number is infinite.
    \end{enumerate}
  \end{prop}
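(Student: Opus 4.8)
The plan is to work throughout with the identification of $\CZ{n}$ with the trigonometric polynomials of degree at most $n-1$ inside $C^*(\Z)\cong C(S^1)$, under which a finitely-supported sequence $(a_k)$ becomes the function $\zeta\mapsto\sum_k a_k\zeta^k$. For $(1)$ I would simply note that, under this identification, the map $(a_k)\mapsto(\lambda^k a_k)$ becomes the rotation $(\rho_\lambda f)(\zeta)=f(\lambda\zeta)$. This is a $*$-automorphism of $C(S^1)$, hence a complete order isomorphism by the functoriality recorded after Proposition \ref{prop:rel-maps-opsyst}, and since it preserves the Fourier degree it restricts to a complete order automorphism of $\CZ{n}$.

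For $(2)$ and $(3)$ I would argue together, using the \v{S}ilov boundary description recalled just before Proposition \ref{prop:silov}. For $n\geq 2$ the system $\CZ{n}$ contains the coordinate function $\zeta$, its conjugate $\bar\zeta=\zeta^{-1}$ and the constants, so it separates the points of $S^1$ and $C^*(\CZ{n})=C(S^1)$; thus the inclusion is a $C^*$-extension with a well-defined nonempty closed \v{S}ilov boundary $K\subseteq S^1$. The key observation is that the rotation automorphisms $\rho_\lambda$ of part $(1)$ extend to $*$-automorphisms of $C(S^1)$ carrying $\CZ{n}$ onto itself, and any such automorphism carries the \v{S}ilov ideal to the \v{S}ilov ideal; hence $\lambda K=K$ for every $\lambda\in S^1$. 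A nonempty closed rotation-invariant subset of $S^1$ is all of $S^1$, so $K=S^1$, the \v{S}ilov ideal $\{f: f|_K=0\}$ is $\{0\}$, and Proposition \ref{prop:silov} gives $C^*_\env(\CZ{n})\cong C(S^1)/\{0\}=C(S^1)\cong C^*(\Z)$, proving $(2)$ and $(3)$ at once.

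For $(4)$ I would compute $\imath_{\CZ{n}}(\CZ{n})^{\circ m}$ explicitly inside $C^*(\Z)$. Multiplying trigonometric polynomials adds degrees, so a product of $j\leq m$ elements of $\CZ{n}$ has degree at most $m(n-1)$; conversely every monomial $\zeta^k$ with $|k|\leq m(n-1)$ is visibly such a product. Hence $\imath_{\CZ{n}}(\CZ{n})^{\circ m}$ (already norm closed, being finite-dimensional) equals exactly the space of trigonometric polynomials of degree $\leq m(n-1)$, i.e. $\CZ{m(n-1)+1}$. This space contains $\zeta^{m(n-1)}$, a product of $m$ copies of $\zeta^{n-1}\in\CZ{n}$, whose square $\zeta^{2m(n-1)}$ has degree $2m(n-1)>m(n-1)$ for $n\geq 2$ and so lies outside $\imath_{\CZ{n}}(\CZ{n})^{\circ m}$. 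Therefore $\imath_{\CZ{n}}(\CZ{n})^{\circ m}$ is not closed under multiplication, hence is not a $C^*$-algebra, for every finite $m$; by the definition of the propagation number this gives ${\rm prop}(\CZ{n})=\infty$.

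The parts are each fairly direct once the correspondence with $C(S^1)$ is in place, so I do not expect a genuinely hard step; the point requiring most care is the invariance argument in $(2)$, namely spelling out that the \v{S}ilov ideal is preserved by the extended rotations because it is characterized purely through the complete order/isometry structure of the pair $(\CZ{n},C(S^1))$, which $\rho_\lambda$ respects. I would also flag the degenerate case $n=1$ (where $\CZ{1}$ is only the constants, $C^*(\CZ{1})=\C$, and the statements fail) so as to keep the standing assumption $n\geq 2$ explicit.
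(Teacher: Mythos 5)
Your proposal is correct in all four parts, and for (1), (3) and (4) it follows essentially the paper's route; the interesting divergence is in (2). The paper proves that the \v{S}ilov boundary is all of $S^1$ by a peak-point argument: since $\CZ{n}$ (for $n>1$) separates points, it contains a function attaining its maximum at a unique point, and the rotation symmetry of part (1) transports this peak function to every point of $S^1$, so no proper closed subset can be a boundary. You instead rotate the boundary itself: the \v{S}ilov ideal is canonically attached to the pair $(\CZ{n},C(S^1))$, hence is preserved by any $*$-automorphism of $C(S^1)$ mapping $\CZ{n}$ onto itself, so the boundary $K$ is a nonempty closed rotation-invariant subset of $S^1$ and therefore all of $S^1$. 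Both arguments hinge on the $S^1$-symmetry; yours is more abstract (it would apply verbatim whenever a transitive symmetry group acts), while the paper's is more concrete and self-contained in that it never needs the invariance of the \v{S}ilov ideal under automorphisms (which you rightly flag as the step needing care, and which does hold because automorphisms permute boundary ideals and the \v{S}ilov ideal is the largest one). In (4) you are slightly sharper than the paper: the paper only notes that products of $k$ elements lie in $\CZ{nk}\subsetneq C^*(\Z)$, leaving implicit that a $C^*$-subalgebra containing the generating system $\CZ{n}$ would have to be all of $C^*(\Z)$, whereas you compute $\imath(\CZ{n})^{\circ m}=\CZ{m(n-1)+1}$ exactly and exhibit non-closure under multiplication directly, so no such implicit step is needed. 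Your remark excluding the degenerate case $n=1$ is also warranted, since the paper's proof of (2) silently assumes $n>1$.
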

\proof
For (1) observe that for a positive $a$ we have that $\sum_k a_k \zeta^k$ is a positive function on $S^1$. But then the same applies to $\sum_k a_k (\zeta \lambda)^k$ as this amounts to a translation in $S^1$.

For (2) note that the natural $C^*$-extension is given by $C(S^1)$ which contains $\CZ{n}$ as a subsystem. For $n > 1$ one has $1, \cos(\theta), \sin(\theta) \in \CZ{n}$ so that the system separates points. In particular, there exists a function in $\CZ{n}$ attaining a unique maximum at some point in $S^1$. By translating this function over $S^1$ one remains in the operator system, while showing that for each point in $S^1$ there exists a function in $\CZ{n}$ that attains its unique maximum there. In other words, the \v{S}ilov boundary is all of $S^1$. But then also (3) follows by Proposition \ref{prop:silov}.

The final statement follows from the fact that sums of products of $k$ elements of $\CZ{n}$ all belong to $\CZ{nk}$ which is strictly smaller than $C^*(\Z)$. 
\endproof

The following old factorization result by Fej\'er \cite{Fej16} and Riesz \cite{Rie16}, plays a key role (and this is why we named the above operator system $\CZ{n}$ after them).
 
\begin{lma}
\label{lma:fr}
  Let $m \geq 0$ and let $I \subseteq [-m,m]$ be an interval of length $m+1$.
  Suppose that $p(z) = \sum_{k= -m}^{m} p_k z^k$ is a Laurent polynomial such that $p(\zeta) \geq 0$ for all $\zeta \in \C$ for which $|\zeta|=1$. Then there exists a Laurent polynomial $q(z)= \sum_{k\in I} q_k z^k$ so that $p(\zeta) =| q(\zeta)|^2$ for all $\zeta \in S^1 \subset \C $.
  \end{lma}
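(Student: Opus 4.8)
The plan is to reduce the statement to the classical Fej\'er--Riesz factorization of a nonnegative trigonometric polynomial by controlling the location of the zeros of an associated ordinary polynomial. First I would record the symmetry forced by positivity: since $p(\zeta)\ge 0$ is in particular real for $|\zeta|=1$, comparing $p(\zeta)$ with $\overline{p(\zeta)}=\sum_k \overline{p_k}\,\zeta^{-k}$ yields $p_{-k}=\overline{p_k}$, so $p$ is palindromic and obeys the reflection identity $p(z)=\overline{p(1/\overline z)}$. Because $|q(\zeta)|^2$ is unchanged when $q$ is multiplied by a monomial $z^a$, it suffices to produce a $q$ of the form $q(z)=\sum_{k=0}^m q_k z^k$ and then translate its support into the prescribed interval $I$ (this fits since $|I|=m+1$); the same reduction lets me assume $p\not\equiv 0$ and, after lowering $m$ if necessary, that the top coefficient $p_m$ is nonzero.

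Next I would pass to the honest polynomial $P(z):=z^m p(z)=\sum_{j=0}^{2m}p_{j-m}z^j$, of degree $2m$ with nonvanishing constant term $P(0)=p_{-m}=\overline{p_m}$, so that $0$ is not a root. The reflection identity shows that the multiset of roots of $P$ is invariant under $z\mapsto 1/\overline z$ with multiplicities preserved; hence every root off the unit circle occurs in a pair $(\alpha,1/\overline\alpha)$ with exactly one member inside the open disk. The crucial point is the behaviour on the circle: if $|\zeta_0|=1$ is a root, then since $\theta\mapsto p(e^{i\theta})$ is real-analytic and nonnegative, its zero at $\theta_0$ has even order, and this forces $\zeta_0$ to be a root of $P$ of even multiplicity.

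I would then define $q$ by collecting, from the factorization $P(z)=p_m\prod_j(z-\alpha_j)$, one factor $(z-\alpha)$ from each reciprocal pair lying inside the open disk together with $(z-\zeta_l)^{\mu_l}$ for each circle root of multiplicity $2\mu_l$, times a constant $\lambda$ to be fixed. A root count gives $\deg q=m$ exactly (the interior and exterior roots pair off and the circle roots are halved, so their numbers sum to $m$), with nonzero constant term, so $\mathrm{supp}(q)=\{0,\dots,m\}$. Finally, for $\zeta\in S^1$ I would compute $|q(\zeta)|^2=q(\zeta)\overline{q(\zeta)}$, use $\overline\zeta=1/\zeta$ to turn each conjugated interior factor $(\overline\zeta-\overline\alpha)=-\tfrac{\overline\alpha}{\zeta}(\zeta-1/\overline\alpha)$ back into the missing exterior factor and each conjugated circle factor into the second half of its even multiplicity; the accumulated monomials combine to exactly $\zeta^{-m}$, so recombining reproduces $\zeta^{-m}P(\zeta)=p(\zeta)$ up to an overall scalar, which is automatically real and positive (both sides being nonnegative with $p\not\equiv 0$) and which I absorb into $|\lambda|^2$.

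The main obstacle I anticipate is precisely the even-multiplicity statement for roots on the unit circle: without it the ``square root'' $q$ would fail to be a genuine Laurent polynomial. Everything else is bookkeeping --- the reflection symmetry of the roots, the degree and support count, and the determination of the positive normalizing constant --- but the even-order argument is the one step that genuinely uses $p\ge 0$ rather than merely $p$ real, and it is the step I would write out most carefully, via the Taylor expansion of the nonnegative real-analytic function $p(e^{i\theta})$ at a circle zero.
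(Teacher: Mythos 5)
Your proof is correct: the Hermitian symmetry $p_{-k}=\overline{p_k}$, the self-inversive root pairing of $P(z)=z^m p(z)$, the even multiplicity of unit-circle roots (the only step where nonnegativity, rather than mere reality, is used), the degree count $s+t=m$, and the positive normalizing constant are all handled properly, with the translation by a monomial taking care of the general support interval $I$. The paper gives no argument of its own but defers to the classical Fej\'er--Riesz theorem \cite[Theorem 1.12]{GS58}, whose standard proof is exactly this root-pairing argument, so your route coincides with the cited one.
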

A proof can be found in \cite[Theorem 1.12]{GS58}. 

\begin{prop}
  \label{prop:CZn}
  Let $\CZ{n}$ be the operator system defined in Equation \eqref{eq:CstZn}.
  \begin{enumerate}
  \item The extreme rays in $(\CZ{n})_+$ are given by the elements $a\in(\CZ{n})_+$,  $a= (a_k)$ for which the Laurent series $\sum_k a_k z^k$ has all its zeroes on the circle $S^1$.     
  \item The pure states of $\CZ{n}$ are given by the functionals $a \mapsto \sum_k a_k \lambda^k$ for any $\lambda \in S^1$.
    \end{enumerate}
  \end{prop}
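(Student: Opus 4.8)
The plan is to identify $\CZ{n}$ with the space of trigonometric polynomials $p(\zeta)=\sum_k a_k\zeta^k$ of degree $\leq n-1$ on $S^1$, so that $(\CZ{n})_+$ becomes the cone of such polynomials that are nonnegative on $S^1$, and to use the Fej\'er--Riesz factorization of Lemma~\ref{lma:fr} throughout: every $p\in(\CZ{n})_+$ can be written $p(\zeta)=|q(\zeta)|^2$ with $q$ an ordinary polynomial whose degree equals the Laurent degree of $p$. I would prove the two statements separately, (2) by an extension/Choquet argument and (1) by a zero-counting argument built on this factorization.

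For (2), first note that each evaluation $a\mapsto\sum_k a_k\lambda^k=p(\lambda)$ with $\lambda\in S^1$ is a state, being the restriction to $\CZ{n}\subset C^*(\Z)\cong C(S^1)$ of the Dirac mass $\delta_\lambda$. Conversely, I would use the material of \S\ref{sect:states-opsyst}: any pure state $\phi$ of $\CZ{n}$ extends to a pure state of $C(S^1)$, and the pure states of $C(S^1)$ are exactly the $\delta_\lambda$, so $\phi$ is the restriction of some $\delta_\lambda$. It then remains to check that each such restriction really is extreme in $\cS(\CZ{n})$. For this I would use, for $n\geq 2$, the degree-one element $g(\zeta)=|\zeta-\lambda|^2=2-\bar\lambda\zeta-\lambda\bar\zeta\in(\CZ{n})_+$, which is strictly positive on $S^1$ except for a single zero at $\zeta=\lambda$: if $\delta_\lambda|_{\CZ{n}}=\tfrac12(\phi_1+\phi_2)$ with $\phi_i$ states, then $\phi_i(g)=0$, and extending $\phi_i$ to a probability measure $\mu_i$ on $S^1$ forces $\mu_i=\delta_\lambda$, whence $\phi_i=\delta_\lambda|_{\CZ{n}}$. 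Injectivity of $\lambda\mapsto\delta_\lambda|_{\CZ{n}}$ for $n\geq 2$ is immediate from the value on $z$, so $\cP(\CZ{n})\cong S^1$.

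For (1), I would prove both inclusions via Fej\'er--Riesz. Writing $p=|q|^2$ with $\deg q=d\leq n-1$ and choosing $q$ with no root at $0$, ``all zeroes on $S^1$'' is to be read as: $d=n-1$ and every root of $q$ lies on $S^1$, equivalently the degree-$2(n-1)$ polynomial $z^{n-1}p(z)$ has all its $2(n-1)$ roots on $S^1$. For the ``if'' direction, suppose the $n-1$ roots $\zeta_j$ of $q$ lie on $S^1$, so $p$ vanishes at each distinct $\zeta_j$ to order $2\mu_j$ with $\sum 2\mu_j=2(n-1)$. To show $p$ is extreme it suffices to show that any $b$ with $0\leq b\leq p$ is a scalar multiple of $p$. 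The crucial local step is that $0\le b(\zeta)\le p(\zeta)$ together with $p(\zeta)=O(|\zeta-\zeta_j|^{2\mu_j})$ forces $b(\zeta)=O(|\zeta-\zeta_j|^{2\mu_j})$, so $b$ vanishes to order $\geq 2\mu_j$ at each $\zeta_j$; since $z^{n-1}b(z)$ has degree $\leq 2(n-1)=\sum 2\mu_j$, it must be a scalar times $\prod_j(z-\zeta_j)^{2\mu_j}$, which is exactly a scalar times $z^{n-1}p(z)$. Hence $b=sp$ with $s\in[0,1]$ and $p$ is extreme. For the ``only if'' direction I argue by contraposition, covering the two remaining cases: if $d<n-1$ then $b=\tfrac12(1+\cos(\theta-\theta_0))\,p$ exhibits $p=b+(p-b)$ as a nontrivial decomposition inside $\CZ{n}$; and if $d=n-1$ but $q$ has a root $w$ with $|w|\neq1$ (hence $w\neq0$), then factoring $q=(z-w)\tilde q$ and splitting the strictly positive factor as $|\zeta-w|^2=(|\zeta-w|^2-m)+m$, with $m=\min_{S^1}|\zeta-w|^2>0$, writes $p=|\tilde q|^2(|\zeta-w|^2-m)+m\,|\tilde q|^2$ as a sum of two non-proportional elements of $(\CZ{n})_+$. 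In either case $p$ is not extreme.

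The main obstacle I anticipate is the ``if'' direction of (1): making rigorous the order-of-vanishing domination ($b\le p$ and $p=O(|\zeta-\zeta_j|^{2\mu_j})\Rightarrow b=O(|\zeta-\zeta_j|^{2\mu_j})$) and correctly accounting for multiplicities so that the degree count $\sum 2\mu_j=2(n-1)$ closes the argument. This is also where the precise reading of ``all zeroes on $S^1$'' (equivalently full support, $a_{\pm(n-1)}\neq0$) is essential, since without full support simple examples such as $p(\zeta)=|\zeta-1|^2$ inside $\CZ{3}$ have all their finite zeroes on $S^1$ yet fail to be extreme. As a consistency check, both statements can be cross-verified against the duality of Corollary~\ref{corl:dual-extr}: pure states of $\CZ{n}$ should match the extreme rays of its dual Toeplitz system $\Toep{n}$, and the extreme rays of $(\CZ{n})_+$ should match the pure states of $\Toep{n}$.
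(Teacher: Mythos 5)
Your proof is correct, and for part (1) it follows the same core strategy as the paper's own proof: Fej\'er--Riesz factorization (Lemma \ref{lma:fr}), factoring out a zero lying off the circle to produce a nontrivial decomposition, and a multiplicity-plus-degree count to get extremality when all $2(n-1)$ zeroes lie on $S^1$ (your domination step is sound: $0\le b\le p$ gives $b=O(|\zeta-\zeta_j|^{2\mu_j})$, and since $z^{n-1}b(z)$ is a polynomial this $O$-bound is an honest root multiplicity). Where you go beyond the paper is in treating the case its dichotomy silently omits: elements whose zeroes all lie on the circle but whose Laurent degree is deficient ($d<n-1$), such as $|\zeta-1|^2$ viewed inside $\CZ{3}$; your trick $b=\tfrac12(1+\cos(\theta-\theta_0))\,p$ shows these are not extreme, and your reading of the statement (full degree, \ie $a_{\pm(n-1)}\neq 0$, so that $z^{n-1}p(z)$ has exactly $2(n-1)$ roots, all on $S^1$) is the correct one --- the paper's proof presupposes this reading when it speaks of ``all its $2n-1$ zeroes'' (an off-by-one count, it should be $2n-2$), but never disposes of the degree-deficient case. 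For part (2) your forward direction coincides with the paper's (extension of pure states to $C(S^1)$ via \S\ref{sect:states-opsyst}); for the converse the paper argues by symmetry --- the rotation action of $S^1$ on $\CZ{n}$ is transitive on evaluations, and at least one evaluation restricts to a pure state since pure states exist --- whereas you argue directly with the peak function $g(\zeta)=|\zeta-\lambda|^2$ and supports of representing measures. Both are valid; the paper's is shorter, yours is self-contained and exhibits the purity of each individual $\delta_\lambda|_{\CZ{n}}$ concretely. One caution: your closing cross-check against Corollary \ref{corl:dual-extr} and Proposition \ref{proptoeplitzextremepoints} can only serve as a consistency check, not as an alternative proof, since the paper derives those Toeplitz statements from the present proposition, and using them here would be circular.
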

\proof
(1) Let $a\in (\CZ{n})_+$. Suppose that the Laurent series $P(z) = \sum_k a_k z^k$ has a zero $z_0$ such that $|z_0|<1$. Since $a^* = a$ this Laurent series also has a zero at $1/ \overline z_0$. We may thus factorize
$$
P(z) = (z-z_0)(1/z-\overline z_0) Q(z)
$$
for a Laurent series $Q(z)=\sum_k b_k z^k$ where $b_k=0$ for $\vert k\vert>n-2$. Since $(z-z_0)(\overline z-\overline z_0)$ is (strictly) positive when restricted to $|z|=1$, we also have that $Q(z)$ is positive when restricted to the circle. Moreover, there exists $\epsilon>0$ such that $(z-z_0)(\overline z-\overline z_0)-\epsilon \geq 0$ for $|z|=1$. Thus  $(z-z_0)(1/z-\overline z_0)=\epsilon + c$ is the sum of two elements of $(\CZ{2})_+$,  and $a=\epsilon b +cb$ is not extremal. We conclude that if $P(z)$ has a zero outside the circle, then $a$ is not extremal.

Suppose now that $P(z) = \sum_k a_k z^k$ has all its $2n-1$ zeroes on the circle. If
$b\in(\CZ{n})_+$ fulfills $b\leq a$, then $\sum_k b_k z^k=Q(z) \leq P(z)$ for $|z|=1$. Then, at a zero of $P(z)$ of multiplicity $k$, $Q(z)$ has a zero of  multiplicity at least equal to $k$.  This shows that $Q(z)$ is a scalar multiple of $P(z)$ and the proof of (1) is complete. 

For (2) we use that we can extend a pure state on $\CZ{n}$ to a pure state of $C(S^1)$, thus given by evaluation in a point of $S^1$. In view of the symmetry given by the action of $S^1$ on $\CZ{n}$ we conclude that all pure states of $C(S^1)$ restrict to pure states of $\CZ{n}$.
\endproof

\subsection{Duality with the Toeplitz operator system}
\label{sect:duality-toeplitz}
In this section we analyze a duality between the operator systems $\Toep{n}$ and $\CZ{n}$ for any $n \geq 0$. The advantage of this duality will become clear soon, when we analyze the pure state space and extremal positive elements in $\Toep{n}$.

\begin{prop}
\label{prop:toep}
  There is a complete order isomorphism between the operator system $\Toep{n}$
  and the dual of $\CZ{n}$. For any Toeplitz matrix $T= (t_{k-l})_{k,l}$
the functional $\phi_T \in (\CZ{n})^d$ is given by $\phi_T(a) = \sum_k a_k t_{-k}$ where $ a = (a_k) \in   \CZ{n}$.
\end{prop}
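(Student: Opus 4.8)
The plan is to show that the bilinear pairing $\langle T,a\rangle := \phi_T(a) = \sum_k a_k t_{-k}$ sets up a perfect duality and that the induced map $\Phi: \Toep{n}\to (\CZ{n})^d$, $T\mapsto \phi_T$, is a complete order isomorphism. First I would record the purely linear facts. Both $\Toep{n}$ and $\CZ{n}$ have complex dimension $2n-1$, parametrised by $(t_{-n+1},\dots,t_{n-1})$ and $(a_{-n+1},\dots,a_{n-1})$ respectively, so $\Phi$ is a map between spaces of equal dimension; pairing $\phi_T$ against the standard basis sequences $\delta_k$ returns $t_{-k}$, whence $\Phi$ is injective and hence a linear isomorphism. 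A short computation with $a^*_k=\overline{a_{-k}}$ and $(t^*)_j=\overline{t_{-j}}$ shows $\Phi(T^*)=\Phi(T)^*$, so $\Phi$ is $*$-linear, and under $\Phi$ the identity matrix goes to the averaging functional $a\mapsto a_0$, a faithful state that serves as the order unit on $(\CZ{n})^d$. It then remains only to match the matrix cones.

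The heart of the argument is a single computation. Given a positive element $a\in(\CZ{n})_+$, its symbol $\sum_k a_k\zeta^k$ is a nonnegative Laurent polynomial of degree $\le n-1$, so by the Fej\'er--Riesz factorisation (Lemma \ref{lma:fr}) we may write it as $|q(\zeta)|^2$ with $q(\zeta)=\sum_{j=0}^{n-1}q_j\zeta^j$; unwinding the convolution $a_k=\sum_{j-l=k}q_j\overline{q_l}$ and inserting it into the pairing gives
$$
\phi_T(a)=\sum_{j,l}\overline{q_l}\,t_{l-j}\,q_j=\langle T q,q\rangle,\qquad q=(q_0,\dots,q_{n-1})\in\C^n .
$$
I would then amplify this to arbitrary matrix level $m$: writing $\mathbf T=(T^{(pq)})\in M_m(\Toep{n})$ and $A=(A_{pq})\in M_m(\CZ{n})$, the pairing reads $\Phi_{m}(\mathbf T)(A)=\sum_{p,q,k}A_{pq,k}\,t^{(pq)}_{-k}$, and factoring the $M_m(\C)$-valued symbol of a positive $A$ as $G(\zeta)^*G(\zeta)$ with $G(\zeta)=\sum_{j=0}^{n-1}G_j\zeta^j$ (the operator-valued Fej\'er--Riesz, or Rosenblum, factorisation) yields the block identity $\Phi_{m}(\mathbf T)(A)=\sum_s \xi_s^{*}\mathbf T\,\xi_s$, where the vectors $\xi_s\in\C^{mn}$ have components $(\xi_s)_{(q,l)}=G_{l,sq}$ and $\mathbf T$ is read as a positive semidefinite $mn\times mn$ matrix.

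With this identity in hand, a complete order isomorphism is exactly the statement that $\Phi_m$ carries $M_m(\Toep{n})_+$ onto the dual cone $M_m((\CZ{n})^d)_+$ for every $m$, and both inclusions become accessible. The reverse inclusion needs no factorisation theorem: for an arbitrary $\xi\in\C^{mn}$ I would choose $G$ with a single nonzero column so that $A:=G^*G\in M_m(\CZ{n})_+$ is manifestly positive and $\Phi_m(\mathbf T)(A)=\xi^{*}\mathbf T\xi$; hence $\Phi_m(\mathbf T)\ge 0$ forces $\xi^{*}\mathbf T\xi\ge0$ for every $\xi$, that is $\mathbf T\ge 0$, so $\Phi^{-1}$ is completely positive. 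The forward inclusion is where the work lies: if $\mathbf T\ge 0$ then $\Phi_m(\mathbf T)(A)=\sum_s\xi_s^*\mathbf T\xi_s\ge0$ for every positive $A$, but only once one has produced the factorisation $A=G^*G$. Thus the main obstacle is precisely the matrix-valued Fej\'er--Riesz factorisation, since Lemma \ref{lma:fr} supplies it only in the scalar case $m=1$; the remaining difficulty is the bookkeeping that reorganises block-Toeplitz positive-semidefiniteness into the quadratic forms $\xi_s^*\mathbf T\xi_s$ on $\C^{mn}$.
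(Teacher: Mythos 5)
Your proposal is correct, and its core computation — factoring a positive $a \in (\CZ{n})_+$ by the scalar Fej\'er--Riesz Lemma \ref{lma:fr} and rewriting the pairing as the Toeplitz quadratic form $\phi_T(a) = \langle Tq,q\rangle$ — is exactly the paper's proof (with $b$ in place of $q$), as is the identification of the order unit via the faithful state $a \mapsto a_0$. The genuine difference is at the matrix levels. The paper asserts that one must check that $T \mapsto \phi_T$ respects the matrix order, but its displayed verification is only at level one; the amplification pairing $M_m(\Toep{n})$ against $M_m(\CZ{n})$ is left implicit. You carry it out, and you correctly isolate the asymmetry: the implication $\Phi_m(\mathbf T) \geq 0 \Rightarrow \mathbf T \geq 0$ needs no factorization theorem, since the manifestly positive rank-one test elements $A_{pq} = b_p^* \ast b_q$ suffice, while $\mathbf T \geq 0 \Rightarrow \Phi_m(\mathbf T) \geq 0$ requires factoring an arbitrary positive $A \in M_m(\CZ{n})_+$ as $G(\zeta)^*G(\zeta)$ with $G$ an $M_m(\C)$-valued analytic polynomial of degree at most $n-1$. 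That is the operator-valued Fej\'er--Riesz theorem of Rosenblum, a strictly stronger input than the paper's scalar Lemma \ref{lma:fr}; it is a standard result (it appears, for instance, in the paper's own reference \cite{BW11}), so invoking it is legitimate, and your argument is complete modulo that citation — indeed more explicit than the paper's own proof on this point. One small correction: in the rank-one test, take $G$ with a single nonzero \emph{row}, not column; with your indexing $(\xi_s)_{(q,l)} = G_{l,sq}$, a single nonzero column produces an $A$ supported on one diagonal block and only tests $T^{(qq)}$, whereas a single nonzero row gives $\Phi_m(\mathbf T)(A) = \xi^{*}\mathbf T\xi$ for an arbitrary $\xi \in \C^{mn}$, which is what complete positivity of $\Phi^{-1}$ requires.
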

\proof
Since the vector space pairing given by the formula $\sum_k a_k t_{-k}$ is clearly non-degenerate, we simply have to check that $T \mapsto \phi_T$ respects the matrix-order and the order unit. Let $I=\{0, \ldots, n-1\}\subset (-n,n)$. In view of Lemma \ref{lma:fr}, an element $a \in \CZ{n}$ is positive if and only if it can be written as a convolution product $b^* \ast b$ for some  $b \in \CZ{I}$. One has, with $b_k=0$ $\forall k\notin I$,
$$
(b^* \ast b)(j)=\sum \overline b_k b_{j+k}   \qqq j, \vert j\vert <n
$$
where the summation takes place for $k\in I\cap (I-j)$. This intersection is non empty for $j\in (-n,n)$. One has using $j+k=l\Rightarrow j=l-k$
$$
   \phi_T(b^* \ast b)=\sum_{j\in (-n,n)}(b^* \ast b)(j)t_{-j}=\sum_{k,j}\overline b_k b_{j+k} t_{-j}=\sum_{k,l} \overline b_k b_l t_{k-l}=\langle b\vert Tb\rangle
$$
since $(T\xi)_k = \sum_{l}  t_{k-l}\xi_l$ for any $\xi \in \ell^2(I)$. Positivity of this expression is equivalent to the positivity of the Toeplitz form, {\em i.e.} $\phi_T \geq 0$ if and only if $T \geq0 $.

We show that for the order unit $1 \in \Toep{n}$ the functional $\phi_1$ is a faithful state on $\CZ{n}$. Since  for $a= b^* \ast b$ one has  $\phi_1(a)=\sum_k |b_k|^2$  the result follows.  
\endproof

This duality allows us to move smoothly between the following three structures:
\begin{enumerate}
\item a positive Toeplitz matrix $T \in \Toep{n}_+$;
\item a positive linear functional $\phi$ on $\CZ{n}$;
\item a positive quadratic form $Q$ on $C^*(\Z)_I$ of elements of sequences with support in an interval $I \subset \Z$ of length $n$.
  \end{enumerate}
In fact, these three structures are equivalent and related via the formulas:
\begin{align*}
&  1 \leftrightarrow 2 : \qquad \phi(a) = \sum t_k a_{-k} ;\\
&  2 \leftrightarrow 3 : \qquad \phi(\xi^* \ast \eta) = Q(\xi,\eta); \\
&  1 \leftrightarrow 3 : \qquad \langle \xi, T \eta \rangle = Q(\xi ,\eta).
  \end{align*}
With respect to these structures, we will be interested in $\ker \,T$, $\phi^\perp$ and both the radical and kernel of $Q$ where we recall that
\begin{align*}
  \phi^\perp &= \left\{ a \in \CZ{n} : \phi(a)  =0 \right\},\\
  \text{rad}(Q) &= \left \{\xi \in \CZ{I} : Q(\xi,\eta) = 0 \, \forall \eta \right\},\\
\ker(Q)  &= \left \{\xi \in \CZ{I} : Q(\xi,\xi) = 0 \, \forall \eta \right\}  .
\end{align*}
\begin{lma}
  \label{lma:properties-quadr-form}
  \begin{enumerate}
  \item For positive quadratic forms $Q$ the radical and kernel coincide. 
\item We have $(\phi^\perp)_+ = \left \{ \xi^* \ast \xi: \xi \in \ker \, Q \right \}$.
    \end{enumerate}
\end{lma}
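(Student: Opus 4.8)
The plan is to handle the two parts separately, exploiting the three-way dictionary relating the Toeplitz matrix $T$, the functional $\phi$, and the quadratic form $Q$, together with the Fej\'er--Riesz factorization of Lemma \ref{lma:fr}. The essential observation is that $Q$ is a positive \emph{semi-definite Hermitian} form, since $Q(\xi,\eta) = \langle \xi, T\eta\rangle$ with $T \geq 0$, so that $Q(\xi,\xi) \geq 0$ for every $\xi \in \CZ{I}$.

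For part (1), I would first note that the inclusion $\text{rad}(Q) \subseteq \ker(Q)$ is immediate from the definitions: if $Q(\xi,\eta) = 0$ for all $\eta$, then in particular $Q(\xi,\xi) = 0$. For the reverse inclusion I would invoke the Cauchy--Schwarz inequality $|Q(\xi,\eta)|^2 \leq Q(\xi,\xi)\,Q(\eta,\eta)$, which holds for any positive semi-definite Hermitian form. Thus if $\xi \in \ker(Q)$, so that $Q(\xi,\xi) = 0$, the right-hand side vanishes for every $\eta$, forcing $Q(\xi,\eta) = 0$ for all $\eta$ and hence $\xi \in \text{rad}(Q)$.

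For part (2), the key input is the Fej\'er--Riesz factorization. By Lemma \ref{lma:fr}, applied with $I = \{0,\ldots,n-1\}$, every positive element $a \in (\CZ{n})_+$ can be written as $a = \xi^* \ast \xi$ for some $\xi \in \CZ{I}$. Using the dictionary entry $\phi(\xi^* \ast \xi) = Q(\xi,\xi)$, I would then compute $\phi(a) = \phi(\xi^* \ast \xi) = Q(\xi,\xi)$, so that $a \in (\phi^\perp)_+$ --- meaning $a \geq 0$ and $\phi(a) = 0$ --- exactly when $a = \xi^* \ast \xi$ with $Q(\xi,\xi) = 0$, i.e. with $\xi \in \ker(Q)$. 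Both inclusions then follow at once: any $\xi^* \ast \xi$ with $\xi \in \ker(Q)$ is positive and annihilated by $\phi$, while any $a \in (\phi^\perp)_+$ arises in this form by the factorization.

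I do not expect a serious obstacle, as both parts are essentially formal once the dictionary is available. The one point requiring care is the bookkeeping of supports: the Fej\'er--Riesz lemma produces a factor $\xi$ supported in the prescribed interval $I$ of length $n$, so one must check that $\xi$ indeed lies in the domain $\CZ{I}$ on which $Q$ is defined and that $\xi^* \ast \xi$ lands back in $\CZ{n}$, which it does since its support sits in $\{-(n-1),\ldots,n-1\} \subset (-n,n)$. Part (1) additionally relies on $Q$ being genuinely Hermitian rather than merely real-quadratic, a fact guaranteed by its realization as $\langle \xi, T\eta\rangle$ with $T$ self-adjoint and positive; this is what legitimizes the use of Cauchy--Schwarz.
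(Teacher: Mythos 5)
Your proposal is correct and follows essentially the same route as the paper: part (1) via the Cauchy--Schwarz inequality for the positive Hermitian form $Q$, and part (2) via the Fej\'er--Riesz factorization of Lemma \ref{lma:fr} combined with the dictionary identity $\phi(\xi^* \ast \xi) = Q(\xi,\xi)$. The paper's proof is just a terser version of yours (it leaves the Cauchy--Schwarz step and the support bookkeeping implicit), so no further changes are needed.
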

\proof
(1) is a straightforward application of the Cauchy--Schwartz inequality.
For (2) note that if $a \in (\phi^\perp)_+$ then since $a \geq 0$ by Lemma \ref{lma:fr} it follows that $a = \xi^* \ast \xi$. But then $Q(\xi,\xi) = \phi(\xi^* \ast \xi) = \phi(a)=0$. The other inclusion is obvious.
\endproof
If no confusion can arise, we will also write $\phi^\perp_+ = (\phi^\perp)_+$ for the positive elements in the kernel of the linear functional $\phi$. 
\subsection{Pure states of the Toeplitz operator system}
We will determine the pure states of $\Toep{n}$, as well as the extreme rays in the cone $\Toep{n}_+$ of positive Toeplitz matrices. Here the duality with $\CZ{n}$ will turn out to be very useful, as it permits a simpler analysis and conceptual understanding of these extreme sets. We introduce the following notation:
$$
f_z = \frac 1 {\sqrt{n}}\begin{pmatrix} 1 & z & z^2 & \cdots & z^{n-1} \end{pmatrix}^t \in \C^n
$$
for any $z \in \C$. It is a column of a Vandermonde matrix ({\em cf.} Equation \eqref{eq:f-z} below).

\begin{prop}\label{proptoeplitzextremepoints}
  Let $\Toep{n}$ be the Toeplitz operator system. 
  \begin{enumerate}
  \item The extreme rays in $\Toep{n}_+$ are given by (multiples 
    of) $\gamma(\lambda) = \ket{f_{\lambda}}\bra{f_{\lambda}}$ for any $\lambda \in S^1$. In other words, the extreme rays $T= (t_{k-l})_{k,l}$ are of the form $t_k = \lambda^k$ (up to a positive real number) for some $|\lambda|=1$.
  \item The pure states of $\Toep{n+1}$ are given by functionals $T \mapsto \langle \xi, T \xi \rangle$ where the vector $\xi=(\xi_0,\ldots, \xi_n) \in \C^{n+1}$ is such that the polynomial $z \mapsto \sum_k \xi_k z^{n-k}$ has all its zeroes on $S^1$.
\item The pure state space $\cP(\Toep{n+1}) \cong \T^n /S_n$ is the quotient  of the $n$-torus by the symmetric group on $n$ objects. 
  \end{enumerate}
  \end{prop}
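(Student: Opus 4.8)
The plan is to deduce all three statements from the duality of Proposition~\ref{prop:toep}, which realizes $\Toep{m}$ as $(\CZ{m})^d$, together with the reflexivity $(\CZ{m})^{dd}\cong\CZ{m}$ of Lemma~\ref{lma:dual} and the dictionary of Corollary~\ref{corl:dual-extr}: for any finite-dimensional operator system $E$ the pure states of $E$ are exactly the extreme rays of the cone $(E^d)_+$. Since the extreme rays and pure states of $\CZ{m}$ are already described in Proposition~\ref{prop:CZn}, the whole argument reduces to transporting those descriptions across the explicit pairing $\phi_T(a)=\sum_k a_k t_{-k}$, plus one topological identification at the end.

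\textbf{Parts (1) and (2).} For (1) I would apply Corollary~\ref{corl:dual-extr} with $E=\CZ{n}$: the extreme rays of $\Toep{n}_+=((\CZ{n})^d)_+$ are the pure states of $\CZ{n}$, which by Proposition~\ref{prop:CZn}(2) are the evaluations $\phi_\lambda:a\mapsto\sum_k a_k\lambda^k$ with $\lambda\in S^1$. Writing $\phi_\lambda=\phi_T$ forces $t_k=\lambda^{-k}$; after replacing $\lambda$ by $\overline\lambda\in S^1$ one gets $t_k=\lambda^k$, and a one-line computation of the $(k,l)$-entry $\tfrac1n\lambda^{k-l}$ identifies this Toeplitz matrix with the positive multiple $n\,\ket{f_\lambda}\bra{f_\lambda}$ of a rank-one projection. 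For (2) I would run the dictionary the other way with $E=\Toep{n+1}$, so $E^d\cong\CZ{n+1}$ and the pure states of $\Toep{n+1}$ are the extreme rays of $(\CZ{n+1})_+$; by Proposition~\ref{prop:CZn}(1) these are the positive $a$ whose Laurent polynomial $A(z)=\sum_k a_k z^k$ has all zeros on $S^1$. Fej\'er--Riesz (Lemma~\ref{lma:fr}) factors $a=\xi^*\ast\xi$ with $\xi=(\xi_0,\dots,\xi_n)$, and the computation already recorded inside the proof of Proposition~\ref{prop:toep} turns the associated functional into $T\mapsto\langle\xi,T\xi\rangle$. Finally, setting $\Xi(z)=\sum_l\xi_l z^l$ one has $A=|\Xi|^2$ on $S^1$, so the zeros of $A$ on $S^1$ are exactly those of $\Xi$, equivalently those of the reversed polynomial $\Psi_\xi(z)=\sum_k\xi_k z^{n-k}=z^n\Xi(1/z)$ (inversion $w\mapsto 1/w$ preserving $S^1$), which is the stated condition.

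\textbf{Part (3).} By (2) every pure state is $\omega_\xi:T\mapsto\langle\xi,T\xi\rangle$ with $\Psi_\xi$ having all its zeros on $S^1$; I would send $\omega_\xi$ to the unordered multiset of roots $\{\mu_1,\dots,\mu_n\}$ of $\Psi_\xi$, i.e. to a point of $\T^n/S_n$. First one must check that the degree does not drop: extremality of $a$ forces the outer coefficients $a_{\pm n}$ to be nonzero (otherwise $z^n A$ would vanish at $0\notin S^1$), and since $a_n=\overline{\xi_0}\xi_n$ this gives $\xi_0\ne0$ and $\xi_n\ne0$, so $\Psi_\xi$ has degree exactly $n$ and contributes exactly $n$ roots. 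Surjectivity is immediate: from a multiset $\{\mu_j\}\subset\T$ set $\Psi(z)=\prod_j(z-\mu_j)$, read off and normalize $\xi$. The map is well defined and injective because the pure state determines the extreme ray $a$, hence its Laurent polynomial $A$, whose zeros are the $n$ numbers $\mu_j^{-1}$ each doubled; thus $\{\mu_1,\dots,\mu_n\}$ is recovered from $a$ alone, independently of the chosen factorization.

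\textbf{Main obstacle and topology.} The delicate point is precisely this well-definedness: Fej\'er--Riesz factorizations are far from unique in general, and it is only because extremality pins all zeros of $\Xi$ onto $S^1$ that the factor $\Xi$ — hence the root multiset — is determined up to a phase, making $\{\mu_j\}$ a complete invariant of $\omega_\xi$. Once the bijection $\T^n/S_n\to\cP(\Toep{n+1})$ is in hand, I would upgrade it to a homeomorphism by noting that coefficients depend continuously on the unordered roots and conversely, so the map is a continuous bijection from the compact space $\T^n/S_n$ into the Hausdorff state space $\cS(\Toep{n+1})$; its image is therefore compact, closed, and equal to the weak-$*$-closure of the extreme points, which is $\cP(\Toep{n+1})$ by definition. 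As a consistency check, for $n=2$ this recovers the symmetric square of the circle, the M\"obius strip found by direct computation in Theorem~\ref{thmextremepoints}.
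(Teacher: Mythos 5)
Your proposal is correct and follows essentially the same route as the paper's own proof: both parts (1) and (2) are deduced from the duality of Proposition~\ref{prop:toep} combined with Corollary~\ref{corl:dual-extr}, Proposition~\ref{prop:CZn} and the Fej\'er--Riesz Lemma~\ref{lma:fr}, and part (3) is obtained in both cases by parametrizing pure states through the root multiset of the associated degree-$n$ polynomial (the paper via the elementary symmetric polynomials, you via the multiset map directly), yielding $\cP(\Toep{n+1})\cong\T^n/S_n$. Your write-up is in fact more careful than the paper's on points the paper leaves implicit --- that extremality forces $\xi_0,\xi_n\neq 0$ so the degree does not drop, that the root multiset is independent of the chosen Fej\'er--Riesz factorization, and that the resulting continuous bijection from the compact space $\T^n/S_n$ is automatically a homeomorphism onto the (closed) pure state space --- but these are refinements of the same argument, not a different one.
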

\proof
For the first two statements we use duality in the form of  Corollary \ref{corl:dual-extr} and Proposition \ref{prop:toep}. Proposition \ref{prop:CZn}  determines pure states and extreme rays in the dual system $(\CZ{n})_+$. The extreme rays in $\Toep{n}_+$ are given by pure states on $(\CZ{n})_+$ \ie by evaluation at points of $S^1$. Up to $\lambda\mapsto \lambda^{-1}=\bar \lambda $, they correspond to the $\gamma(\lambda)$. A pure state of $\Toep{n+1}$ corresponds to an extreme ray  in $(\CZ{n+1})_+$ and hence to  an  element $a\in(\CZ{n+1})_+$,  $a= (a_k)$ for which the Laurent series $\sum_k a_k z^k$ has all its zeroes on the circle $S^1$. Since $a \geq 0$ by Lemma \ref{lma:fr} it follows that $a = \xi^* \ast \xi$. Then the vector $\xi \in \C^{n+1}$ is such that the polynomial $z \mapsto \sum_k \xi_k z^{n-k}$ has all its zeroes on $S^1$.  For the third statement, let the zeroes of $\sum_k \xi_k z^{n-k}$ be labelled $\lambda_1, \ldots, \lambda_{n}$ (taken with multiplicities). Then, up to normalization, we can write $\xi$ in terms of elementary symmetric polynomials in the $\lambda_k$'s: 
\begin{equation}
  \label{eq:vector-v}
\xi =  \begin{pmatrix} 1 \\ \sum_k \lambda_k \\ \sum_{k < l} \lambda_k \lambda_l \\ \vdots \\ \lambda_1 \cdots \lambda_{n} \end{pmatrix},
\end{equation}
which gives, using (2) the required identification $\cP(\Toep{n+1}) \cong \T^n /S_n$.
\endproof

Note that this type of duality between cones of positive elements is central in the theory of matrix completion and moments, and appears for instance in \cite[Section 1.1]{BW11}. 

\medskip

As an example let us consider the case $\Toep{3}$. The description of extreme rays in Proposition \ref{proptoeplitzextremepoints}  agrees with Proposition \ref{extrepts}. The pure state space of $\Toep{3}$ is   given in Proposition \ref{proptoeplitzextremepoints} by vector states $\ket {\xi} \bra{\xi}$ with $\xi$ of the form
\begin{equation}
  \label{eq:toep-pure-3}
\xi = \frac1 {\sqrt{4 + 2 \cos (x-y)}} \begin{pmatrix} 1 \\ e^{ix} + e^{i y} \\ e^{i (x+y)} \end{pmatrix} ,
\end{equation}
where $x ,y \in [0,2\pi)$. This confirms the result from the previous section where we found in Theorem \ref{thmextremepoints} that this M\"obius strip is the pure state space of $\Toep{3}$ ({\em cf.} Figure \ref{surface}).

\subsection{The cone of positive Toeplitz matrices}
We now apply the above operator system duality to arrive at a description of the cone $\Toep{n}_+$. This generalizes the analysis done in Section \ref{sect:truncations} to arbitrary dimension.

As a first powerful application of the duality we derive a classical result of Carath\'eodory from 1911 \cite{Car11} stating that positive semi-definite Toeplitz matrices allow for a so-called Vandermonde factorization (see also \cite{AK62} and \cite[Chapter 4]{GS58}). More recently, the value of these kind of factorizations has been rediscovered in the context of 
  signal analysis ({\em cf.} \cite{Red84,Bac13,YX18} and \cite{BW11} for a mathematical treatment). But our main finding is   the 
extension to the general case of the peculiar properties of the hypersurface which determines the boundary of the cone $\Toep{n}_+$. As shown below in Theorem \ref{thmstratification} this hypersurface admits a remarkable stratification by the degree of singularity of its points and this stratification corresponds to the rank of positive Toeplitz matrices, thus extending the results of the special case $n=3$ to the general case.
We start with some preparation.
\begin{lma}
  \label{lma:cone-det0-n}
The cone $\Toep{n}_+$ of positive elements is the closure of the open component of the identity matrix in the complement of the hypersurface defined by $H := \{ T  \in \Toep{n}:  \det(T)= 0 \} $. In particular, the boundary of $\Toep{n}_+$ coincides with the boundary in the complement of $H$ of the component of $1$. 
    \end{lma}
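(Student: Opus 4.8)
The plan is to follow exactly the strategy of the proof of Lemma~\ref{lma:cone-det0}, now carried out in arbitrary dimension. Since a positive matrix is automatically self-adjoint, $\Toep{n}_+$ is contained in the real vector space $V$ of self-adjoint Toeplitz matrices (those with $t_{-k}=\overline{t_k}$), on which the determinant restricts to a real-valued polynomial whose zero locus is the real hypersurface $H\cap V$; as $\Toep{n}_+\subseteq V$, the component of $1$ in the statement is understood within $V$. I would first record the two elementary facts that structure the whole argument: the set $P$ of \emph{positive-definite} Toeplitz matrices is an open convex subset of $V$ with $1\in P$, and $\Toep{n}_+=\overline P$. The latter is immediate because $\Toep{n}_+$ is closed while every $T\in\Toep{n}_+$ is the limit of $T+\eps\, 1\in P$ as $\eps\downarrow 0$; convexity of $P$ is convexity of positive-definiteness together with the fact that $\Toep{n}$ is a linear space containing $1$.

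Next I would prove that $P$ equals the connected component $C$ of $1$ in $V\setminus H$. For the inclusion $P\subseteq C$, given $T\in P$ the segment $s\mapsto (1-s)\,1+sT$ stays positive-definite by convexity and stays Toeplitz by linearity, hence has nowhere-vanishing determinant and connects $T$ to $1$ inside $V\setminus H$. For the reverse inclusion $C\subseteq P$, given $T\in C$ I would choose a continuous path $\gamma$ in $V\setminus H$ from $1$ to $T$; the eigenvalues of $\gamma(s)$ are real (self-adjointness) and depend continuously on $s$, while their product $\det\gamma(s)$ never vanishes, so no eigenvalue can change sign along $\gamma$. Since all eigenvalues equal $1$ at $s=0$, they remain strictly positive throughout, whence $T\in P$.

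Combining the two inclusions gives $P=C$, \ie the interior of the cone is exactly the open component of $1$ in the complement of $H$, and therefore $\Toep{n}_+=\overline P=\overline C$, which is the first assertion. The boundary statement then follows formally: since $C$ is open and $\Toep{n}_+=\overline C$, one has $\partial\,\Toep{n}_+=\overline C\setminus C$, which is precisely the topological boundary of the component $C$ of $1$ in $V\setminus H$ (and is contained in $H$).

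I do not expect any serious obstacle, as this is a clean $n$-independent version of the $n=3$ computation. The one step deserving care is the reverse inclusion $C\subseteq P$, where one must invoke continuity of the (real) eigenvalues of a self-adjoint matrix together with the nonvanishing of $\det$ along the path to rule out any sign change. A minor point worth verifying explicitly is that the cone is solid in $V$, \ie has nonempty interior, which is guaranteed by $1\in P$.
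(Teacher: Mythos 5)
Your proposal is correct and follows essentially the same route as the paper's own proof: the segment from $T$ to $1$ gives one inclusion, and continuity of the (real) eigenvalues along a path avoiding the determinant hypersurface gives the other. Your write-up is somewhat more careful than the paper's (making explicit that one works in the real space of self-adjoint Toeplitz matrices, proving $\Toep{n}_+=\overline{P}$ via $T+\eps\,1$ rather than just asserting the cone is the closure of its interior, and spelling out the boundary statement), but these are refinements of the same argument, not a different one.
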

  \proof
  The proof is completely analogous to Lemma \ref{lma:cone-det0} above but let us for convenience include it here for the general case. The cone is convex and is the closure of its interior which consists of matrices whose eigenvalues are strictly positive. The segment joining $T$ to the identity matrix $1$ stays inside $\Toep{n}_+$ and thus $T$ belongs to the open component of $1$ in the complement of the hypersurface $H$. Conversely on a path in the complement of this hypersurface joining $1$ to $T$ the eigenvalues remain positive since they vary continuously and cannot vanish as their product is given by the determinant. 
  \endproof

   A {\em face} $F$ of a convex cone $C\subset E$, in a real linear space, is a sub-cone $F\subset C$ which is  hereditary \ie 
  $$
  x\in F\ \text{ and } \ 0\leq_C y \leq_C x \Rightarrow y\in F.
  $$
  The intersection of $C$ with the real linear span $L(F)$ of $F$ is equal to $F$, since one has $L(F)=F-F$. Moreover if $C$ is proper, \ie $C\cap -C=\{0\}$ the projection $p(C)\subset E/L(F)$ of $C$ in the quotient is still proper.
  \begin{lma}
    Let $\phi$ be a positive linear functional on $\CZ{n}$. 
    \begin{enumerate}[a)]
    \item $\phi^\perp_+$ is a face of $(\CZ{n})_+$.
      \item $(\phi^\perp_+)_+^\perp$ is a face of $\Toep{n}_+$; it is the face generated by $\phi$. 
    \end{enumerate}
    \end{lma}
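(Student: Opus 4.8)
The plan is to dispatch part~a) by a direct positivity argument and then, via the duality of Proposition~\ref{prop:toep} and the description in Lemma~\ref{lma:properties-quadr-form}, to reduce part~b) to an explicit statement about kernels of positive Toeplitz matrices, which I would close with a spectral domination estimate.

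For a), the set $\phi^\perp_+$ is manifestly a sub-cone of $(\CZ{n})_+$, being stable under addition and nonnegative scaling. For the hereditary property I would take $a\in\phi^\perp_+$ together with $b\in(\CZ{n})_+$ such that $a-b\in(\CZ{n})_+$. Positivity of $\phi$ then gives $\phi(b)\ge 0$ and $\phi(a-b)\ge 0$, while $\phi(a)=0$, so $0\le\phi(b)\le\phi(a)=0$ and hence $\phi(b)=0$, i.e. $b\in\phi^\perp_+$. Thus $\phi^\perp_+$ is a face.

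For b), write $T=T_\phi\in\Toep{n}_+$ for the positive Toeplitz matrix dual to $\phi$, with quadratic form $Q(\xi,\eta)=\langle\xi,T\eta\rangle$. The first step is to make the double annihilator explicit. By Lemma~\ref{lma:properties-quadr-form} one has $\phi^\perp_+=\{\xi^*\ast\xi:\xi\in\ker T\}$ (with $\ker Q=\ker T$ because $T\ge 0$), and since $\phi_{T'}(\xi^*\ast\xi)=\langle\xi,T'\xi\rangle$ for any $T'\in\Toep{n}_+$, positivity of $T'$ yields
$$
(\phi^\perp_+)_+^\perp=\{T'\in\Toep{n}_+:\langle\xi,T'\xi\rangle=0\ \ \forall\,\xi\in\ker T\}=\{T'\in\Toep{n}_+:\ker T\subseteq\ker T'\}.
$$
That this set is a face of $\Toep{n}_+$ follows by the argument of a) transported to the dual cone: if $\psi$ lies in it and $0\le\chi\le\psi$, then $0\le\chi(a)\le\psi(a)=0$ for every $a\in\phi^\perp_+$, so $\chi$ lies in it as well. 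To identify it with the face generated by $\phi$, i.e. the smallest face containing $T$, I would use that this smallest face is $\{T'\in\Toep{n}_+: T-\epsilon T'\in\Toep{n}_+\text{ for some }\epsilon>0\}$, which is readily checked to be hereditary and contained in every face through $T$. The inclusion $\subseteq$ is immediate, since $T-\epsilon T'\ge 0$ forces $\langle\xi,T'\xi\rangle\le\epsilon^{-1}\langle\xi,T\xi\rangle=0$ on $\ker T$. For the reverse inclusion, given a Toeplitz $T'\ge 0$ with $\ker T\subseteq\ker T'$, I must produce $\epsilon>0$ with $T-\epsilon T'\ge 0$: taking orthogonal complements gives $\mathrm{ran}(T')\subseteq\mathrm{ran}(T)$, so with $P$ the projection onto $\mathrm{ran}(T)$ one has $T\ge\lambda_{\min}P$, where $\lambda_{\min}>0$ is the least nonzero eigenvalue of $T$, while $T'=PT'P\le\|T'\|P$; hence $T\ge(\lambda_{\min}/\|T'\|)\,T'$ and $\epsilon=\lambda_{\min}/\|T'\|$ works.

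The only genuine obstacle is this last domination step, everything else being formal bookkeeping with positive functionals. Its resolution rests on upgrading the kernel containment $\ker T\subseteq\ker T'$ to an order domination $T'\le\epsilon^{-1}T$, supplied by the spectral argument above; crucially $T-\epsilon T'$ is automatically Toeplitz, so it lands in $\Toep{n}_+$ and not merely in the ambient cone of positive matrices, which is precisely what keeps it inside the face.
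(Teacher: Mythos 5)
Your proof is correct, but it follows a genuinely different route from the paper's. Part a) is the same one-line positivity argument in both. For part b), the paper stays entirely inside abstract convex duality: writing $F$ for the face generated by $\phi$ and $L(F)=F-F$ for its linear span, it identifies $\phi^\perp_+$ with $(\CZ{n})_+\cap L(F)^\perp$, recognizes this set as the dual cone of the projection of $\Toep{n}_+$ into the quotient by $L(F)$, and uses that this projected cone is proper---hence has a spanning dual---to conclude that any $\psi\in(\phi^\perp_+)_+^\perp$ annihilates all of $L(F)^\perp$ and so lies in $\Toep{n}_+\cap L(F)=F$. You instead work concretely in the matrix picture: via Lemma \ref{lma:properties-quadr-form} (hence the Fej\'er--Riesz factorization) you compute the double annihilator explicitly as $\{T'\in\Toep{n}_+ : \ker T\subseteq\ker T'\}$, you characterize the face generated by $T$ as $\{T'\in\Toep{n}_+ : T-\epsilon T'\in\Toep{n}_+ \text{ for some } \epsilon>0\}$, and you close the gap between the two descriptions with the spectral domination $T\ge \lambda_{\min}P\ge (\lambda_{\min}/\Vert T'\Vert)\,T'$, which is valid and, as you note, keeps $T-\epsilon T'$ inside $\Toep{n}_+$ because the Toeplitz positive cone is exactly $\Toep{n}\cap M_n(\C)_+$. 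What each approach buys: the paper's argument is shorter and purely cone-theoretic, applying verbatim to any dual pair of proper finite-dimensional cones with no spectral theory or factorization input; your argument produces as a byproduct the explicit kernel-containment description of the face generated by $T$, which is precisely the form of the statement that the paper's subsequent theorem (faces of rank-$r$ positive Toeplitz matrices as simplicial cones on the $\gamma(\lambda_i)$ attached to the common roots of $\ker T$) actually exploits.
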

  \proof
  For (a) we suppose $a \in \phi^\perp_+$ and $b \leq a$ in $(\CZ{n})_+$. Then $\phi(b) \leq \phi(a) =0$ so $b \in  \phi^\perp_+$. Let us prove the second claim. Let $F$ be the face of $\phi$ and $L(F)=F-F$ its linear span. Then 
  $$a\in \phi^\perp_+\iff a\in (\CZ{n})_+ \cap L(F)^\perp  $$
 which is the dual of the projection of the cone in the quotient by $L(F)$. This projection is a proper cone, thus its dual is spanning and we get
  $$
  \psi \in (\phi^\perp_+)_+^\perp \iff \psi \in \Toep{n}_+\cap L(F)=F
  $$
  since the projection of $\psi$ is $0$ in the quotient by $L(F)$.
  \endproof

  \begin{prop}
The extreme rays of a face $F$ in a cone $K$ are extreme rays in $K$. 
    \end{prop}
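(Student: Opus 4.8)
The plan is to rely on a standard equivalent reformulation of the notion of extreme ray and to combine it with the hereditary property that \emph{defines} a face. Recall that a nonzero element $x \in K$ spans an extreme ray of $K$ precisely when, whenever $0 \leq_K y \leq_K x$ (that is, $y \in K$ and $x - y \in K$), one has $y = \lambda x$ for some $\lambda \geq 0$. This is the characterization I would use, as it matches exactly the ``hereditary'' formulation in the definition of a face.

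So let $x$ span an extreme ray of the face $F$, and suppose $0 \leq_K y \leq_K x$. The key observation is that this relation, which a priori only takes place in the ambient cone $K$, is automatically confined to the face. Indeed, since $x \in F$ and $F$ is hereditary in $K$, the relation $0 \leq_K y \leq_K x$ forces $y \in F$; applying hereditariness a second time to $0 \leq_K x - y \leq_K x$ gives $x - y \in F$ as well. Hence $0 \leq_F y \leq_F x$ holds entirely within $F$. Now extremality of $x$ \emph{inside} $F$ applies and yields $y = \lambda x$ for some $\lambda \geq 0$. Since $y$ was an arbitrary element with $0 \leq_K y \leq_K x$, this shows that $x$ spans an extreme ray of $K$.

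The argument is essentially immediate once the right reformulation of extreme ray is adopted; the only step requiring any care is the one I singled out, namely promoting the ambient order relation $0 \leq_K y \leq_K x$ to a relation inside $F$ by invoking the hereditary property of the face twice. No compactness, properness, or finite-dimensionality assumption is needed for this direction.
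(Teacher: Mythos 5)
Your proof is correct and follows essentially the same route as the paper's: use the hereditary property of the face to confine the relation $0 \leq_K y \leq_K x$ to $F$, then invoke extremality of $x$ within $F$. In fact your version is slightly more careful than the paper's, which omits the second application of hereditariness (needed to get $x-y \in F$) and writes the conclusion as $y=x$ rather than $y=\lambda x$.
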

  \proof
  Let $x$ be extreme in $F$ but suppose it is not extreme in $K$. Then there is an $y$ in $K$ such that $y \leq x$. Since $x$ is a point of a face, it follows that $y \in F$, and by extremality of $x$ in $F$ we find that $y=x$.
  \endproof

  \begin{thm}
Let $T$ be a rank $r \leq n-1$ positive Toeplitz matrix. Then the face generated by $T$ is a cone based on a simplex of dimension $r$ whose extreme points are the $\gamma(\lambda_i)$ where the $\lambda_1,\ldots, \lambda_r$ are the common roots of $\xi(\lambda)=0$ for all $\xi \in \ker\,T$.
  \end{thm}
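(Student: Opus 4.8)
The plan is to translate everything through the duality of Proposition \ref{prop:toep}, under which $\Toep{n}\cong(\CZ{n})^d$ and a Toeplitz matrix $T$ corresponds to the functional $\phi_T$. Writing $\phi=\phi_T$, the face generated by $T$ is $(\phi^\perp_+)_+^\perp$ by the lemma preceding the theorem, and the first task is to make this concrete. By that same lemma $\phi^\perp_+$ consists of the $a\in(\CZ{n})_+$ with $\phi(a)=0$, and by Lemma \ref{lma:properties-quadr-form} every such $a$ equals $\xi^*\ast\xi$ for some $\xi\in\ker Q=\ker T$ (the identifications $Q(\xi,\eta)=\langle\xi,T\eta\rangle$ and rad$(Q)=\ker Q$ match $\ker Q$ with the ordinary matrix kernel). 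Dualizing, a positive Toeplitz matrix $S$ lies in the face iff its functional $\phi_S$ annihilates all these elements, i.e.\ iff $\phi_S(\xi^*\ast\xi)=\langle\xi,S\xi\rangle=0$ for all $\xi\in\ker T$; since $S\geq 0$ this forces $S\xi=0$. Hence
$$
F=\{\,S\in\Toep{n}_+:\ \ker T\subseteq\ker S\,\},
$$
which is closed, convex and pointed.

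Next I would locate and count the extreme rays of $F$. By the proposition preceding the theorem, the extreme rays of the face $F$ are among those of $\Toep{n}_+$, and by Proposition \ref{proptoeplitzextremepoints} these are exactly the $\gamma(\lambda)=\ket{f_\lambda}\bra{f_\lambda}$ with $\lambda\in S^1$. Now $\gamma(\lambda)\in F$ iff $\gamma(\lambda)\xi=\langle f_\lambda,\xi\rangle f_\lambda=0$ for every $\xi\in\ker T$, i.e.\ iff $f_\lambda\perp\ker T$, equivalently $f_\lambda\in(\ker T)^\perp$. Since $\langle f_\lambda,\xi\rangle$ is, up to a nonzero factor, the value at $\lambda$ of the polynomial $\xi(z)=\sum_k\xi_kz^{n-1-k}$, this is precisely the condition that $\lambda$ be a common root of $\xi(\lambda)=0$ for all $\xi\in\ker T$. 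To see there are exactly $r$ of these I would invoke the Carath\'eodory--Vandermonde factorization established above \cite{Car11}: $T=\sum_{i=1}^{r}c_i\gamma(\lambda_i)$ with $c_i>0$ and $\lambda_1,\dots,\lambda_r\in S^1$ distinct. Then $(\ker T)^\perp$ is spanned by $f_{\lambda_1},\dots,f_{\lambda_r}$, and since any $r+1\leq n$ distinct Vandermonde vectors are linearly independent, $f_\lambda\in(\ker T)^\perp$ forces $\lambda\in\{\lambda_1,\dots,\lambda_r\}$. Thus the common roots are exactly $\lambda_1,\dots,\lambda_r$, and the extreme rays of $F$ are precisely $\gamma(\lambda_1),\dots,\gamma(\lambda_r)$.

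Finally I would assemble the cone. Each $\gamma(\lambda_i)$ is an extreme ray of the ambient $\Toep{n}_+$ lying in $F$, hence an extreme ray of $F$, and by the previous paragraph these exhaust them. As $F$ is a closed pointed convex cone in a finite-dimensional space it is the conical hull of its extreme rays, so $F=\{\sum_{i=1}^{r}s_i\gamma(\lambda_i):s_i\geq 0\}$. The generators are linearly independent, for as Toeplitz matrices $\gamma(\lambda_i)$ corresponds to the sequence $(\lambda_i^m)_{|m|<n}$, and distinct points give linearly independent such sequences by the Vandermonde determinant. Therefore $F$ is the simplicial cone on the $r$ extreme points $\gamma(\lambda_i)$, as claimed.

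The main obstacle I expect is not a single computation but keeping the three-fold dictionary (positive Toeplitz matrix, positive functional on $\CZ{n}$, positive quadratic form) perfectly aligned, so that $\ker Q$, $\phi^\perp_+$ and $\ker T$ are correctly matched and the concrete description of $F$ is beyond doubt. The other essential point is the use of the hypothesis $r\leq n-1$: it is exactly what gives $r+1\leq n$, preventing any spurious common root $\lambda\notin\{\lambda_i\}$ and pinning the number of extreme rays at precisely $r$. The Carath\'eodory factorization is what makes this root-count effective; without it one would instead have to argue directly that the $(n-r)$-dimensional kernel of a rank-$r$ positive Toeplitz matrix has exactly $r$ common roots, all on $S^1$.
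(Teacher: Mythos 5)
Your first two paragraphs essentially reproduce the paper's own argument and are correct: the identification of the face generated by $T$ as $(\phi^\perp_+)_+^\perp$, its concrete form $F=\{S\in\Toep{n}_+ : \ker T\subseteq\ker S\}$, and the characterization of the candidate extreme rays as those $\gamma(\lambda)$ with $\lambda$ a common root of the polynomials attached to $\ker T$ all match the paper. The genuine gap is in the root count. The factorization $T=\sum_{i=1}^{r}c_i\gamma(\lambda_i)$ that you invoke as ``established above'' is not established above: in the paper it is Corollary \ref{corl:toeplitz-decomp-rank}, which appears immediately \emph{after} this theorem and is deduced \emph{from} it (``We can reformulate this as the following Vandermonde factorization\dots''). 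The announced purpose of the section is to \emph{derive} Carath\'eodory's 1911 theorem from the operator system duality, and the theorem you are proving is precisely the step carrying that content; invoking the factorization here is therefore circular within the paper's development. Falling back on the classical result \cite{Car11} as an external black box would make the logic formally sound, but it concedes exactly the hard part --- that the common roots of the kernel of a rank-$r$ positive Toeplitz matrix are precisely $r$ in number --- which, as you acknowledge in your closing paragraph, your argument cannot produce on its own.

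The missing idea is to exploit the hypothesis that $F$ is the face \emph{generated} by $T$, i.e.\ the smallest face containing $T$, which after your first paragraph you never use again. Let $\lambda_1,\ldots,\lambda_m$ be the common roots; there are at most $n-1$ of them, since any nonzero $\xi\in\ker T$ yields a nonzero polynomial of degree at most $n-1$. By your own steps, $F$ is the simplicial cone on the linearly independent rays $\gamma(\lambda_1),\ldots,\gamma(\lambda_m)$, so $T=\sum_{i=1}^{m}c_i\gamma(\lambda_i)$ with $c_i\geq 0$. If some $c_j$ were zero, then $T$ would lie in the conical hull of the remaining rays, which is a proper face of $F$ (hence a face of $\Toep{n}_+$) containing $T$, contradicting minimality of $F$. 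Thus all $c_i>0$, whence $\ker T=\mathrm{span}\{f_{\lambda_1},\ldots,f_{\lambda_m}\}^{\perp}$, and linear independence of the $f_{\lambda_i}$ gives that the rank of $T$ equals $m$; therefore $m=r$. This is how the paper concludes, and it is what makes the Vandermonde factorization a genuine corollary of the theorem rather than an input to it.
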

  \proof
  Since $(\phi^\perp_+)^\perp_+$ is a face, it is generated by extreme rays $\gamma(\lambda)$ where the $\lambda$ are exactly the common zeroes of all $\xi \in \ker \, Q$. Let us denote these zeroes by $\lambda_1,\ldots, \lambda_m$ for $m \leq n-1$. By linear independence of the vectors $f_{\lambda_1},\ldots f_{\lambda_m}$ it follows that the $\gamma(\lambda_1), \ldots, \gamma(\lambda_m)$ are linearly independent. This implies that the face generated by $T$ is a cone based on an $r$-dimensional simplex with extreme points $\gamma(\lambda_1), \ldots,\gamma(\lambda_m)$. But then dually we must have $\dim(\phi^\perp) = n-m$ so that it follows that the rank of $T$ is equal to $m$. 
This completes the proof.
\endproof

We can reformulate this as the following Vandermonde factorization of positive Toeplitz matrices.
  \begin{corl}
\label{corl:toeplitz-decomp-rank}  Let $T$ be a positive $n \times n$ Toeplitz matrix of rank $r\leq n-1$. Then $T$ can be written in the following form:
  $$
T = V \Delta V^*,
  $$
where $\Delta$ is some positive diagonal matrix and $V$ is an $n \times r$ Vandermonde matrix,
  $$
\Delta= \begin{pmatrix} d_1 & &&& \\ & d_2 &&&\\ &&\ddots & \\ &&&&d_r\end{pmatrix}; \qquad  V=  \frac 1 {\sqrt n}  \begin{pmatrix} 
    1 & 1 & \cdots &1 \\
   \lambda_1 & \lambda_2 & \cdots & \lambda_n\\
    \vdots & & &\vdots \\
    \lambda_1^{n-1} & \lambda_2^{n-1} & \cdots & \lambda_{r}^{n-1}
 \end{pmatrix},
$$
for unique values $d_1, \ldots, d_r > 0$ and $\lambda_1,\ldots, \lambda_{r} \in S^1 \subset \C$.
    \end{corl}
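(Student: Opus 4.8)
The plan is to read the factorization straight off the preceding theorem, which has already done the substantive work: for a positive Toeplitz matrix $T$ of rank $r\le n-1$ it identifies the face of $\Toep{n}_+$ generated by $T$ as a cone over an $r$-dimensional simplex whose extreme rays are the $\gamma(\lambda_i)=\ket{f_{\lambda_i}}\bra{f_{\lambda_i}}$, with $\lambda_1,\dots,\lambda_r\in S^1$ the common roots of $\xi(\lambda)=0$, $\xi\in\ker\,T$. All that remains is to rewrite a conical combination of the rank-one terms $\gamma(\lambda_i)$ as a Vandermonde product $V\Delta V^*$.

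First I would use that $T$ lies in the relative interior of the face it generates. Since that face is a cone over the $r$-simplex with extreme rays $\gamma(\lambda_1),\dots,\gamma(\lambda_r)$, this forces $T=\sum_{i=1}^r d_i\,\gamma(\lambda_i)$ with every $d_i>0$: if some $d_i$ vanished, then by the linear independence of $f_{\lambda_1},\dots,f_{\lambda_r}$ (established in the theorem) the rank of $T$ would drop below $r$. Recalling from Proposition \ref{proptoeplitzextremepoints} that $f_z=\tfrac1{\sqrt n}(1,z,\dots,z^{n-1})^t$, I would then set $\Delta=\diag(d_1,\dots,d_r)$ and let $V$ be the $n\times r$ matrix whose $i$-th column is $f_{\lambda_i}$. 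A one-line computation gives
$$
V\Delta V^*=\sum_{i=1}^r d_i\,f_{\lambda_i}f_{\lambda_i}^*=\sum_{i=1}^r d_i\,\gamma(\lambda_i)=T,
$$
and $V$ is exactly the displayed Vandermonde matrix (entries $\tfrac1{\sqrt n}\lambda_i^{\,k}$), while $\Delta$ is positive because all $d_i>0$.

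For uniqueness I would note that the nodes $\lambda_i$ are intrinsically determined by the theorem as the common zeros of the polynomials attached to $\ker\,T$, hence unique up to relabelling; and the weights $d_i$ are then unique because, the $\gamma(\lambda_i)$ being linearly independent, the coefficients in the expansion $T=\sum_i d_i\,\gamma(\lambda_i)$ are uniquely determined (equivalently, barycentric coordinates in a simplex are unique). There is essentially no remaining obstacle: once the simplicial structure of the face is granted, the only genuine points to verify are the strict positivity $d_i>0$ and the uniqueness of the data, both of which collapse to the linear independence of the Vandermonde columns $f_{\lambda_i}$. The one thing to keep track of is the $1/\sqrt n$ normalization, which is precisely what makes $V$ come out as written.
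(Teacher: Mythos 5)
Your proof is correct and takes essentially the same route as the paper: the paper offers no separate argument, presenting the corollary as a direct reformulation of the preceding theorem on the face generated by $T$, which is exactly what you spell out (conical combination $T=\sum_i d_i\gamma(\lambda_i)$ rewritten as $V\Delta V^*$, with strict positivity of the $d_i$ and uniqueness both reduced to the linear independence of the columns $f_{\lambda_i}$).
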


\begin{thm}
\label{thm:toeplitz-decomp}
  Let $T$ be an $n \times n$ Toeplitz matrix of arbitrary rank. Then $T \geq 0$ if and only if $T$ is of the following form:
  $$
T = V \Delta V^*,
  $$
where $\Delta$ is some positive diagonal matrix and $V$ is a Vandermonde matrix,
  $$
\Delta= \begin{pmatrix} d_1 & &&& \\ & d_2 &&&\\ &&\ddots & \\ &&&&d_n \end{pmatrix}; \qquad  V=  \frac 1 {\sqrt n}    \begin{pmatrix} 
    1 & 1 & \cdots &1 \\
   \lambda_1 & \lambda_2 & \cdots & \lambda_n\\
    \vdots & & &\vdots \\
    \lambda_1^{n-1} & \lambda_2^{n-1} & \cdots & \lambda_n^{n-1}
 \end{pmatrix},
$$
for some $d_1, \ldots, d_n \geq 0$ and $\lambda_1,\ldots, \lambda_n \in S^1 \subset \C$.
\end{thm}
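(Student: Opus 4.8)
The plan is to prove the nontrivial implication by a one-step Toeplitz extension that reduces the full-rank case to the rank-deficient case already settled in Corollary \ref{corl:toeplitz-decomp-rank}. The reverse implication is immediate: if $T = V\Delta V^*$ with nodes $\lambda_j \in S^1$, then $T = \sum_j d_j \ket{f_{\lambda_j}}\bra{f_{\lambda_j}} = \sum_j d_j \gamma(\lambda_j)$ up to a normalization absorbed into $\Delta$, and each $\gamma(\lambda_j)$ is a positive Toeplitz matrix, its $(k,l)$ entry being $\frac1n \lambda_j^{\,k-l}$ precisely because $|\lambda_j|=1$; hence $T \ge 0$ and is Toeplitz.

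For the forward implication, if $T$ has rank $r \le n-1$ then Corollary \ref{corl:toeplitz-decomp-rank} already gives the factorization with $r$ strictly positive weights, and I would pad with $d_{r+1}=\cdots=d_n=0$ (and arbitrary extra nodes on $S^1$) to reach the stated form. It remains to treat a positive \emph{definite} $T$. Here I would embed $T$ as the top-left $n\times n$ principal block of an $(n+1)\times(n+1)$ Toeplitz matrix $\tilde T$, the only free parameter being the new corner entry $t_n$ (with $t_{-n}=\overline{t_n}$). Writing $\tilde T = \left(\begin{smallmatrix} T & v \\ v^* & t_0\end{smallmatrix}\right)$ with $v = \overline{t_n}\,e_0 + w$ for a fixed vector $w$, the Schur complement
\[
t_0 - v^* T^{-1} v = -a\,|t_n - c|^2 + R, \qquad a = (T^{-1})_{00} > 0,
\]
is a concave quadratic in $t_n$. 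Thus the set of admissible extensions $\{t_n : \tilde T \ge 0\}$ is the closed disk $|t_n - c|^2 \le R/a$; it is nonempty (the central, maximum-entropy completion of a positive definite Toeplitz sequence, \cite[Ch.~4]{GS58}), and on its boundary circle the Schur complement vanishes, so $\det \tilde T = 0$ while the invertible block $T$ forces $\tilde T$ to have rank exactly $n$.

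Choosing any $t_n$ on this boundary circle yields a positive semidefinite $(n+1)\times(n+1)$ Toeplitz matrix $\tilde T$ of rank $n \le (n+1)-1$. Applying Corollary \ref{corl:toeplitz-decomp-rank} to $\tilde T$ produces nodes $\lambda_1,\ldots,\lambda_n \in S^1$ and weights $d_1,\ldots,d_n>0$ with $\tilde T = \sum_j d_j \ket{\tilde f_{\lambda_j}}\bra{\tilde f_{\lambda_j}}$, where $\tilde f_\lambda \in \C^{n+1}$ is the size-$(n+1)$ analogue of $f_\lambda$. Since the top $n$ entries of $\tilde f_\lambda$ are proportional to $f_\lambda$, restricting each rank-one term to the top-left $n \times n$ block replaces $\ket{\tilde f_{\lambda_j}}\bra{\tilde f_{\lambda_j}}$ by a positive multiple of $\gamma(\lambda_j)=\ket{f_{\lambda_j}}\bra{f_{\lambda_j}}$. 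As that block of $\tilde T$ equals $T$, we obtain $T = V\Delta V^*$ with the same nodes on $S^1$ and a rescaled positive diagonal $\Delta$, as claimed.

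The main obstacle is the extension step: one must be sure the admissible region for $t_n$ is a genuine disk with nonempty boundary circle, i.e. that $a>0$ and $R \ge 0$. Positivity of $a=(T^{-1})_{00}$ is clear from $T \succ 0$, whereas nonemptiness $R \ge 0$ is exactly the classical Carath\'eodory--Toeplitz fact that a positive definite finite Toeplitz matrix extends to a positive definite one of one larger size; this is where I would invoke \cite[Ch.~4]{GS58} rather than reprove it. Everything else is the bookkeeping of passing from the $(n+1)$-dimensional Vandermonde factorization down to the $n$-dimensional one.
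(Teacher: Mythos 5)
Your proof is correct, and it reaches the theorem by a genuinely different route than the paper's. Both arguments reduce the invertible case to the rank-deficient Corollary \ref{corl:toeplitz-decomp-rank}, but the reduction mechanisms differ. The paper stays inside the convex geometry of the cone $\Toep{n}_+$ itself: after normalizing the trace so that the base of the cone is compact, it picks an arbitrary extreme point $\gamma(\lambda)$, prolongs the segment from $\gamma(\lambda)$ through $T$ until it meets the boundary of the base, notes that boundary points have vanishing determinant (Lemma \ref{lma:cone-det0-n}) and hence rank at most $n-1$, applies the corollary to that boundary point $T'$, and writes $T = tT' + (1-t)\gamma(\lambda)$. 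You instead go up one matrix size: your Schur-complement computation correctly identifies the admissible one-step Toeplitz extensions of a positive definite $T$ as a closed disk in the $t_n$-plane, any boundary point of which is a singular positive semidefinite $(n+1)\times(n+1)$ Toeplitz matrix $\tilde T$ of rank exactly $n$; the corollary applied to $\tilde T$ then compresses correctly to the top-left block, since the top $n$ entries of $\tilde f_\lambda$ are proportional to $f_\lambda$. What the paper's route buys is complete self-containedness: nothing is needed beyond compactness of the base and the determinant lemma. What your route buys is a sharper output --- the $n$ nodes you produce are pairwise distinct with strictly positive weights, inherited from the uniqueness statement for the rank-$n$ matrix $\tilde T$ --- together with a clean bridge to the classical completion theory (admissible disk, maximum-entropy center). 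The cost is the imported existence statement $R \geq 0$. Your appeal to \cite{GS58} is legitimate and not circular, since the corollary is proved in the paper by operator-system duality independently of that reference; but if you wish to avoid the import altogether, observe that the strictly positive functional $\phi_T$ on $\CZ{n}$ extends to a positive functional on $\CZ{n+1}$ by Krein's extension theorem (the order unit $1$ is an interior point of the positive cone), and by Proposition \ref{prop:toep} such an extension is exactly a positive semidefinite Toeplitz extension of $T$, which makes your argument self-contained within the paper's own framework.
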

\proof
We take a base for the cone $\Toep{n}_+$ by fixing the trace of the Toeplitz matrices to be $n$. Note that this is a compact set. 

Let $T$ be a matrix of rank $n$ with trace $1$ and take an arbitrary extreme point $\gamma(\lambda)$. We consider a line segment from $\gamma(\lambda)$ to $T$ and prolong this segment until it reaches a point $T'$ on the boundary of the (compact) base of the cone. Since elements in the boundary of the cone of positive elements have vanishing determinant (Lemma \ref{lma:cone-det0-n}), the rank of $T'$ is $n-1$. Hence the above Theorem applies and we may write $T' = \sum_{k=1}^{n-1} d_k \gamma(\lambda_k)$ for some $d_k, \lambda_k$. Since $T = t T' + (1-t) \gamma(\lambda)$ for some $t \in (0,1)$ we may write $T = \sum_{k=1}^{n-1}t d_k \gamma(\lambda_k) +(1-t) \gamma(\lambda)$ and the proof is complete. 
\endproof

Given the above concrete realization of the extreme elements, we may wonder how they are related to the singular points of the hypersurface $H$ defined by $\det T =0$. In particular, we would like to generalize to arbitrary $n$ the results of the previous section where we found the extreme points of $\Toep{3}_+$ (with fixed trace) to coincide with the singular points on $H$.

Since $H$ is defined to be the zero-set of $\det T$ we may analyze the singular points by looking at the partial derivatives of $\det T$. The determinant of a matrix is a multilinear function of the entries of the matrix and the partial derivatives of any order with respect to the entries are given by the minors. When $\det T$ is evaluated on Toeplitz matrices it is no longer multilinear but we shall show that the singularities of $\det T$ are still related to the rank of $T$.  Let us denote the Fr\'echet derivative of a functional $f$ on the real vector space $\Toep{n}_h$ by $D^{(k)}(f)$; when evaluated at an element $T \in \Toep{n}_h$ it is a linear functional on $(\Toep{n}_h)^{\otimes k}$ defined by
$$
D^{(k)}(f)(T, T_1\otimes \cdots \otimes  T_k) =\frac{\partial}{\partial t_1}\cdots \frac{\partial}{\partial t_k} \det (T+t_1 T_1 + \cdots t_k T_k)|_{t_1=\cdots = t_k =0 },
$$
where $T_1, \cdots, T_k \in \Toep{n}_h$.

In the case at hand, there is a natural stratification of the determinant hypersurface given by the degree of vanishing of $\det T$
$$ \cdots \subset S_{k} \subset S_{k-1} \subset \cdots \subset S_1 \subset S_0 =  H
$$ 
where at level $k$ one imposes the many conditions
$$
S_{k} = \left\{ T \in H: D^{(k)}\det(T) = 0, D^{(k-1)}\det(T) = 0, \ldots,\det(T) = 0 \right\} .
$$
We will say that $T$ has {\em multiplicity} $k+1$ in the hypersurface $H$ if $T \in S_k$.

\begin{thm}\label{thmstratification}
In the boundary of the cone $\Toep{n}_+$ the stratification of the singular set of $H$ coincides with the stratification by the rank. More precisely, $T$ has multiplicity $m$ if and only if $T$ has rank $n-m$ for any $m = 0, \ldots n-1$.
  \end{thm}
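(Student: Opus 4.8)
The plan is to compare the order of vanishing of $\det$ restricted to the self-adjoint Toeplitz directions with its order of vanishing on the full self-adjoint matrix space, and to show these agree exactly when $T \geq 0$. Recall the classical fact that, as a polynomial on $M_n(\C)$, the determinant vanishes at a matrix $A$ to order exactly $n-\operatorname{rank}(A)$: writing $A = P\bigl(\begin{smallmatrix} I_r & 0 \\ 0 & 0\end{smallmatrix}\bigr)Q$ with $P,Q$ invertible and using $\det(A+X)=\det(P)\det(Q)\det\bigl(\bigl(\begin{smallmatrix} I_r & 0\\ 0 & 0\end{smallmatrix}\bigr)+P^{-1}XQ^{-1}\bigr)$, one reduces to the normal form, where the Leibniz expansion shows that the lowest-degree homogeneous component of $X\mapsto\det(A+X)$ is the determinant of the lower-right $(n-r)\times(n-r)$ block, a nonzero polynomial of degree $n-r$. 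Since the forms $D^{(k)}\det(T)$ are precisely the homogeneous Taylor components of $X\mapsto \det(T+X)$, the multiplicity of $T$, namely the smallest $k$ with $D^{(k)}\det(T)\neq 0$ along Toeplitz directions, is exactly this restricted order of vanishing.

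For the lower bound, suppose $T$ lies on $\partial\Toep{n}_+$ with $\operatorname{rank} T = n-m$. By the fact above the full polynomial $X\mapsto\det(T+X)$ has no homogeneous terms of degree $<m$; restricting $X$ to the real subspace of self-adjoint Toeplitz matrices preserves this property, so $D^{(k)}\det(T)=0$ for all $k<m$. Hence $T\in S_{m-1}$ and the multiplicity is at least $m$.

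The crucial point is the matching upper bound, where restriction to Toeplitz directions could a priori raise the order of vanishing by annihilating the degree-$m$ part; this is where positivity enters decisively through the Vandermonde factorization. By Corollary \ref{corl:toeplitz-decomp-rank}, $T=\sum_{i=1}^{r} d_i\,\gamma(\lambda_i)$ with $r=n-m$, $d_i>0$ and \emph{distinct} nodes $\lambda_1,\dots,\lambda_r\in S^1$, where $\gamma(\lambda)=\ket{f_\lambda}\bra{f_\lambda}$. Choose further distinct points $\lambda_{r+1},\dots,\lambda_n\in S^1$ (so that all $n$ nodes are distinct) together with positive reals $d_{r+1},\dots,d_n$, and set the self-adjoint Toeplitz direction $T_1:=\sum_{j=r+1}^{n}d_j\,\gamma(\lambda_j)$. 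Writing $F=\bigl[\,f_{\lambda_1}\,\vert\cdots\vert\,f_{\lambda_n}\,\bigr]$, which is a nonzero scalar multiple of an invertible Vandermonde matrix since the $\lambda_i$ are distinct, we obtain for every $s$
$$
T+s\,T_1 = F\,\diag(d_1,\dots,d_r,\,s\,d_{r+1},\dots,s\,d_n)\,F^*,
$$
whence
$$
\det(T+s\,T_1)=|\det F|^2\Bigl(\prod_{i=1}^{n}d_i\Bigr)\,s^{\,n-r}=C\,s^{\,m},\qquad C\neq 0 .
$$
Thus $s\mapsto\det(T+sT_1)$ vanishes to order exactly $m$ at $s=0$, and since $\frac{d^m}{ds^m}\det(T+sT_1)\big|_{s=0}=D^{(m)}\det(T)(T,T_1^{\otimes m})=m!\,C\neq 0$, we conclude $D^{(m)}\det(T)\neq 0$, i.e. $T\notin S_m$. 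Combined with the lower bound, the multiplicity of $T$ is exactly $m$.

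This proves that every boundary point of rank $n-m$ has multiplicity $m$. Since $\det T=0$ forces $\operatorname{rank}T\le n-1$, the rank of a nonzero boundary point runs over $\{1,\dots,n-1\}$, and the assignment $r\mapsto n-r$ is a bijection onto the possible multiplicities; hence $T$ has multiplicity $m$ if and only if $\operatorname{rank}T=n-m$ for each $m=1,\dots,n-1$, the case $m=0$ being the interior condition $\det T\neq 0$. The main obstacle is precisely the upper bound: a generic Toeplitz matrix of rank $n-m$ need not factor with distinct unimodular nodes, so the degree-$m$ part of the restricted determinant could in principle vanish; it is the positivity of $T$, via the distinct-node Vandermonde factorization and its completion to an invertible $F$, that rules this out.
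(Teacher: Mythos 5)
Your proof is correct. It shares with the paper's proof the key mechanism where positivity enters, but handles the two halves differently enough to be worth comparing. For the upper bound on the multiplicity, both arguments rest on the rank-$r$ Vandermonde factorization of Corollary \ref{corl:toeplitz-decomp-rank} with distinct unimodular nodes, completed by $n-r$ further distinct nodes: the paper perturbs $T$ by the $k=n-r$ rank-one directions $\gamma(\lambda_j)$ separately and argues that $\det\bigl(T+\sum_j s_j\gamma(\lambda_j)\bigr)$ is a not-identically-zero polynomial of degree $\le k$ in $(s_1,\dots,s_k)$, so some Taylor coefficient of order $\le k$ survives and $T\notin S_k$; you compress this into the single direction $T_1=\sum_{j>r}d_j\gamma(\lambda_j)$ and obtain the exact identity $\det(T+sT_1)=Cs^m$ with $C\neq 0$ from the factorization $T+sT_1=F\,\diag(d_1,\dots,d_r,sd_{r+1},\dots,sd_n)\,F^*$, which pins down $D^{(m)}\det(T)\neq 0$ in a specific direction. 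That is a tighter execution of the same idea. For the lower bound you take a genuinely different route: the paper shows $D^{(q)}\det(T)=0$ for all $q\le n-1-r$ by rank subadditivity ($T$ plus $q$ rank-one terms still has rank $<n$, so the determinant vanishes identically on such combinations), differentiation, and then the spanning of the Toeplitz matrices by the $\gamma(\lambda)$ (via Theorem \ref{thm:toeplitz-decomp}) to pass from vanishing on $\gamma$-directions to vanishing of the full multilinear functional; you instead invoke the classical fact that $\det$ vanishes at a rank-$(n-m)$ matrix to order exactly $m$ on all of $M_n(\C)$ and note that restriction to the Toeplitz subspace can only raise the order of vanishing. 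Your lower bound is more elementary---it needs neither positivity nor the spanning lemma---and your write-up isolates cleanly the one place positivity is indispensable, namely ruling out, via the distinct-node factorization, that the restricted order of vanishing exceeds $m$.
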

\proof
Assume that $T$ has rank $\leq r$. Then with $q \leq n-1-r$ we have 
$$
\det \left(T + s_1 \gamma(\lambda_1) + \ldots + s_{q} \gamma(\lambda_{q} \right)) = 0
$$
for arbitrary $s_1, \ldots, s_{q} \geq 0$ and $\lambda_1,\ldots, \lambda_{q} \in S^1$. This implies that
$$
D^{(q)}(\det)(T,\gamma(\lambda_1) \otimes \cdots \otimes \gamma(\lambda_{q}) ) = 0
$$
for all $\lambda_1,\ldots, \lambda_q \in S^1$. Since by Theorem \ref{thm:toeplitz-decomp} the Toeplitz matrices are in the linear span of $\gamma(\lambda)$'s this implies that $D^{(q)}(\det)$ vanishes at $T$. Thus, $T$ has multiplicity $m$ with $m=n-r$.

In the other direction, for any $k$ let us suppose that $T$ has rank $r>n-1-k$ for some $k$. Then there are $s_1, \ldots ,s_k$ and $\lambda_1,\ldots, \lambda_k$ such that 
$$
\det\left (T + s_1 \gamma(\lambda_1) + \ldots + s_{q} \gamma(\lambda_{q} \right)  \neq 0.
$$
Since the $\gamma(\lambda_j)$ have rank one, this determinant is a polynomial of order $k$ in the $s_1,\ldots,s_k$ so that we find that $D^{(k)}(\det)(T) \neq 0$ for this $k$. Hence $T \notin S_k$ and the proof is complete. 
\endproof
\subsection{Distance on spectral truncations of the circle}\label{sect:trunc-dist}

We now compute the distance on the state space of $\Toep{n}$, using the formula 
$$
d(\phi,\psi):=\sup\{ \vert \phi(A)-\psi(A)\vert \mid \Vert [D,A]\Vert\leq 1\}.
$$
where $D$ is the Dirac operator on the circle ({\em cf.} Section \ref{sect:trunc-circle} above). We use only self-adjoint elements $A=A^*\in \Toep{n}_{\rm sa}$  in this formula. The distance is in fact determined by the following norm $\Vert  A\Vert_D$ on the quotient $\Toep{n}_{\rm sa}/\R 1$ of the  real vector space $\Toep{n}_{\rm sa}$ of Toeplitz selfadjoint matrices by the scalar ones:
$$
\Vert  A\Vert_D:=\Vert [D, A]\Vert.
$$  
More precisely one takes as the dual  of $\Toep{n}_{\rm sa}/\R 1$ the subspace $\CZ{n}^0$ of $(\CZ{n})_{\rm sa}$ given by linear forms which vanish on scalars. In the above formula one has $\phi-\psi\in \CZ{n}^0$, and the distance is determined by 
\begin{equation}\label{distdet}
	d(\phi,\psi)=\sup \{\vert (\phi-\psi)(A)\vert \mid \Vert  A\Vert_D\leq 1\},\end{equation}
so that, using the dual norm $\Vert  \bullet\Vert^D$ of $ \Vert  \bullet\Vert_D$, one gets  \begin{equation}\label{distdet1}
d(\phi,\psi)=\Vert \phi-\psi\Vert^D, \  \Vert  \omega\Vert^D:=\sup \vert \omega(A)\vert \mid \Vert  A\Vert_D\leq 1\qqq \omega \in \CZ{n}^0.
 \end{equation} 
The commutator $[D, A]$ is a Toeplitz matrix of trace $0$ and  one has a linear map $\partial$ from selfadjoint Toeplitz matrices to themselves given by 
\begin{equation}
\partial A:=i[D,A].
\label{eq:differential}
\end{equation}
Thus the unit ball  for the norm $\Vert  A\Vert_D$ is obtained by pulling back, by the map $\partial$  the unit ball of the Toeplitz norm in the subspace of elements of trace $0$. Now the latter is the intersection of two convex sets $C_\pm$ where 
$$
C_\pm:=\{A\in \Toep{n}_{\rm sa}\mid \tr(A)=0, \  1\pm A\geq 0\}.
$$ 
 The polar of a convex subset $C\subset E$ of a real vector space $E$ is defined as 
$$
C^o:=\{L\in E^*\mid L(\xi)\leq 1\qqq \xi \in C\}.
$$ 
 We have:
\begin{prop}\label{bipolar1}
$(i)$~The map $\partial$ gives an isomorphism $\partial:\Toep{n}_{\rm sa}/\R 1\to \Toep{n}_{\rm sa,0}$ (with Toeplitz matrices of trace $0$).\newline
$(ii)$~The transpose $\partial^t$ of $\partial$  is an isomorphism  $(\CZ{n})_{\rm sa}/\R 1\to \CZ{n}^0$.\newline
 $(iii)$~The unit ball of $ \CZ{n}^0$ for the norm $\Vert \bullet\Vert^D$ is the projection by the map  $\partial^t$ of the unit ball of the dual $\Vert\bullet\Vert^*$ of the Toeplitz operator norm.
 \newline
 $(iv)$~The unit ball of $ \CZ{n}^0$ for the norm $\Vert \bullet\Vert^D$ is the convex hull of the polars of  $\partial^{-1}C_\pm\subset\Toep{n}_{\rm sa}/\R 1$.
\end{prop}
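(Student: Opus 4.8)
The plan is to derive $(iv)$ from an explicit description of the unit ball of $\|\cdot\|_D$ followed by a single application of the bipolar theorem, so first I would pin down that ball and then dualize. By definition $\|A\|_D=\|[D,A]\|=\|\partial A\|$ for the Toeplitz operator norm $\|\cdot\|$, and by part $(i)$ the map $\partial$ is an isomorphism $\Toep{n}_{\rm sa}/\R 1\to\Toep{n}_{\rm sa,0}$; hence the $\|\cdot\|_D$-unit ball in $\Toep{n}_{\rm sa}/\R 1$ is exactly $\partial^{-1}$ of the operator-norm unit ball inside the trace-zero Toeplitz matrices. For self-adjoint $B$ one has $\|B\|\le 1\iff 1-B\ge 0$ and $1+B\ge 0$, so that operator-norm unit ball is $\{B\in\Toep{n}_{\rm sa,0}:\tr B=0,\ -1\le B\le 1\}=C_+\cap C_-$. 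Consequently the $\|\cdot\|_D$-unit ball equals $\partial^{-1}C_+\cap\partial^{-1}C_-$.

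Next I would dualize. Using parts $(i)$ and $(ii)$ to identify $\CZ{n}^0$ with the dual of $\Toep{n}_{\rm sa}/\R 1$ under the pairing of Proposition \ref{prop:toep} (equivalently, invoking $(iii)$ directly), the unit ball of $\|\cdot\|^D$ on $\CZ{n}^0$ is the polar of the $\|\cdot\|_D$-unit ball, namely $(\partial^{-1}C_+\cap\partial^{-1}C_-)^o$. The one algebraic input needed is the polar-of-an-intersection identity: for closed convex sets $A,B$ containing the origin, $(A\cap B)^o=\overline{\mathrm{conv}}(A^o\cup B^o)$, which follows from the bipolar theorem applied to $\mathrm{conv}(A^o\cup B^o)$. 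Taking $A=\partial^{-1}C_+$ and $B=\partial^{-1}C_-$ yields
$$\left(\partial^{-1}C_+\cap\partial^{-1}C_-\right)^o=\overline{\mathrm{conv}}\left((\partial^{-1}C_+)^o\cup(\partial^{-1}C_-)^o\right),$$
which is the claimed convex hull of the polars of $\partial^{-1}C_\pm$.

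It then remains to justify the hypotheses of the polar identity and to remove the closure, and this is where essentially all the real work sits. I would first check that $C_\pm$ are compact: writing $C_+=\{B\in\Toep{n}_{\rm sa,0}:B\ge -1\}$, the constraints $\tr B=0$ and $\lambda_j(B)\ge -1$ confine all eigenvalues to $[-1,n-1]$, whence $\|B\|\le n-1$; the analogous bound holds for $C_-$. Each $C_\pm$ is clearly closed, convex, and contains $0$ in its relative interior, and the linear isomorphism $\partial^{-1}$ preserves all these features, so $\partial^{-1}C_\pm$ are compact convex neighborhoods of $0$. Their polars are therefore compact convex subsets of $\CZ{n}^0$, and in finite dimensions the convex hull of a union of two compact convex sets is itself compact, hence closed; this lets me drop the closure and conclude that the $\|\cdot\|^D$-unit ball is precisely $\mathrm{conv}\left((\partial^{-1}C_+)^o\cup(\partial^{-1}C_-)^o\right)$.

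The hard part will not be the convex-geometric identity but the bookkeeping of the dual pairings: one must verify that $\partial^t$ and the two polars genuinely land in $\CZ{n}^0$, the forms vanishing on scalars, rather than in all of $(\CZ{n})_{\rm sa}$, which is exactly what parts $(i)$ and $(ii)$ secure. It is equally essential to notice that the trace-zero constraint embedded in $C_\pm$ is what rescues compactness; without it $C_+$ would be an unbounded slab $\{B\ge -1\}$ and its polar would degenerate, so the bipolar argument would break down.
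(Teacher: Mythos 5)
Your treatment of part $(iv)$ is correct and follows the same route as the paper: identify the unit ball of $\Vert\bullet\Vert_D$ in $\Toep{n}_{\rm sa}/\R 1$ with $\partial^{-1}C_+\cap\partial^{-1}C_-$, then convert the polar of the intersection into the convex hull of the polars via the bipolar theorem. Your compactness bookkeeping is in fact a small improvement in rigor over the paper: the eigenvalue bound $[-1,n-1]$ forced by the trace-zero constraint makes $C_\pm$ compact, hence the polars compact, hence the convex hull of their union already closed; the paper simply asserts that "the polar of an intersection is the convex hull of the polars," which in general yields only the \emph{closed} convex hull, so your justification for dropping the closure fills a genuine (if minor) lacuna.

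The gap is that you prove only one of the four assertions. Parts $(i)$ and $(ii)$ are admittedly one-line checks --- the kernel of $\partial=i[D,\cdot]$ on $\Toep{n}_{\rm sa}$ is $\R 1$ and its range is exactly $\Toep{n}_{\rm sa,0}$, since $[D,A]$ multiplies the $j$-th diagonal of a Toeplitz matrix by $j$, and $(ii)$ follows by transposition --- but they must be recorded, since your entire dual-pairing bookkeeping rests on them. The substantive omission is $(iii)$, which you never prove and at one point even propose to invoke ("equivalently, invoking $(iii)$ directly"), which would be circular if meant as part of the proof. What your argument for $(iv)$ actually uses is only the definitional fact \eqref{distdet1} that the $\Vert\bullet\Vert^D$-unit ball is the polar of the $\Vert\bullet\Vert_D$-unit ball; statement $(iii)$ is strictly stronger: it asserts that every $\omega\in\CZ{n}^0$ with $\Vert\omega\Vert^D\leq 1$ is of the form $\partial^t\psi$ for some $\psi$ with $\Vert\psi\Vert^*\leq 1$. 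The paper proves this by defining $\psi_0$ on the subspace $\Toep{n}_{\rm sa,0}$ by $\psi_0(\partial A):=\omega(A)$, observing that $\Vert\omega\Vert^D\leq 1$ makes $\psi_0$ of norm $\leq 1$ there, and then extending by Hahn--Banach to a functional $\psi$ on all of $\Toep{n}_{\rm sa}$ with $\Vert\psi\Vert^*\leq 1$ and $\partial^t\psi=\omega$. That extension argument is the real content of $(iii)$ and is absent from your proposal.
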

\proof $(i)$~It is an isomorphism from the quotient by the kernel $\R 1$ to the range.\newline
$(ii)$~The transpose $\partial^t$ is similarly an isomorphism from the quotient by its kernel $\R 1$ with its range.\newline
 $(iii)$~The unit ball for the norm $\Vert \bullet\Vert^D$ is described as the subset of $ \CZ{n}^0$
 $$
 \Vert \omega\Vert^D\leq 1\iff \vert \omega(A)\vert\leq 1\qqq A \mid \Vert \partial A\Vert \leq 1.
 $$
 If $\omega=\partial^t(\psi)$ with $\Vert\psi\Vert^*\leq 1$ one gets, using $\omega(A)=\partial^t(\psi)(A)=\psi(\partial(A))$
 $$
 A, \Vert \partial A\Vert \leq 1\Rightarrow  \vert \omega(A)\vert=\vert \psi(\partial(A))\vert\leq 1.
 $$
 This shows that the projection by the map  $\partial^t$ of the unit ball of the dual $\Vert\bullet\Vert^*$ is contained in the unit ball of $ \CZ{n}^0$ for the norm $\Vert \bullet\Vert^D$. Conversely one can identify the dual of $\Toep{n}_{\rm sa,0}$ with $(\CZ{n})_{\rm sa}/\R 1$ since $1\in \CZ{n}$ pairs trivially with Toeplitz matrices with trace $0$. Then let $\omega\in  \CZ{n}^0$, $\Vert \omega\Vert^D\leq 1$. Let $\psi_0$ be the linear functional on $\Toep{n}_{\rm sa,0}$ uniquely defined by 
 $$
 \psi_0(\partial(A)):= \omega(A).
 $$
 Since $\Vert \omega\Vert^D\leq 1$ the norm of $\psi_0$, as a functional on a subspace of the normed space $\Toep{n}_{\rm sa}$, is $\leq 1$. Thus by Hahn--Banach it extends to an element $\psi$ with $\Vert\psi\Vert^*\leq 1$. Moreover one has $ \psi(\partial A)=\psi_0(\partial(A))=\omega(A)$, $\forall A$.\newline 
 $(iv)$~The unit ball of the Toeplitz norm in the subspace of elements of trace $0$ is the intersection of the two convex sets $C_\pm$ and thus its image by the inverse of the isomorphism $\partial$ is the intersection of the $\partial^{-1}C_\pm\subset\Toep{n}_{\rm sa}/\R 1$.
 We then use the general fact that for closed convex sets the polar of an intersection is the convex hull of the polars, as follows from the bipolar theorem.\endproof 
 We now determine the  polar of $\partial^{-1}C_\pm\subset\Toep{n}_{\rm sa}/\R 1$.
 \begin{lem}\label{bipolar2} $(i)$~An element  $\phi \in \CZ{n}^0$ 
belongs to the polar of $C_-$ if and only if the linear form $\tilde \phi=\phi +1 
$
belongs to the state space $\cS$ of $\Toep{n}$. \newline
$(ii)$~The polar of $\partial^{-1}C_-\subset\Toep{n}_{\rm sa}/\R 1$ is $ \partial^t \cS$.
\end{lem}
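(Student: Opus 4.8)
The plan is to establish $(i)$ and then obtain $(ii)$ from it by transposition.

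For $(i)$, the key preliminary observation is that the order unit $1\in\CZ{n}$, regarded as a functional on $\Toep{n}$ via the duality of Proposition \ref{prop:toep}, simply reads off the constant diagonal: $1(T)=t_0=\tfrac1n\tr(T)$ for $T=(t_{k-l})\in\Toep{n}$. Since $\phi\in\CZ{n}^0$ means exactly that $\phi$ vanishes on scalar Toeplitz matrices, i.e. $\phi(1)=0$, it follows that $\tilde\phi(1)=1$ holds automatically; because a positive unital functional on an operator system is a state, $\tilde\phi=\phi+1$ lies in $\cS$ if and only if $\tilde\phi$ is positive on $\Toep{n}_+$. Thus $(i)$ reduces to the equivalence between $\phi\in C_-^o$ and positivity of $\tilde\phi$.

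One implication is immediate: if $\tilde\phi\ge 0$ and $A\in C_-$, then $1-A\ge 0$ gives $\tilde\phi(1-A)\ge 0$; using $\tilde\phi(1)=1$ and $\tilde\phi(A)=\phi(A)$ (the unit contributes $1(A)=\tfrac1n\tr(A)=0$ since $A$ has trace zero) we get $\phi(A)\le 1$, so $\phi\in C_-^o$. For the converse I would take an arbitrary nonzero $T\in\Toep{n}_+$ and set $t_0=\tfrac1n\tr(T)$, which is strictly positive because a positive Toeplitz matrix with vanishing diagonal is $0$. Put $A:=1-T/t_0$. Then $\tr(A)=0$ and $1-A=T/t_0\ge 0$, so $A\in C_-$, and the hypothesis $\phi(A)\le 1$ combined with $\phi(1)=0$ gives $\phi(T)\ge -t_0$, whence $\tilde\phi(T)=\phi(T)+t_0\ge 0$. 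As $T$ was arbitrary, $\tilde\phi\ge 0$, which finishes $(i)$.

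For $(ii)$, I would use that the dual of $\Toep{n}_{\rm sa}/\R 1$ is the annihilator $\CZ{n}^0$ of the scalars, which is precisely the range of the isomorphism $\partial^t$ of Proposition \ref{bipolar1}$(ii)$. Writing a general element of $\CZ{n}^0$ as $\omega=\partial^t\psi$ with $\psi\in(\CZ{n})_{\rm sa}/\R 1$, the defining property of the transpose gives $\omega(\partial^{-1}A)=\psi(A)$ for every $A\in C_-$; hence $\omega\in(\partial^{-1}C_-)^o$ if and only if $\psi(A)\le 1$ for all $A\in C_-$. Therefore $(\partial^{-1}C_-)^o=\partial^t P$ with $P=\{\psi\in(\CZ{n})_{\rm sa}/\R 1:\psi(A)\le 1\ \forall A\in C_-\}$. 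It then remains to identify $P$ with the image $\bar\cS$ of the state space in the quotient. Each class has a unique representative $\phi\in\CZ{n}^0$, and since $1$ pairs trivially with trace-zero matrices the condition defining $P$ depends only on $\phi$ and reads $\phi\in C_-^o$; by $(i)$ this is equivalent to $\phi+1\in\cS$, i.e. $\phi\in\cS-1$. Passing to the quotient absorbs the scalar shift and shows $P=\bar\cS$, so $(\partial^{-1}C_-)^o=\partial^t\cS$. The computations involved are elementary; the only delicate point is keeping track of the several dual pairings and of the repeated quotient by $\R 1$, while the conceptual crux is the rescaling $A=1-T/t_0$ in $(i)$, which is exactly what turns the order-interval condition defining the polar of $C_-$ into plain positivity of $\tilde\phi$ and thereby ties the metric unit ball to the state space.
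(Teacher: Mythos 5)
Your proof is correct and follows essentially the same route as the paper: in $(i)$ the shift $\tilde\phi=\phi+1$ together with the observation that $\{1-A : A\in C_-\}$ is exactly the set of positive Toeplitz matrices of trace $n$ (your rescaling $A=1-T/t_0$ is precisely the verification that this set is a base of the cone $\Toep{n}_+$), and in $(ii)$ the standard compatibility of polars with transposes of isomorphisms, which you unpack explicitly via $\omega=\partial^t\psi$ instead of citing it abstractly. No gaps; the bookkeeping of the quotients by $\R 1$ is handled correctly.
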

 \proof $(i)$~One has $\tilde \phi(1)=1$ by construction and $\tilde \phi(A)=\phi(A)$ for any $A\in \Toep{n}_{\rm sa,0}$. Moreover for such an $A$ one has  $A\in C_- \iff 1-A \geq 0$ and thus
$$
\left(\phi(A)\leq 1\qqq A\in C_-\right)\iff \left(\tilde \phi(1-A)\geq 0\qqq A\in C_-\right)\iff \tilde \phi\in \cS
$$
since the elements of the form $1-A, A\in C_-$ are the positive Toeplitz matrices of fixed trace $=n$ and form a base of the positive cone $\Toep{n}_+$. \newline
$(ii)$~Given an isomorphism $T:E\to F$ of finite dimensional real vector spaces and a subset $X\subset E$, the polar of $T(X)\subset F$ is the image by the inverse of $T^t$ of the polar of $X$. Applying this to the isomorphism $\partial^{-1}:\Toep{n}_{\rm sa,0}\to \Toep{n}_{\rm sa}/\R 1$ one gets that the polar of $\partial^{-1}C_-\subset\Toep{n}_{\rm sa}/\R 1$ is the image by $\partial^t$ of the polar of $C_-$ and by $(i)$ one obtains $ \partial^t \cS$. \endproof 
One passes from $C_-$ to $C_+$ by multiplication by $-1$ and the same holds for the polars. Thus Lemma \ref{bipolar2} also determines the polar of $\partial^{-1}C_+$ as $-\partial^{t}\cS$. 
\begin{prop}\label{bipolar3} The unit ball of $ \CZ{n}^0$ for the norm $\Vert \bullet\Vert^D$ is the convex hull of   $\partial^{t}\cS$ and $-\partial^{t}\cS$.	
\end{prop}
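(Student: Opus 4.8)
The plan is to read the statement off directly from Proposition \ref{bipolar1}$(iv)$ and Lemma \ref{bipolar2}, so that no new analytic input is required: all of the genuine work—the Hahn--Banach extension and the use of the bipolar theorem—has already been carried out in establishing those two results. What remains is purely a matter of assembling the pieces together with one elementary symmetry observation.

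First I would invoke Proposition \ref{bipolar1}$(iv)$, which already asserts that the unit ball of $\CZ{n}^0$ for $\Vert \bullet\Vert^D$ is the convex hull of the two polars of $\partial^{-1}C_\pm\subset\Toep{n}_{\rm sa}/\R 1$. Thus it suffices to identify these two polars explicitly. Lemma \ref{bipolar2}$(ii)$ gives the first one directly, namely the polar of $\partial^{-1}C_-$ is $\partial^t\cS$. For the second polar I would use the symmetry $C_+ = -C_-$, which is immediate from the defining conditions $\tr(A)=0$ and $1\pm A\geq 0$: replacing $A$ by $-A$ interchanges the constraints $1+A\geq 0$ and $1-A\geq 0$ while preserving $\tr(A)=0$. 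Since $\partial$ is linear it commutes with this sign change, whence $\partial^{-1}C_+ = -\partial^{-1}C_-$.

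It then remains to combine these with the elementary fact that the polar of the reflection of a set is the reflection of its polar, that is $(-X)^o = -(X^o)$ for any $X$; this follows at once from the definition of the polar by substituting $\xi = -\eta$ in the inequality $L(\xi)\leq 1$. Applying this to $X = \partial^{-1}C_-$ yields that the polar of $\partial^{-1}C_+$ equals $-\partial^t\cS$, exactly as recorded in the remark following Lemma \ref{bipolar2}. Substituting the two identifications $\partial^t\cS$ and $-\partial^t\cS$ into Proposition \ref{bipolar1}$(iv)$ then gives that the unit ball of $\CZ{n}^0$ for $\Vert \bullet\Vert^D$ is the convex hull of $\partial^t\cS$ and $-\partial^t\cS$, which is the claim. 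I do not expect any real obstacle: the statement is essentially a corollary of the preceding two results, and the only point requiring a line of verification is the polar reflection identity, which is routine.
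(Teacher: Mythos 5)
Your proof is correct and follows exactly the paper's own route: the paper proves Proposition \ref{bipolar3} by citing Proposition \ref{bipolar1}$(iv)$ and Lemma \ref{bipolar2}, with the symmetry $C_+=-C_-$ and the resulting identification of the polar of $\partial^{-1}C_+$ as $-\partial^t\cS$ stated in the sentence immediately preceding the proposition. Your only addition is spelling out the routine polar reflection identity $(-X)^o=-(X^o)$, which the paper leaves implicit.
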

\proof This follows from $(iv)$ of Proposition \ref{bipolar1} and Lemma \ref{bipolar2}.\endproof 
Coming back to Proposition \ref{bipolar1} $(iii)$, note that it is not true that the dual $\Vert\bullet\Vert^*$ of the Toeplitz operator norm is the operator norm in $ \CZ{n}$. In fact in the limit $n\to \infty$ the Toeplitz operator norm becomes the $L^\infty$ norm and its dual is the $L^1$ norm. This suggests to compare the norm $\Vert \bullet\Vert^D$ with the image by $\partial^{t}$ of the  quotient norm of the $L^1$ norm in $(\CZ{n})_{\rm sa}/\R 1$. We show that the  norm $\Vert \bullet \Vert^*$ on  $ \CZ{n}$ dual to the Toeplitz operator norm is larger than the $L^1$ norm for the normalized Haar measure $\frac{\vert dz\vert}{2\pi}$ on $S^1$.
\begin{prop}\label{ell1compare} $(i)$~Let $a= (a_k)\in (\CZ{n})_{\rm sa}$ with Laurent series $a(z):=\sum_k a_k z^k$, then one has 
\begin{equation}
\Vert a \Vert^*\geq \frac{1}{2\pi} \int_{S^1}\vert a(z)\vert \, \vert dz\vert= \Vert a \Vert_1 .
\label{normsell1}
\end{equation}
$(ii)$~The norm $\Vert \bullet\Vert^D$ fulfills the inequality
\begin{equation}
\Vert a \Vert_D\geq \inf_{\partial^t(b)=a} \Vert b \Vert_1 .
\label{norms2}
\end{equation}
\end{prop}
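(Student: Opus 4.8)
The plan is to treat $(i)$ as the substantive step and then obtain $(ii)$ formally from it together with Proposition \ref{bipolar1}$(iii)$. Recall from Proposition \ref{prop:toep} that the duality pairing between $a=(a_k)\in\CZ{n}$ and a Toeplitz matrix $T=(t_{k-l})_{k,l}\in\Toep{n}$ is $\langle a,T\rangle=\sum_k a_k t_{-k}$, and that by definition of the dual norm
$$
\Vert a\Vert^*=\sup\{\,|\langle a,T\rangle| : T\in\Toep{n},\ \Vert T\Vert\leq 1\,\}.
$$
To bound this from below it is enough to produce a \emph{single} Toeplitz matrix of operator norm $\leq 1$ whose pairing with $a$ equals $\Vert a\Vert_1$. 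The natural candidates are compressions of multiplication operators: for $g\in L^\infty(S^1)$ I would set $T_g:=P_n M_g P_n$, where $M_g$ is multiplication by $g$ on $L^2(S^1)$ and $P_n$ is the spectral projection of Section \ref{sect:trunc-circle}. This is a Toeplitz matrix with entries $(T_g)_{kl}=\widehat g(k-l)$, and being a compression of $M_g$ it satisfies the crucial bound $\Vert T_g\Vert\leq\Vert M_g\Vert=\Vert g\Vert_\infty$.

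Next I would unwind Fourier coefficients to get the pairing identity: for the Laurent polynomial $a(z)=\sum_k a_kz^k$ supported in $(-n,n)$,
$$
\langle a,T_g\rangle=\sum_k a_k\,\widehat g(-k)=\frac{1}{2\pi}\int_{S^1} g(z)\,a(z)\,\vert dz\vert.
$$
Since $a\in(\CZ{n})_{\rm sa}$ the function $a(z)$ is real-valued on $S^1$, so I would choose $g=\mathrm{sgn}\,(a(\cdot))\in L^\infty(S^1)$, which has $\Vert g\Vert_\infty\leq 1$ and hence $\Vert T_g\Vert\leq 1$. With this choice the identity becomes $\langle a,T_g\rangle=\frac{1}{2\pi}\int_{S^1}|a(z)|\,\vert dz\vert=\Vert a\Vert_1$, whence $\Vert a\Vert^*\geq\langle a,T_g\rangle=\Vert a\Vert_1$, proving $(i)$. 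The only point requiring a word of care is that $\mathrm{sgn}(a)$ need not be continuous; this is harmless, since $T_g$ is a genuine finite Toeplitz matrix determined by the finitely many relevant Fourier coefficients of the bounded measurable symbol $g$, and the compression bound $\Vert T_g\Vert\leq\Vert g\Vert_\infty$ holds for any $g\in L^\infty(S^1)$.

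For $(ii)$ I would transport $(i)$ through the duality already established. By Proposition \ref{bipolar1}$(iii)$ the unit ball of $\CZ{n}^0$ for $\Vert\bullet\Vert^D$ is the image under $\partial^t$ of the unit ball of $\Vert\bullet\Vert^*$; since everything is finite-dimensional this image is compact, and the statement is exactly that $\Vert\bullet\Vert^D$ is the push-forward (quotient) norm
$$
\Vert a\Vert^D=\inf\{\,\Vert b\Vert^* : b\in(\CZ{n})_{\rm sa},\ \partial^t(b)=a\,\},
$$
the infimum being over the coset $b_0+\R 1$ of solutions. Applying $(i)$ to each admissible $b$ gives $\Vert b\Vert^*\geq\Vert b\Vert_1$, and taking the infimum over the same coset on both sides yields $\Vert a\Vert^D\geq\inf_{\partial^t(b)=a}\Vert b\Vert_1$, which is the asserted inequality. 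Thus the whole proof hinges on the construction in $(i)$: once the test matrix $T_g$ and the compression bound are in hand, $(ii)$ is immediate, and the main (very mild) obstacle is simply justifying the use of the measurable symbol $g=\mathrm{sgn}(a)$ rather than a continuous one.
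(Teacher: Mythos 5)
Your proposal is correct, and part (i) takes a genuinely different route from the paper's. The paper argues by convexity and order theory: the unit ball of the operator norm in $\Toep{n}_{\rm sa}$ is the order interval $[-1,1]$, i.e.\ the intersection of $1-\Toep{n}_+$ and $-1+\Toep{n}_+$; by the bipolar theorem the polar of this intersection is the convex hull of the two polars, and each polar lies inside the $L^1$ unit ball because any $\phi$ with $\phi(1-\Toep{n}_+)\leq 1$ must be a positive element of $(\CZ{n})_{\rm sa}$ with $\phi(1)\leq 1$, and for positive elements the $L^1$ norm is just the integral $\phi(1)$. You instead exhibit, for each fixed $a$, an explicit norming contraction $T_g=P_nM_gP_n$ with measurable symbol $g=\mathrm{sgn}(a)$, using the compression bound $\Vert T_g\Vert\leq\Vert g\Vert_\infty$ and the pairing identity $\langle a,T_g\rangle=\frac{1}{2\pi}\int_{S^1}g\,a\,\vert dz\vert$; your remark that only the finitely many Fourier coefficients $\widehat g(j)$, $\vert j\vert<n$, enter disposes of the non-continuity of the symbol, and $T_g$ is self-adjoint since $g$ is real-valued, so it is an admissible test element. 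Both arguments are complete. The paper's version describes the entire dual unit ball at once and is the one that feeds into its surrounding polar machinery (Lemma \ref{bipolar2} and Proposition \ref{bipolar3}); yours is pointwise and constructive (the lower bound $\Vert a\Vert_1$ is actually attained by a test matrix) and makes the underlying classical $L^1$--$L^\infty$ duality, hence the $n\to\infty$ discussion following the proposition, transparent. For part (ii) you and the paper do essentially the same thing: read Proposition \ref{bipolar1}$(iii)$ as saying that $\Vert\bullet\Vert^D$ is the quotient (push-forward) norm of $\Vert\bullet\Vert^*$ under $\partial^t$, apply (i) to every lift $b$ in the coset $b_0+\R 1$, and take the infimum; your interpretation of the left-hand side of \eqref{norms2} as the dual norm $\Vert\bullet\Vert^D$ is the one consistent with the paper's own proof, and your spelled-out justification of the quotient-norm identity via compactness in finite dimensions is a useful expansion of the paper's one-line deduction.
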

\proof $(i)$~The unit ball for the Toeplitz operator norm in $\Toep{n}_{\rm sa}$ is the interval $[-1,1]$ \ie intersection of $1-\Toep{n}_+$ and $-1+\Toep{n}_+$, thus its polar is the convex hull in $(\CZ{n})_{\rm sa}$ of the polars and it suffices to show that each is contained in the unit ball of the norm $\Vert a \Vert_1$. One has 
$$
\phi(1-\Toep{n}_+)\leq 1\Rightarrow \phi\geq 0\, \text{ and } \, \phi(1)\leq 1
$$
and since the positive elements of $(\CZ{n})_{\rm sa}$ are positive functions on $S^1$ (\ie the associated Laurent polynomial is positive) the $L^1$ norm is simply the integral and the latter is $\phi(1)\leq 1$. By symmetry the  polar of $-1+\Toep{n}_+$ is also contained in the unit ball of the $L^1$ norm.\newline
$(ii)$~The norm $\Vert \bullet\Vert^D$ is, by Proposition \ref{bipolar1} $(iii)$ the image of the norm $\Vert \bullet \Vert^*$ by the projection associated to the map $\partial$ thus the statement follows from $(i)$.\endproof  

Note the infimum which appears in formula \eqref{norms2}. It is directly related to the fact that the geodesic distance is computed using the shortest path between two points. More precisely we take $n=\infty$ and consider the distance between two points $x,y$ of the circle incarnated as the associated Dirac masses $\delta_x,\delta_y$ viewed as states. Then the choice of an element $a$ such that $\partial a=\delta_x -\delta_y$ is unique up to the addition of a constant. It contains $\pm$ times the characteristic function of the two intervals joining $x$ and $y$ as well as  affine combinations of these two solutions. One finds that the infimum taken in formula \eqref{norms2}, where we use the $L^1$ norm in the limit $n=\infty$, corresponds to the choice of the shortest interval. Moreover the $L^1$ norm of the characteristic function of the  interval is the distance between  $x$ and $y$. To go further in the exploration of the distance function on the truncated circle involves understanding how the state space converges to the space of probability measures on $S^1$ and the distance to the Kantorovich metric.
By Theorem 3.7 of  \cite{Cabrelli}, the Kantorovich distance $d_T(\mu,\nu)$ between two probability measures $\mu,\nu$  on $S^1$ is computed by the formula 
\begin{equation}\label{knownkantorov}
d_T(\mu,\nu)=\int_{S^1}\vert \alpha(x)-a \vert dx, \ \ \alpha(x)=\mu([0,x])-\nu([0,x])
\end{equation}
and where the constant $a$ is such that the integral is minimal. The derivatives of the functions $\mu([0,x])$ and $\nu([0,x])$ give $\mu$ and $\nu$ and we see that  Proposition \ref{bipolar1} $(iii)$ is the  version of the above formula for $d_T$ for the truncated circle. Thus we obtain
\begin{thm}\label{thmtruncatedd} $(i)$~The distance function on the state space $\cS$ of the operator system $\Toep{n}$ is given for $\phi, \psi \in \cS$ and primitives $\Phi, \partial^t\Phi=\phi$, $\Psi, \partial^t\Psi=\psi$ by
$$
d(\phi,\psi)= \inf_{c\in \R} \Vert \Psi -\Phi -c \Vert^*.
$$
$(ii)$~The distance function on the state space $\cS$ is larger than the Kantorovich distance $d_T(\phi,\psi)$ of the associated probability measures on the circle.	
\end{thm}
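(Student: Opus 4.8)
The plan for $(i)$ is to read the formula straight off the duality that has already been assembled. By \eqref{distdet1} one has $d(\phi,\psi)=\Vert \phi-\psi\Vert^D$, and since $\phi,\psi$ are states their difference lies in $\CZ{n}^0$. Proposition \ref{bipolar1}$(iii)$ identifies $\Vert\bullet\Vert^D$ on $\CZ{n}^0$ as the image (quotient) norm of the dual Toeplitz norm $\Vert\bullet\Vert^*$ under $\partial^t$, i.e. $\Vert\omega\Vert^D=\inf\{\Vert\chi\Vert^*\mid \partial^t\chi=\omega\}$. Since $\partial$ kills only the scalars, its kernel being $\R 1$, the transpose $\partial^t$ has kernel $\R 1$ as well (constants pair trivially with trace-zero Toeplitz matrices), so the fibre of $\partial^t$ over $\psi-\phi$ is a single affine line. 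Choosing primitives $\Phi,\Psi$ so that $\partial^t(\Psi-\Phi)=\psi-\phi$, this fibre is $\{\Psi-\Phi-c\mid c\in\R\}$, and minimizing $\Vert\bullet\Vert^*$ over it gives exactly $d(\phi,\psi)=\inf_{c\in\R}\Vert\Psi-\Phi-c\Vert^*$.

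For $(ii)$ the idea is simply to replace $\Vert\bullet\Vert^*$ by the $L^1$ norm. Proposition \ref{ell1compare}$(i)$ gives $\Vert a\Vert^*\geq\Vert a\Vert_1$ for every $a\in(\CZ{n})_{\rm sa}$, and each $\Psi-\Phi-c$ lies in $(\CZ{n})_{\rm sa}$ because dividing the Fourier coefficients of $\psi-\phi$ by $ij$ keeps the support inside $(-n,n)$. Hence $\Vert\Psi-\Phi-c\Vert^*\geq\Vert\Psi-\Phi-c\Vert_1$ term by term, and taking the infimum over $c$ of both sides turns $(i)$ into $d(\phi,\psi)\geq\inf_{c\in\R}\Vert\Psi-\Phi-c\Vert_1$. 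What then remains is to recognize this last infimum as the Kantorovich distance via the Cabrelli--Molter formula \eqref{knownkantorov}.

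The crux, and the step I expect to be the main obstacle, is precisely this identification together with the bookkeeping of normalizations. A state $\phi$ corresponds under Proposition \ref{prop:toep} to a positive $a_\phi\in(\CZ{n})_+$ with $(a_\phi)_0=1$, hence to the probability measure $d\mu_\phi=a_\phi(z)\,\vert dz\vert/2\pi$ on $S^1$. Since $\partial$ multiplies the $j$-th diagonal of a Toeplitz matrix by $-ij$, its transpose $\partial^t$ multiplies the $j$-th Fourier coefficient by $ij$; thus $\Psi-\Phi$ is, up to an additive constant, an antiderivative of the density $a_\psi-a_\phi$, and integrating yields $(\Psi-\Phi)(x)=2\pi\big(\mu_\psi([0,x])-\mu_\phi([0,x])\big)+\text{const}$. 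Substituting into $\Vert\Psi-\Phi-c\Vert_1=\tfrac1{2\pi}\int_{S^1}\vert(\Psi-\Phi)(x)-c\vert\,\vert dx\vert$, the two factors of $2\pi$ cancel and the infimum over $c$ becomes $\inf_{a}\int_{S^1}\vert\alpha(x)-a\vert\,\vert dx\vert$ with $\alpha(x)=\mu_\phi([0,x])-\mu_\psi([0,x])$, which by \eqref{knownkantorov} equals $d_T(\phi,\psi)$. I would stress that $\partial^t\Phi=\phi$ is meaningful only through the difference $\psi-\phi\in\CZ{n}^0$, since the constant Fourier mode of a state is nonzero and so lies outside the range of $\partial^t$; the whole argument therefore runs through $\Psi-\Phi$ rather than the individual primitives, and the only genuine work is matching the abstract $\partial^t$-primitive to the difference of cumulative distributions with the $2\pi$ factors tracked correctly.
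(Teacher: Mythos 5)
Your proof is correct and is essentially the paper's own argument: part $(i)$ is read off from \eqref{distdet1} together with Proposition \ref{bipolar1}$(iii)$ (the quotient-norm description of $\Vert\bullet\Vert^D$ with fibre $\{\Psi-\Phi-c\}$ over $\psi-\phi$), and part $(ii)$ follows by applying the inequality \eqref{normsell1} to each $\Psi-\Phi-c$, taking the infimum, and invoking \eqref{knownkantorov}. The only difference is that you spell out what the paper leaves implicit — the identification of $\ker\partial^t=\R 1$, the matching of the $\partial^t$-primitive with the difference of cumulative distribution functions, and the cancellation of the $2\pi$ normalizations — all of which is accurate.
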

\proof $(i)$~Follows from \eqref{distdet1} combined with Proposition \ref{bipolar1} $(iii)$.\newline
$(ii)$~The inequality \eqref{normsell1}  gives 
$$
\Vert \Psi -\Phi -c \Vert^*\geq \Vert \Psi -\Phi -c \Vert_1
$$ 
combining with $(i)$ one gets 
$$
d(\phi,\psi)= \inf_{c\in \R} \Vert \Psi -\Phi -c \Vert^*\geq \inf_{c\in \R}\Vert \Psi -\Phi -c \Vert_1.
$$
Thus the result follows from  \eqref{knownkantorov} which shows that the last term is $d_T(\phi,\psi)$.\endproof 

These results relate Connes' distance formula for the truncated system to the explicit integral formula for the Kantorovich distance. More general results relating the distance on a spectral truncation of a given geometry to the Kantorovich distance on probability measures can be obtained from \cite[Proposition 3.6]{ALM14}. Indeed, from its very definition it is clear that Connes' distance formula for a Riemannian spin geometry coincides with Kantorovich's formulation of the distance formula on the state space of probability measures. Note that this formed the basis for the development of compact quantum metric spaces \cite{Rie99} (see also \cite{AM10} for a nice overview of the relation between the relevant distance functions).

\section{Toeplitz and circulant matrices}
\label{sect:circulant}
There is an interesting relation between the Toeplitz operator system $\Toep{n}$ discussed in the previous section and the group algebra of the cyclic group $C_{m}$ of order $m =2n-1$. We first recall the structure of this group algebra and the finite Fourier transform and formulate it in terms of operator systems.

\subsection{Fourier transform on the cyclic group of order $m$}
Let $m > 0$ and consider the finite abelian group $C_m := \Z/m \Z$. The point-wise action of $l^\infty(C_m)$ on $l^2(C_m)$ is given by:
\begin{equation}
\label{eq:action-cyclic}
g \cdot \psi (k) = g(k) \psi(k); \qquad (g \in l^\infty(C_m), \psi \in l^2(C_m), k \in C_m).
\end{equation}
In terms of the standard basis of $l^2(C_m)$ this becomes matrix-multiplication by a diagonal matrix $\text{diag}(g(0), \ldots, g(m-1) )$.

The {\em finite Fourier transform} is a map $\F: l^2(C_{m}) \to l^2(C_{m})$ defined by 
$$
\F(\psi)(k) = \sum_{l=0}^{m-1} \psi(l) \bar\zeta^{kl}
$$
with $\zeta$ a primitive $m$'th root of unity. The inverse finite Fourier transform $\bar\F$ is given by the same formula with $\bar\zeta$ replaced by $\zeta$. It is well-known that  $m^{-1/2}\F$ is unitary (Plancherel) so that $\F\bar \F=\bar \F\F=m$ and that both $\F$  and $\bar \F$ replace the convolution  product
  $$
  f\star g (k):=\sum_{l=0}^{m-1} f(k-l) g(l) 
$$ 
by the point-wise product since the Haar measures used to define them and $*$ are the same. Thus the Fourier transform is an isomorphism of the group algebra $C^*(C_m)$ with $l^\infty(C_m)$, $\F (f \ast  g)=\F f\cdot \F g$.  The unitary $U=m^{-1/2}\F$ conjugates the above representation \eqref{eq:action-cyclic} of $l^\infty(C_m)$ with the action of the group algebra $C^*(C_m)$ on $l^2(C_m)$ by convolution  product 
$$
f \ast  \psi=U^*(\F (f) \cdot U\psi)\qqq f\in  C^*(C_m), \ \psi \in l^2(C_m).
$$
The action by convolution of an element $c = (c_l) \in C^*(C_m)$  in terms of the standard basis of $l^2(C_m)$ is the following matrix acting on column vectors
\begin{equation}
\label{eq:circulant}
 c \sim \begin{pmatrix} c_0 & c_{m-1} & \cdots & c_2 & c_1 \\ c_1 &c_0 & c_{m-1} & & c_2 \\ \vdots & c_1 & c_0 & \ddots & \vdots \\ c_{m-2}  && \ddots & \ddots &c_{m-1} \\ c_{m-1} & c_{m-2} & \cdots & c_1 & c_0 \end{pmatrix}.
\end{equation}
Such a matrix is called a {\em circulant matrix}, it is a special case of a Toeplitz matrix. 

If we write as before
\begin{equation}
  \label{eq:f-z}
f_z  =\frac 1 {\sqrt{m}} \begin{pmatrix} 1 \\ z \\ \vdots \\ z^{m-1} \end{pmatrix}; \qquad z \in \C,
\end{equation}
then the Fourier transform can be written in terms of the canonical basis of $l^2(C_m)$ as a  Vandermonde matrix which is $\F=m^{1/2}U$ with $U$ unitary
$$
\mathcal F = \begin{pmatrix} 1 & 1 &\cdots & 1 \\ 1 & \xi & \cdots & \xi^{m-1} \\ \vdots & \vdots & \vdots & \vdots \\ 1 & \xi^{m-1} & \cdots & \xi^{(m-1)(m-1)}\end{pmatrix},\qquad U=  \begin{pmatrix} f_{1} & f_\xi & \cdots & f_{\xi^{m-1}} \end{pmatrix}
$$
with $\xi=\bar \zeta$ a primitive $m$'th root of unity. Consequently, $U=m^{-1/2}\F$ is the transformation matrix that diagonalizes the above circulant matrix \eqref{eq:circulant}. 

The finite Fourier transform can be understood nicely in terms of a duality of finite-dimensional operator systems, very similar to Proposition \ref{prop:toep} above. 
\begin{prop}
$(i)$~The operator system $C^*(C_m)$ is its  dual under the pairing  
$$
C^*(C_m) \times C^*(C_m) \to \C,   \   \
\langle f , g \rangle_{C^*(C_m)} := (f\star g)(0)=\sum_l f_l\, g_{-l}.
$$
$(ii)$~The following pairing gives a duality between $C^*(C_m)$ and $l^\infty(C_m)$
\begin{align*}
C^*(C_m) \times l^\infty(C_m) &\to \C\\
\left( c , g \right) &\mapsto \langle c ,  \F g \rangle_{C^*(C_m)}= \sum_{l,k} c_l \,  g(k) \zeta^{kl}.
\end{align*}
\end{prop}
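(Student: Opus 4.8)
The plan is to reduce both parts to the self-duality of the finite-dimensional commutative $C^*$-algebra $C^*(C_m)\cong l^\infty(C_m)$ via the Fourier transform. For $(i)$, the pairing $\langle f,g\rangle = (f\star g)(0)=\sum_l f_l g_{-l}$ is manifestly non-degenerate and so identifies $(C^*(C_m))^d$ with $C^*(C_m)$ as vector spaces; what remains is to check that it respects the matrix order and the order unit. Using the inversion formula $\F\bar\F=m$ one has $(f\star g)(0)=\frac1m\sum_k \F f(k)\,\F g(k)$, so that under the $*$-isomorphism $\F:C^*(C_m)\to l^\infty(C_m)$ the convolution pairing becomes, up to the positive scalar $\frac1m$ (irrelevant for the order structure), the pointwise pairing $\sum_k v(k)w(k)$ on $l^\infty(C_m)$. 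I would then verify directly that $l^\infty(C_m)$ is self-dual under this pairing: writing an element $\phi=(\phi_{ij})\in M_n((l^\infty)^d)$ as a function $k\mapsto W(k)\in M_n(\C)$, its pairing with $x\in M_n(l^\infty)$, $k\mapsto x(k)$, is $\phi(x)=\sum_k\tr\bigl(W(k)\,x(k)^t\bigr)$. Since the values $x(k)$ may be prescribed independently and transposition is a bijection of the positive cone of $M_n(\C)$, this is non-negative for every $x$ with $x(k)\geq0$ precisely when each $W(k)\geq0$; hence $M_n((l^\infty)^d)_+$ coincides with $M_n(l^\infty)_+$, which is the required complete order isomorphism at the level of matrix cones. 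The Archimedean order unit of the dual is furnished, via the Choi--Effros proposition, by the faithful state $f\mapsto f(0)$ (the normalized trace); since this state is exactly $\langle\,\cdot\,,\delta_0\rangle$, the unit $\delta_0$ of $C^*(C_m)$ is mapped to the order unit of the dual and the identification is unital.

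For $(ii)$, the stated pairing is the self-pairing of $(i)$ precomposed in the second slot with the Fourier transform, $(c,g)\mapsto\langle c,\F g\rangle_{C^*(C_m)}$, so it suffices to observe that $\F$ is a complete order isomorphism $l^\infty(C_m)\to C^*(C_m)$. This holds because $U=m^{-1/2}\F$ is the unitary diagonalizing the circulant matrices, so $\F$ agrees --- up to the positive scalar $m$ and the $*$-automorphism of $C^*(C_m)$ induced by the group inversion $k\mapsto -k$ --- with conjugation by $U$; a unitary conjugation and a $*$-automorphism are both completely positive with completely positive inverses. Composing the complete order isomorphism $\F:l^\infty(C_m)\to C^*(C_m)$ with the self-duality $C^*(C_m)\cong (C^*(C_m))^d$ of $(i)$ then yields the complete order isomorphism $l^\infty(C_m)\cong (C^*(C_m))^d$ asserted in $(ii)$. (The unit of $l^\infty(C_m)$ is sent to $m$ times the canonical faithful state of the dual, a positive multiple of its order unit, reflecting the Plancherel factor $\F\bar\F=m$.)

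The only genuinely non-formal step --- and hence the main obstacle --- is the verification that the dual matrix cone of $l^\infty(C_m)$ equals its own positive cone, i.e. the self-duality of the commutative $C^*$-algebra at every matrix level $M_n$. The subtle point is that the transposition appearing in the trace pairing acts on the test elements $x(k)$ rather than on $W(k)$, so it merely permutes the positive cone of $M_n(\C)$ and creates no obstruction; this is what allows the commutative $C^*$-algebra to be genuinely self-dual, rather than only order-isomorphic to its dual through the (non-completely-positive) transpose map. Everything else is bookkeeping with the Fourier transform together with appeals to Lemma \ref{lma:dual} and the general duality machinery.
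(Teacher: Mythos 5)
Your proof is correct and follows essentially the same route as the paper: both reduce the statement to the self-duality of $l^\infty(C_m)$ by using the identity $(f\star g)(0)=\frac1m\sum_k \F f(k)\,\F g(k)$, so that the Fourier transform intertwines the convolution pairing with the pointwise one, and then deduce $(ii)$ by composing with $\F$. The only difference is that you spell out details the paper leaves implicit --- the matrix-level verification that $l^\infty(C_m)$ is its own dual (via the trace/transpose computation) and the identification of the order unit through the Choi--Effros faithful-state criterion --- which is a sound fleshing-out rather than a different argument.
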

\proof $(i)$~The Fourier transform $\F$ is an isomorphism of the operator system $C^*(C_m)$ with $l^\infty(C_m)$ and the latter system is its own dual under the pairing 
$
\langle h ,   g \rangle_{l^\infty}:=\sum_l h(l) g(l)
$.
Moreover one has for $f,g\in C^*(C_m)$, by Fourier inversion
$$
\langle f , g \rangle_{C^*(C_m)}=(f\star g)(0)=\frac 1m \bar \F(\F(f \star g))(0)=\frac 1m \bar \F(\F(f)\cdot \F(g))(0)=$$ $$=\frac 1m \sum_l \F(f)(l) \F(g)(l)=\frac 1m \langle \F(f) ,   \F(g) \rangle_{l^\infty}.
$$
Thus the isomorphism $\F:C^*(C_m)\to l^\infty(C_m)$ is compatible (up to normalization) with the pairing, so $(i)$ follows. \newline
$(ii)$~Follows from $(i)$. Note that both $\F$ and $\bar \F$ are isomorphisms $C^*(C_m)\to l^\infty(C_m)$ 
so that there is no issue on the choice of $\F$ in the formula for the pairing.
\endproof

The operator norm of elements of $C^*(C_m)$ is given by the sup norm ($l^\infty$ norm) of the Fourier transform. The dual of the operator norm is given exactly by the $l^1$ norm and both norms are easier to compute than for the Toeplitz operator system and its dual. 

This duality implies that there is a one-to-one correspondence between pure states of $l^\infty(C_m)$ and extreme rays in the positive cone $C^*(C_m)_+$, as well as the converse. Of course, in all cases, these spaces are just given by the $m$'th roots of unity. Note that viewing $C^*(C_m)$ as a subsystem of $\Toep{m}$ as shown in \eqref{eq:circulant} the extreme rays of $C^*(C_m)_+$ are those extreme rays of $\Toep{m}_+$ which belong to $C^*(C_m)$.

It is interesting to compare the finite nature of this structure  to the much richer structure encountered for the Toeplitz operator system, where a whole $S^1$-worth of extreme rays has been found, not to mention the rich structure of the pure state space. The underlying reason is that of symmetry: for $\Toep{n}$ the symmetry group is $S^1$ while for the circulant matrices this is reduced to the cyclic group of order $m$. Moreover, a comparison between the frameworks of the cyclic group and the Toeplitz operator system suggests that Theorem \ref{thm:toeplitz-decomp} is a generalization of the (finite) Fourier transform. In the next subsection we will further explore the relation between circulant and Toeplitz matrices.

\subsection{Relation between circulant and Toeplitz matrices}
Given a Toeplitz matrix of size $n$, it is possible to `complete' it to a circulant matrix of size $2n-1$. More precisely, we have the following classical result. 

\begin{prop}
Let $m \geq 2n-1$. Then any Toeplitz matrix $T \in \Toep{n}$ can be obtained as the compression of an $m \times m$ circulant matrix $C$ to the upper-left $n \times n$ corner:
  $$
T = P_n C P_n
$$
where $P_n$ projects onto the linear span of the first $n$ canonical basis vectors. In other words, conjugation by $P$ induces a completely positive map $C^*(C_{m}) \to \Toep{n}$. 
  \end{prop}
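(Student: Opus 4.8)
The plan is to exhibit an explicit circulant completion and then deduce complete positivity from the general theory of compressions. Writing both matrices in coordinates (indexing rows and columns by $0,\ldots,m-1$), a circulant matrix associated to $c=(c_0,\ldots,c_{m-1})$ has $(k,l)$-entry $c_{(k-l)\bmod m}$ as in Equation \eqref{eq:circulant}, while the target $T=(t_{k-l})_{kl}$ is indexed by $k-l\in\{-(n-1),\ldots,n-1\}$. The compression $P_n C P_n$ retains the entries with $k,l\in\{0,\ldots,n-1\}$, so its $(k,l)$-entry is $c_{(k-l)\bmod m}$. I would therefore set $c_j=t_j$ for $0\le j\le n-1$ and $c_{m-s}=t_{-s}$ for $1\le s\le n-1$, leaving the remaining coefficients free (say zero).

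The hypothesis $m\ge 2n-1$ enters precisely here: the two index sets $\{0,\ldots,n-1\}$ and $\{m-n+1,\ldots,m-1\}$ on which values are prescribed are disjoint exactly when $m-n+1>n-1$, that is $m\ge 2n-1$; so the assignment is consistent and $P_n C P_n=T$, giving surjectivity. Conversely, I would note that the compression of any circulant matrix is automatically Toeplitz, since $(k-l)\bmod m$ depends only on $k-l$ for $k,l$ in the corner, so $P_n C P_n$ has constant diagonals and the map $C\mapsto P_n C P_n$ indeed lands in $\Toep{n}$.

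For complete positivity I would identify $C^*(C_m)$ with the concrete operator system of circulant matrices acting by convolution on $\ell^2(C_m)$ and write the compression as $C\mapsto V^*CV$, where $V:\C^n\to\C^m$ is the isometric inclusion onto the first $n$ coordinates (so $V^*CV=P_nCP_n$ read off on $\C^n$). Conjugation by a fixed operator is completely positive---for $X=(x_{ij})\in M_k(C^*(C_m))_+$ one has $(V^*x_{ij}V)=(I_k\otimes V)^*X(I_k\otimes V)\ge 0$---and $V^*V=I_n$ makes it unital, hence UCP. Restricting to circulant matrices and combining with the first two paragraphs yields the desired surjective completely positive map $C^*(C_m)\to\Toep{n}$. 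There is no analytic obstacle here: the only real content is the index bookkeeping of the first step, where the bound $m\ge 2n-1$ is exactly what prevents the ``positive'' and ``wrapped-around'' diagonals of the circulant from colliding in the $n\times n$ corner.
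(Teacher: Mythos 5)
Your proof is correct and follows essentially the same route as the paper: the paper's entire argument is to display the explicit circulant completion (for $m=2n-1$) with the wrapped-around entries $t_{-s}$ placed in positions $m-s$, which is exactly your index-bookkeeping construction. If anything, yours is slightly more complete, since it handles all $m\geq 2n-1$ (zero-padding the free coefficients) and spells out the standard conjugation argument $(V^*x_{ij}V)=(I_k\otimes V)^*X(I_k\otimes V)\geq 0$ for complete positivity, which the paper leaves implicit.
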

\proof
For any Toeplitz matrix $T= (t_{k-l})$, the sought-for circulant matrix is given as follows:
$$
C= \left( \begin{array}{ccccc|cccc}
  t_0 & t_{-1} & \cdots &t_{-n+2} &t_{-n+1} &  t_ {n-1} & \cdots & t_2 & t_1 \\
    t_1 & t_0 & t_{-1} & & t_{-n+2} & t_{-n+1} &&\cdots &t_2  \\
    \vdots & t_1 &t_0 &\ddots & \vdots & & \ddots & & \vdots\\
t_{n-2} & & \ddots & \ddots & t_{-1} & &\ddots  & \ddots & t_{n-1} \\
t_{n-1} & t_{n-2} & \cdots & t_1 & t_0 & t_{-1} & \cdots & & t_{-n+1}\\
\hline
t_{-n+1}& t_{n-1} & t_{n-2}&\cdots & t_1& t_0 & t_{-1} & \cdots & t_{-n+2}  \\
\vdots & & \ddots &\ddots & & \ddots & \ddots& & \vdots   \\
t_{-2} && &  \ddots &\ddots && \ddots & \ddots&  t_{-1} \\
t_{-1} & t_{-2} & \cdots & t_{-n+1} & t_{n-1} & t_{n-2} & \cdots & t_1 & t_0
  \end{array} \right).
$$
\endproof

Another intriguing property of the inclusion of the Toeplitz operator system inside $M_{2n-1}(\C)$ by the map $T\mapsto T \oplus 0_{n-1}$ is the following construction of a map
$$
\phi :l^\infty(C_{2n-1}) \otimes \Toep{n} \to M_{2n-1}(\C),
$$
where both $\Toep{n}$ and $M_{2n-1}(\C)$ are equipped with the operator norm. 

We let $S$ denote the (cyclic) shift matrix in $M_{2n-1}(\C)$ defined by $S(e_k) = e_{k+1}$ for $k=1,\ldots, 2n-2$ and $S(e_{2n-1}) = e_1$. We then define in terms of $f \in l^\infty(C_{2n+1})$ and 
a Toeplitz matrix $T \in \Toep{n}$:
$$
\phi(f \otimes T ) = \sum_{k=1}^{2n-1} f_k S^k \left(T \oplus 0_{n-1} \right) S^{-k} \in M_{2n-1}(\C).
$$
Since all operator spaces are finite-dimensional, $\phi$ is completely bounded but bijectivity of it is harder to establish and, in fact, not always true. 
\begin{prop}
  If $2n-1$ is a prime number, then the map $\phi$ is a completely positive bijection on the minimal tensor product
  $$
l^\infty(C_{2n-1}) \otimes_\min \Toep{n} \cong M_{2n-1}(\C).
  $$
\end{prop}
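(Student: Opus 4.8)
The plan is to exploit that $l^\infty(C_{2n-1})$ is a finite-dimensional commutative $C^*$-algebra, for which the minimal tensor product splits as a direct sum. Writing $m=2n-1$ and $C_m=\{1,\dots,m\}$, I would first record the standard identification of operator systems $l^\infty(C_m)\otimes_\min\Toep{n}\cong\bigoplus_{j\in C_m}\Toep{n}$, under which a tuple $(T^{(j)})_j$ is positive at every matrix level if and only if each $T^{(j)}$ is positive (this is $C(X)\otimes_\min E=\bigoplus_x E$ for finite $X$). Feeding in the indicator functions $f=\delta_j$ turns $\phi$ into $\phi\big((T^{(j)})_j\big)=\sum_{j}S^{j}(T^{(j)}\oplus 0_{n-1})S^{-j}$. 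Complete positivity is then immediate: each summand is $\mathrm{Ad}_{S^{j}}\circ\iota$, where $\iota:T\mapsto T\oplus 0_{n-1}$ is the corner embedding (the restriction to $\Toep{n}$ of the non-unital $*$-homomorphism $M_n(\C)\hookrightarrow M_m(\C)$, hence completely positive) and $\mathrm{Ad}_{S^{j}}$ is a $*$-automorphism of $M_m(\C)$; a sum of completely positive maps carries a direct-sum-positive element to a positive one, at each matrix level.

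The real content is bijectivity, which I would establish by showing that $\phi$ respects the decomposition of $M_m(\C)$ into its $m$ cyclic diagonals $D_d$ ($d\in\Z/m$), where $D_d$ is spanned by the matrices supported on $\{(k,k+d)\}$. Conjugation by $S$ preserves each $D_d$, acting there as a cyclic shift; and $\iota(T)$ places, on diagonal $d$, a single Toeplitz coefficient $c_d(T)$ on a set $I_d\subset\Z/m$ and zero elsewhere. The key structural point, which hinges on the relation $m=2n-1$, is that $I_d$ is always a single contiguous arc: it is the intersection of two length-$n$ arcs of $\Z/m$, and since $n+n-(2n-1)=1$ there is not enough slack for the intersection to break into two pieces. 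Its length is $s_d=n-|d'|\in\{1,\dots,n\}$, where $d'\in\{-(n-1),\dots,n-1\}$ is the Toeplitz index of $d$. Reorganising the input data by setting $F_d(j):=c_d(T^{(j)})$, one computes that on diagonal $d$ the map $\phi$ sends $F_d$ to the cyclic convolution $F_d\star\chi_{I_d}$, so that $\phi=\bigoplus_d\Phi_d$ with $\Phi_d(\,\cdot\,)=(\,\cdot\,)\star\chi_{I_d}$ under the linear bijection between the $m$ Toeplitz matrices $(T^{(j)})_j$ and the $m$ diagonal functions $(F_d)_d$.

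It then remains to diagonalise each $\Phi_d$ by the finite Fourier transform of Section~\ref{sect:circulant}: convolution by $\chi_{I_d}$ becomes multiplication by $\widehat{\chi_{I_d}}$, so $\Phi_d$ is invertible precisely when $\widehat{\chi_{I_d}}(l)=\sum_{r\in I_d}\zeta^{rl}\neq 0$ for every $l$. For an arc $I_d$ of length $s_d$ this geometric sum equals $s_d$ at $l=0$ and $\zeta^{pl}(\zeta^{s_dl}-1)/(\zeta^{l}-1)$ otherwise, which vanishes for some $l\neq 0$ exactly when $\gcd(s_d,m)>1$. Since $m=2n-1$ is prime and $0<s_d\leq n<m$, we get $\gcd(s_d,m)=1$ for all $d$, so every $\Phi_d$ is invertible and hence $\phi$ is bijective.

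The main obstacle is the structural reduction in the second paragraph: verifying that the corner embedding really produces a single contiguous arc on each cyclic diagonal (the step that genuinely uses $m=2n-1$) and carrying out the bookkeeping that turns $\phi$ into a direct sum of honest convolution operators. Once this is in place the Fourier computation and the primality input are short. I would also note that this picture explains the failure for composite $m$: the smallest prime factor $p\leq\sqrt{m}<n$ occurs among the arc-lengths $\{1,\dots,n\}$ as some $s_d$, and then $\gcd(s_d,m)=p>1$ makes $\Phi_d$ singular, so $\phi$ is not injective.
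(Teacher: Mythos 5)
Your proof is correct, and at the level of global strategy it parallels the paper's: both arguments decompose $M_{2n-1}(\C)$ into its cyclic diagonals (in the paper this appears as the crossed-product grading $x=\sum_k f_kS^k$ with $f_k\in\ell^\infty(C_{2n-1})$, which is the same decomposition, since $\ell^\infty(C_{2n-1})\cdot S^k$ is exactly one cyclic diagonal), observe that $\phi$ respects this grading, and reduce invertibility on each graded piece to a Fourier non-vanishing statement over $\Z/m$. But the key lemma is genuinely different. The paper never uses the shape of the supports: writing $\phi(\delta_k\otimes\tau_j)=S^jS^kQ_jS^{-k}$ with $Q_j=S^{-j}PS^jP$ a nontrivial diagonal projection, it proves that the translates of $\chi_X$ span $\ell^\infty(C_p)$ for an \emph{arbitrary} nonempty proper subset $X\subset\Z/p$, because a common zero of their Fourier transforms would give a vanishing sum $\sum_{g\in X}\chi(g)=0$ of $p$-th roots of unity, which is impossible since the cyclotomic polynomial has degree $p-1$; bijectivity then follows from surjectivity together with equality of dimensions. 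You instead identify the supports explicitly as contiguous arcs $I_d$ of length $s_d=n-|d'|$ (this is where $m=2n-1$ enters, and your ``no slack'' argument for contiguity, via $n+n-(2n-1)=1<2$, is valid), so that each graded piece of $\phi$ becomes convolution by $\chi_{I_d}$, invertible because the geometric sum $\widehat{\chi_{I_d}}(l)$ can vanish only when $\gcd(s_d,m)>1$. Your route is more elementary (no cyclotomic irreducibility, just geometric series and a gcd), gives injectivity directly without the dimension count, and yields a sharper conclusion: for composite $m$ the smallest prime factor lies in $\{1,\dots,n\}$ and hence equals some arc length $s_d$, so some convolution operator is singular and $\phi$ fails to be injective — making primality necessary as well as sufficient, which refines the paper's remark that bijectivity is ``not always true.'' What the paper's lemma buys in exchange is robustness: it applies to the conjugates of any nontrivial diagonal projection, with no need for the contiguity analysis.
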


\proof To prove the proposition it is enough to show that the map $\phi$ is completely positive and that it is surjective, since the dimensions of both sides are the same.  

Note that the minimal tensor product is realized as an operator system inside $\B(l^2(C_{2n-1}) \otimes \C^n)$. In other words, if we write $f = \sum f_k \delta_k\in l^\infty(C_{2n-1})$ then a general element $\sum_k f_k \delta_k \otimes T_k  \in l^\infty(C_{2n-1}) \otimes_\min \Toep{n}$ is realized as the Kronecker product $\text{diag}(f_1 T_1, \ldots, f_{2n-1} T_{2n-1})$. 
If this is a positive matrix then the image under $\phi$ is clearly positive.

For surjectivity of $\phi$ one first identifies the matrix algebra with the crossed product of $\ell^\infty(C_{2n-1})$ represented as diagonal matrices by the action of $C_{2n-1}$. More precisely, any element  $x\in M_{2n-1}(\C)$ is uniquely of the form 
$$
x=\sum_{k=1}^{2n-1} f_kS^k, \ f_k\in \ell^\infty(C_{2n-1}).
$$ 
Note that the sub-spaces of the crossed product of $\ell^\infty(C_{2n-1})$ by the action of $C_{2n-1}$ involving a fixed power $j$ of $S$ are pairwise linearly independent and span the whole crossed product so that it suffices to show the surjectivity of $\phi$ on each such subspace.
The Toeplitz operators are uniquely obtained as compressions on the  $n$ dimensional space with projection $P=1_{n}\oplus 0_{n-1}\in \ell^\infty(C_{2n-1})$ of linear combinations of powers of $S$. Let 
 $\tau_j=PS^jP $ be the Toeplitz operator obtained by compression of $S^j$ on  $P$. These operators  fulfill, for fixed $j$,
$$
\phi(\delta_k \otimes \tau_j)= S^k \left( \tau_j\oplus 0_{n-1} \right) S^{-k}=S^k PS^jP S^{-k}=S^kS^j Q S^{-k}=S^jS^k Q S^{-k}
$$
where $Q=S^{-j}PS^{j}P$ is a fixed self-adjoint idempotent. It is non-zero since it is given by the intersection of two sub-spaces of dimension $n$ and the sum of dimensions exceeds $p=2n-1$. Thus, fixing $j$ and taking linear combinations of the form
$$
\phi\left(\sum_{k=1}^{2n-1} \delta_k \otimes \lambda_k \tau_j\right) =
S^j\sum_{k=1}^{2n-1}\lambda_k\,S^k Q S^{-k},
$$
the next  lemma applies and gives the required surjectivity of the linear map $\phi$.
\endproof
\begin{lma}Let $C_{2n-1}$ be a cyclic group of prime order $p=2n-1$. \newline 
$(i)$~Let $X\subset C_{2n-1}$ be a non-empty subset $X\neq C_{2n-1}$ and $\chi\in \hat C_{2n-1}$ a  character of $C_{2n-1}$. Then $\sum_{g\in X} \chi(g)\neq 0$.\newline
$(ii)$~Let $0<Q<1$ be a self-adjoint idempotent in $\ell^\infty(C_{2n-1})$. Then the linear space generated by the  conjugates $ S^{k} Q  S^{-k} $ (under the action of $C_{2n-1}$ on itself) is $\ell^\infty(C_{2n-1})$.	
\end{lma}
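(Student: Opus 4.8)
The plan is to prove $(i)$ by a number-theoretic argument and then deduce $(ii)$ from it by Fourier analysis on $C_{2n-1}$.

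For $(i)$, I would first dispose of the trivial character, for which $\sum_{g\in X}\chi(g)=|X|>0$ since $X$ is non-empty. For a non-trivial character $\chi$ I would identify $C_{2n-1}$ with $\{0,1,\dots,p-1\}$, $p=2n-1$, and write $\chi(g)=\zeta^{g}$ where $\zeta$ is a suitable primitive $p$-th root of unity, so that $\sum_{g\in X}\chi(g)=\sum_{g\in X}\zeta^{g}$ is a sum of $|X|$ distinct powers of $\zeta$. The key input is that $p$ is prime: then the cyclotomic polynomial $\Phi_p(x)=1+x+\cdots+x^{p-1}$ is irreducible over $\mathbb{Q}$ and is the minimal polynomial of $\zeta$. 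Consequently every $\mathbb{Q}$-linear relation among $1,\zeta,\dots,\zeta^{p-1}$ is a scalar multiple of $1+\zeta+\cdots+\zeta^{p-1}=0$. Since the coefficient vector of $\sum_{g\in X}\zeta^{g}$ is the $0/1$ indicator of $X$, a vanishing sum would force this vector to be constant, hence $X=\emptyset$ or $X=C_{2n-1}$, both excluded. This yields $\sum_{g\in X}\chi(g)\neq 0$.

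For $(ii)$, I would first observe that a self-adjoint idempotent $Q\in\ell^\infty(C_{2n-1})$ with $0<Q<1$ is exactly the indicator function $\mathbf 1_X$ of a proper non-empty subset $X\subset C_{2n-1}$, and that conjugation by the shift is translation: $S^kQS^{-k}$ is the diagonal matrix associated to the function $g\mapsto Q(g-k)$, i.e. to $\mathbf 1_{X+k}$. Thus the linear span of the conjugates equals the span of all translates of $\mathbf 1_X$, which is the same as the convolution ideal $\{h\star\mathbf 1_X:h\in\ell^\infty(C_{2n-1})\}$. Passing to the finite Fourier transform $\F$ of the previous subsection turns convolution into the pointwise product on $\hat C_{2n-1}$, so this ideal is all of $\ell^\infty(C_{2n-1})$ if and only if the Fourier coefficient $\sum_{g\in X}\chi(g)$ is non-zero for every character $\chi$. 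This is precisely the content of $(i)$, so the span is the whole of $\ell^\infty(C_{2n-1})$.

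The main obstacle is the number-theoretic heart of $(i)$: the non-vanishing of proper subset sums of $p$-th roots of unity, which genuinely uses that $p$ is prime. For composite order one has vanishing sums over cosets of proper subgroups, and indeed the surjectivity established in the preceding proposition fails in that case, so primality cannot be dispensed with. By contrast, the identification of conjugation with translation and the Fourier diagonalization of convolution are routine and carry no real difficulty.
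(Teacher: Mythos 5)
Your proof is correct and takes essentially the same route as the paper: part $(i)$ rests on the fact that for prime $p$ the cyclotomic polynomial $\Phi_p(x)=1+x+\cdots+x^{p-1}$ is the minimal polynomial of a primitive $p$-th root of unity, which you phrase as one-dimensionality of the space of $\mathbb{Q}$-linear relations among $1,\zeta,\dots,\zeta^{p-1}$ and the paper phrases as divisibility of the $0/1$ indicator polynomial by $\Phi_p$ — the same argument in two dialects. Likewise your part $(ii)$ (translates of $\mathbf{1}_X$ span the principal convolution ideal, whose Fourier transform is everything iff $\widehat{\mathbf{1}}_X$ never vanishes, which is $(i)$) matches the paper's argument that the Fourier image of the translation-invariant span is an ideal with no common zero.
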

\proof $(i)$~We can assume that $\chi$ is non-trivial. Since $p$ is prime the subgroup $\chi(C_{2n-1})\subset \mu_p$ (roots of unity of order $p$) is equal to $\mu_p$. Thus it is enough to show that for any subset  $Y\subset\mu_p$, $Y\neq \emptyset$ such that $\sum_{u\in Y}u=0$ one has $Y=\mu_p$. Let $\xi$ be a primitive root of $1$ of order $p$ and $Z\subset \{0,\ldots, p-1\}$ such that $Y=\xi^Z$. Then the polynomial $A(x):=\sum_Z x^j$ fulfills $A(\xi)=0$ and is hence a multiple of the cyclotomic polynomial. But the latter is of degree $p-1$ since $p$ is prime, and thus one gets that $A$ is equal to the cyclotomic polynomial and thus $Z=\{0,\ldots, p-1\}$.\newline
$(ii)$~Let $X\subset C_{2n-1}$ be the non-empty subset $X\neq C_{2n-1}$ corresponding to the self-adjoint idempotent $0<Q<1$. Let $E$ be the linear space generated by the  conjugates $ S^{k} Q  S^{-k} $. The invariance of $E$ under the action of $C_{2n-1}$ means that its image $\hat E$ under Fourier transform is an ideal. Thus if it is non-trivial there exists a point of the dual group $\hat C_{2n-1}$ on which all elements of  $\hat E$ vanish. Equivalently this means that there exists a character $\chi$ of $C_{2n-1}$ such that $\langle \chi,E\rangle=0$ or equivalently that $\langle \chi,Q\rangle=0$. One has
$$
\langle \chi,Q \rangle=\sum_{g\in X} \chi(g).
$$
Thus by $(i)$ one gets that  $\langle \chi,Q\rangle \neq 0$ and this shows that $E=\ell^\infty(C_{2n-1})$.	\endproof

Note that the map $\phi$ is not a complete order isomorphism. This can be seen as follows. Note that up to a scaling factor it is a unital map, so by Proposition \ref{prop:rel-maps-opsyst} it is a complete order injection if and only if it is a complete isometry. But a simple calculation for $2\times 2$ Toeplitz matrices already shows that $\phi$ is not isometric.

\section{Outlook}\label{sect:outlook}
In this paper we have introduced a new approach to noncommutative geometry where the prominent role traditionally played by $C^*$-algebras is taken over by operator systems. The matrix ordering makes it possible that most of the theory still goes through, including state spaces, cones of positive elements, distance functions, etc. 

The examples we have considered show that spectral truncations allows one to work with finite-dimensional operator systems, while keeping in tact the full symmetry of the original space. For instance, the Toeplitz operator systems possess an $S^1$ symmetry, and as a consequence have a very rich extremal and pure state space structure. This is in contrast with the circulant matrices, where the symmetry is reduced to a discrete group. So, even though both spaces converge in Gromov--Hausdorff distance to the circle, for the second one loses a lot of structure in the finite-dimensional reduction.

The duality between the Toeplitz operator system $\Toep{n}$ and the (truncated) group algebra $\CZ{n}$ also uncovers the following intriguing relation between the fermionic and bosonic content of a spectral triple. As explained at the beginning of Section \ref{sect:truncations} the truncation on the Fourier modes of the (fermionic) vectors in the Hilbert space gives rise to the Toeplitz operator system $\Toep{n} = P_n C(S^1) P_n$, but the dual system $\CZ{n}$ describes truncations of the Fourier modes of the (bosonic) elements in the function algebra $C(S^1)$.

In a forthcoming paper  we shall show how, using as proposed in this paper operator systems rather than $C^*$-algebras, the fundamental idea of noncommutative geometry of associating a noncommutative $C^*$-algebra to a quotient space which is intractable by standard topological methods, extends to situations where the equivalence relation defining the quotient is no longer assumed to be transitive.  Such relations are called {\em tolerance relations} and can be traced back to Poincar\'e in his {\em Science and Hypothesis} (though the name was coined in \cite{Zee62}). Poincar\'e argued that in the physical continuum (in contrast with the mathematical continuum) it can hold that for measured quantities one has $A=B$, $B=C$ while $A<C$ due to potentially added measurement errors (see \cite{Sos86} for a development of the mathematical theory). This will allow us to extend the scope of noncommutative geometry and, in particular, to   introduce another operator system that appears naturally when one studies spaces up to some energy scale. In terms of position space this amounts to introducing a finite resolution $\eps$ and the tolerance relation between  points $x,y$ which is given by $d(x,y) < \eps$.  It allows one to define an operator system which, in the case that the relation is transitive, becomes the usual equivalence groupoid $C^*$-algebra \cite{Ren80}. We shall analyze the $C^*$-envelope and express the propagation number in terms of the diameter of the metric space. We will also characterize the pure state space of the operator system by means of a support condition on vector states.

\newcommand{\noopsort}[1]{}\def\cprime{$'$}

\end{document}